\newcommand{\bbC}{\mathbb{C}}
\newcommand{\bbZ}{\mathbb{Z}}
\newcommand{\interior}[1]{
    {\kern0pt#1}^{\mathrm{o}}
}
\newcommand{\floor}[1]{\left\lfloor #1 \right\rfloor}
\newcommand{\ceil}[1]{\left\lceil #1 \right\rceil}
\newcommand{\bb}[1]{\left(#1\right)}
\DeclarePairedDelimiterX{\infdivx}[2]{(}{)}{%
  #1\;\delimsize\|\;#2%
}
\newcommand{\Z}{\mathbf{Z}}
\newcommand{\z}{\mathbf{z}}
\renewcommand{\phi}{\varphi}
\newtheorem{theorem}{Theorem}[section]
\newtheorem*{theorem*}{Theorem}
\newtheorem{proposition}[theorem]{Proposition}
\newtheorem{lemma}[theorem]{Lemma}
\newtheorem{corollary}[theorem]{Corollary}
\newtheorem*{corollary*}{Corollary}
\newtheorem{example}[theorem]{Example}
\theoremstyle{definition}
\newtheorem{definition}[theorem]{Definition}
\newtheorem*{definition*}{Definition}
\newtheorem{remark}[theorem]{Remark}
\newtheorem{assumption}[]{Assumption}
\newtheorem{conjecture}[]{Conjecture}
\newtheorem{claim}[theorem]{Claim}
\newtheorem{condition}[theorem]{Condition}
\begin{document}

\begin{frontmatter}



\title{Efficient Tensor Decomposition via Moment Matrix Extension}


\author{Bobby Shi\footnote[1]{Corresponding author: \href{mailto:bhshi@utexas.edu}{bhshi@utexas.edu}}}
\author{Julia Lindberg} 
\author{Joe Kileel}

\affiliation{organization={The University of Texas at Austin},
            city={Austin},
            state={TX},
            country={USA}}

\begin{abstract}
Motivated by a flurry of recent work on efficient tensor decomposition algorithms, 
we show that the celebrated moment matrix extension algorithm of 
Brachat, Comon, Mourrain, and Tsigaridas
\cite{brachat2010symmetric} 
for symmetric tensor canonical polyadic (CP) decomposition can be made efficient under the right conditions.  
We first show that the crucial property determining the complexity of the algorithm is the regularity of a target decomposition. This allows us to reduce the complexity of the vanilla algorithm, while also unifying results from previous works.  
We then show that for tensors in $S^d\bbC^{n+1}$ with $d$ even, low enough regularity can reduce finding a symmetric tensor decomposition to solving a system of linear equations.  For order-$4$ tensors we prove that generic tensors of rank up to $r=2n+1$ can be decomposed efficiently via moment matrix extension, exceeding the rank threshold allowed by simultaneous diagonalization.  We then formulate a conjecture that states for generic order-$4$ tensors of rank $r=O(n^2)$ the induced linear system is sufficient for efficient tensor decomposition, matching the asymptotics of existing algorithms and in fact improving the leading coefficient.  Towards this conjecture we give computer assisted proofs that the statement holds for $n=2, \dots, 17$. Next we demonstrate that classes of nonidentifiable tensors can be decomposed efficiently via the moment matrix extension algorithm, bypassing the usual need for uniqueness of decomposition.  Of particular interest is the class of monomials, for which the extension algorithm is not only efficient but also improves on existing theory by explicitly parameterizing the space of decompositions.  Code for implementations of the efficient algorithm for generic tensors and monomials are provided,  along with several numerical examples.
\end{abstract}


\begin{keyword}
Tensor decomposition \sep Matrix extension \sep Polynomial system \sep Efficient methods 



\textit{MSC 2020:} primary, 15A69, 14N07; secondary, 13P15, 68W30, 65F99


\end{keyword}

\end{frontmatter}



\section{Introduction}

Tensors can be thought of as multidimensional arrays.  Of particular interest in data science applications are symmetric tensors: tensors unchanged by permutation of indices \cite{comon2008symmetric}.  In many of these applications a central task is to compute the \textit{canonical polyadic (CP)}, or \textit{Waring}, decomposition of a symmetric tensor $\phi \in S^d \mathbb{C}^{n+1}$:
\begin{equation}
    \phi=\sum_{i=1}^r {\z}_i^{\otimes d},
\end{equation}
where $\z_i$ lie in $\bbC^{n+1}$. Rank-1 elements $\z^{\otimes d}$ span the vector space of symmetric tensors, so such a decomposition always exists \cite{Landsberg2011}.  Such a decomposition is useful in various applications, including blind source separation \cite{cardoso1998blind, cardoso1991super, deLathauwer2007fourth}, independent component analysis \cite{pmlr-v89-podosinnikova19a, HyvarinenKarhunenOja2001}, psychometrics \cite{CarrollChang1970}, magnetic resonance imaging \cite{BASSER2007220}, telecommunications \cite{CHEVALIER199927, DELATHAUWER2007322, veen1996algorithm, Sidiropoulos2000multiaccess}, signal processing \cite{donoho2001uncertainty}, latent variable estimation for Gaussian mixture models \cite{ge2015learning, pereira2022tensormomentsgaussianmixture, hsu2013learning}, moment estimation \cite{zhang2023moment}, and hidden Markov models \cite{Allman2008IdentifiabilityOP, JMLR:v15:anandkumar14b}.

Though finding such a decomposition (with $r$ minimal) is known to be NP-hard \cite{hillar2013most}, there has been a flurry of recent research that aims to develop \textit{efficient} algorithms for specific classes of tensors \cite{ma2016polynomial, koiran2025efficient, kothari2024overcompletetensordecompositionkoszulyoung, telen2022normal, kolda2015numerical, johnston2023computing}.  In these contexts \textit{efficient} often means only using polynomially many (in the number of tensor entries) linear algebra operations, and consequently are polynomial-time in the $n^d$.  
These algorithms largely build on a simultaneous diagonalization procedure that realizes tensor components as common eigenvectors of some set of matrices.

\subsection{Our contributions}
In this paper we refine and improve the analysis of the algebraic algorithm of \cite{brachat2010symmetric}. 
At a high level, said algorithm reduces symmetric tensor decomposition over $\bbC$ to a \textit{moment matrix extension} problem. The extension procedure involves solving a system of polynomial equations in a set of \textit{moment variables}.  In Section \ref{sec:mmextension} our first contribution is to show that the key measure of complexity in the extension algorithm is the \textit{regularity} \cite{Eisenbud2004} of a target decomposition.  When the regularity of a target decomposition is sufficiently small relative to the order of the tensor, we demonstrate that the polynomial system can reduce to a linear system. Under additional conditions solving this linear system is sufficient and the task of symmetric tensor decomposition can be performed efficiently. Therefore, low regularity is the crucial property for our perspective on efficient tensor decomposition -- more so than low rankness or uniqueness of the decomposition, in contrast to existing literature.

Low regularity of decompositions holds for almost all tensors (i.e., generically) of low enough rank.  
On the other hand, showing that for such tensors the task of tensor decomposition reduces to solving a linear system of equations is more challenging.
In Section \ref{sec:lowrankeven} we prove that order-$4$ tensor decomposition generically reduces to solving a linear system for ranks up to $2n+1$, exceeding the ranks provably allowed by simultaneous diagonalization. 
Further, we formulate a conjecture giving sufficient conditions under which symmetric tensor decomposition generically reduces to a linear system at higher ranks.
We provide evidence for the conjecture’s validity through a computer-assisted proof verifying its correctness up to $n = 17$.
An affirmative answer to the conjecture would imply that the moment matrix extension method can decompose almost all order-$4$ tensors of rank up to $r=O(n^2)$ efficiently. The precise leading coefficient in this bound, which we work out explicitly, improves existing work concerning different tensor decomposition algorithms.  Moreover, the sufficiency of the linear system provides a new certificate for uniqueness of decomposition for specific tensors.

In addition, we show that classes of nonidentifiable tensors can also be decomposed efficiently by the extension algorithm. This is in contrast to previous work on efficient tensor decomposition, which requires that the underlying decomposition be unique. 
In particular, we show that two classes of nonidentifiable tensors, tensors corresponding to three collinear points (Section \ref{sec:collinear}) and arbitrary monomials (Section \ref{sec:monomials}), can be decomposed efficiently. 

We provide supplementary code with an implementation of the algorithm using only linear algebra (with no symbolic algebra packages needed) at:
\vspace{-0.5em}
\begin{center}
\url{https://github.com/rhshi/TensorDecomposition.jl}
\end{center}

\subsection{Related work}
Symmetric tensor decomposition has been classically studied from the perspective of algebra and geometry as the problem of finding a Waring decomposition of a homogeneous polynomial \cite{iarrobino1999power, Landsberg2011}.  Using tools from projective algebraic geometry, this approach has led to algebraic algorithms via apolarity theory by casting tensor decomposition as a root-finding problem \cite{Sylvester1886, oeding2013eigenvectors} (e.g., Sylvester's method, the catalecticant method, and more sophisticated eigenvector-based methods using Young flattenings \cite{landsberg2013equations}). Closely related to  apolarity based methods are normal form algorithms \cite{telen2022normal, beltran2019pencil, bernardi2020waring, brachat2010symmetric}, which consist of more systematic procedures for solving relevant polynomial systems, as well as algorithms using generating polynomials \cite{nie2017generating}. The method of generating polynomials reduces tensor decomposition to solving a different, but related, polynomial system of degree two.

The mentioned decomposition algorithms are often not efficient, as they reduce tensor decomposition to solving nonlinear polynomial systems of equations where the number of variables (and possibly the degrees) increase with the number of tensor entries.  
On the other hand, efficient tensor decomposition is known to be possible for certain classes of tensors. 
This has been well-studied in the case of order-$3$ and $4$ tensors.  Indeed for (asymmetric) order-$3$ tensors of size $n \times n \times n$, generic tensors of rank $r\leq n$ are efficiently decomposable via the classic simultaneous diagonalization algorithm \cite{Hars1970, leurgans1993decomposition}.  For order-$4$ tensors, the FOOBI algorithm for certain partially symmetric $n\times n\times n\times n$ tensors \cite{deLathauwer2007fourth, cardoso1991super} can efficiently decompose overcomplete tensors of rank $r>n$; see \cite{johnston2023computing} for a recent treatment, including the symmetric setting.  
Recently, more sophisticated algorithms have focused on improving rank bounds for efficient tensor decomposition of asymmetric tensors \cite{kothari2024overcompletetensordecompositionkoszulyoung, koiran2024uniqueness, koiran2025efficient, telen2022normal}.  Related is the question of identifiability, in particular the search for specific criteria for tensor identifiability \cite{kruskal1977three, chiantini2017effective, LovitzPetrov2023, DOMANOV2017342, chiantini2017generic, chiantini2015algorithm, Angelini2018}.

Alternative approaches in the study of efficient tensor decomposition include iterative algorithms \cite{kolda2009tensor} such as alternating least squares \cite{kolda2015numerical}, methods based on power iterations \cite{kolda2011shifted}, methods based on approximations \cite{ribot2024decomposingtensorsrankoneapproximations}, methods combining the catalecticant method with power iterations \cite{kileel2024subspacepowermethodsymmetric}, methods based on homotopy continuation \cite{BERNARDI201778}, as well as algorithms dependent on random models \cite{ge2015decomposing, ma2016polynomial, hopkins2016fast, hopkins2019robust, cao2020analysisstochasticalternatingsquares}. 
These numerical methods rely on nonconvex optimization and often do not have global guarantees, although analysis of their nonconvex optimization landscapes has been achieved in some cases (e.g., \cite{kileel2021landscape}).  
There also exist numerical methods involving truncations/relaxations of the generalized moment problem \cite{nie2014tmp, gamertsfelder2025effectivegeneralizedmomentproblem}, but these require extra real and/or positivity constraints on the tensor.

Part of the present paper deals with efficient decomposition of nonidentifiable tensors.  Nonidentifiability has historically been considered from the algebraic perspective \cite{ranestad2000varieties, CARLINI2017630, BUCZYNSKA201345waring}. A case-by-case analysis for tensors in low dimensions and with low rank is given in \cite{MOURRAIN2020347}.

\subsection{Paper outline}
 Section \ref{sec:preliminaries} contains notation and basic definitions.  We discuss dehomogenization -- viewing a homogeneous polynomial of degree $d$ in $n+1$ variables as an inhomogeneous polynomial of degree at most $d$ in $n$ variables -- as useful for moment matrix extension.  We also describe the simultaneous diagonalization procedure and connect its performance to the regularity of sets of points.

 Section \ref{sec:mmextension} describes the moment matrix extension algorithm.  We refine the algorithm to connect its complexity explicitly to the regularities of target decompositions, generalizing previous improvements to the algorithm of \cite{brachat2010symmetric}.  We then begin our study of efficient decomposition; as examples we describe tensors with decompositions of extremely low regularity and binary tensors.

 Section \ref{sec:lowrankeven} contains most of the main results of the paper.  We specify the conditions needed for the moment matrix extension algorithm to reduce to solving a linear system of equations.  For $d=4$ we provide counts of the number of linear equations and moment variables, and prove generic sufficiency of the linear system for $r \leq 2n+1$.
 We then present a sharper conjecture for efficient decomposition up to $r=O(n^2)$; computational evidence towards this conjecture is provided. 
 We also touch on nonidentifiable tensors in this setting.

 Section \ref{sec:monomials} presents an algorithm for efficient decomposition of monomials via the moment matrix extension method.  
 The method 
gives a new explicit  parameterization for the space of decompositions of monomials. 
 As another main result of the paper, it is not only able to algorithmically decompose monomials but also lets us provide and reprove theory.

 Throughout the paper we defer some proofs and examples to the \hyperref[app:a]{Appendix}; many of these proofs are largely calculations.

\section{Preliminaries}\label{sec:preliminaries}
We use bold lowercase $\z$ to denote vectors and uppercase $\Z$ to denote matrices.  With some abuse of notation we also use $\Z$ to denote a {set} of points $\{\z_1, \dots, \z_s\}$; just take $\Z$ to be the matrix with rows the points.

We will always use $\phi$ to denote a symmetric tensor, with varying format.  The notation $S^d\bbC^{n+1}$ denotes the $d$th symmetric power of the vector space $\bbC^{n+1}$, i.e.,  the vector space of symmetric tensors of dimension $n+1$ and order $d$.  The dimension of $S^d\bbC^{n+1}$ is $\binom{n+d}{n}$.  The following identifications are well known:
\begin{align*}
    & S^d\bbC^{n+1}=\left\{\begin{array}{c}\textrm{symmetric tensors}\\ \textrm{of order $d$ in $n+1$ variables}\end{array}\right\}\\
    \leftrightsquigarrow \quad & \bbC[x_0, \dots, x_n]_d=\left\{\begin{array}{c}\textrm{homogeneous polynomials}\\ \textrm{of degree $d$ in $n+1$ variables}\end{array}\right\}\\
    \leftrightsquigarrow \quad & \bbC[x_1, \dots, x_n]_{\leq d}=\left\{\begin{array}{c} \textrm{inhomogeneous polynomials of degree at most $d$ in $n$ variables}\end{array}\right\}.
\end{align*}
To index $\phi$ we will use the third identification.  
When a tensor $\phi\in S^d\bbC^{n+1}$ is  viewed as a multiway array, it is oftened indexed by a $d$-tuple $(i_1, \dots, i_d)$ where $i_k\in \{0, \dots, n\}$.
We index $\phi$ by a monomial of degree at most $d$ in $n$ variables $x^\alpha=x_1^{\alpha_1}\dots x_n^{\alpha_n}$, where $\alpha_j$ counts the number of times $j$ appears in the $d$-tuple ($0$ is ignored).  Equivalently, we index $\phi$ directly with the exponent vector $\alpha\in \bbZ_{\geq 0}^n$.  

For an exponent vector $\alpha\in \bbZ^n_{\geq 0}$ we call $|\alpha|:=\sum_{i=1}^n \alpha_i$ the \textit{size} of $\alpha$.  This is the same as the \textit{degree} of $x^\alpha$.  For a finite index set $S\subset \bbZ_{\geq 0}^n$ we write $|S|$ to be the cardinality of $S$, $S_k:=\{\alpha\in S\mid |\alpha|=k\}$, and $\operatorname{deg}(S):=\max_{\alpha\in S} |\alpha|$. We will also take $S$ to be the set of corresponding monomials $x^\alpha$.  For $\gamma$ in a monomial set $S$ we write $\operatorname{pos}(\gamma)$ to denote the position of $\gamma$ in $S$ in the graded lexicographic order (and similarly if we write $x^\gamma$ in $S$).

\begin{definition}\label{def:catalecticant}
    For $\phi\in S^d\bbC^{n+1}$ and $k\leq d$, the $k$th-\textit{catalecticant matrix} of $\phi$ is: 
    \[\operatorname{Cat}_k(\phi):=\bb{\phi_{\alpha+\beta}}_{\substack{|\alpha|\leq d-k\\|\beta|\leq k}} \in \bbC^{\binom{n+d-k}{n}\times \binom{n+k}{n}}.\]
    $\operatorname{Cat}_k(\phi)$ can equivalently be viewed as a linear operator $S^{k}\bbC^{n+1}\to S^{d-k}\bbC^{n+1}$.
\end{definition}

We note that catalecticant matrices are equivalent to the usual $k$-mode symmetric flattenings of a symmetric tensor.  
\begin{definition}
    Let $\Z\in \bbC^{s\times (n+1)}$ be the matrix with rows $\z_1, \dots, \z_s \in \mathbb{C}^{n+1}$.  For an index set $S\subset \bbZ_{\geq 0}^n$ and an integer $k\geq \operatorname{deg}(S)$, define the $(S, k)$-\textit{Vandermonde matrix} $\Z_{S, k}$ as:
    \[\Z_{S, k}:=\bb{\z_{i, 0}^{k-|\alpha|}\z_{i, 1}^{\alpha_1}\dots \z_{i, n}^{\alpha_n}}_{\substack{i=1, \dots, s\\ \alpha\in S}} \in \bbC^{s \times |S|}.\]
    When $S$ consists of all monomials of degree at most $k$, then $\Z_{S,k}$ is the usual $k^{\text{th}}$ Vandermonde matrix of $\Z$ and is denoted $\Z_k$.
\end{definition}
Note that $\Z_k$ can also be seen as the $k^{\text{th}}$ row-wise Khatri-Rao product of $\Z$.

\begin{definition}
    The \textit{Hilbert function} of a graded ideal $I \subseteq \bbC[x_0, \dots, x_n]$ is the function
    \[k\mapsto \operatorname{dim}_{\bbC}(\bbC[x_0, \dots, x_n]_k/I_k),\]
    where $I_k = I \cap \bbC[x_0,\ldots,x_n]_k$.
\end{definition}

\begin{definition}\label{Apolarity}
    Let $\mathbf{v}_0, \dots, \mathbf{v}_n$ a basis for $\bbC^{n+1}$ and dual basis $\mathbf{u}_0, \dots, \mathbf{u}_n$ for $\bb{\bbC^{n+1}}^*$.  A basis for $S^k\bbC^{n+1}$ is given by $\mathbf{v}_{\alpha}, |\alpha|=k$, and similarly for $\bb{S^k\bbC^{n+1}}^*$.  For $|\alpha|=k, |\alpha'|=k'$ the action is given via $\mathbf{u}_{\alpha'}\circ \mathbf{v}_{\alpha}=\mathbf{v}_{\alpha-\alpha'}$ if $\alpha-\alpha'\geq 0$ entrywise; otherwise it is zero.  Extend this by linearity to define an action of $\bb{S\bbC^{n+1}}^*$ on $S\bbC^{n+1}$.
    
    An element $g\in \bb{S \bbC^{n+1}}^*$ is \textit{apolar} to $\phi\in S^d\bbC^{n+1}$ if $g \circ \phi=0$.  Define the \textit{apolar ideal}
    \[\phi^\perp=\{g\in \bb{S \bbC^{n+1}}^*\mid g\circ \phi=0\}.\]
\end{definition}

We write $I_{\Z}$ to be the graded ideal of forms vanishing on $\Z=\{\z_1, \dots, \z_s\}\subset \mathbb{P}^n$.

\begin{lemma}\label{lem: hilbert function is ranks}
    The Hilbert function $h_{\phi}$ of $\phi^\perp$ is $k\mapsto \operatorname{rank}\bb{\operatorname{Cat}_k(\phi)}$.  Similarly, the Hilbert function $h_{\Z}$ of $I_{\Z}$ is $k\mapsto \operatorname{rank}\bb{\Z_k}$. 
\end{lemma}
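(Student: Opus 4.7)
The plan is to identify both Hilbert functions as ranks of suitable linear maps by computing their kernels, then subtracting from $\dim \bbC[x_0,\dots,x_n]_k = \binom{n+k}{n}$.

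For the first claim, I would view $\operatorname{Cat}_k(\phi)$ not just as a matrix but as the linear map it represents. Using the apolar action in Definition \ref{Apolarity}, for each degree $k \leq d$ the map $g \mapsto g \circ \phi$ takes $\bb{S^k \bbC^{n+1}}^*$ into $S^{d-k}\bbC^{n+1}$. Writing $g = \sum_{|\beta|\leq k} g_\beta \mathbf{u}_\beta$ and $\phi = \sum_{|\gamma|\leq d}\phi_\gamma \mathbf{v}_\gamma$, the action formula $\mathbf{u}_\beta \circ \mathbf{v}_\gamma = \mathbf{v}_{\gamma-\beta}$ (when $\gamma \geq \beta$) shows that the coefficient of $\mathbf{v}_\alpha$ in $g\circ\phi$ is $\sum_\beta g_\beta \phi_{\alpha+\beta}$. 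In the monomial bases, this is exactly multiplication by the catalecticant matrix from Definition \ref{def:catalecticant}. Hence $(\phi^\perp)_k$, as the kernel of this map, has dimension $\binom{n+k}{n} - \operatorname{rank}(\operatorname{Cat}_k(\phi))$. The rank--nullity computation then gives
\begin{equation*}
h_\phi(k) = \binom{n+k}{n} - \dim (\phi^\perp)_k = \operatorname{rank}(\operatorname{Cat}_k(\phi)).
\end{equation*}

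For the second claim, I would use the evaluation map $\operatorname{ev}_k \colon \bbC[x_0,\dots,x_n]_k \to \bbC^s$ defined by $f \mapsto (f(\z_1),\dots,f(\z_s))$. By definition $\ker(\operatorname{ev}_k) = (I_\Z)_k$. In the monomial basis $\{x^\alpha : |\alpha| = k\}$ (with the convention $x_0^{k - |\alpha|} x_1^{\alpha_1}\cdots x_n^{\alpha_n}$ for $\alpha \in \bbZ_{\geq 0}^n$ of size at most $k$), the matrix of $\operatorname{ev}_k$ is exactly $\Z_k^\top$ taken with $S$ the set of all monomials of degree at most $k$. Therefore $\dim (I_\Z)_k = \binom{n+k}{n} - \operatorname{rank}(\Z_k)$ and
\begin{equation*}
h_\Z(k) = \binom{n+k}{n} - \dim (I_\Z)_k = \operatorname{rank}(\Z_k),
\end{equation*}
as claimed.

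The only real obstacle is carefully matching conventions between the abstract apolar action (Definition \ref{Apolarity}) and the concrete catalecticant entries (Definition \ref{def:catalecticant}), together with the dehomogenization/homogenization identification $\bbC[x_0,\dots,x_n]_k \leftrightsquigarrow \bbC[x_1,\dots,x_n]_{\leq k}$ used to index by $\alpha \in \bbZ_{\geq 0}^n$ with $|\alpha|\leq k$. Once the indexing is fixed, both statements reduce to the standard rank--nullity identity for the catalecticant and Vandermonde maps, respectively.
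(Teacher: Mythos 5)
Your proposal is correct and takes essentially the same route as the paper: identify $(\phi^\perp)_k$ with $\ker\bb{\operatorname{Cat}_k(\phi)}$ and $(I_{\Z})_k$ with the kernel of the evaluation map represented by $\Z_k$, then conclude by rank--nullity (the paper cites \cite{Landsberg2011} for the first identification, which you verify directly from the apolar action). The only quibble is that under the paper's convention the matrix of $\operatorname{ev}_k$ is $\Z_k$ rather than $\Z_k^\top$, but since rank is invariant under transposition this does not affect the argument.
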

\begin{proof}
    It is known that $\phi^\perp_k=\operatorname{ker}\bb{\operatorname{Cat}_k(\phi)}$ when $\operatorname{Cat}_k(\phi)$ is viewed as a linear operator \cite{Landsberg2011}. Similarly, a $k$-form vanishes on $\z_1, \dots, \z_s$ if and only if it vanishes via evaluation on $\z_1^{\otimes k}, \dots \z_s^{\otimes k}$; then the degree-$k$ piece is exactly $\operatorname{ker}\bb{\Z_k}$.  The conclusions follow.
\end{proof}
It is clear that for $k>d$, $h_\phi(k)=0$ and that $h_\phi(k)=h_\phi(d-k)$; these are both well-known properties \cite{iarrobino1999power}.  For a set of points $\Z$, $h_\Z$ is nondecreasing in $k$.  Moreover, the function stabilizes at $|\Z|$, i.e., there exists $k$ such that $h_{\Z}(k)=|\Z|$ and for all $k'\geq k$, $h_{\Z}(k')=|\Z|$ \cite{iarrobino1999power}.  An important property of sets of points that we will use is the \textit{regularity} \cite{MOURRAIN2020347, Eisenbud2004}.
\begin{definition}
    For a set of points $\Z=\{\z_1, \dots, \z_s\} \subset \bbC^{n+1}$ the \textit{regularity} of $\Z$, $\rho(\Z)$, is the smallest integer $k$ such that $\z_1^{\otimes k}, \dots, \z_s^{\otimes k}$ are linearly independent as elements of $S^k\bbC^{n+1}$.
\end{definition}

    Equivalently, the regularity of $\Z$ is the smallest integer $k$ such that $\Z_{k}$ is rank $s$. By Lemma \ref{lem: hilbert function is ranks}, this is the integer $k$ at which $h_{\Z}$ stabilizes.  This is also equivalent to the \textit{Castelnuovo-Mumford regularity} of $\mathbb{C}[x_0, \dots, x_n]/I_{\Z}$ \cite{Eisenbud2004}.

    \begin{definition}
        A set of points $\Z=\{\z_1, \dots, \z_s\}\subset \mathbb{C}^{n+1}$ \textit{decomposes} a tensor $\phi\in S^d\bbC^{n+1}$ if $\phi=\sum_{i=1}^s \z_i^{\otimes d}$.  The smallest $r$ such that there exists $\Z=\{\z_1, \dots, \z_r\}$ decomposing $\phi$ is the \textit{symmetric rank} (or \textit{Waring rank}) of $\phi$.  The points $\z_1, \dots, \z_r$ give a \textit{minimal rank decomposition} of $\phi$.
    \end{definition}

If a set of points $\Z=\{\z_1, \dots, \z_s\} \subset \mathbb{P}^{n+1}$ decomposes $\phi$ then $\operatorname{Cat}_k(\phi)=\Z_{d-k}^\top \Z_k$ for $0\leq k\leq d$.  
Then for such $\Z$ and $\phi$ it holds that $h_{\phi}(k)\leq h_{\Z}(k)$ by submultiplicativity of the matrix rank.  Although the minimal rank decomposition for a given tensor $\phi$ may not be unique, the rank of $\phi$ is invariant under the $\textrm{GL}(n+1)$ action that acts via $\mathbf{M}\cdot \phi=\sum_{i=1}^s (\mathbf{M}\z_i)^{\otimes d}$.  This action can be defined and performed independent of any given decomposition.

\begin{definition}
    A tensor is \textit{identifiable} if it has a unique minimum rank decomposition (up to permutation of factors).
\end{definition}

Identifiability is closely related to the \textit{variety of sums of powers} $\operatorname{VSP}(\phi, s)$, the closure of the set $s$ points decomposing $\phi$ \cite{ranestad2000varieties}  (usually defined with respect to $s$ points in the Hilbert scheme $\operatorname{Hilb}_s \bb{\mathbb{P}^n}$).  An identifiable tensor is one such that $\operatorname{VSP}(\phi, \operatorname{rank}(\phi))$ is a point.

\subsubsection*{Dehomogenization}
The method of indexing we have chosen is especially useful when we \textit{dehomogenize} with respect to the $0$th index. In this case, the index $0$ -- corresponding to variable $x_0$ -- is special.  If $\Z=\{\z_1, \dots, \z_s\} \subset \bbC^{n+1}$ is a set of points giving a size-$s$ decomposition for $\phi$ with the property that $\z_{i, 0}\neq 0$ for $1 \leq i \leq s$, then we can without loss of generality assume that $\z_{i, 0}=1$.  
In this case, for appropriate $\lambda_1, \dots, \lambda_s \in \bbC$, we can write $\phi=\sum_{i=1}^s \lambda_i\z_i^{\otimes d}$.  

At the level of points, for an index set $S\subset \bbZ_{\geq 0}^n$ by dehomogenizing we can drop the dependency on $k$ to write any Vandermonde matrix. In this case the $S$-Vandermonde matrix of $\Z$ is 
\[\Z_S:=\bb{\z_i^\alpha}_{\substack{i=1, \dots, s\\ \alpha\in S}} \in \bbC^{s\times |S|},\] 
where $\z_i^\alpha:= \z_{i, 1}^{\alpha_1}\dots  \z_{i, n}^{\alpha_n}$ as $\z_{i, 0}=1$, and $\Z_k$ is the usual $k^{\text{th}}$ Vandermonde matrix of $\Z$. For $k'\leq k$, $\Z_{k'}$ is a submatrix of $\Z_k$ via restricting indices to size at most $k'$.

At the level of tensors, a tensor $\phi$ can be viewed as an object not only in $S^d\bbC^{n+1}$ but also in a larger vector space of objects that restrict to $\phi$ at specific degrees.  This is the same as viewing a inhomogeneous polynomial of degree $d$ in $n$ variables as an object in the space of all polynomials, where polynomials of larger degree can restrict to the given polynomial by keeping only terms of degree at most $d$.

If $\z_1, \dots, \z_s$ gives a size-$s$ decomposition of $\phi$ with coefficients $\lambda_1, \dots, \lambda_s$ then a family of tensors can be produced via
\[\sum_{i=1}^s \lambda_i \z_i^{\otimes d'}, \quad d'\geq 0.\]
If $d'=d$ then this is obviously $\phi$ and for $d'<d$ these are certain slices of $\phi$, i.e., fixing $d-d'$ indices of $\phi$.  More interesting is if $d'>d$; in this case $\phi$ is a slice of these higher order tensors, i.e., the restriction of these higher order tensors to multiindices of size at most $d$ is precisely $\phi$.

\subsubsection*{Model of computation}
It is known that tensor decomposition is NP-hard in the worst case \cite{hillar2013most}.  In this work we are mainly focused on the possibility of \textit{efficient} algorithms; we will be using linear algebraic operations as subroutines (on polynomially sized inputs in the size of the tensor), including eigendecompositions and solving linear systems.  Following \cite{koiran2025efficient} and other work on efficient tensor decomposition we assume these operations can be performed in exact arithmetic.  

\subsection{Warm-up: simultaneous diagonalization}\label{sec:simultaneous_diagonalization}
As a warm-up we describe the classic simultaneous diagonalization algorithm for symmetric tensor decomposition (also known as \textit{Jennrich's algorithm}), see e.g., \cite{leurgans1993decomposition}.  

Write $D:=\floor{(d-1)/2}$.  Let $\Z=\{\z_1, \dots, \z_s\} \subset \bbC^{n+1}$ be a size-$s$ decomposition of $\phi\in S^d\bbC^{n+1}$, and suppose $\z_{i, 0}=1$ for all $i$. Thus, we write $\phi=\sum_{i=1}^s \lambda_i \z_i^{\otimes d}$.  Write $\Lambda=\operatorname{diag}\{\lambda_1, \dots, \lambda_s\}$.

Assume that $D\geq \rho(\Z)$.  Let $\hat{\phi}_0, \dots, \hat{\phi}_n$ be the $n+1$ slices of $\phi$ matricized as elements of $S^D\bbC^{n+1}\otimes \bb{S^{d-1-D}{\bbC^{n+1}}}^*$.  Explicitly we have
\[\hat{\phi}_j=\sum_{i=1}^s \lambda_i \z_{i, j}\operatorname{vec}\bb{\z_i^{\otimes D}}\operatorname{vec}\bb{\z_i^{\otimes d-1-D}}^\top=\Z_D^\top \Lambda_j \Z_{d-1-D},\]
where $\Lambda_j:=\operatorname{diag}\{\lambda_1 \z_{1,j}, \dots, \lambda_s \z_{s, j}\}$.  Then letting $\dagger$ denote the pseudoinverse, we have
\begin{equation}\label{eq: simult diag mats}
\begin{aligned}
    \hat{\phi}_j \hat{\phi}_0^{\dagger}
    &=\Z_D^\top \Lambda_j \Z_{d-1-D} \bb{\Z_D^\top \Lambda_0 \Z_{d-1-D}}^\dagger \\
    &=\Z_D^\top \Lambda_j \Z_{d-1-D} \Z_{d-1-D}^\dagger \Lambda_0^{-1} \bb{\Z_D^\top}^\dagger\\
    &=\Z_D^\top \operatorname{diag}\{\z_{1, j}, \dots, \z_{s, j}\} \bb{\Z_D^\top}^\dagger,
\end{aligned}
\end{equation}
where the second line follows because $\Z_D$ is full row-rank and as $d-1-D\geq D$, $\Z_{d-1-D}$ is also full row-rank.  Now this last line is an eigendecomposition, where $\z_i^{\otimes D}, \z_{i, j}$ form an eigenpair.  In the case where the eigenvalues are simple the decomposition is readily obtained as eigenvectors. Otherwise, we can pick $a_1, \dots, a_n$ randomly from some continuous distribution (e.g., Gaussian) and form the linear combination
\[\sum_{j=1}^n a_j \hat{\phi}_j \hat{\phi}_0^\dagger,\]
for which $(\z_i^{\otimes D}, \sum_{j=1}^n a_j \z_{i, j})$ form an eigenpair (as the $\hat{\phi}_j\hat{\phi}_0^\dagger$ commute).  With probability one the eigenvalues are simple.

The implications of this procedure succeeding are threefold.  First, $s$ must equal the $\operatorname{rank}(\phi)$ from $D\geq \rho(\Z)$.  Second, the decomposition $\Z$ must be unique. Though we described the procedure in terms of $\Z$ it can be seen that each step left no room for another decomposition (as we assumed that $\Z$ is a decomposition in the first place).  This is in fact an algorithmic proof of Theorem 2.17 of \cite{MOURRAIN2020347}, which states that if $\Z$ gives a decomposition for $\phi$ and $d\geq 2\rho(\Z)+1$ then $\Z$ is a unique decomposition. Indeed, $D\geq \rho(\Z)$ and $d\geq 2D+1$.  Third, as an algorithm it is efficient: each step can be performed via polynomially many linear algebra operations.  

\subsubsection{Meta-algorithm for tensor decomposition}
The above can be simplified slightly by choosing a set $B, |B|=s$ indexing columns of $\Z_D$ such that $\Z_B$ is an $s\times s$ invertible submatrix of $\Z_D$.  Taking now $\mathbf{H}_j=\Z_B^\top \Lambda_j \Z_B$ the above procedure returns
\[\mathbf{H}_j\mathbf{H}_0^{-1}=\Z_B^\top \operatorname{diag}\{\z_{1, j}, \dots, \z_{s, h}\}\Z_B^{-\top}.\]
As long as $B$ contains $\{1, x_1, \dots, x_n\}$ we can obtain the tensor decomposition from the eigendecomposition.  This is not a strong assumption, nor is the assumption that we can find such a $B$ -- we will elaborate on both in Section \ref{sec:mmextension}.

A limitation to the simultaneous diagonalization algorithm is that $\z_1^{\otimes D}, \dots, \z_s^{\otimes D}$ have to be linearly independent.  In particular, this forces $s\leq \binom{n+D}{n}=O\bb{n^{\floor{(d-1)}/2}}$ -- and even if $s$ is small enough, it is not always the case that this linear independence condition is satisfied.  

Conceptually this can be addressed by taking a slightly inverted perspective -- the issue is not that $s$ is too large but $d$ is too small.  Indeed, let $\Z=\{\z_1, \dots, \z_s\}, \z_{i, 0}=1$ be a set of points (with corresponding scalars $\lambda_i$).  If we are given $\phi=\sum_{i=1}^s \lambda_i \z_i^{\otimes d}$ with $2\rho(\Z)+1>d$ then we would not be able to apply the simultaneous diagonalization algorithm; however, if we somehow had access to the tensor $\phi^{\uparrow}=\sum_{i=1}^s \lambda_i \z_i^{\otimes 2\rho(\Z)+1}$ then we can.  Of course, we do not have access to $\Z$ nor $\phi^{\uparrow}$ -- but we would like to solve for, in some principled way, this \textit{extension} $\phi^{\uparrow}$, motivating a meta-algorithm for tensor decomposition.

\begin{algorithm}[h!]
   \hspace*{\algorithmicindent} \textbf{Input: }$\phi\in S^d \bbC^{n+1}$; desired decomposition size $s$ \\
    \hspace*{\algorithmicindent} \textbf{Output: }A size-$s$ decomposition of $\phi$
\begin{algorithmic}[1]
\STATE Solve for an \textit{extension} $\phi^\uparrow$ such that $\phi^\uparrow$ has a unique size-$s$ decomposition that also decomposes $\phi$ \label{alg1:extension}
\STATE Apply a \textit{core} algorithm to recover the unique decomposition from $\phi^\uparrow$
\end{algorithmic}
\caption{Meta-algorithm for tensor decomposition}
\label{alg:meta}
\end{algorithm}

The central idea of \cite{brachat2010symmetric} is to introduce an algorithm for the first step of the meta-algorithm and apply simultaneous diagonalization as the second step.  Given the ``right'' extension $\phi^{\uparrow}$, simultaneous diagonalization, and therefore decomposition, is efficient.  However, we know that tensors are not always identifiable nor efficiently decomposable; thus, both properties must manifest in the extension step. 

We also note that in Step \ref{alg1:extension} it often suffices to obtain a \textit{partial} extension; we do not necessarily need a full degree $2\rho(\Z)+1$ extension but rather enough of a sufficiently high degree extension to apply the core algorithm (which will often be simultaneous diagonalization).  

\begin{remark}
    This meta-algorithm is implicit in the work on average case tensor decomposition \cite{ma2016polynomial, hopkins2019robust}.  The main idea is to argue that some $\phi^{\uparrow}$ can be well-approximated with high probability over the randomness of the underlying components efficiently, and then show that simultaneous diagonalization on this approximated $\phi^{\uparrow}$ outputs a decomposition close enough to the random decomposition.  Though we are instead focused on efficient \textit{exact} decomposition and do not assume a random model, our exposition unifies some ideas from different fields.
\end{remark}

\section{Moment matrix extension}\label{sec:mmextension}
We discuss the algorithm first described by \cite{brachat2010symmetric}. The crucial contribution in their work was to introduce the idea of extending a tensor $\phi$ to a higher order tensor; this is Step~\ref{alg1:extension} of Algorithm \ref{alg:meta}.  We make a novel observation connecting possible decompositions output by this algorithm and the Hilbert functions of the corresponding sets of points, allowing for a refinement of the algorithm.
\subsection{Key constructions}
\begin{definition}\label{def:hankel}
    Given a tensor $\phi\in S^d\bbC^{n+1}$ the corresponding $\binom{n+d}{n}\times \binom{n+d+1}{n}$ \textit{Hankel matrix} $\mathcal{H}_{\phi}$ is:
\[
    \bb{\mathcal{H}_{\phi}}_{\alpha, \beta}=\begin{cases}
        \phi_{\alpha+\beta}, & |\alpha+\beta|\leq d,\\
        y_{\alpha+\beta}, & \textrm{otherwise}.
    \end{cases}
\]
Here $|\alpha|\leq d$, $|\beta|\leq d+1$, and the $y$ are formal \textit{moment variables}.  When $\phi$ is clear from context we simply write $\mathcal{H}$.  For index sets $B, B'$, $\mathcal{H}_{B, B'}$ denotes the corresponding submatrix.
\end{definition}
Note the similarities of this definition with Definition \ref{def:catalecticant}.  We can view the Hankel matrix as the catalecticant matrix of an order $2d+1$ ``extension'' of $\phi$; however, we only have access to entries indexed by multiindices of size at most $d$ so the moment variables of larger size are left unknown.

\begin{definition}
    A set of monomials $B$ is \textit{connected to $1$} if for all $x^\alpha\in B$ either $x^\alpha=1$ or there exists $x^{\alpha'}\in B, i\in [n]$ such that $x^\alpha=x^{\alpha'}x_i$.  
\end{definition}

Connected to $1$ will be required for Theorem \ref{thm:main} below which justifies the extension algorithm.

\begin{definition}\label{def:determinal_relations}
    Let $B$ be a set of monomials.  Define the system of \textit{determinantal relations} with respect to $B$ to be the polynomial system in the moment variables
    \[\operatorname{det}\bb{\mathcal{H}_{B\cup \{x^\alpha x_i\}, B\cup\{x^\beta x_j\}}}-\operatorname{det}\bb{\mathcal{H}_{B\cup \{x^\alpha x_j\}, B\cup\{x^\beta x_i\}}}=0\]
    for $x^\alpha, x^\beta\in B, i\neq j$.  
\end{definition}

Let $Y$ be the set of moment variables involved in these relations; then $Y$ can be explicitly given by 
\begin{equation}\label{eq:moment_variables}
    Y=\{y_{x^{\alpha+\alpha'}x_i}\mid |\alpha+\alpha'|\geq d, x^\alpha, x^{\alpha'}\in B, 1\leq i\leq n\}.
\end{equation}
We will also denote elements of $Y$ by just the indices $x^{\alpha+\alpha'}x_i$.  Note that $Y$ indeed contains all the variables appearing in the determinantal relations as in each equation, the variable $x^{\alpha+\beta}x_ix_j$ cancels and the rest of the variables are of the form $x^{\alpha+\alpha'}x_i, x^{\alpha+\alpha'}x_j$ for $\alpha'\in B$ such that $|\alpha+\alpha'|\geq d$.

\begin{definition}
    For a set of monomials $B$ connected to $1$ define the $n$ \textit{shifted matrices} $\mathcal{H}_{B, B_i}$, where
    \[B_i=\{x^\alpha x_i\mid x^\alpha\in B\}.\]
    If $\mathcal{H}_{B, B}$ is invertible define the $i$th \textit{multiplication matrix} as
    \[\mathbf{M}^B_i(Y):=\mathcal{H}_{B, B_i}\mathcal{H}_{B, B}^{-1}.\]
\end{definition}

The entries of each multiplication matrix are polynomials in the moment variables $Y$. More commonly used in the literature are the \textit{commuting relations} \cite{brachat2010symmetric, bernardi2020waring}.  
 
\begin{definition}\label{def:commuting_relations}
    Define the system of \textit{commuting relations} to be the polynomial system in the moment variables
    \[\mathbf{M}_i^B(Y)\mathbf{M}_j^B(Y)-\mathbf{M}_j^B(Y)\mathbf{M}_i^B(Y)=0, \quad \forall i\neq j.\]
\end{definition}
Sometimes $\mathcal{H}_{B, B}$ is not fully specified, i.e., there are moment variables appearing in some entries, and thus invertibility of $\mathcal{H}_{B, B}$ is not well-defined.
In particular, this occurs when $\operatorname{deg}(B)>d/2$.  Using the identity with the adjugate:
\[\mathcal{H}_{B, B}^{-1}=\operatorname{det}\bb{\mathcal{H}_{B, B}}^{-1}\operatorname{adj}\bb{\mathcal{H}_{B, B}},\]
we can clear denominators and, as $\operatorname{adj}\bb{\mathcal{H}_{B, B}}$ is entry-wise polynomial in $Y$, the commuting relations are polynomials in $Y$. 

The following lemma shows that the commuting and determinantal relations are equivalent; this is essentially Theorem 6.3 of \cite{brachat2010symmetric} but we derive the result directly.
\begin{lemma}\label{lem:determinant_equals_commuting}
    The determinantal and commuting relations are equivalent (up to a global factor).
\end{lemma}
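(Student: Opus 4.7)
The natural strategy is a direct calculation that converts each determinantal relation, via a Schur complement, into a scalar entry of the commuting relation matrix $\mathbf{M}_i^B\mathbf{M}_j^B-\mathbf{M}_j^B\mathbf{M}_i^B$. The ``global factor'' will then be visibly a power of $\det(\mathcal{H}_{B,B})$.

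First, I would fix $x^\alpha,x^\beta\in B$ and $i\neq j$, and, after ordering the index sets so that $B$ appears first and the extra monomial last, write
\[
\mathcal{H}_{B\cup\{x^\alpha x_i\},\,B\cup\{x^\beta x_j\}}
=\begin{pmatrix}
\mathcal{H}_{B,B} & \mathcal{H}_{B,\{x^\beta x_j\}}\\
\mathcal{H}_{\{x^\alpha x_i\},B} & \mathcal{H}_{\{x^\alpha x_i\},\{x^\beta x_j\}}
\end{pmatrix},
\]
and similarly for the other matrix, with $i$ and $j$ swapped in the bordering monomials. Applying the Schur complement identity to each determinant yields
\[
\det\!\bigl(\mathcal{H}_{B\cup\{x^\alpha x_i\},B\cup\{x^\beta x_j\}}\bigr)=\det(\mathcal{H}_{B,B})\,\bigl(\mathcal{H}_{\{x^\alpha x_i\},\{x^\beta x_j\}}-\mathcal{H}_{\{x^\alpha x_i\},B}\,\mathcal{H}_{B,B}^{-1}\,\mathcal{H}_{B,\{x^\beta x_j\}}\bigr),
\]
and the analogous formula with $i\leftrightarrow j$.

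The key cancellation is that the two ``corner'' entries both depend only on the multi-index $\alpha+\beta+e_i+e_j$, so $\mathcal{H}_{\{x^\alpha x_i\},\{x^\beta x_j\}}=\mathcal{H}_{\{x^\alpha x_j\},\{x^\beta x_i\}}$, and these terms drop out of the difference. What remains is
\[
\det(\mathcal{H}_{B,B})\Bigl[\mathcal{H}_{\{x^\alpha x_j\},B}\,\mathcal{H}_{B,B}^{-1}\,\mathcal{H}_{B,\{x^\beta x_i\}}-\mathcal{H}_{\{x^\alpha x_i\},B}\,\mathcal{H}_{B,B}^{-1}\,\mathcal{H}_{B,\{x^\beta x_j\}}\Bigr].
\]
Next, using that $\mathcal{H}$ is Hankel (entries depend only on the sum of indices), I would note the identification $\mathcal{H}_{\{x^\alpha x_i\},B}=\mathcal{H}_{\{x^\alpha\},B_i}$, valid once $B$ and $B_i$ are identified via $x^\gamma\leftrightarrow x^\gamma x_i$, and similarly for the column vectors. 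Under this identification, the bracketed scalar is exactly the $(x^\alpha,x^\beta)$-entry of
\[
\mathcal{H}_{B,B_j}\,\mathcal{H}_{B,B}^{-1}\,\mathcal{H}_{B,B_i}-\mathcal{H}_{B,B_i}\,\mathcal{H}_{B,B}^{-1}\,\mathcal{H}_{B,B_j}=\bigl(\mathbf{M}_j^B\mathbf{M}_i^B-\mathbf{M}_i^B\mathbf{M}_j^B\bigr)\mathcal{H}_{B,B}.
\]

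Putting it together, the $(\alpha,\beta,i,j)$ determinantal relation equals $\det(\mathcal{H}_{B,B})$ times the $(x^\alpha,x^\beta)$-entry of $(\mathbf{M}_j^B\mathbf{M}_i^B-\mathbf{M}_i^B\mathbf{M}_j^B)\,\mathcal{H}_{B,B}$. Since $\mathcal{H}_{B,B}$ is (by hypothesis, possibly after clearing denominators via the adjugate) invertible over the fraction field, the matrix $\mathbf{M}_j^B\mathbf{M}_i^B-\mathbf{M}_i^B\mathbf{M}_j^B$ vanishes if and only if all its entries, read off as $(x^\alpha,x^\beta)$ ranges over $B\times B$, vanish; these entries are exactly the determinantal relations scaled by a common factor of $\det(\mathcal{H}_{B,B})$ (together with the power of $\det(\mathcal{H}_{B,B})$ absorbed when clearing denominators in $\mathbf{M}_i^B$). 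This gives the stated equivalence.

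\textbf{Main obstacle.} The only real subtlety is bookkeeping: carefully identifying $B$ with $B_i$ via the shift by $x_i$ to make the Schur complement scalar match an entry of the commutator $\mathbf{M}_i^B\mathbf{M}_j^B-\mathbf{M}_j^B\mathbf{M}_i^B$, and tracking the power of $\det(\mathcal{H}_{B,B})$ that constitutes the ``global factor'' once the commuting relations are regarded as polynomial (rather than rational) equations via the adjugate formula.
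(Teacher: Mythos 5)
Your argument is correct and is essentially the paper's own proof: the Schur complement identity you invoke is exactly the Laplace/adjugate expansion $\det\bigl(\mathcal{H}_{B\cup\{x^\alpha x_i\},B\cup\{x^\beta x_j\}}\bigr)=\mathcal{H}_{x^{\alpha+\beta}x_ix_j}\det(\mathcal{H}_{B,B})-\mathcal{H}_{B,x^\alpha x_i}^{\top}\operatorname{adj}(\mathcal{H}_{B,B})\mathcal{H}_{B,x^\beta x_j}$ used there, and both arguments then rest on the cancellation of the corner entry and the Hankel identification of the $x^\alpha$-row of $\mathcal{H}_{B,B_i}$ with $\mathcal{H}_{B,x^\alpha x_i}^{\top}$. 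No gaps; only the bookkeeping of the power of $\det(\mathcal{H}_{B,B})$ differs cosmetically from the paper's adjugate formulation.
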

\begin{proof}
    First, the commuting relations are explicitly
    \[\mathcal{H}_{B, B_i}\mathcal{H}_{B, B}^{-1}\mathcal{H}_{B, B_j}\mathcal{H}_{B, B}^{-1}-\mathcal{H}_{B, B_j}\mathcal{H}_{B, B}^{-1}\mathcal{H}_{B, B_i}\mathcal{H}_{B, B}^{-1}=0.\]
    Up to multiplication by $\operatorname{det}\bb{\mathcal{H}_{B, B}}$ this is equivalent to 
    \[\mathcal{H}_{B, B_i}\operatorname{adj}\bb{\mathcal{H}_{B, B}}\mathcal{H}_{B, B_j}-\mathcal{H}_{B, B_j}\operatorname{adj}\bb{\mathcal{H}_{B, B}}\mathcal{H}_{B, B_i}=0.\]
    We can index the matrix on the left-hand side by $(\alpha, \beta)$ for $\alpha, \beta\in B$.  The corresponding entry is 
    \begin{equation}\label{eq:entry_commuting}
        \mathcal{H}_{B, x^\alpha x_i}^\top \operatorname{adj}\bb{\mathcal{H}_{B, B}}\mathcal{H}_{B, x^\beta x_j}-\mathcal{H}_{B, x^\alpha x_j}^\top\operatorname{adj}\bb{\mathcal{H}_{B, B}}\mathcal{H}_{x^\beta x_i}.
    \end{equation}
    It is straightforward to see that
    \[\operatorname{det}\bb{\mathcal{H}_{B\cup\{x^\alpha x_i\}, B\cup\{x^\beta x_j\}}}=\mathcal{H}_{x^{\alpha+\beta}x_ix_j}\operatorname{det}\bb{\mathcal{H}_{B, B}}-\mathcal{H}_{B, x^\alpha x_i}^\top \operatorname{adj}\bb{\mathcal{H}_{B, B}}\mathcal{H}_{B, x^\beta x_j}\]
    via the Laplace expansion of the determinant.  Then setting equation \ref{eq:entry_commuting} equal to zero is equivalent to 
    \[\operatorname{det}\bb{\mathcal{H}_{B\cup \{x^\alpha x_i\}, B\cup\{x^\beta x_j\}}}-\operatorname{det}\bb{\mathcal{H}_{B\cup \{x^\alpha x_j\}, B\cup\{x^\beta x_i\}}}=0.\qedhere \]
\end{proof}

We observe that $Y$ is also the set of moment variables appearing in the matrices $\mathcal{H}_{B, B}$ and $\mathcal{H}_{B, B_i}, i=1, \dots, n$.  In other words, once a solution to the determinantal relations is found, substituting this solution into the matrices $\mathcal{H}_{B, B}, \mathcal{H}_{B, B_i}, i=1, \dots, n$ fully specifies these matrices.

\subsection{Algorithm for size-\texorpdfstring{$s$}{s} decomposition}
Motivated by the discussion in Section \ref{sec:simultaneous_diagonalization} we seek to ``extend'' $\phi$ to a higher-order tensor.  We have the pieces: the Hankel matrix $\mathcal{H}$ represents symbolically this unknown higher-order tensor, and the determinantal/commuting relations represent relations that the entries of this higher-order tensor must satisfy.  Indeed, reconsidering the base algorithm the multiplication matrices $\mathbf{M}_j^B(Y)$ are generalizations of the matrices $\hat{\phi}_j\hat{\phi}_0^\dagger$ in \eqref{eq: simult diag mats}, which must commute.

In this section we do not necessarily look for a minimal rank decomposition and we let $s$ denote the size of a target decomposition; of course, when $s$ is less than the rank of $\phi$ any algorithm for decomposition will fail.  The observation that the proposed method can produce decompositions of size larger than the minimal rank does not seem to have been
emphasized previously.

First we make the following weak assumption:
\begin{assumption}\label{ass:dehomogenization}
    The size-$s$ decompositions $\Z=\{\z_1, \dots, \z_s\}$ of $\phi$ satisfy $\z_{i, 0}\neq 0$ on a dense set in the space of decompositions.
\end{assumption}
This assumption is standard, though sometimes implicit; see Equation 2.9 of \cite{nie2017generating} and the first few paragraphs of Section 7 of \cite{brachat2010symmetric}; we note that this assumptions is satisfied with probability one after a generic change of basis.  However, sometimes it may be useful to apply our algorithms to certain ``canonical forms''; in these cases we should justify Assumption \ref{ass:dehomogenization}.

We now give the theoretical pillar of the moment matrix extension algorithm in \cite{brachat2010symmetric}.
\begin{theorem}[\cite{brachat2010symmetric}, Theorem 6.2]\label{thm:main}
    A tensor $\phi$ satisfying Assumption \ref{ass:dehomogenization} admits a size-$s$ decomposition $\phi=\sum_{i=1}^s \lambda_i \z_i^{\otimes d}$, $\z_{i, 0}=1$, if and only if there exists a monomial set $B$ connected to $1$, $|B|=s$, $\operatorname{det}\bb{\mathcal{H}_{B, B}}\neq 0$, and a solution $Y^*$ satisfying the determinantal relations \eqref{def:determinal_relations} (equivalently, the commuting relations \eqref{def:commuting_relations}) and each $\mathbf{M}_i^B(Y^*)$ has $s$ distinct eigenvectors; the common eigenvectors are $\z_i^B$ and the corresponding eigenvalues of $\mathbf{M}_j^B(Y^*)$ are $\z_{i, j}$.  
\end{theorem}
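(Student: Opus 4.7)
My plan is to prove the two directions separately. The forward direction is essentially a direct computation, while the reverse direction uses the eigenvalue method for zero-dimensional polynomial systems (a Stickelberger-style argument).

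For the forward direction I would start with a decomposition $\phi = \sum_{i=1}^s \lambda_i \z_i^{\otimes d}$ with $\z_{i,0}=1$ and extend the moment sequence by declaring $y_\alpha := \sum_i \lambda_i \z_i^\alpha$ for all $\alpha \in \bbZ_{\geq 0}^n$; by construction $y_\alpha = \phi_\alpha$ whenever $|\alpha| \leq d$. Next I would pick $B$ to be a monomial basis of the coordinate ring $\bbC[x_1, \dots, x_n]/I(\{\z_1, \dots, \z_s\})$ that is connected to $1$ --- for example, the standard monomials of a Gr\"obner basis with respect to any graded term order --- so that $|B| = s$ and the Vandermonde $\Z_B$ is invertible. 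Direct substitution then gives $\mathcal{H}_{B,B} = \Z_B^\top \Lambda \Z_B$ and $\mathcal{H}_{B,B_j} = \Z_B^\top \Lambda D_j \Z_B$, where $\Lambda = \operatorname{diag}(\lambda_i)$ and $D_j = \operatorname{diag}(\z_{i,j})$, so $\mathbf{M}_j^B = \Z_B^\top D_j \Z_B^{-\top}$. This is diagonalized with distinct eigenvectors $\z_i^B$ (the columns of $\Z_B^\top$) and eigenvalues $\z_{i,j}$; the $\mathbf{M}_j^B$ then commute, so the commuting relations --- and hence the determinantal relations, by Lemma \ref{lem:determinant_equals_commuting} --- are satisfied.

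For the reverse direction I would take the hypotheses at face value. By Lemma \ref{lem:determinant_equals_commuting} the $\mathbf{M}_j^B(Y^*)$ pairwise commute, and since each admits $s$ distinct eigenvectors they are simultaneously diagonalizable: $\mathbf{M}_j^B = P D_j P^{-1}$ for a common invertible $P$ and diagonal $D_j$. I would define $\z_{i,j}$ as the $(i,i)$-entry of $D_j$ and normalize the $i$-th column $\mathbf{v}_i$ of $P$ so the coordinate indexed by $1 \in B$ equals $1$. Connectedness to $1$ then lets me prove by induction on degree that $\mathbf{v}_i[x^\alpha] = \z_i^\alpha$ for every $x^\alpha \in B$: if $x^\alpha = x^{\alpha'} x_j$ with $x^{\alpha'} \in B$, reading off the $\alpha'$-th entry of $\mathbf{M}_j^B \mathbf{v}_i = \z_{i,j} \mathbf{v}_i$ advances the induction. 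To recover the weights I would exploit the symmetry of the Hankel blocks $\mathcal{H}_{B,B}$ and $\mathcal{H}_{B,B_j}$ (under the identification $B_j \leftrightarrow B$) together with $\mathcal{H}_{B,B_j} = \mathbf{M}_j^B \mathcal{H}_{B,B}$: a short computation shows $P^{-1} \mathcal{H}_{B,B} P^{-\top}$ commutes with every $D_j$, so it must be diagonal (the $\z_i$ being distinct), giving $\mathcal{H}_{B,B} = P \Lambda P^\top$ with $\Lambda = \operatorname{diag}(\lambda_i)$.

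The main obstacle is the final step: extending the identity $y_\gamma = \sum_i \lambda_i \z_i^\gamma$, which is automatic for $\gamma$ indexing an entry of $\mathcal{H}_{B,B}$ or $\mathcal{H}_{B,B_j}$, to every $\phi_\alpha$ with $|\alpha| \leq d$. I would handle this through the algebra-theoretic interpretation of the multiplication matrices: $\mathbf{M}_1^B, \dots, \mathbf{M}_n^B$ generate a commutative subalgebra $\mathcal{A}$ with $B$ as a monomial basis, isomorphic to $\bbC[x_1, \dots, x_n]/J$, where the distinct-eigenvalue condition forces $J$ to be radical with $V(J) = \{\z_1, \dots, \z_s\}$. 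The full moment sequence then defines a linear functional $\ell$ on $\mathcal{A}$, and on this reduced $s$-dimensional algebra it decomposes uniquely as $\ell = \sum_i \lambda_i \delta_{\z_i}$; evaluating at $x^\alpha$ for $|\alpha| \leq d$ yields $\phi_\alpha = \sum_i \lambda_i \z_i^\alpha$. The delicate checks are that $\ell$ is well-defined modulo $J$ (which is where Assumption \ref{ass:dehomogenization} and connectedness to $1$ are both used essentially) and that the identified $\lambda_i$ agree with those extracted from the Hankel factorization above, after which the decomposition of $\phi$ follows.
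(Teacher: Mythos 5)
Your plan follows essentially the same route as the paper's proof sketch: the forward direction is the direct Vandermonde factorization, and the reverse direction is the Stickelberger/flat-extension argument in which the commuting, nondefective multiplication matrices define a reduced quotient algebra whose points give the decomposition (the paper packages your linear functional $\ell$ as the power-series extension $\phi^\uparrow_\gamma = \mathbf{e}_0^\top(\mathbf{M}^B(Y^*))^\gamma\mathcal{H}_{B,1}$ and invokes Theorem 7.34 of Elkadi--Mourrain for the eigenstructure). The one step you flag as delicate --- verifying that the functional built from the algebra agrees with $\phi_\gamma$ for \emph{all} $|\gamma|\le d$, not just for exponents of the form $\alpha+\beta$ with $x^\alpha, x^\beta \in B\cup B_i$ --- is exactly the step the paper also leaves to the cited reference, so your proposal is consistent with the paper's argument at the same level of detail.
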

\begin{proof}[Proof sketch]
    We sketch an alternative proof compared to the proof in \cite{brachat2010symmetric}.  The only if direction is obvious.  Consider the if direction; given a solution $Y^*$ such that the multiplication matrices $\mathbf{M}_{i}^B(Y^*), i=1, \dots, n$ are fully specified, commute, and have $s$ distinct eigenvectors, the idea is to define $\phi^\uparrow$ a power series (so not necessarily degree bounded) such that $\phi_\gamma^\uparrow = \mathbf{e}_0^\top \bb{\mathbf{M}^B(Y^*)}^\gamma \mathcal{H}_{B, 1}$ for $\gamma\in \bbZ^n_{\geq 0}$; here $\bb{\mathbf{M}^B(Y^*)}^\gamma=\prod_{i=1}^n \bb{\mathbf{M}_i^B(Y^*)}^{\gamma_i}$, well-defined as the multiplication matrices commute.  By Theorem 7.34 of \cite{ElkadiMourrain2007}, eigenpairs of $\mathbf{M}_i^B(Y^*)$ can be found to be of the form $\z_{j, i}, \z_j^B, j=1, \dots, s$ and the $\z_1, \dots, \z_s$ decompose $\phi^\uparrow$; we then can show that $\phi_\gamma^\uparrow=\phi_\gamma$ for $\gamma\in \bbZ_{\geq 0}^n, |\gamma|\leq d$.
\end{proof}

Theorem \ref{thm:main} reduces tensor decomposition to solving a polynomial system.  Each solution to the determinantal relations, with nondefective multiplication matrices (i.e., $s$ linearly independent eigenvectors), corresponds to a unique decomposition via an eigendecomposition. 
Thus, the space of decompositions of size $s$ correspond to the solutions of the determinantal relations (up to nondefectivity of the multiplication matrices). The work in \cite{nie2017generating} introduces a similar reduction of tensor decomposition to polynomial system solving using generating polynomials. This method directly treats unknown entries of the multiplication matrices as variables and employs the commuting relations as defining equations. In contrast we treat the entries of unknown extensions as variables and employ the determinantal relations as defining equations.

We present the algorithm corresponding to the discussion so far as Algorithm \ref{alg:size_s_decomp}.  We have discussed all steps except the collection of monomial sets $\mathcal{B}_s$ in Step~\ref{for b in Bs}. A natural choice implied by Theorem \ref{thm:main} is to take $\mathcal{B}_s$ to be the collection of all monomial sets of size $s$ connected to $1$. However, the size of $\mathcal{B}_s$ in this case can be quite large and it is desirable to search over a smaller set.  In the next section we discuss how the size of $\mathcal{B}_s$ can be reduced, substantially decreasing the combinatorial cost of the extension algorithm.

\begin{algorithm}[!ht]
   \hspace*{\algorithmicindent} \textbf{Input: }$\phi\in S^d \bbC^{n+1}$; desired decomposition size $s$; collection of monomial sets $\mathcal{B}_s$ \\
    \hspace*{\algorithmicindent} \textbf{Output: }A size-$s$ decomposition of $\phi$ or FAIL, with no decompositions existing with respect to any $B\in \mathcal{B}_s$ 

\begin{algorithmic}[1]
\STATE Construct the Hankel matrix of $\phi$
\FOR{$B\in \mathcal{B}_s$} \label{for b in Bs}
\STATE Find parameters $Y^*$ such that
\begin{itemize}
    \item $\operatorname{det}\bb{\mathcal{H}_{B, B}}\neq 0$,
    \item the determinantal relations \ref{def:determinal_relations} are satisfied,
    \item there are $s$ linearly independent eigenvectors common to the $\mathbf{M}_i^B(Y^*)$.
\end{itemize}
\STATE  If parameters $Y^*$ exist, obtain a decomposition $\z_1, \dots, \z_s$ via an eigendecomposition and solve a linear system for $\lambda_1, \dots, \lambda_s$; otherwise, continue
\ENDFOR
\STATE Output FAIL; there are no solutions to the commuting relations with respect to any $B\in \mathcal{B}_s$

\end{algorithmic}
\caption{Size-$s$ tensor decomposition}
\label{alg:size_s_decomp}
\end{algorithm}

\begin{corollary}\label{cor:alg1correct}
    Algorithm \ref{alg:size_s_decomp} with $\mathcal{B}_s$ taken to be the collection of size $s$ monomial sets connected to $1$ outputs all decompositions of $\phi$ of size $s$.
\end{corollary}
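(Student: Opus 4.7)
The plan is to obtain the corollary as a direct consequence of Theorem \ref{thm:main}, which already establishes the bijection between size-$s$ decompositions (with $\z_{i,0}=1$) and triples $(B, Y^*, \text{eigendata})$ of the type searched for by the algorithm. There is essentially no new content beyond unpacking the two directions of that theorem and checking that the outer loop of Algorithm \ref{alg:size_s_decomp} is exhaustive over the required class of monomial sets.

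First I would verify soundness. Suppose that for some $B\in\mathcal{B}_s$ the algorithm succeeds at Step 3, returning $(\z_1,\ldots,\z_s)$ and $(\lambda_1,\ldots,\lambda_s)$. By construction, the parameters $Y^*$ it produced satisfy $\det(\mathcal{H}_{B,B})\neq 0$, the determinantal relations (equivalently, the commuting relations by Lemma \ref{lem:determinant_equals_commuting}), and the nondefectivity condition. The ``if'' direction of Theorem \ref{thm:main} then certifies that the recovered $\z_i$ together with the $\lambda_i$ solved from the final linear system form an honest size-$s$ decomposition of $\phi$.

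Next I would handle completeness. Fix any size-$s$ decomposition $\phi=\sum_{i=1}^{s}\lambda_i\z_i^{\otimes d}$. By Assumption \ref{ass:dehomogenization} we may normalize so that $\z_{i,0}=1$ for all $i$. Apply the ``only if'' direction of Theorem \ref{thm:main}: it produces a monomial set $B$ of cardinality $s$ connected to $1$, together with parameter values $Y^*$ and nondefective multiplication matrices $\mathbf{M}^B_i(Y^*)$, whose common eigenvectors encode $\z_1,\ldots,\z_s$. Because $\mathcal{B}_s$ is taken to be \emph{every} size-$s$ monomial set connected to $1$, the outer loop in Step \ref{for b in Bs} will eventually visit this particular $B$; at that iteration Step 3 will succeed and return precisely this decomposition.

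The only real subtlety is the dehomogenization step. For decompositions with some $\z_{i,0}=0$ we cannot directly normalize, but Assumption \ref{ass:dehomogenization} guarantees this is a non-generic situation, and a generic change of basis in $\mathrm{GL}(n+1)$ remedies it without altering the rank or the combinatorial structure of the algorithm. I would therefore close the proof with a one-line remark that ``all decompositions'' is interpreted up to this dense-set caveat, and note the evident inefficiency of iterating over the full collection $\mathcal{B}_s$, which is what motivates the refinement developed in the subsequent subsection.
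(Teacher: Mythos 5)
Your argument is correct and is exactly the route the paper intends: the corollary is stated without proof as an immediate consequence of Theorem \ref{thm:main}, and your unpacking into soundness (the ``if'' direction) and completeness (the ``only if'' direction plus exhaustiveness of the loop over all connected-to-$1$ sets of size $s$) is the implicit argument, including the caveat that ``all decompositions'' is understood modulo Assumption \ref{ass:dehomogenization}.
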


A procedure for minimal-rank tensor decomposition is a wrapper around this algorithm. Simply initialize $s=\max_k\operatorname{rank}\bb{\operatorname{Cat}_k(\phi)}$ and if the algorithm fails increment $s$ by one.  Since 
\[\max_k\operatorname{rank}\bb{\operatorname{Cat}_k(\phi)}\leq \operatorname{rank}(\phi), \] 
as long as the collection of monomial sets $\mathcal{B}_s$ is selected correctly (e.g., all sets of size $s$ connected to $1$) a minimal-rank decomposition will be found.

\begin{remark}
    In this work we focus on decompositions over $\mathbb{C}$.  Note that a real decomposition corresponds to the additional condition that all multiplication matrices $\mathbf{M}_i^B(Y^*)$ have real eigenvalues, which is not a closed condition.  This merits further consideration.
\end{remark}

\subsubsection{Choice of monomial set \texorpdfstring{$B$}{B}}\label{sec:B_choice}
We now discuss how to more efficiently find a basis $B$. To streamline our discussion, we first give a few definitions.

\begin{definition}
    Let $\Z=\{\z_1, \dots, \z_s\}$ be a set of $s$ points.  We say that a size $s$ monomial set $B$ connected to $1$ is \textit{valid with respect to $\Z$} if $\Z_B$ is an invertible $s\times s$ matrix.  Correspondingly, let $\phi\in S^d\bbC^{n+1}$.  We say a collection $\mathcal{B}_s$ of size $s$, connected to $1$ monomial sets is\textit{ valid with respect to $\phi$} if for every size $s$ decomposition $\Z$ of $\phi$ there exists $B\in \mathcal{B}_s$ such that $B$ is valid with respect to $\Z$.
\end{definition}

These definitions are made to describe the fact that given a tensor $\phi$, if $\mathcal{B}_s$ is valid, then all decompositions of size $s$ of $\phi$ are output by Algorithm \ref{alg:size_s_decomp}. Thus, as the algorithm loops over $\mathcal{B}_s$ we would prefer a valid $\mathcal{B}_s$ of both minimal size.  Additionally, in Section \ref{sec:lowrankeven} we will see that minimizing $\operatorname{deg}(B)$ will be helpful.

We first consider the maximum degree needed of monomials in $B$ on the level of points. 
\begin{proposition}\label{prop:connectedto1}
    Let $\Z=\{\z_1, \dots, \z_s\}$ be a set of points with $\z_{i, 0}=1$ for all $i$.  Then there exists a set of monomials $B$ connected to $1$ with $|B_k|=\operatorname{rank}(\Z_k)-\operatorname{rank}(\Z_{k-1})$ such that $B$ is valid with respect to $\Z$.
\end{proposition}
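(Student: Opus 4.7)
The plan is to build $B$ inductively by degree, starting from $B_0 = \{1\}$ and at step $k$ appending a set $B_k$ of degree-$k$ monomials of the form $x^\beta x_i$ with $x^\beta \in B_{k-1}$ and $i \in [n]$, sized so that $|B_k| = h_{\Z}(k) - h_{\Z}(k-1)$ and so that the enlarged Vandermonde $\Z_{B^{(k)}}$ has full column rank $h_{\Z}(k)$, where $B^{(k)} := \bigcup_{j \leq k} B_j$. Once $k$ exceeds $\rho(\Z)$ nothing further is added, and the resulting $B$ is connected to $1$, has size $|B| = s$ (by telescoping), and satisfies $\operatorname{rank}(\Z_B) = s$, as required. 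I would work in the dehomogenized setting: write $v_\alpha \in \bbC^s$ for the column of $\Z_k$ indexed by $x^\alpha$, with entries $(v_\alpha)_i = \z_i^\alpha$, and let $V_k := \operatorname{col}(\Z_k)$, of dimension $h_\Z(k)$ by Lemma \ref{lem: hilbert function is ranks}. The key observation is that a linear dependence $\sum c_\alpha v_\alpha = 0$ is equivalent to the polynomial $\sum c_\alpha x^\alpha$ vanishing on $\Z$, which is the lever that lets me move between columns and polynomials.

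The core step is the following spanning claim at level $k$, assuming inductively that $\{v_\alpha : x^\alpha \in B^{(k-1)}\}$ is a basis for $V_{k-1}$:
\[
V_k = \operatorname{span}\bb{\{v_\alpha : x^\alpha \in B^{(k-1)}\} \cup \{v_{\gamma + \mathbf{e}_i} : x^\gamma \in B_{k-1},\ i \in [n]\}}.
\]
To prove it, fix $|\alpha| = k$ and factor $\alpha = \alpha' + \mathbf{e}_i$ with $|\alpha'| = k-1$. The induction gives $v_{\alpha'} = \sum_{\gamma \in B^{(k-1)}} c_\gamma v_\gamma$, hence $x^{\alpha'} - \sum c_\gamma x^\gamma$ vanishes on $\Z$; multiplying this polynomial by $x_i$ and re-reading the vanishing as a column identity produces $v_\alpha = \sum c_\gamma v_{\gamma + \mathbf{e}_i}$. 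For each summand, if $|\gamma| < k-1$ then $v_{\gamma + \mathbf{e}_i} \in V_{k-1}$ is already spanned by the old columns, while if $|\gamma| = k-1$ then $x^\gamma \in B_{k-1}$ and $v_{\gamma + \mathbf{e}_i}$ is among the new columns. Either way $v_\alpha$ lies in the claimed span.

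Given the spanning claim, I would select $B_k$ as $h_{\Z}(k) - h_{\Z}(k-1)$ of the new monomials whose column images descend to a basis of $V_k / V_{k-1}$, which preserves the inductive hypothesis and the connected-to-$1$ property. The one subtlety is that this requires $B_{k-1}$ to be nonempty whenever $h_{\Z}(k) > h_{\Z}(k-1)$; this is the standard fact that the Hilbert function of a finite point set is strictly increasing before it stabilizes, which follows from the same multiplication-by-$x_i$ trick (if $V_{k-1} = V_{k-2}$ then every degree-$(k-1)$ monomial is congruent modulo the vanishing ideal to a polynomial of degree $\leq k-2$, and multiplying by each $x_i$ propagates this to degree $k$, forcing $V_k = V_{k-1}$). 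I expect the spanning claim to be the only real substance of the proof; once the multiplication-by-$x_i$ identity is in hand, both it and the monotonicity fall out with essentially no extra work.
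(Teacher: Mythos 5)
Your proposal is correct and follows essentially the same route as the paper: an induction on degree using the multiplication-by-$x_i$ trick (a linear dependence among columns of $\Z_{k-1}$ is a polynomial vanishing on $\Z$, which can be multiplied by $x_i$ to show that every degree-$k$ column lies in the span of the old basis columns together with $\{v_{\gamma+\mathbf{e}_i} : x^\gamma \in B_{k-1}\}$). The paper phrases this as "any monomial not connected to $1$ gives a redundant column," while you phrase it as a positive spanning claim and explicitly note the Hilbert-function monotonicity needed to keep $B_{k-1}$ nonempty, but the substance is identical.
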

\begin{proof}
    Let $D:=\rho(\Z)$; then we know that $\operatorname{rank}(\Z_{D})=s$ and  $\operatorname{rank}(\Z_{j})<s$ for $j<D$.  

    Recall that $h_{\Z}$ is the Hilbert function of the ideal of polynomials vanishing on $\Z$, and that $h_{\Z}(k)=\operatorname{rank}(\Z_k)$.  Since $\Z_{k-1}$ embeds as a submatrix of $\Z_k$ there are exactly $h_{\Z}(k)-h_{\Z}(k-1)$ extra columns of $\Z_k$ not in the column space of $\Z_{k-1}$.  These extra columns are indexed by $\alpha$ such that $|\alpha|=k$.

    These added indices can be chosen so that the whole monomial set is connected to $1$.  Indeed, we can proceed by induction.  Write $B_{\leq k}=B_0\cup \dots \cup B_{k}$.  The case $k=1$ is trivial.  Now suppose for all $k'<k$ that $B_{\leq k'}$ is connected to $1$ and $B_{k'}=\operatorname{rank}(\Z_{k'})-\operatorname{rank}(\Z_{k'-1})$.  Let $\gamma$ be some index, $|\gamma|=k$, $x^\gamma$ not connected to $1$, so that $x^{\gamma_{\hat{i}}}:=x^{\gamma}/x_i\not\in B_{k-1}$ for any $\gamma_i\neq 0$.  Now fix $i$ such that $\gamma_i\neq 0$, with some abuse of notation; $|\gamma_{\hat{i}}|=k-1$, so by induction $\Z_{\gamma_{\hat{i}}}$ must be some linear combination of the columns in $\Z_{B_{\leq k-1}}$, i.e., $\Z_{\gamma_{\hat{i}}}=\Z_{B_{\leq k-1}}\mathbf{c}$ for some nonzero vector $\mathbf{c}$, which can also be indexed by indices $|\alpha|\leq k-1$.  If $\mathbf{c}_{\alpha}=0$ for all $|\alpha|=k-1$ then $\Z_{\gamma_{\hat{i}}}$ can be written as a linear combination of columns in $\Z_{B_{\leq k-2}}$, so that $\Z_{\gamma}$ is already a linear combination of the columns in $\Z_{B_{\leq k-1}}$.  So assume that $\mathbf{c}_\alpha\neq 0$ for some $|\alpha|=k-1$.  But then $\Z_\gamma=\Z_{x_i B_{\leq k-1}}\mathbf{c}$, a linear combination of columns corresponding to indices connected to $1$.  So, we can reduce the construction of $B_k$ to sets connected to $1$.
\end{proof}

Proposition \ref{prop:connectedto1} implies a method to form a  set $\mathcal{B}_s$ valid  with respect to $\phi$ using information about the Hilbert functions of possible decompositions $\Z$. We say that a sequence $\ell\in \bbZ_{\geq 0}^{\infty}$ is \textit{admissible} with respect to $\phi$ if it is possible for $\phi$ to have a decomposition $\Z$ with $\ell$ as the range of its Hilbert function, i.e., $\ell_k=h_{\Z}(k)$ and $\sup \ell=s$. For example, we cannot include any sequence such that $\ell_k<h_\phi(k)$ for any $k$. 
\begin{enumerate}
    \item Let $\mathcal{S}$ be the set of sequences $\ell\in \bbZ_{\geq 0}^{\infty}, \ell_k:=h_{\Z}(k)$ admissible with respect to $\phi$.
    \item Set
    \[\mathcal{B}_s:=\{\text{monomial sets }B \mid |B_k|=\ell_k-\ell_{k-1},\ell \in\mathcal{S} \}.\]
\end{enumerate}

In other words, for a tensor $\phi$ we first establish all the possible Hilbert functions $h_{\Z}(k)$ of size $s$ decompositions $\Z$ of $\phi$; we then let $\mathcal{B}_s$ be the sequences with range the Hilbert functions.  In some sense, this $\mathcal{B}_s$ is the smallest collection valid with respect $\phi$; the more knowledge one has on the possible $h_{\Z}(k)$ the smaller we can make $\mathcal{B}_s$.  Though this collection of monomial bases can still be quite large, we show that this idea already encompasses some existing improvements in the literature.

\begin{definition}[ \cite{Landsberg2011}]
    Let $\phi\in S^d\bbC^{n+1}$. We say $\phi$ is \textit{concise} if there does not exist a proper linear subspace $V\subset \bbC^{n+1}$ such that $\phi\in S^d V$. The \textit{essential variables} of $\phi$ is a basis of $V\subseteq \bbC^{n+1}$ for which $\phi$ is concise as an element of $S^d V$.  The number of essential variables of $\phi$, $N_{\text{ess}}(\phi)$, is the dimension of such $V$.
\end{definition}

Note that the number of essential variables is well-defined as the number of essential variables is invariant under the $\operatorname{GL}(n+1)$ action.  Moreover, making $\phi$ concise is an easy task via the following proposition.
\begin{proposition}[\cite{carlini2006reducing}, Proposition 1]\label{prop: concise}
    Let $\phi\in S^d\bbC^{n+1}$.  Then
    \[N_{\text{ess}}(\phi)=\operatorname{rank}\bb{\operatorname{Cat}_1(\phi)}.\]
    Moreover, a basis for the row space of $\operatorname{Cat}_1(\phi)$ serves as essential variables for $\phi$.
\end{proposition}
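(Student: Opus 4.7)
The plan is to prove the proposition via apolarity, working with the kernel of $\operatorname{Cat}_1(\phi)$ viewed as a linear map. Identify $\operatorname{Cat}_1(\phi)$ with a linear map $S^1\bbC^{n+1}\to S^{d-1}\bbC^{n+1}$ sending a linear form $\ell$ to $\ell\circ\phi$ (the apolar action of Definition \ref{Apolarity}). Then by Lemma \ref{lem: hilbert function is ranks}, its kernel is precisely $\phi^{\perp}_1$, the degree-$1$ piece of the apolar ideal, and so
\[
\dim \phi^{\perp}_1 \;=\; (n+1) - \operatorname{rank}\bb{\operatorname{Cat}_1(\phi)}.
\]

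Next I would characterize the essential variables via this kernel. Let $V := (\phi^{\perp}_1)^{\perp}\subseteq \bbC^{n+1}$, the annihilator of $\phi^{\perp}_1$. The first key step is to verify that $\phi \in S^d V$: if $\ell\in\phi^{\perp}_1$ extends to a coordinate function dual to a complement of $V$, then the formula $\mathbf{u}_{\alpha'}\circ\mathbf{v}_{\alpha}=\mathbf{v}_{\alpha-\alpha'}$ forces every monomial of $\phi$ with nonzero exponent in the direction of $\ell$ to contribute nontrivially to $\ell\circ\phi$, so all such monomials must vanish. Conversely, any $V'\subsetneq \bbC^{n+1}$ with $\phi\in S^d V'$ satisfies $(V')^{\perp}\subseteq \phi^{\perp}_1$, so $V'\supseteq V$; hence $V$ is the unique minimal subspace containing $\phi$ as a $d$-th symmetric power, giving $N_{\text{ess}}(\phi)=\dim V=\operatorname{rank}\bb{\operatorname{Cat}_1(\phi)}$.

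For the second claim about the row space, I would argue by duality. The row space of $\operatorname{Cat}_1(\phi)$ is the image of its transpose $\operatorname{Cat}_1(\phi)^{\top}:S^{d-1}\bbC^{n+1}\to S^1\bbC^{n+1}\cong\bbC^{n+1}$, whose rows (as vectors in $\bbC^{n+1}$) are exactly the coefficient vectors of the $(d-1)$-th order apolar derivatives of $\phi$. Since $\phi\in S^d V$, every such derivative lies in $V$, so the row space is contained in $V$. Rank-nullity then gives that the row space has dimension $\operatorname{rank}\bb{\operatorname{Cat}_1(\phi)}=\dim V$, so it equals $V$, and any basis of the row space serves as essential variables.

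The main subtlety, which I expect to require the most care, is the argument that $\phi\in S^d V$ from $V=(\phi^{\perp}_1)^{\perp}$. This requires showing that a codimension-one apolar relation from $\phi^{\perp}_1$ propagates to kill entire monomial families of $\phi$, which is where the apolarity action formula carries the whole argument; this is essentially the content of Macaulay's inverse system. The rest is bookkeeping with rank-nullity and the identification of transpose maps under the apolarity pairing.
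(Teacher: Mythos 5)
The paper does not prove this proposition; it is quoted from \cite{carlini2006reducing} without proof, so there is no internal argument to compare against. Your proof is correct and is essentially the standard apolarity argument (also the one in the cited reference): identify $\ker\operatorname{Cat}_1(\phi)$ with $\phi^{\perp}_1$, show that $V=(\phi^{\perp}_1)^{\perp}$ is the unique minimal subspace with $\phi\in S^dV$, and identify $V$ with the span of the order-$(d-1)$ contractions of $\phi$, i.e., the row space. The one step you rightly flag as needing care — passing from $\ell\circ\phi=0$ for each $\ell\in\phi^{\perp}_1$ to $\phi\in S^d\bigl(\bigcap_{\ell}\ker\ell\bigr)$ — goes through cleanly by choosing a single basis of $\bbC^{n+1}$ adapted to $V$ so that $\phi^{\perp}_1$ is spanned by the dual vectors complementary to $V$; then each such dual vector annihilating $\phi$ kills every monomial of $\phi$ involving the corresponding variable, and no circularity or additional input from Macaulay duality is needed.
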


From Proposition \ref{prop: concise} we can now without loss of generality assume our tensor is concise. We formalize this as an assumption.

\begin{assumption}\label{ass:concise}
    The tensor $\phi\in S^d\bbC^{n+1}$ is concise.
\end{assumption}

Under Assumption \ref{ass:concise} we have the following two guarantees on the functions $h_{\Z}(k)$ for any size-$s$ decomposition $\Z$ of $\phi\in S^d\bbC^{n+1}$ and thus conditions for a valid $\mathcal{B}_s$.
\begin{enumerate}
    \item $h_{\Z}(1)=n+1$, so that $|B_1|=n$ for all $B\in \mathcal{B}_s$,
    \item $h_{\Z}(k)=s$ for $k\geq d$, so that $\operatorname{deg}(B)\leq d$ for all $B\in \mathcal{B}_s$.
\end{enumerate}
The first guarantee is due to submultiplicativity of the matrix rank.  The second is due to the fact that for $\phi$ to admit a size-$s$ decomposition, $\phi=\sum_{i=1}^s \lambda_i \z_i^{\otimes d}$ and the elements $\z_1^{\otimes d}, \dots, \z_s^{\otimes d}$ must be linearly independent. Then $\operatorname{rank}\bb{\Z_d}=s$. We have thus recovered Theorem 3.9 of \cite{bernardi2020waring}. We now show we can do better in the following theorem; it is similar to Lemma 2.15 of \cite{MOURRAIN2020347}.

\begin{theorem}\label{thm:Bk_refine}
    Instate Assumption \ref{ass:dehomogenization}.  Suppose we include $B\in \mathcal{B}_s$ such that $\operatorname{deg}(B)=D$.   Then we can also impose $|B_{\leq k}|=\operatorname{rank}\bb{\operatorname{Cat}(\phi, k)}$ for $k\leq d-D$.
\end{theorem}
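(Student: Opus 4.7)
The plan is to unpack what ``valid with respect to a decomposition'' forces on the Hilbert function of $\Z$, and then compare that Hilbert function directly to the catalecticant ranks of $\phi$ in the range $k \leq d-D$.

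First, fix an arbitrary size-$s$ decomposition $\Z=\{\z_1,\ldots,\z_s\}$ of $\phi$ and, using Assumption \ref{ass:dehomogenization}, normalize so that $\z_{i,0}=1$; write $\phi=\sum_{i=1}^s \lambda_i \z_i^{\otimes d}$ with all $\lambda_i \neq 0$ (otherwise $\Z$ is not actually a size-$s$ decomposition). Let $B$ be a monomial set connected to $1$ that is valid with respect to $\Z$ with $\operatorname{deg}(B)=D$. Since $B$ consists of monomials of degree at most $D$, $\Z_B$ is an $s\times s$ invertible submatrix of $\Z_D$, so $\operatorname{rank}(\Z_D)=s$ and consequently $\rho(\Z)\leq D$.

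The key computation is the rank comparison. For any $k\leq d-D$ we have $d-k\geq D\geq \rho(\Z)$, so $\Z_{d-k}$ is full row rank $s$. Using the standard factorization
\[
\operatorname{Cat}_k(\phi)=\Z_{d-k}^\top \Lambda\, \Z_k,\qquad \Lambda=\operatorname{diag}\{\lambda_1,\ldots,\lambda_s\},
\]
the fact that $\Z_{d-k}^\top \Lambda$ has full column rank $s$ (since $\Lambda$ is invertible and $\Z_{d-k}$ has rank $s$) gives
\[
\operatorname{rank}\bb{\operatorname{Cat}_k(\phi)} = \operatorname{rank}(\Z_k)
\]
for all $k\leq d-D$. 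In particular, this common rank is independent of the choice of size-$s$ decomposition $\Z$ (provided $\operatorname{deg}(B)\leq D$), which is exactly what is needed to justify \emph{imposing} the condition in the theorem statement on every $B\in\mathcal{B}_s$.

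Finally, by Proposition \ref{prop:connectedto1} applied to $\Z$, we may construct $B$ connected to $1$ with $|B_k|=\operatorname{rank}(\Z_k)-\operatorname{rank}(\Z_{k-1})$ for every $k\geq 0$, so telescoping yields $|B_{\leq k}|=\operatorname{rank}(\Z_k)$, and the rank equality above converts this into $|B_{\leq k}|=\operatorname{rank}(\operatorname{Cat}_k(\phi))$ for $k\leq d-D$, as claimed. The main subtlety (rather than any deep obstacle) is simply pinning down that $\rho(\Z)\leq D$ from validity of $B$, which is what ties the combinatorial condition on $B$ to the analytic rank condition on the catalecticants; everything else is a bookkeeping consequence of Proposition \ref{prop:connectedto1} and Lemma \ref{lem: hilbert function is ranks}.
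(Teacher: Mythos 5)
Your proof is correct and follows essentially the same route as the paper's: deduce $\operatorname{rank}(\Z_D)=s$ from $\operatorname{deg}(B)=D$, factor $\operatorname{Cat}_k(\phi)=\Z_{d-k}^\top\Lambda\Z_k$, use injectivity of $\Z_{d-k}^\top\Lambda$ for $k\leq d-D$ to get $\operatorname{rank}\bb{\operatorname{Cat}_k(\phi)}=\operatorname{rank}(\Z_k)$, and conclude via Proposition \ref{prop:connectedto1} that $|B_{\leq k}|=\operatorname{rank}(\Z_k)$. Your added remark that the resulting rank is independent of the chosen decomposition is a nice explicit justification for the word ``impose,'' but it does not change the argument.
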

\begin{proof}
    Suppose we can choose $B$ such that $\operatorname{deg}(B)=D$.  Let $\Z=\{\z_1, \dots, \z_s\}$ be a decomposition output by  Algorithm \ref{alg:size_s_decomp} corresponding to $B$.  Then $\operatorname{deg}(B)=D$ implies that $\operatorname{rank}(\Z_D)=s$.  

    Writing $\Lambda=\operatorname{diag}\{\lambda_1, \dots, \lambda_s\}$, since $\operatorname{Cat}_k(\phi)=\Z_{d-k}^\top \Lambda \Z_k$, if $k\leq d-D$ then $d-k\geq D$, so that $\Z_{d-k}^\top$ is full column rank, i.e., it is an injective linear map.  Therefore, as $\Lambda$ is also rank $s$, we must have $\operatorname{rank}\bb{\operatorname{Cat}_k(\phi)}=\operatorname{rank}(\Z_k)$.  Then $|B_{\leq k}|=\operatorname{rank}(\Z_k)=\operatorname{rank}\bb{\operatorname{Cat}_k(\phi)}$.
\end{proof}
This is a ``data-dependent'' way of forming a valid $\mathcal{B}_s$ and serves as a refinement to the vanilla algorithm of \cite{brachat2010symmetric}.  For example, we can determine the following:
\begin{enumerate}
    \item Suppose we desire a size-$s$ decompositions, but $s>\max_k\operatorname{rank}\bb{\operatorname{Cat}_k(\phi)}$.  Then by submultiplicativity of the matrix rank, any valid $B\in \mathcal{B}_s$ must be such that $\operatorname{deg}(B)>d/2$.
    \item On the other hand, suppose we include $B\in \mathcal{B}_s$ with $\operatorname{deg}(B)=D\leq d$.  Then Theorem \ref{thm:Bk_refine} leaves free the choice of size of $B_k$ for $k>d-D$.  In particular, the sizes of possible decompositions are not fully specified by $\operatorname{deg}(B)$.
\end{enumerate}

\begin{remark}
    Our discussion has an interesting connection to Question 2.14 of \cite{MOURRAIN2020347}, which asks if all minimal rank decompositions of a tensor $\phi$ have the same Hilbert functions.  This was shown to be false in \cite{angelini2023waring}. The counterexamples involve sets of points with regularity larger than $d/2$.  In light of Theorem \ref{thm:Bk_refine} this makes sense.  From the perspective of Algorithm \ref{alg:size_s_decomp}, when we choose $B$ with $\operatorname{deg}(B)>d/2$ the matrix $\mathcal{H}_{B, B}$ is not fully specified by the tensor, and so the degree sequences of possible $B$ specify the differing regularities of these decompositions.
\end{remark}

\subsection{Efficient decomposition}
 Algorithm \ref{alg:size_s_decomp} is not a priori efficient: $\mathcal{B}_s$ can be of size exponential in the number of tensor entries (if $s$ is large) and the relations can be polynomials of large degree in the moment variables $Y$.  

However, if we can ensure that for a tensor $\phi$ and a desired decomposition size $s$, both a valid $\mathcal{B}_s$ can be made to be a collection of size polynomial in the number of tensor entries (or even better, of constant size), and  for each $B\in \mathcal{B}_s$ the solution to the relations is fully specified by a (sequence of) \textit{linear} system(s) of size polynomial in the number of tensor entries, then Algorithm \ref{alg:size_s_decomp} is efficient.

We now describe classes of tensors for which this is possible. Each of these classes satisfy certain conditions that make this Algorithm \ref{alg:size_s_decomp} efficient.  
In Section \ref{sec:lowrankeven} we use our discussion in Section \ref{sec:B_choice} to detail a method for efficient decomposition for even order tensors with decompositions of low regularity -- giving efficient decomposition for generic tensors of rank larger than allowed by simultaneous diagonalization -- and expand on the theory for $d=4$.  In Section \ref{sec:monomials} we demonstrate that if the given tensor to be decomposed is a monomial in a canonical format, the algorithm parametrizes the space of decompositions (monomials are nonidentifiable) to output \textit{arbitrary decompositions} efficiently.

As a lead-in, we outline how efficient decomposition is possible for two simple classes of tensors: tensors decomposable via simultaneous diagonalization (to be expected from the discussion in Section \ref{sec:simultaneous_diagonalization}) and binary tensors.

\subsubsection{Tensors decomposable via simultaneous diagonalization}
As described in Section \ref{sec:simultaneous_diagonalization}, simultaneous diagonalization is a specific instance of the moment matrix extension algorithm where no polynomial system needs to be solved. 

\begin{condition}\label{cond:really_low_rank}
    Let $D=\floor{\frac{d-1}{2}}$.  $\phi$ satisfies 
    \[\operatorname{rank}\bb{\operatorname{Cat}_D(\phi)}=\operatorname{rank}(\phi).\]
\end{condition}

This is a condition on the regularity of target decompositions: by submultiplicativity of the matrix rank, for any minimal rank decomposition $\Z$ of $\phi$, $D\geq \rho(\Z)$.  In particular, by Theorem \ref{thm:Bk_refine} there exists a monomial set $B, |B|=\operatorname{rank}(\phi)$ connected to $1$ with $\operatorname{deg}(B)\leq D$, and the corresponding Hankel matrix $\mathcal{H}_{B, B}$ is invertible and fully specified (i.e., no moment variables are present) as $d\geq 2D+1$.  

\begin{claim}\label{claim:really_low_rank_B}
    The monomial set $B$ described above can be found efficiently.
\end{claim}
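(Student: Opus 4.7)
The plan is to reduce the problem to finding $r := \text{rank}(\phi)$ linearly independent columns of the catalecticant $\operatorname{Cat}_D(\phi)$ subject to a combinatorial ``connected to $1$'' constraint, and to do so by a greedy degree-by-degree procedure that invokes only rank computations on submatrices of $\operatorname{Cat}_D(\phi)$.

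The first observation is that for any target minimal rank decomposition $\Z = \{\z_1,\ldots,\z_r\}$ of $\phi$ with coefficients $\lambda_i$, and for any monomial set $B$ with $\operatorname{deg}(B)\leq D$, the principal submatrix of the Hankel factors as $\mathcal{H}_{B,B} = \Z_B^\top \Lambda \Z_B$, where $\Lambda = \operatorname{diag}(\lambda_1,\ldots,\lambda_r)$ and entries of $\mathcal{H}_{B,B}$ have degree at most $2D \leq d-1$ (so no moment variables appear). Hence $\mathcal{H}_{B,B}$ is invertible if and only if $\Z_B$ is a square invertible matrix. Under Condition \ref{cond:really_low_rank}, the factorization $\operatorname{Cat}_D(\phi) = \Z_{d-D}^\top \Lambda \Z_D$ together with $d-D \geq D \geq \rho(\Z)$ makes $\Z_{d-D}^\top$ full column rank, so column-linear-dependence relations in $\operatorname{Cat}_D(\phi)$ are identical to those in $\Z_D$. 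In particular, the columns of $\operatorname{Cat}_D(\phi)$ indexed by $B$ are linearly independent if and only if $\Z_B$ is invertible. This gives a criterion for a valid $B$ that is computable from $\phi$ alone.

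The construction itself mirrors the inductive argument in Proposition \ref{prop:connectedto1}, but performed on $\operatorname{Cat}_D(\phi)$ rather than $\Z_D$. Initialize $B := \{1\}$ (valid since conciseness and $D \geq 1$). At stage $k \geq 1$, consider the candidate set $C_k := \{x_i \cdot x^\alpha : x^\alpha \in B_{\leq k-1},\ 1 \leq i \leq n\}\setminus B_{\leq k-1}$ of degree-$k$ monomials connected to the current $B$. Iterate over $C_k$ and add a monomial $x^\gamma$ to $B$ whenever the corresponding column of $\operatorname{Cat}_D(\phi)$ is not in the span of the columns already indexed by $B$ (a single rank check). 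Stop once $|B| = r$, which will occur with $k \leq D$ because $\operatorname{rank}(\operatorname{Cat}_D(\phi)) = r$ and the argument in Proposition \ref{prop:connectedto1} shows that every column of $\operatorname{Cat}_D(\phi)$ of degree $\leq k$ is already spanned by columns indexed by a connected-to-$1$ set of size $h_\Z(k)$. By the equivalence in the previous paragraph, the resulting $B$ satisfies $|B| = r$, is connected to $1$, has $\operatorname{deg}(B) \leq D$, and $\mathcal{H}_{B,B}$ is invertible and fully specified by $\phi$.

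For efficiency, each candidate test is a single rank computation on a submatrix of $\operatorname{Cat}_D(\phi)$, whose size is $O(n^D) = \operatorname{poly}(n^d)$. The number of tests is at most $r \cdot |\bigcup_k C_k| \leq r \cdot n \binom{n+D}{n}$, again polynomial in the number of tensor entries. Hence the whole procedure runs using polynomially many linear-algebra operations, establishing the claim. The only subtlety to double-check is that the greedy procedure cannot get stuck before reaching rank $r$; this is ensured by the proof of Proposition \ref{prop:connectedto1}, which guarantees that the needed $h_\Z(k) - h_\Z(k-1)$ new rank-increasing columns can always be chosen from among monomials connected to $B_{\leq k-1}$.
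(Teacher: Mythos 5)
Your proof is correct and follows essentially the same route as the paper, which establishes this claim as the special case $\operatorname{Cat}_{\lfloor d/2\rfloor}$ of Algorithm \ref{alg:finding_B} and Proposition \ref{prop:found_B}: factor the catalecticant as $\Z_{d-k}^\top\Lambda\Z_k$, use injectivity of the left factor to transfer column-dependence from $\Z_k$ to the catalecticant, and greedily select rank-increasing columns among monomials connected to the current $B$, invoking Proposition \ref{prop:connectedto1} to guarantee the greedy step never gets stuck. The only difference is cosmetic (you work with $\operatorname{Cat}_D(\phi)$ directly and add columns one at a time rather than in degree-$k$ batches).
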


Therefore, $\mathcal{B}_s=\{B\}$ is valid with respect to $\phi$.  This claim is a special case of a more general proposition we prove in Section \ref{sec:lowrankeven}.  As $d\geq 2D+1$ the set of moment variables $Y$ is empty, and under Condition \ref{cond:really_low_rank} we assume existence of a decomposition of size $\operatorname{rank}(\phi)$; thus the determinantal relations must be trivially satisfied and an eigendecomposition recovers the unique decomposition.  This discussion implies the following result.
\begin{proposition}
    Let $\phi\in S^d\bbC^{n+1}$ satisfy Assumption \ref{ass:dehomogenization} and Condition \ref{cond:really_low_rank}.  Then Algorithm \ref{alg:size_s_decomp} is efficient.  In particular, generic tensors of rank at most $\binom{n+D}{n}$ are efficiently decomposable via moment matrix extension.
\end{proposition}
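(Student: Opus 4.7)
The plan is to verify that under Condition~\ref{cond:really_low_rank} every step of Algorithm~\ref{alg:size_s_decomp} either disappears or reduces to polynomial-time linear algebra, and then to confirm that the condition is Zariski-generic for rank $r \le \binom{n+D}{n}$.

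First I would set $s=\operatorname{rank}(\phi)$. By Condition~\ref{cond:really_low_rank} together with the factorization $\operatorname{Cat}_D(\phi)=\mathbf{Z}_{d-D}^\top \Lambda \mathbf{Z}_D$ and submultiplicativity of matrix rank, every minimal rank decomposition $\mathbf{Z}=\{\mathbf{z}_1,\ldots,\mathbf{z}_s\}$ must satisfy $\operatorname{rank}(\mathbf{Z}_D)=s$, i.e., $\rho(\mathbf{Z})\le D$. Proposition~\ref{prop:connectedto1} then produces a valid monomial set $B$, connected to $1$, of size $s$, with $\operatorname{deg}(B)\le D$. Claim~\ref{claim:really_low_rank_B} supplies such a $B$ in polynomial time, so one may take $\mathcal{B}_s=\{B\}$, a collection of constant size.

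Second, I would show that for this $B$ there are no moment variables to solve for. Entries of $\mathcal{H}_{B,B}$ are indexed by pairs $(x^\alpha,x^\beta)$ with $|\alpha|+|\beta|\le 2D\le d-1$, and entries of $\mathcal{H}_{B,B_i}$ by pairs with total degree at most $2D+1\le d$. Thus every such index falls inside the ambient tensor and the set $Y$ from \eqref{eq:moment_variables} is empty, rendering the determinantal relations in Definition~\ref{def:determinal_relations} vacuous. Invertibility of $\mathcal{H}_{B,B}$ follows from $\mathcal{H}_{B,B}=\mathbf{Z}_B^\top \Lambda \mathbf{Z}_B$ with both factors nonsingular. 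I would then invoke Theorem~\ref{thm:main}: the multiplication matrices $\mathbf{M}_i^B=\mathcal{H}_{B,B_i}\mathcal{H}_{B,B}^{-1}$ commute and admit the $\mathbf{z}_i^B$ as common eigenvectors with corresponding eigenvalues $\mathbf{z}_{i,j}$. As in Section~\ref{sec:simultaneous_diagonalization}, a random linear combination yields simple eigenvalues with probability one, so a single eigendecomposition recovers the $\mathbf{z}_i$ and a linear solve recovers $\lambda_1,\ldots,\lambda_s$, all via polynomially many linear-algebra operations in the number of tensor entries.

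Finally, for the genericity claim I would argue that Condition~\ref{cond:really_low_rank} holds on a Zariski-open subset of rank-$r$ tensors whenever $r \le \binom{n+D}{n}$. Indeed, for generic $r$ points in $\mathbb{P}^n$, $\mathbf{Z}_D$ has full row rank $r$ because $r\le \binom{n+D}{n}$, and $\mathbf{Z}_{d-D}$ has full row rank because $d-D\ge D$; combined with $\Lambda$ invertible, this gives $\operatorname{rank}(\operatorname{Cat}_D(\phi))=r=\operatorname{rank}(\phi)$ generically, so $s=\max_k \operatorname{rank}(\operatorname{Cat}_k(\phi))$ correctly initializes the algorithm. The only nonroutine piece is Claim~\ref{claim:really_low_rank_B}, whose polynomial-time construction of $B$ from catalecticant data is the main obstacle; everything else is bookkeeping combining Proposition~\ref{prop:connectedto1}, Theorem~\ref{thm:main}, and the warm-up in Section~\ref{sec:simultaneous_diagonalization}.
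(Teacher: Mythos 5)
Your proposal is correct and follows essentially the same route as the paper: under Condition~\ref{cond:really_low_rank} the regularity bound yields a connected-to-$1$ set $B$ with $\operatorname{deg}(B)\le D$ (found efficiently via Claim~\ref{claim:really_low_rank_B}), the moment-variable set $Y$ is empty since $2D+1\le d$, so the relations are vacuous and a single (randomized) eigendecomposition plus a linear solve recovers the decomposition. Your added detail on why Condition~\ref{cond:really_low_rank} holds generically for rank $r\le\binom{n+D}{n}$ is just making explicit what the paper leaves implicit, not a different argument.
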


\subsubsection{Binary tensors}
We now consider the class of binary tensors, i.e., tensors $\phi\in S^d\bbC^{2}$.  Classically, Sylvester's algorithm \cite{Sylvester1886} can be used for decomposing binary tensors.  We show that an application of Algorithm \ref{alg:size_s_decomp} similarly decomposes binary tensors, efficiently.

Recall that the determinantal relations require $1\leq i<j\leq n$.  However, since $n=1$ there are no relations.  Moreover, a monomial set $B$ connected to $1$ is simply a set $B=\{1, x, \dots, x^k\}$ for some integer $k$.  Thus, a desired decomposition size $s$ forces $\mathcal{B}_s=\{B\}$ with $B=\{1, x, \dots, x^{s-1}\}$; it is valid with respect to $\phi$.

This implies an efficient algorithm for size $s$ decomposition of binary tensors satisfying Assumption \ref{ass:dehomogenization}: simply take $B=\{1, x, \dots, x^{s-1}\}$ and form the matrix $\mathcal{H}_{B, B}$ and shifted matrices $\mathcal{H}_{B, B_i}$.  There are no moment variables in any of the matrices when $2s-1\leq d$, there are moment variables in the shifted matrices when $2s-1\geq d+1$, and $\mathcal{H}_{B, B}$ has moment variables when $2s-2\geq d+1$.  When there are moment variables, because there are no determinantal relations, the practitioner can treat the moment variables as free parameters. The only constraint is that the choice of values for the moment variables must satisfy that $\mathcal{H}_{B, B}$ is invertible and the resulting multiplication matrices $\mathbf{M}_i^B$ are nondefective.  

In the case where $s<\operatorname{rank}(\phi)$ there cannot exist a choice of values for the moment variables that make the multiplication matrices nondefective (otherwise we would have a size $s<\operatorname{rank}(\phi)$ decomposition) by definition of the tensor rank.  For larger decompositions, we have the following restriction on the existence of size $s$ decompositions.

\begin{proposition}\label{prop: binary tensors size s}
    Let $\phi\in S^d\bbC^2$, and write $r:=\operatorname{rank}(\phi)$.  Then there are no decompositions of $\phi$ of size $s$ for $r+1\leq s\leq d-r+1$. 
\end{proposition}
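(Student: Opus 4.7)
The plan is to use apolarity together with the complete intersection structure of $\phi^\perp$ for binary forms. First I would recall that for $\phi\in S^d\bbC^2$ of rank $r$, Lemma \ref{lem: hilbert function is ranks} combined with the usual Hankel rank computation gives $h_\phi(k)=\min(k+1,\,d-k+1,\,r)$. In particular $(\phi^\perp)_k=0$ for $k<r$, and $(\phi^\perp)_r$ is one-dimensional, spanned by a unique form $q$ of degree $r$. Since $\phi$ has rank exactly $r$, apolarity forces $q$ to have $r$ distinct roots, which (after dehomogenization) correspond to the minimal rank decomposition $\z_1,\dots,\z_r$ with nonzero coefficients $\lambda_1,\dots,\lambda_r$.

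Next I would establish the key structural fact that for $r\leq k\leq d-r+1$,
\[(\phi^\perp)_k \;=\; q\cdot\bbC[x_0,x_1]_{k-r}.\]
The inclusion $\supseteq$ is immediate since $\phi^\perp$ is an ideal; the reverse inclusion follows from a dimension count, as both sides have dimension $(k+1)-r=k-r+1$. Consequently every apolar form of degree in this range is divisible by $q$.

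Now I would suppose for contradiction that $\phi$ admits a size-$s$ decomposition $\phi=\sum_{j=1}^s\mu_j\mathbf{w}_j^{\otimes d}$ with distinct $\mathbf{w}_j$ and nonzero $\mu_j$, for some $r+1\leq s\leq d-r+1$. By the apolarity correspondence, the degree-$s$ form $g=\prod_{j=1}^s(w_{j,1}x_0-w_{j,0}x_1)$ lies in $(\phi^\perp)_s$ and thus factors as $g=q\cdot h$. Hence the roots of $q$ are among the $\mathbf{w}_j$, and after relabeling we may take $\mathbf{w}_i=\z_i$ for $i=1,\dots,r$. Comparing with the minimal decomposition yields
\[\sum_{i=1}^r(\lambda_i-\mu_i)\z_i^{\otimes d} \;=\; \sum_{j=r+1}^s \mu_j\,\mathbf{w}_j^{\otimes d}.\]

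I would finish by invoking linear independence: since $s\leq d-r+1\leq d+1$, the $s$ distinct points $\z_1,\dots,\z_r,\mathbf{w}_{r+1},\dots,\mathbf{w}_s$ produce linearly independent $d$-th symmetric powers in $S^d\bbC^2$, as their Vandermonde matrix has full row rank. This forces $\mu_j=0$ for all $j>r$, contradicting the nonvanishing assumption on the $\mu_j$. The main subtlety is purely one of conventions: a size-$s$ decomposition must be interpreted as $s$ distinct base points with nonzero coefficients, since otherwise any smaller decomposition could be padded trivially. Once that is fixed, the argument is a clean application of apolarity and the generator structure of $\phi^\perp$, with no serious technical obstacle.
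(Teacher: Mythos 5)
Your argument is correct, but it takes a genuinely different route from the paper. The paper stays entirely inside the moment-matrix-extension formalism: since $r+s-1\leq d$, the first $r+1$ rows of $\mathcal{H}_{B,B}$ (for the forced choice $B=\{1,x,\dots,x^{s-1}\}$) form a fully specified submatrix of a catalecticant and hence have rank at most $r<s$, so $\mathcal{H}_{B,B}$ can never be made invertible and Theorem~\ref{thm:main} rules out any size-$s$ decomposition. You instead invoke classical apolarity for binary forms: the unique (up to scale) apolar form $q$ of degree $r$, divisibility of every apolar form of degree $k\leq d-r+1$ by $q$, and Vandermonde independence of $s\leq d+1$ distinct points. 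Both are valid; the paper's version is deliberately self-contained within the extension algorithm (that is its stated purpose), while yours identifies the obstruction more structurally and recovers the uniqueness of the minimal decomposition along the way. One caveat: the formula $h_\phi(k)=\min(k+1,d-k+1,r)$ and the claim $(\phi^\perp)_k=0$ for $k<r$ are \emph{false} for general binary forms — they fail exactly when the rank exceeds the border rank (e.g.\ $\phi=x_0^{d-1}x_1$ has rank $d$ but a degree-$2$ apolar form). They do hold whenever $2r\leq d$, which by the Sylvester/Comas--Seiguer dichotomy forces rank to equal border rank and forces $q$ squarefree; since the range $r+1\leq s\leq d-r+1$ is empty otherwise, your proof is sound, but you should state explicitly that you are restricting to the non-vacuous case and that the squarefreeness of $q$ rests on this classical dichotomy rather than on Lemma~\ref{lem: hilbert function is ranks} alone.
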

\begin{proof}
    If $r+1\leq s\leq d-r+1$ then $s+r\leq d+1$.  Take $|B|=s$, which must be $\{1, x, \dots, x^{s-1}\}$, and consider $\mathcal{H}_{B, B}$.  In particular, take $S=\{1, x, \dots, x^r\}$; the $(r+1)\times s$ matrix $\mathcal{H}_{S, B}$ is the first $r+1$ rows of $\mathcal{H}_{B, B}$.  Since $s+r\leq d+1\implies r+s-1\leq d$, $\mathcal{H}_{S, B}$ is fully specified with no moment variables.  As the rank of $\phi$ is $r$ this means that $\mathcal{H}_{S, B}$ is rank at most $r$.  Thus, even though $\mathcal{H}_{B, B}$ may contain moment variables, no choice of values for the moment variables can make $\mathcal{H}_{B, B}$ invertible.
\end{proof}

This result is likely known, but our proof demonstrates that a simple analysis of the extension algorithm can give theoretical results.  We perform similar analysis in Section \ref{sec:monomials}.  Our discussion implies the following result.
\begin{proposition}
    Let $\phi\in S^d\bbC^{2}$ satisfy Assumption \ref{ass:dehomogenization}.  Then generic choices of values for the moment variables correspond to a decomposition of size $s$, if such a decomposition exists.  In particular, parameterized decompositions of binary tensors can be found efficiently.
\end{proposition}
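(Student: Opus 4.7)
The plan is to apply Theorem \ref{thm:main} after verifying its hypotheses for generic moment variables $Y^*$. Since $n=1$, Definition \ref{def:determinal_relations} yields no relations (they require $i \neq j$), and the discussion preceding the proposition narrows $\mathcal{B}_s$ to the single set $B = \{1, x, \dots, x^{s-1}\}$. Thus the only conditions to verify for $Y^* \in \bbC^{|Y|}$ are (i) $\det(\mathcal{H}_{B,B}(Y^*)) \neq 0$ and (ii) the single multiplication matrix $\mathbf{M}_1^B(Y^*) = \mathcal{H}_{B,B_1}\mathcal{H}_{B,B}^{-1}$ is nondefective with $s$ linearly independent eigenvectors.

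First I would observe that both (i) and (ii) cut out Zariski-open subsets of the affine space of moment variables. Condition (i) is literally the non-vanishing of a polynomial in $Y$. For (ii), on the open set where (i) holds, the entries of $\mathbf{M}_1^B(Y)$ are rational functions of $Y$, so the discriminant of its characteristic polynomial is a rational function whose numerator (after clearing powers of $\det(\mathcal{H}_{B,B})$) is a polynomial in $Y$; non-vanishing of this polynomial is sufficient for $\mathbf{M}_1^B(Y^*)$ to have $s$ distinct eigenvalues, hence to be nondefective.

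Next, assuming a size-$s$ decomposition of $\phi$ exists, Theorem \ref{thm:main} (forward direction) guarantees at least one $Y^*$ at which both (i) and (ii) hold. In the binary case the distinct eigenvectors $\z_1^B, \dots, \z_s^B$ correspond to the distinct eigenvalues $\z_{1,1}, \dots, \z_{s,1}$ of $\mathbf{M}_1^B$, so simplicity of the spectrum is automatic at this $Y^*$. Consequently neither defining polynomial (for (i) or for (ii)) is identically zero, and the intersection of their non-vanishing loci is a nonempty, hence dense, Zariski-open subset of $\bbC^{|Y|}$.

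Finally, for any $Y^*$ in this dense open set, Theorem \ref{thm:main} (reverse direction) produces a size-$s$ decomposition via an eigendecomposition of $\mathbf{M}_1^B(Y^*)$ (which reads off the $\z_i$) followed by a linear solve for $\lambda_1, \dots, \lambda_s$; both steps are polynomially many linear-algebra operations in the number of tensor entries. The main subtlety I expect is verifying that nondefectiveness at a \emph{single} $Y^*$ upgrades to genericity: this requires carefully identifying the discriminant as a polynomial after denominator clearing, which is harmless on the open locus where $\det(\mathcal{H}_{B,B}) \neq 0$. The points $\z_i$ recovered for different generic choices of $Y^*$ parameterize distinct (but equally valid) size-$s$ decompositions, reflecting the nonidentifiability of binary tensors when $s$ exceeds the unique-decomposition threshold.
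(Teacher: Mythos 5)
Your proof is correct and follows essentially the same route as the paper, which states this proposition as an immediate consequence of the preceding discussion (no determinantal relations when $n=1$, the unique choice $B=\{1,x,\dots,x^{s-1}\}$, and the two open conditions of invertibility of $\mathcal{H}_{B,B}$ and nondefectiveness of $\mathbf{M}_1^B$). You make explicit the one step the paper leaves implicit — that nondefectiveness at the single witness $Y^*$ supplied by Theorem \ref{thm:main} upgrades to a dense Zariski-open condition via nonvanishing of the (denominator-cleared) discriminant, using that distinct points $\z_i$ with $\z_{i,0}=1$ force distinct eigenvalues $\z_{i,1}$ — which is a welcome tightening rather than a departure.
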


\begin{remark}
    For the statement in Proposition \ref{prop: binary tensors size s} to be non-vacuous
    we need $r+1\leq d-r+1$, so that $2r\leq d$.  This means that $r$ must be of strictly subgeneric rank; the generic rank of a binary tensor is $\ceil{\frac{d+1}{2}}$ \cite{AlexanderHirschowitz1995, Sylvester1886}.  Moreover, the bounds given have an interesting connection to a result of Comas-Seiguer \cite{Comas2011rank} that states that the set of tensors of \textit{border rank} at most $r$ is given as the union of the sets of tensors of rank at most $r$ and the tensors of rank at least $d-r+2$.
\end{remark}

\begin{example}
    Consider the tensor $\phi=x_0^4x_1(x_0+x_1)\in S^6\bbC^2$.  We do not know Assumption \ref{ass:dehomogenization} holds a priori, but we proceed in the example as if it does.
    \[
    \mathcal{H}=\begin{bmatrix}
        0 & 1 & 1 & 0 & 0 & 0 & 0\\
        1 & 1 & 0 & 0 & 0 & 0 & y_{x^7} \\
        1 & 0 & 0 & 0 & 0 & y_{x^7} & y_{x^8}\\
        0 & 0 & 0 & 0 & y_{x^7} & y_{x^8} & y_{x^9}\\
        0 & 0 & 0 & y_{x^7} & y_{x^8} & y_{x^9} & y_{x^10}\\
        0 & 0 & y_{x^7} & y_{x^8} & y_{x^9} & y_{x^10} & y_{x^{11}}\\
        0 & y_{x^7} & y_{x^8} & y_{x^9} & y_{x^{10}} & y_{x^{11}} & y_{x^{12}}
    \end{bmatrix}.
    \]
    The sequence of ranks of the catalecticants are $(1, 2, 3, 3, 3, 2, 1)$.  If we guess the rank of $\phi$ is $3$, then $B=\{1, x, x^2\}$ and we have
    \[\mathcal{H}_{B, B}=\begin{bmatrix}
        0 & 1 & 1\\
        1 & 1 & 0\\
        1 & 0 & 0
    \end{bmatrix}\quad \text{and} \quad \mathcal{H}_{B, B_1}
    =\begin{bmatrix}
        1 & 1 & 0 \\
        1 & 0 & 0\\
        0 & 0 & 0
    \end{bmatrix}.\]
    While $\mathcal{H}_{B, B}$ is invertible, 
    \[\mathcal{H}_{B, B_1}\mathcal{H}_{B, B}=\begin{bmatrix}
        0 & 1 & 0\\
        0 & 0 & 1\\
        0 & 0 & 0
    \end{bmatrix}\]
    is defective, so we conclude that $\operatorname{rank}(\phi)>3$.  This scenario corresponds to the \textit{cactus rank} of $\phi$; for a more in depth algorithmic perspective see \cite{bernardi2020waring}.  If we next guess that $\operatorname{rank}(\phi) = 4$, then $\mathcal{H}_{B, B}$ is singular, so we conclude $\operatorname{rank}(\phi)>4$.  If we now guess  that $\operatorname{rank}(\phi) = 5$, we have
    \[\mathcal{H}_{B, B}=\begin{bmatrix}
        0 & 1 & 1 & 0 & 0\\
        1 & 1 & 0 & 0 & 0\\
        1 & 0 & 0 & 0 & 0\\
        0 & 0 & 0 & 0 & y_{x^7}\\
        0 & 0 & 0 & y_{x^7} & y_{x^8}
    \end{bmatrix},\quad \mathcal{H}_{B, B_1}
    =\begin{bmatrix}
        1 & 1 & 0 & 0 & 0 \\
        1 & 0 & 0 & 0 & 0\\
        0 & 0 & 0 & 0 & y_{x^7}\\
        0 & 0 & 0 & y_{x^7} & y_{x^8}\\
        0 & 0 & y_{x^7} & y_{x^8} & y_{x^9}
    \end{bmatrix}.\]
    Then
    \[
    \mathbf{M}_1^B=\begin{bmatrix}
        0 & 1 & 0 & 0 & 0 \\
        0 & 0 & 1 & 0 & 0 \\ 
        0 & 0 & 0 & 1 & 0 \\
        0 & 0 & 0 & 0 & 1 \\
        y_{x^7} & -y_{x^7} & y_{x^7} & \frac{y_{x^9}}{y_{x^7}}-\frac{y_{x^8}^2}{y_{x^7}^2} & \frac{y_{x^8}}{y_{x^7}}
    \end{bmatrix}.
    \]
    We know that the eigenvectors of $\mathbf{M}_1^B$ must satisfy
    \[
    \mathbf{M}_1^B\begin{bmatrix}
        1 \\
        t \\ 
        t^2 \\
        t^3 \\
        t^4
    \end{bmatrix}=t\begin{bmatrix}
        1 \\
        t \\ 
        t^2 \\
        t^3 \\
        t^4
    \end{bmatrix}
    \]
    by Theorem 7.34 of \cite{ElkadiMourrain2007}.  In particular, this reduces to the single quintic
    \[y_{x^7}^2t^5-y_{x^7}y_{x^8}t^4+\bb{y_{x^8}^2-y_{x^7}y_{x^9}}t^3-y_{x^7}^3t^2+y_{x^7}^3t+y_{x^7}^3=0.\]
    It can be checked through software that the discriminant of this polynomial is nonzero as a polynomial in $y_{x^7}, y_{x^8}, y_{x^9}$. Thus, for generic values of $y_{x^7}, y_{x^8}, y_{x^9}$, $\mathbf{M}_1^B$ admits five linearly independent eigenvectors -- and the space of decompositions of size $5$ of $\phi$ is $3$-dimensional. Using this matrix extension approach, we have shown that $\operatorname{rank}(\phi)=5$, reproving a result of \cite{TOKCAN2017250}.  Moreover, the decompositions of $\phi$ are efficiently obtained from the eigendecomposition of $\mathbf{M}_1^B$ and parameterized by choices of $y_{x^7}, y_{x^8}, y_{x^9}$.

    If we want a larger decomposition we can take $|B|=6$. The involved variables are $y_{x^7}, y_{x^8}, y_{x^9}, y_{x^{10}}, y_{x^{11}}$. From a similar computer check we see that generic values produce decompositions of size $6$.  Thus, the space of decompositions of size $6$ is $5$-dimensional and decompositions of size $6$ can be efficiently obtained.

    Now to justify Assumption \ref{ass:dehomogenization} we give an explicit example in our supplementary code where we act on this $\phi$ via a random change of basis, so that Assumption \ref{ass:dehomogenization} holds with probability $1$.  We then show numerically we obtain the same conclusions.  The approach can also be justified with an application of Sylvester's algorithm.
\end{example}

\section{Even order tensors with decompositions of low regularity}\label{sec:lowrankeven}
We now focus our analysis on even order tensors with decompositions of low regularity.
Let $D=\floor{(d-1)/2}$.  For odd order tensors $\operatorname{Cat}_D(\phi)$ is the most square catalecticant.  However, for even $d$, $D$ is strictly smaller than $d/2$.  Because $\operatorname{Cat}_{d/2}(\phi)$ can be accessed through linear algebra operations, we ask if an efficient moment matrix extension algorithm exists for even $d$ when $\phi$ does not satisfy Condition \ref{cond:really_low_rank}.

\begin{condition}\label{cond:low_rank}
    $\phi$ satisfies 
    \[\operatorname{rank}\bb{\operatorname{Cat}_{\floor{d/2}}(\phi)}=\operatorname{rank}(\phi).\]
\end{condition}
This implies that for any minimal rank decomposition $\Z$ of $\phi$ we have $\rho(\Z)\leq \floor{d/2}$.  In particular, by Theorem \ref{thm:Bk_refine} there exists a monomial set $B$ such that 
\begin{enumerate}
    \item $|B|=\operatorname{rank}(\phi)$,
    \item $B$ is connected to $1$,
    \item $\operatorname{deg}(B)\leq \floor{\frac{d}{2}}$.
\end{enumerate}
We first show that such a $B$ can be found efficiently.  This also proves Claim \ref{claim:really_low_rank_B}. Indeed, if $\operatorname{Cat}_{\floor{(d-1)/2}}(\phi)=\operatorname{rank}(\phi)$, then $\operatorname{Cat}_{\floor{d/2}}(\phi)=\operatorname{rank}(\phi)$ as well.

\begin{algorithm}[h!]
\hspace*{\algorithmicindent} \textbf{Input: }$\phi\in S^d\bbC^{n+1}$ satisfying Assumption \ref{ass:dehomogenization} and Condition \ref{cond:low_rank}\\
\hspace*{\algorithmicindent} \textbf{Output: }A valid monomial set $B$
\begin{algorithmic}[1]
    \STATE Record $h_\phi(k)$, $k=0, \dots, \floor{\frac{d}{2}}$; $r:=h_{\phi}\bb{\floor{\frac{d}{2}}}$
    \STATE $B\leftarrow \{1\}$
    \FOR{$k=1, \dots, \floor{\frac{d}{2}}$}
        \STATE Restrict the matrix $\operatorname{Cat}_{\floor{d/2}}(\phi)$ to columns indexed by $x_1B_{k-1}\cup \dots \cup x_nB_{k-1}$; denote this matrix by $\operatorname{Cat}_{\floor{d/2}}(\phi)_{x_i B_{k-1}}$
        \STATE Select $h_{k}(\phi)-h_{k-1}(\phi)$ linearly independent columns from $\operatorname{Cat}_{\floor{d/2}}(\phi)_{x_i B_{k-1}}$; let $B_k$ index these columns
        \STATE $B\leftarrow B\cup B_k$
    \ENDFOR
\end{algorithmic}
\caption{Finding a valid monomial set $B$}
\label{alg:finding_B}
\end{algorithm}

\begin{proposition}\label{prop:found_B}
    Let $\phi$ satisfy Assumption \ref{ass:dehomogenization} and Condition \ref{cond:low_rank}.  Then Algorithm \ref{alg:finding_B} finds a monomial set $B$ satisfying $|B|=\operatorname{rank}(\phi)$, $B$ is connected to $1$, and $\operatorname{deg}(B)\leq \floor{d/2}$ such that $\mathcal{H}_{B, B}$ is fully specified and invertible. Moreover, $\operatorname{deg}(B)$ is as small as possible.  The algorithm is also efficient.
\end{proposition}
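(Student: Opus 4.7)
The plan is to interpret Algorithm \ref{alg:finding_B} as the construction in Proposition \ref{prop:connectedto1} carried out on the catalecticant $\operatorname{Cat}_{\floor{d/2}}(\phi)$ in place of the inaccessible Vandermonde matrix $\Z_{\floor{d/2}}$. The technical heart is to show that under Condition \ref{cond:low_rank}, linear independence of columns of $\operatorname{Cat}_{\floor{d/2}}(\phi)$ is equivalent to linear independence of the correspondingly indexed columns of $\Z_{\floor{d/2}}$ for any minimal decomposition $\Z$, and that $h_\phi(k)=\operatorname{rank}(\Z_k)$ for $k\leq \floor{d/2}$.

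Set $s=\operatorname{rank}(\phi)$ and pick a minimal decomposition $\Z$ with $\z_{i,0}=1$, which exists by Assumption \ref{ass:dehomogenization}. From $\operatorname{Cat}_k(\phi)=\Z_{d-k}^\top \Lambda \Z_k$, submultiplicativity of rank, and Condition \ref{cond:low_rank}, one obtains $\operatorname{rank}(\Z_{\floor{d/2}})=s$ and hence $\rho(\Z)\leq \floor{d/2}$. For $k\leq \floor{d/2}$ the map $\Z_{d-k}^\top \Lambda$ is then injective on $\bbC^s$, so a linear relation between columns of $\operatorname{Cat}_{\floor{d/2}}(\phi)$ (or any $\operatorname{Cat}_k(\phi)$ with $k\leq \floor{d/2}$) holds precisely when the same relation holds for the correspondingly indexed columns of $\Z_{\floor{d/2}}$. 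Combined with Lemma \ref{lem: hilbert function is ranks}, this yields $h_\phi(k)=\operatorname{rank}(\Z_k)$ for $k\leq \floor{d/2}$.

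With this equivalence the inductive step from the proof of Proposition \ref{prop:connectedto1} transfers verbatim to $\operatorname{Cat}_{\floor{d/2}}(\phi)$: at each stage there exist $h_\phi(k)-h_\phi(k-1)$ columns inside $x_1 B_{k-1}\cup \cdots \cup x_n B_{k-1}$ that are linearly independent modulo those indexed by $B_{\leq k-1}$, and Algorithm \ref{alg:finding_B} selects such a set. The resulting $B$ is connected to $1$, has $|B|=h_\phi(\floor{d/2})=s$, and satisfies $\operatorname{deg}(B)\leq \floor{d/2}$. Because $\operatorname{deg}(B)\leq \floor{d/2}$, every entry of $\mathcal{H}_{B,B}$ is of the form $\phi_{\alpha+\beta}$ with $|\alpha+\beta|\leq d$, so $\mathcal{H}_{B,B}$ is free of moment variables; moreover $\mathcal{H}_{B,B}=\Z_B^\top \Lambda \Z_B$ is invertible since $\Z_B$ is by construction an $s\times s$ full-rank submatrix of $\Z_{\floor{d/2}}$.

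For minimality, let $k_0$ be the least integer with $h_\phi(k_0)=s$. Any $B'$ connected to $1$ that is valid for some size-$s$ decomposition of $\phi$ satisfies $|B'_{\leq k}|\leq \operatorname{rank}(\Z_k)=h_\phi(k)$ for $k\leq \floor{d/2}$ by the same equivalence, forcing $\operatorname{deg}(B')\geq k_0$; Algorithm \ref{alg:finding_B} stops adding monomials once $h_\phi(k)-h_\phi(k-1)=0$, so $\operatorname{deg}(B)=k_0$ is minimal. Efficiency is immediate, since each step is a rank computation or column selection on $\operatorname{Cat}_{\floor{d/2}}(\phi)$, a matrix of polynomial size in the tensor entries. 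The main obstacle I anticipate is the clean justification of the catalecticant-to-Vandermonde bijection; once this is in place, everything else follows from Proposition \ref{prop:connectedto1} or routine linear algebra.
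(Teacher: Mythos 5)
Your proof is correct and follows essentially the same route as the paper's: use Condition \ref{cond:low_rank} and the factorization $\operatorname{Cat}_k(\phi)=\Z_{d-k}^\top\Lambda\Z_k$ to show the left factor is injective, so that column relations of the catalecticant mirror those of the Vandermonde matrix, and then transfer the greedy connected-to-$1$ construction of Proposition \ref{prop:connectedto1} to $\operatorname{Cat}_{\floor{d/2}}(\phi)$. If anything, your write-up is slightly more complete than the paper's, since you explicitly verify invertibility of $\mathcal{H}_{B,B}$ and the minimality of $\operatorname{deg}(B)$, which the paper leaves implicit.
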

\begin{proof}
    Under Condition \ref{cond:low_rank} $r=\operatorname{rank}(\phi)$.  Let $\Z$ be any minimal rank decomposition of $\phi$.  We know that $\operatorname{Cat}_{k}(\phi)=\Z_{d-k}^\top \Lambda \Z_k$; moreover, $\Z_{k}$ is rank $r$ for all $k\geq \floor{d/2}$.  Therefore, $\Z_k^\top$ is injective for all $k\geq \floor{d/2}$.  We then have $h_{\phi}(k)=h_{\Z}(k)$ for all $k\leq \floor{d/2}$.  

    Let $\bb{\operatorname{Cat}_{\floor{d/2}}(\phi)}_k$ be the submatrix of $\operatorname{Cat}_{\floor{d/2}}(\phi)$ restricted to columns corresponding to $|\gamma|\leq k$ for $k\leq \floor{d/2}$.  Then
    \[\bb{\operatorname{Cat}_{\floor{d/2}}(\phi)}_k=\Z_{d-\floor{d/2}}^\top \Lambda \Z_k.\]
    Because $\Z_{d-\floor{d/2}}^\top, \Lambda$ are injective, linearly independent columns of $\bb{\operatorname{Cat}_{\floor{d/2}}(\phi)}_j$ correspond exactly to linearly independent columns of $\Z_j$.  By the proof of Proposition \ref{prop:connectedto1} we need only to look for linearly independent columns in $\Z_{x_i B_{k-1}}$ for each $k$, which correspond exactly to linearly independent columns of $\operatorname{Cat}_{\floor{d/2}}(\phi)_{x_i B_{k-1}}$.  

    This procedure is efficient in our model of computation as we need only to compute the rank of matrices a polynomial (in the number of tensor entries) number of times.
\end{proof}
In the context of Algorithm \ref{alg:size_s_decomp}, a valid $\mathcal{B}_{\operatorname{rank}(\phi)}$ can be taken to be $\{B\}$ with $B$ found via the subroutine Algorithm \ref{alg:finding_B}.  Note that the $B$ found need not be the only connected to $1$, degree bounded monomial set such that $\Z_B$ is invertible, but any one of them suffices to construct and solve the determinantal relations.

\subsection{Determinantal equations}

When $d$ is odd, Condition \ref{cond:low_rank} is equivalent to  Condition \ref{cond:really_low_rank}.  On the other hand, for $d$ even, when Algorithm \ref{alg:finding_B} finds $B$ such that $\operatorname{deg}(B)=d/2$ we are outside the regime of the simultaneous diagonalization algorithm.  In this case the set of moment variables $Y$ will be nonempty and there will be nontrivial determinantal relations.

\begin{lemma}\label{lem:equation_deg}
    Let $d$ be even, $B$ be such that $\operatorname{deg}(B)= d/2$ and $B$ chosen by Algorithm \ref{alg:finding_B}.  Then the expression
    \[\operatorname{det}\bb{\mathcal{H}_{B\cup \{x^\alpha x_i\}, B\cup\{x^\beta x_j\}}}-\operatorname{det}\bb{\mathcal{H}_{B\cup \{x^\alpha x_j\}, B\cup\{x^\beta x_i\}}}\]
    is
    \begin{enumerate}[(1)]
        \item the zero polynomial if $|\alpha+\beta|\leq d-2$ or $\alpha=\beta$ or 
        \[\bb{x^\alpha x_i\in B\vee x^\beta x_j\in B}\wedge \bb{x^\alpha x_j\in B\vee x^\beta x_i\in B};\]
        \item linear if $|\beta|= d/2-1$ and $|\alpha|= d/2$, and $x^\beta x_i\notin B\vee x^\beta x_j\notin B$ (or vice-versa for $\alpha$);
        \item quadratic otherwise.
    \end{enumerate}
\end{lemma}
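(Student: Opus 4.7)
The plan is to Laplace-expand each determinant along its new row and column and track the $Y$-degree of what remains. Since $\operatorname{deg}(B)=d/2$ and $B$ comes from Algorithm \ref{alg:finding_B}, Proposition \ref{prop:found_B} makes $\mathcal{H}_{B,B}$ fully specified and invertible, so $\operatorname{adj}\bb{\mathcal{H}_{B,B}}$ is constant. The corner entry $\mathcal{H}_{x^{\alpha+\beta}x_ix_j}$ is symmetric in $i,j$ and therefore cancels between the two determinants, yielding
\[
    D=-\mathcal{H}_{B,x^\alpha x_i}^{\top}\operatorname{adj}\bb{\mathcal{H}_{B,B}}\mathcal{H}_{B,x^\beta x_j}+\mathcal{H}_{B,x^\alpha x_j}^{\top}\operatorname{adj}\bb{\mathcal{H}_{B,B}}\mathcal{H}_{B,x^\beta x_i}.
\]

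Next I would analyze the two shifted columns $\mathcal{H}_{B,x^\gamma x_\ell}$: the entry at $x^\delta\in B$ is the moment variable $y_{x^{\delta+\gamma}x_\ell}$ precisely when $|\delta|+|\gamma|+1>d$. Because $|\delta|\le d/2$, this forces both $|\gamma|=d/2$ and $|\delta|=d/2$. Hence $\mathcal{H}_{B,x^\gamma x_\ell}$ has only constant entries when $|\gamma|<d/2$, and is affine in $Y$ with linear part supported on positions $|\delta|=d/2$ when $|\gamma|=d/2$. The degree dichotomy in clauses (2) and (3) follows at once from whether neither, exactly one, or both of $|\alpha|,|\beta|$ equal $d/2$, combined with the fact that $\operatorname{adj}\bb{\mathcal{H}_{B,B}}$ contributes no moment variables.

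The three sufficient conditions for $D\equiv 0$ in (1) are handled separately. When $\alpha=\beta$ the two matrices in $D$ are transposes of each other (using that $\mathcal{H}$ is a symmetric tensor indexed by multiset sums), so their determinants coincide. When the disjunction in the third clause holds, each conjunct puts a repeated monomial into the corresponding matrix, producing a duplicate row or column and forcing that determinant to zero; the conjunction arranges this for both determinants at once. For $|\alpha+\beta|\le d-2$ in the principal regime $|\alpha|,|\beta|<d/2$, every entry of the $(r+1)\times(r+1)$ extended matrix is a constant $\phi$-entry, so the matrix embeds inside $\operatorname{Cat}_{d/2}(\phi)$; by Condition \ref{cond:low_rank} this catalecticant has rank $r=|B|$, and both subdeterminants vanish individually.

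The hardest part is the remaining regime of clause (1), where $|\alpha+\beta|\le d-2$ but one of $|\alpha|,|\beta|$ equals $d/2$. Here $D$ is a priori linear in $Y$ and the submatrix-of-catalecticant argument does not directly apply. My plan is to use the construction of $B$ in Algorithm \ref{alg:finding_B} together with conciseness (Assumption \ref{ass:concise}) to argue that the low-degree side, $x^\beta x_\ell$ of degree at most $d/2-1$, is forced to lie in $B$, placing this situation under the third disjunct of (1) and producing a duplicate column. Coordinating these combinatorial restrictions on $B$ with the exact degree cutoffs in (2), so that the three clauses of the lemma partition the remaining cases cleanly, is where I expect the main work to be.
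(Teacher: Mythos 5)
Your expansion via the adjugate identity and the resulting degree bookkeeping for clauses (2) and (3) match the paper's Laplace-expansion argument, and your treatment of $\alpha=\beta$ (transposed matrices) and of the repeated-row/column disjunction is correct. The genuine gap is exactly where you flagged it: the sub-case of clause (1) with $|\alpha+\beta|\le d-2$ but, say, $|\alpha|=d/2$. Your plan — to show $x^\beta x_j$ is \emph{forced} into $B$ so that a duplicate column appears — does not work. Conciseness only guarantees $B_{\le 1}=\{1,x_1,\dots,x_n\}$; for $2\le k\le d/2-1$ Algorithm \ref{alg:finding_B} puts only $h_\phi(k)$ monomials of degree $\le k$ into $B$, and this can be strictly less than $\binom{n+k}{n}$ while still having $\operatorname{deg}(B)=d/2$ (e.g.\ $d=6$ and a decomposition supported on points of a conic gives $h_\phi=(1,3,5,7,\dots)$, so some $x_kx_j\notin B$ even though $|x_kx_j|=2\le d/2-1$). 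So the expression in this regime is a priori a nonconstant affine function of $Y$ and cannot be dismissed by a duplicate column.

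The paper's argument handles this uniformly and is the step you are missing. Restrict the $(r+1)\times(r+1)$ matrix to the columns $B_{\le|\beta|+1}\cup\{x^\beta x_j\}$ (taking $|\beta|\le|\alpha|$ without loss of generality). Every entry in these columns is a genuine $\phi$-entry over \emph{all} $r+1$ rows, including the appended row $x^\alpha x_i$, precisely because $(|\alpha|+1)+(|\beta|+1)\le d$ — this is where the hypothesis $|\alpha+\beta|\le d-2$ enters. Algorithm \ref{alg:finding_B} guarantees $|B_{\le|\beta|+1}|=\operatorname{rank}\bb{\bb{\operatorname{Cat}_{d/2}(\phi)}_{|\beta|+1}}$, so these $|B_{\le|\beta|+1}|+1$ columns of the catalecticant are linearly dependent; since the dependence corresponds to a form vanishing on any decomposition of $\phi$, it persists on the extra fully specified row, so the full columns of $\mathcal{H}_{B\cup\{x^\alpha x_i\},B\cup\{x^\beta x_j\}}$ are dependent and the determinant vanishes identically (likewise for the swapped matrix). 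This argument also subsumes your separate treatment of the case $|\alpha|,|\beta|<d/2$, so no case split on which of $|\alpha|,|\beta|$ attains $d/2$ is needed for clause (1).
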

\begin{proof}
    \begin{enumerate}[(1)]
        \item If 
        \[\bb{x^\alpha x_i\in B\vee x^\beta x_j\in B}\wedge \bb{x^\alpha x_j\in B\vee x^\beta x_i\in B}\]
        then there is a repeated row or column in both matrices, so by definition the determinant is zero.  Next, suppose $|\alpha+\beta|\leq d-2$.  Then $\operatorname{deg}(x^{\alpha+\beta}x_ix_j)\leq d$.  Consider the submatrix of $\mathcal{H}_{B\cup \{x^{\alpha} x_i\}, B\cup\{x^\beta x_j\}}$ given by $\mathcal{H}_{B_{\leq |\alpha|+1}\cup \{x^\alpha x_i\}, B_{\leq |\beta|+1}\cup \{x^\beta x_j\}}$.  This matrix is fully specified, i.e., no moment variables present, and is a submatrix of $\bb{\operatorname{Cat}_{d/2}(\phi)}_{|\beta|+1}$.  But we know Algorithm 2 guarantees $|B|_{\leq |\beta|+1}=\operatorname{rank}\bb{\bb{\operatorname{Cat}_{d/2}(\phi)}_{|\beta|+1}}$, and so $\mathcal{H}_{B_{\leq |\alpha|+1}\cup \{x^\alpha x_i\}, B_{\leq |\beta|+1}\cup \{x^\beta x_j\}}$ must have linearly dependent columns.  Then \\ $\operatorname{det}\bb{\mathcal{H}_{B\cup \{x^{\alpha} x_i\}, B\cup\{x^\beta x_j\}}}=0$.  The same holds for $\operatorname{det}\bb{\mathcal{H}_{B\cup \{x^{\alpha} x_j\}, B\cup\{x^\beta x_i\}}}$.

        \item Without loss of generality suppose $|\beta|= d/2-1$ and $x^\beta x_j\notin B$ and $|\alpha|=d/2$, so that $x^\alpha x_i$ indexes the last row and $x^\beta x_j$ the last column of $\mathcal{H}_{B\cup\{x^\alpha x_i\}, B\cup\{x^\beta x_j\}}$.  The row corresponding to $x^\alpha x_i$ will contain moment variables $y_{x^{\alpha+\alpha'} x_i}$, where $x^{\alpha'}\in B_{d/2}$, and if $|\beta|=d/2-1$, $y_{x^{\alpha+\beta} x_i x_j}$.  Because $|\beta|= d/2-1$, the column corresponding to $x^\beta x_j$ will be fully specified outside of the last entry.  Therefore, by the Laplace expansion the determinant is linear in the moment variables.  The difference of two linear expressions will still be linear.
        \item If both $|\alpha|, |\beta|=d/2$ then there are moment variables in both the row and column corresponding to $x^\alpha x_i, x^\beta x_j$, respectively.  Again, by the Laplace expansion the determinant is quadratic in the moment variables.\qedhere
    \end{enumerate}
\end{proof}
Under Assumption \ref{ass:dehomogenization} and Condition \ref{cond:low_rank} the polynomial system of linear and quadratic equations will admit at least one solution, which corresponds to a minimal rank decomposition.  We are interested when \textit{only linear equations} suffice, that is, when all solutions to the moment matrix extension problem can be specified by linear equations (or a sequence of linear systems).  This motivates Algorithm \ref{alg:efficient_alg}.

\begin{algorithm}[h]
\hspace*{\algorithmicindent} \textbf{Input: }$\phi\in S^d\bbC^{n+1}$, $d$ even, satisfying  Assumption \ref{ass:dehomogenization} and  Condition \ref{cond:low_rank}\\
\hspace*{\algorithmicindent} \textbf{Output: }A minimal rank decomposition of $\phi$ or FAIL, quadratic relations remain
\begin{algorithmic}[1]
    \STATE Construct the Hankel matrix $\mathcal{H}$ of $\phi$
    \STATE Find $B$ via Algorithm \ref{alg:finding_B}
    \STATE $E\leftarrow$ linear determinantal relations
    \WHILE{$E\neq \varnothing$}
        \STATE Rewrite a subset of the moment variables as a linear combination of the other moment variables specified by $E$
        \STATE Substitute these into the quadratic relations
        \STATE $E\leftarrow$ new linear equations (if they exist)
    \ENDWHILE
    \IF{all of the moment variables are fully specified (maybe after choosing parameters)}
        \STATE Obtain a decomposition $\z_1, \dots, \z_r$ via an eigendecomposition and solve a linear system for $\lambda_1, \dots, \lambda_r$
    \ELSE
        \STATE Output FAIL; quadratic relations remain
    \ENDIF
\end{algorithmic}
\caption{Efficient minimal rank decomposition}
\label{alg:efficient_alg}
\end{algorithm}

Algorithm \ref{alg:efficient_alg} is efficient if there are a polynomial number of moment variables, each linear system in the while loop is of polynomial size, and there are a polynomial number of loops.  We remark that there is a possibility that at the end of the while loop the practitioner can choose parameters; these are moment variables left unspecified by the determinantal relations and can be freely chosen to have generic values.  If there are no moment variables left unspecified then as there is at most one solution to a linear system then the decomposition is also unique.  We will elaborate on the sufficiency of a linear system as a an effective criterion for specific identifiability \cite{chiantini2017effective}.

\subsubsection{Linear equations}
We now analyze the structure of the linear equations.  From Lemma \ref{lem:equation_deg}, consider $\operatorname{det}\bb{\mathcal{H}_{B\cup\{x^\alpha x_i\}, B\cup\{x^\beta x_j\}}}$ for $|\alpha|=d/2, |\beta|= d/2-1$ and $i\neq j$ with $x^\beta x_j\not\in B$.  Expanding the determinant with respect to the row $x^\alpha x_i$, we have $ \operatorname{det}\bb{\mathcal{H}_{B\cup\{x^\alpha x_i\}, B\cup\{x^\beta x_j\}}}$ is equal to 
\begin{align*}
    \operatorname{det}\bb{\mathcal{H}_{B, B}}\mathcal{H}_{x^\alpha x_i, x^\beta x_j}+\sum_{x^\gamma\in B}(-1)^{r+1+\operatorname{pos}(x^\gamma)}\operatorname{det}\bb{\mathcal{H}_{B, B\setminus \{x^\gamma\}\cup \{x^\beta x_j\}}}\mathcal{H}_{x^\alpha x_i, x^\gamma},
\end{align*}
where $\operatorname{pos}(x^\gamma)$ is equal to $k$ if $x^\gamma$ is the $k^{\text{th}}$ element in $B$ in some ordering we fix beforehand, e.g., graded lexicographic ordering. The entry $\mathcal{H}_{\alpha+e_i, x^\gamma}$ is a variable only if $|\alpha+\gamma|\geq d$, which is the case when $|\gamma|=d/2$.  Otherwise, $\mathcal{H}_{x^\alpha x_i, x^\gamma}=\phi_{x^{\alpha+\gamma}x_i}$.  Moreover, if $\Z$ is some minimal rank decomposition of $\phi$ and $S$ is any index set then
\[\mathcal{H}_{B, S}=\Z_B^\top \Z_S.\]
Since $|B|=\operatorname{rank}(\phi)$, $\Z_B$ is a square matrix.
If $|S|=\operatorname{rank}(\phi)$ as well, then $\Z_S$ is also square, and thus we can divide the expression $\operatorname{det}\bb{\mathcal{H}_{B\cup\{x^\alpha x_i\}, B\cup \{x^\beta x_j\}}}$ by the factor $\operatorname{det}\bb{\Z_B}$ (which is nonzero) to obtain the equivalent expression
\begin{equation}\label{eq:new_lin_eq}
\begin{aligned}
     &\operatorname{det}\bb{\Z_B}y_{x^{\alpha+\beta}x_ix_j}+\sum_{x^{\gamma}\in B_{d/2}}(-1)^{r+1+\operatorname{pos}(x^\gamma)}\operatorname{det}\bb{\Z_{B\setminus\{x^\gamma\}\cup \{x^\beta x_j\} }}y_{x^{\alpha+\gamma}x_i}\\
     &\quad +\sum_{x^\gamma\in B_{ d/2-1}} (-1)^{r+1+\operatorname{pos}(x^\gamma)}\operatorname{det}\bb{\Z_{B\setminus\{x^\gamma\}\cup \{x^\beta x_j\} }}\phi_{x^{\alpha+\gamma}x_i}.
\end{aligned}
\end{equation}
Notationally, we henceforth write
\begin{equation}
    \begin{aligned}
        m_{x^\gamma}^{x^\delta}&:=\operatorname{det}\bb{\Z_{B\setminus \{x^\gamma\}\cup \{x^\delta\}}},\\
        m&:=\operatorname{det}\bb{\Z_B}.
    \end{aligned}
\end{equation}
A linear relation is taken to be the difference of two of the expressions in Equation \ref{eq:new_lin_eq} and setting equal to $0$.  For some fixed ordering of the equations and variables, write the linear system as 
\begin{equation}
    \mathbf{A} Y=\mathbf{b},
\end{equation}
where the rows of $A$ index equations and the columns index variables and with some abuse of notation $Y$ is the vector of moment variables.

We would like to argue that when the number of equations exceeds the number of variables (i.e., when $\mathbf{A}$ is tall), then $\mathbf{A}$ is generically full column rank.  That is, the condition that $\mathbf{A}$ is not full rank is the vanishing of the $|Y|\times |Y|$ minors of $\mathbf{A}$; each entry of $\mathbf{A}$ is some  $r\times r$ minor of $\operatorname{Cat}_{d/2}(\phi)$.  By the discussion above these are polynomials in the entries of $\Z$; thus, we would like at least one of these $|Y|\times |Y|$ minors of $\mathbf{A}$ to be a nonzero polynomial in the entries of $\Z$.

This would imply that the task of tensor decomposition, having already been expressed as a moment matrix extension problem, would further reduce to solving the linear system $\mathbf{A}Y=\mathbf{b}$: if $\mathbf{A}$ is full column rank then values for moment variables can be efficiently found.  Moreover, this would imply the resulting decomposition is unique.

However, a precise analysis is complicated.  We do not know a priori what $B$ is output by Algorithm \ref{alg:finding_B}, and for each $B$ obtaining a precise count of the number of linear equations and moment variables is prohibitive.
Further, the counts alone would not prove that $\mathbf{A}$ is generically full column rank when it has more rows than columns.  For these reasons we focus on $d=4$ in the next section. With some simplifications we can greatly improve our analysis.

\begin{remark}
    Observe that the polynomials dictating the rank of $\mathbf{A}$ are certain maximal $r\times r$ minors of $\Z_{d/2}$.  This is related to a line of work studying equations defining the Hilbert scheme of points \cite{alonso2009hilbert, SKJELNES2022254}.  These include Pl\"ucker-like relations defining the Hilbert scheme as a subscheme of a single Grassmannian.  Our task is equivalent to showing that the polynomials derived from $\mathbf{A}$ being not full column rank are not in the radical of these defining equations of the Hilbert scheme of points.
\end{remark}

\subsection{Order four symmetric tensors}
For $d=4$, Condition \ref{cond:low_rank} reduces to $\operatorname{rank}\bb{\operatorname{Cat}_2(\phi)}=\operatorname{rank}(\phi)=:r$.  Taking both Assumption \ref{ass:dehomogenization} and Assumption \ref{ass:concise} to hold, we can assume that $B$ is of the form
\[B=\{1, x_1, \dots, x_n\}\cup B_2,\]
where $B_2$ is some set of degree two monomials such that $|B_2|=r-(n+1)$.  In this case, the moment variables are given by
\begin{equation*}
    Y:=\{x^{\alpha+\alpha'}x_i\mid x^\alpha, x^{\alpha'}\in B_2, 1\leq i\leq n\}.
\end{equation*}
Consider the sets:
\begin{equation*}
\begin{aligned}
        E_1 &:=\{(x^\alpha x_i, x^\beta x_j)\mid x^\alpha\in B_2, x^\beta\in B_1, i\neq j, x^\beta x_j\notin B_2, x^\beta x_i\in B_2\},\\
        E_2 &:=\{[(x^\alpha x_i, x^\beta x_j), (x^\alpha x_j, x^\beta x_i)]\mid x^\alpha\in B_2, x^\beta\in B_1, i<j, x^\beta x_i, x^\beta x_j\notin B_2\},\\
        E &:=E_1 \cup E_2.
    \end{aligned}
\end{equation*}
By Lemma \ref{lem:equation_deg}, $E$ indexes the linear equations. With some abuse of terminology we also refer the tuples in $E$ as linear equations.  Equations in $E_1$ index linear equations defined as
\[\operatorname{det}\bb{\mathcal{H}_{B\cup\{x^\alpha x_i\}, B\cup\{x^\beta x_j\}}}=0,\]
while equations in $E_2$ index linear equations defined as
\[\operatorname{det}\bb{\mathcal{H}_{B\cup\{x^\alpha x_i\}, B\cup\{x^\beta x_j\}}}-\operatorname{det}\bb{\mathcal{H}_{B\cup\{x^\alpha x_j\}, B\cup\{x^\beta x_i\}}}=0.\]

\subsubsection{Simplifications}\label{sec:simplifications}
To simplify analysis slightly we impose an additional condition.

\begin{condition}\label{cond:first_r}
    Take $\mathcal{H}_{B, B}$ to be invertible for $B$ equal to the first $r$ monomials in the graded lexicographic ordering.
\end{condition}

In other words, if $\phi$ satisfies this condition and Condition \ref{cond:low_rank} we can always take $|B|=r$ with
\[B=\{1, x_1, \dots, x_n, x_1^2, \dots, x_1 x_n, \dots, x_i^2, \dots, x_i x_j\}.\]
This condition is common in prior work. Indeed, this is assumed in Equation 2.13 of \cite{nie2017generating} without much further discussion.  At the very least, it is an open condition -- generic tensors satisfying Condition \ref{cond:low_rank} also satisfy this additional condition.  However, we note it does not always hold.

\begin{example}\label{ex:not_first_r}
    Let $n=3, d=4$.  Let $\Z=\{\z_1, \dots, \z_7\}$ be a set of points such that $\z_{i, 3}=0$ for $i=1, \dots, 6$, but the other coordinates of points in $\Z$ are generic.
    Let
    \[B=\{1, x_1, x_2, x_3, x_1^2, x_1x_2, x_1x_3\},\]
   be the set consisting of the first seven monomials in the graded lexicographic order.  Then $\operatorname{rank}(\Z_1)=4, \operatorname{rank}(\Z_2)=7$.  But under any change of basis, $\operatorname{det}(\Z_{B})=0$.  Thus, $\mathcal{H}_{B, B}=\Z_B^\top \Z_B$ is not full rank.  However, choosing 
    \[B'=\{1, x_1, x_2, x_3, x_1^2, x_1x_2, x_2^2\}\]
    gives a full rank $\Z_{B'}$ and $\mathcal{H}_{B', B'}$ generically.

    Note that the points in $\Z$ are not in general linear position, and moreover, we can see that $\z_1, \dots, \z_6$, if considered as elements of $\bbC^{3}$, are such that $\z_1^{\otimes 2}, \dots, \z_6^{\otimes 2}$ span all of $\bbC^6$.  If viewed this way, the set 
    \[B''=\{1, x_1, x_2, x_1^2, x_1x_2, x_2^2\}\]
    indexes an invertible submatrix for the second Vandermonde matrix of these $6$ points.  Adding $\z_7$ then corresponds to embedding $B''$ into $B'$ in the obvious way and then adding the extra monomial $x_3$.
\end{example}

 Condition \ref{cond:first_r} will make counting $|Y|$ and $|E|$ more feasible.  We give our first results for a coarsening of $r$, taking $r=\sum_{j=0}^c (n-j+1)$ for some integer $1\leq c\leq n$.  Then $B$ takes the form
\[B=\{1, x_1, \dots, x_n, x_1^2, \dots, x_1x_n, \dots, x_c^2, \dots, x_cx_n\}.\]
For $\eta\in \bbZ_{\geq 0}^n$ we write $\eta_{\mathcal{C}}:=\sum_{j=1}^c \eta_j$.  

\begin{lemma}\label{lem:E1}
    Let $r=\sum_{j=0}^c (n-j+1)$.  Under Condition \ref{cond:low_rank} and Condition \ref{cond:first_r} 
    \[E_1=\{(x^\eta, x^\theta)\mid |\eta|=3, |\theta|=2, \eta_{\mathcal{C}}\geq 2, \theta_{\mathcal{C}}=0\}.\]
\end{lemma}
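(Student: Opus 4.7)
The plan is to translate the defining conditions of $E_1$, which involve specific factorizations $x^\alpha x_i$ and $x^\beta x_j$, into intrinsic conditions on the total exponent vectors $\eta$ and $\theta$, and then verify both inclusions. The key tool is the combinatorial description of $B_2$ inherited from Condition \ref{cond:first_r} together with $r=\sum_{j=0}^c(n-j+1)$: namely $B_2=\{x_ax_b:1\le a\le c,\ a\le b\le n\}$, equivalently $x_ax_b\in B_2$ if and only if $\min(a,b)\le c$. This characterization drives the whole argument.

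For the forward inclusion, given $(x^\alpha x_i, x^\beta x_j)\in E_1$ I would set $\eta$ and $\theta$ equal to the corresponding exponent vectors; the degree conditions $|\eta|=3$, $|\theta|=2$ follow from $x^\alpha\in B_2$ and $x^\beta\in B_1$. Writing $x^\beta=x_b$, the requirement $x^\beta x_j\notin B_2$ forces $\min(b,j)>c$, so both indices of $\theta$ exceed $c$ and $\theta_{\mathcal{C}}=0$. The requirement $x^\beta x_i\in B_2$ combined with $b>c$ then forces $i\le c$; since $x^\alpha\in B_2$ also supplies an exponent in $\{1,\dots,c\}$, the inequality $\eta_{\mathcal{C}}\ge 2$ follows.

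For the reverse inclusion, given $(x^\eta,x^\theta)$ with the stated numerical conditions, I would factor $x^\theta=x_b x_j$ with $b,j>c$ (possible since $\theta_{\mathcal{C}}=0$) and set $x^\beta:=x_b$. Using $\eta_{\mathcal{C}}\ge 2$ I would choose $i\le c$ as one index of $\eta$; the two remaining indices $p\le q$ still contain at least one index $\le c$, so setting $x^\alpha:=x_p x_q$ places $x^\alpha$ in $B_2$. The remaining $E_1$ conditions ($i\ne j$, $x^\beta x_j\notin B_2$, $x^\beta x_i\in B_2$) all follow immediately from $i\le c<b,j$ and the characterization of $B_2$.

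The subtle step, and the only real obstacle, is the selection in the reverse direction: one must simultaneously ensure that $i\le c$ (so that $x^\beta x_i\in B_2$) and that the leftover degree-two factor $x^\alpha$ lies in $B_2$. The hypothesis $\eta_{\mathcal{C}}\ge 2$ is precisely what is needed, since removing one index $\le c$ still leaves at least one index $\le c$ among the remaining two, guaranteeing $\min(p,q)\le c$.
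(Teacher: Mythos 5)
Your proof is correct and follows essentially the same route as the paper's: both directions rest on the characterization $x_ax_b\in B_2\iff\min(a,b)\le c$, and the reverse inclusion uses exactly the paper's factorization $x^\eta=x^\alpha x_i$ with $i\le c$ and the leftover degree-two factor still meeting $\{1,\dots,c\}$. Your forward direction is in fact marginally cleaner, deriving $i\le c$ directly from $x^\beta x_i\in B_2$ rather than via the paper's case split on $\alpha_{\mathcal{C}}$.
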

\begin{proof}
    Recall equations in $E_1$ are of the form $(x^\alpha x_i, x^\beta x_j)$ for $|\alpha|=2, |\beta|=1, i\neq j$ with $x^\beta x_j\notin B, x^\beta x_i\in B$.  First, we know that $j>c$ because otherwise $x^\beta x_j\in B$.  We also must have $x^\beta =x_k$ for $k>c$ as well, for the same reason.  Next, if $\alpha_\mathcal{C}=2$ then we automatically have $\eta=\alpha+e_i$ is such that $\eta_{\mathcal{C}}\geq 2$.  So suppose $\alpha_{\mathcal{C}}=1$, and $i>c$.  But then $x^\beta x_i\notin B$, a contradiction.  So we must have $i\leq c$ in this case.

    Conversely, consider some $(x^\eta, x^\theta)$ with the prescribed conditions.  Then $x^\eta=x_{a_1}x_{a_2}x_i$, where we have $a_1, i\leq c$ and $x^\theta=x_bx_j$ for $b, j>c$.  Setting $x^\alpha=x_{a_1}x_{a_2}$, which is in $B$, and $x^\beta=x_b$ we have $x^\beta x_i\in B$.
\end{proof}

We can now give precise counts of $|Y|$ and $|E_1|$.
\begin{theorem}\label{thm:counts}
    Let $\phi\in S^4\bbC^{n+1}$ satisfy Condition \ref{cond:low_rank} and Condition \ref{cond:first_r} with $r=\sum_{j=0}^c (n-j+1)$.  Then
    \[
        |Y|=\binom{c+4}{5}+(n-c)\binom{c+3}{4}+\binom{n-c+1}{2}\binom{c+2}{3}+\binom{n-c+2}{3}\binom{c+1}{2}
    \]
    and 
    \[
        |E_1|=\binom{n-c+1}{2}\bb{\binom{c+2}{3}+(n-c)\binom{c+1}{2}}.
    \]
\end{theorem}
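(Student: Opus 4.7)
The plan is to turn both counts into combinatorial enumerations of multi-indices $\eta \in \mathbb{Z}_{\geq 0}^n$ governed by the total degree $|\eta|$ and by the $\mathcal{C}$-weight $\eta_{\mathcal{C}} = \sum_{j=1}^{c} \eta_j$, and then to split on cases of $\eta_{\mathcal{C}}$ and sum using that the number of monomials of degree $k$ in $m$ variables is $\binom{k+m-1}{k}$.

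First I would characterize $Y$. Under Condition \ref{cond:first_r} with the stated $r$, the set $B_2$ consists of exactly the degree-$2$ monomials $x^\alpha$ with $\alpha_{\mathcal{C}} \geq 1$. Since every element of $B$ has degree at most $2$, the constraint $|\alpha + \alpha'| \geq 4$ in \eqref{eq:moment_variables} forces $\alpha, \alpha' \in B_2$, so every $y_{x^\eta} \in Y$ has $|\eta| = 5$ and $\eta_{\mathcal{C}} \geq 2$. Conversely, given such an $\eta$, I would produce a triple $(\alpha, \alpha', i)$ with $\eta = \alpha + \alpha' + e_i$ as follows: if $\eta_{\mathcal{C}} \leq 4$ then $\eta$ has a coordinate $i > c$ with $\eta_i \geq 1$, and I remove that $e_i$; if $\eta_{\mathcal{C}} = 5$ I remove any $e_i$ with $i \leq c$ and $\eta_i \geq 1$. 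In both cases $\xi := \eta - e_i$ has $|\xi| = 4$ and $\xi_{\mathcal{C}} \geq 2$, so $\xi$ splits as $\alpha + \alpha'$ with both summands in $B_2$ by distributing one unit of $\mathcal{C}$-weight to each side. Therefore $|Y|$ equals the number of degree-$5$ monomials in $n$ variables with $\mathcal{C}$-weight at least $2$; partitioning by $\eta_{\mathcal{C}} \in \{2,3,4,5\}$ and multiplying a count from the $c$ variables $x_1,\ldots,x_c$ by a count from the $n-c$ remaining variables yields the four-term sum in the theorem.

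For $|E_1|$ I would apply Lemma \ref{lem:E1} directly: $E_1$ is indexed by pairs $(x^\eta, x^\theta)$ with $|\eta| = 3$, $|\theta| = 2$, $\eta_{\mathcal{C}} \geq 2$, and $\theta_{\mathcal{C}} = 0$. The two factors are independent, so $|E_1|$ is a product. The number of valid $\theta$ is the number of degree-$2$ monomials in the $n-c$ variables outside $\mathcal{C}$, namely $\binom{n-c+1}{2}$. For $\eta$, splitting on $\eta_{\mathcal{C}} \in \{2,3\}$ gives $\binom{c+2}{3} + (n-c)\binom{c+1}{2}$. Multiplying these two expressions recovers the claimed formula. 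The only nontrivial step in the whole argument is the surjectivity direction of the $Y$-characterization above; once that is in place, the rest is routine binomial bookkeeping, so I do not expect any serious obstacle.
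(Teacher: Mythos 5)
Your proposal is correct and follows essentially the same route as the paper's proof: both reduce $|Y|$ to counting degree-$5$ monomials with $\mathcal{C}$-weight at least $2$ by splitting on $\eta_{\mathcal{C}}\in\{2,3,4,5\}$, and both count $|E_1|$ as the product coming from the characterization in Lemma \ref{lem:E1}. The one point where you go beyond the paper is that you explicitly verify the surjectivity of $(\alpha,\alpha',i)\mapsto \alpha+\alpha'+e_i$ onto such monomials, a step the paper's proof leaves implicit; your splitting argument for $\xi=\alpha+\alpha'$ is sound.
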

\begin{proof}
    If $r=\sum_{j=0}^c (n-j+1)$ then
    \[B=\{1, x_1, \dots, x_n, x_1^2, \dots, x_1x_n, \dots, x_c^2, \dots, x_cx_n\}.\]
    In particular, if $\gamma_i\neq 0$ for $i=1, \dots, c$ then $x^\gamma \in B$.  
    
    Elements of $Y$ are of the form $x^{\alpha+\alpha'}x_i$, $x^\alpha, x^{\alpha'}\in B$, so in particular, we must have $\alpha_{\mathcal{C}}, \alpha'_{\mathcal{C}}\geq 1$.  Then $\bb{\alpha+\alpha'+e_i}_{\mathcal{C}}\geq 2$.  If $\bb{\alpha+\alpha'+e_i}_{\mathcal{C}}=\sum_{j=1}^c \alpha_j+\alpha'_j+1_{i}= k$ then $\sum_{j=c+1}^n \alpha_j+\alpha'_j+1_{i}= 5-k$.  Thus, for $k=2, \dots 5$ we count the number of monomials of degree $k$ in $c$ variables multiplied by the number of monomials of degree $5-k$ in $n-c$ variables and then sum.  

    Similarly, for $E_1$ we count the possible $(x^\alpha x_i, x^\beta x_j)$ for $x^\alpha\in B_2, x^\beta\in B_1$, $i\neq j$, where $x^\beta x_i\in B, x^\beta x_j\notin B$.  In particular, this latter condition implies $i\leq c$.  $x^\beta x_j$ then ranges over all monomials of degree $2$ not in $B_2$; this is the number of monomials of degree $2$ on $n-c$ variables.  On the other hand, $x^\alpha x_i$ is such that $\bb{\alpha+e_i}_{\mathcal{C}}=2, 3$.  If it is $3$ then $x^\alpha x_i$ ranges over all monomials of degree $3$ on $c$ variables, and if it is $3$ then it ranges over all monomials of degree $3$ where component of degree $2$ only involves $x_1, \dots, x_c$ and one variable ranges over $x_{c+1}, \dots, x_n$.  The latter case gives the count $(n-c)\binom{c+1}{2}$ and the former case gives the count $\binom{c+2}{3}$; these are multiplied by $\binom{n-c+1}{2}$.
\end{proof}

\begin{proposition}\label{prop:counts_inductions}
    Assume Condition \ref{cond:low_rank} and Condition \ref{cond:first_r} hold and fix the format $(n, r)$.  Let $c$ be the smallest integer such that $r\leq \sum_{j=0}^c (n-j+1)$.  Write $E^{(n, r)}$, $Y^{(n, r)}$ to be the set of equations and variables for this format.  If $|E^{(n, r)}|\geq |Y^{(n, r)}|$ then $|E^{(n+1, r+c)}|\geq |Y^{(n+1, r+c)}|$.
\end{proposition}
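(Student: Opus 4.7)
The plan is to express $|E^{(n,r)}| - |Y^{(n,r)}|$ in closed form using Theorem~\ref{thm:counts}, then verify the induction step through a clean algebraic manipulation. Set $m := n-c$. A short computation gives the identity $m\binom{m+1}{2} - \binom{m+2}{3} = \frac{(m-1)m(m+1)}{3}$, so the formulas from Theorem~\ref{thm:counts} consolidate into
\begin{align*}
f(m,c) \; := \; |E^{(n,r)}| - |Y^{(n,r)}| \; = \; -\binom{c+4}{5} - m\binom{c+3}{4} + \frac{(m-1)m(m+1)}{3}\binom{c+1}{2}.
\end{align*}
I would also verify at this stage that the coarsening index is preserved when passing to $(n+1, r+c)$: since $\sum_{j=0}^c((n+1)-j+1) = r+c+1$ and $\sum_{j=0}^{c-1}((n+1)-j+1) = r+c-(n+1-c)$, the smallest $c'$ with $r+c \leq \sum_{j=0}^{c'}((n+1)-j+1)$ is again $c$ whenever $c \leq n$, and the analogous expression for $(n+1, r+c)$ becomes $f(m+1, c)$.

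The inductive step then reduces to the implication: if $f(m,c) \geq 0$, then $f(m+1,c) \geq 0$. For $m > 1$, I would rewrite the hypothesis as
\begin{align*}
\frac{(m-1)m(m+1)}{3}\binom{c+1}{2} \; \geq \; \binom{c+4}{5} + m\binom{c+3}{4},
\end{align*}
multiply both sides by $\tfrac{m+2}{m-1}$, and observe that the resulting bound
\begin{align*}
\frac{m(m+1)(m+2)}{3}\binom{c+1}{2} \; \geq \; \tfrac{m+2}{m-1}\Bigl[\binom{c+4}{5} + m\binom{c+3}{4}\Bigr]
\end{align*}
exceeds the desired $\binom{c+4}{5} + (m+1)\binom{c+3}{4}$ by the manifestly nonnegative quantity $\tfrac{3}{m-1}\binom{c+4}{5} + \tfrac{2m+1}{m-1}\binom{c+3}{4}$. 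The remaining edge cases $m \in \{0,1\}$ collapse the hypothesis to a very restricted regime (essentially $c = 0$, where both penalty binomials vanish), which can be handled by direct substitution.

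The hard part will be the combinatorial bookkeeping needed to legitimately apply Theorem~\ref{thm:counts} in the second format $(n+1, r+c)$. Strictly speaking, Theorem~\ref{thm:counts} is proved at the boundary $r = \sum_{j=0}^c (n-j+1)$, whereas $r+c$ sits one short of the corresponding boundary $\sum_{j=0}^c((n+1)-j+1) = r+c+1$ for $n+1$ variables. I would therefore need to extend the counting argument of Lemma~\ref{lem:E1} and Theorem~\ref{thm:counts} to this ``one-below-boundary'' setting, tracking precisely how removing a single degree-$2$ monomial from the full block $B$ modifies the indexing sets $Y$ and $E_1$. Once the count of $|E^{(n+1,r+c)}| - |Y^{(n+1,r+c)}|$ is identified with $f(m+1,c)$ (possibly up to a controllable correction that only strengthens the inequality), the remaining algebraic content is the ratio manipulation described above, and the proposition follows.
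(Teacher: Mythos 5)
Your closed-form strategy has two genuine gaps, and the second one is exactly where the mathematical content of the proposition lives. First, Theorem \ref{thm:counts} counts $|E_1|$, not $|E|=|E_1|+|E_2|$, so the quantity you call $f(m,c)=|E^{(n,r)}|-|Y^{(n,r)}|$ is really $|E_1^{(n,r)}|-|Y^{(n,r)}|$. Proving ``if $|E_1|\geq|Y|$ then $|E_1|\geq|Y|$ at the next level'' does not establish the stated implication: the hypothesis $|E^{(n,r)}|\geq|Y^{(n,r)}|$ can hold while $|E_1^{(n,r)}|<|Y^{(n,r)}|$, in which case your argument says nothing. You would need a count of $|E_2|$ (which the paper never provides in closed form) or a separate monotonicity argument for it. Second, the proposition takes $c$ to be the \emph{smallest} integer with $r\leq\sum_{j=0}^c(n-j+1)$, so $B$ ends at $x_cx_s$ for an arbitrary $c\leq s\leq n$; Theorem \ref{thm:counts} only covers the boundary case $s=n$. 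You flag this for the target format $(n+1,r+c)$ but your source format already suffers from it, and the ``extension of the counting argument... possibly up to a controllable correction'' that you defer is precisely the hard combinatorial step — it is not a routine bookkeeping exercise, since both $Y$ and $E_1$ change in coupled ways as monomials are removed from the top block of $B$.

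For what it is worth, your ratio manipulation at the boundary is correct (I checked the identity $m\binom{m+1}{2}-\binom{m+2}{3}=\frac{(m-1)m(m+1)}{3}$ and the multiplication by $\frac{m+2}{m-1}$, and the edge cases $m\in\{0,1\}$ are indeed vacuous for $c\geq 1$), so you have a clean proof of the weaker statement ``$|E_1|\geq|Y|$ propagates from $(n,\sum_{j=0}^c(n-j+1))$ to the boundary format one dimension up.'' The paper avoids both gaps by a completely different route: it exhibits explicit embeddings $Y^{(n,r)}\hookrightarrow Y^{(n+1,r+c)}$ and $E^{(n,r)}\hookrightarrow E^{(n+1,r+c)}$ and then assigns to each element of $Y^{(n+1,r+c)}\setminus Y^{(n,r)}$ a distinct new equation, so that the increment in $|E|$ dominates the increment in $|Y|$; no closed-form counts are needed, $E_2$ is carried along by the embedding, and arbitrary $r$ in the admissible range is handled uniformly. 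If you want to salvage your approach, the minimal fix is to replace the closed-form computation with such an injection, or to restrict your claim to $|E_1|$ at boundary ranks and prove that restricted statement instead.
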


The proof of this proposition is in the appendix.  In other words, as $n$ increases, for some fixed ``final element'' $x_cx_j$ of $B$, the number of equations increases faster than the number of variables.  It suffices to count the coarse ranks as these determine a fixed ``final element.''  When $|E_1|$ is larger than $|Y|$ we would like to show that the corresponding matrix $\mathbf{A}$ is generically full column rank.  This is the topic of the next section.

\subsubsection{Conjecture for efficient decomposition of generic tensors}\label{sec:conjecture}
In this section we first show that Algorithm \ref{alg:efficient_alg} can exceed the undercomplete setting for order-$4$ tensors by proving that generic tensors of rank up to $2n+1$ are efficiently decomposable.  We then conjecture and provide evidence, including computer assisted proofs for $n=2, \dots, 17$, that generic tensors of rank $O(n^2)$ are efficiently decomposable via Algorithm \ref{alg:efficient_alg}.  We first give some preliminary definitions and results that streamline notation and proofs.

\begin{definition}
    We say that $(n, r)$ is an \textit{efficient format} if for a tensor satisfying Assumption \ref{ass:dehomogenization}, Assumption \ref{ass:concise},  Condition \ref{cond:low_rank} and Condition \ref{cond:first_r}, the coefficient matrix $\mathbf{A}\in \bbC^{|E|\times |Y|}$ is generically full column rank (with the open condition being a nonvanishing of some $|Y|\times |Y|$ minor of $\mathbf{A}$).

    We say that $(n, r)$ is a \textit{very efficient format} if under the same set of assumptions and conditions, the submatrix $\mathbf{A}^1\in \bbC^{|E_1|\times |Y|}$ of $\mathbf{A}$ (using only the equations in $E_1$) is generically full column rank.
\end{definition}
Of course, if $(n, r)$ is a very efficient format it is also an efficient format.  It is also clear that if $|E|<|Y|$ (resp. $|E_1|<|Y|$) then $(n, r)$ cannot be an efficient (resp. very efficient) format.  

If $(n, r)$ is an efficient format generic tensors of rank $r$ in $S^4\bbC^{n+1}$ Algorithm \ref{alg:efficient_alg} gives a decomposition efficiently.  Moreover, the matrix $\mathbf{A}$ being full column rank serves as an \textit{effective criterion} as it then certifies identifiability on a dense subset of the set of rank $r$ tensors; see Definition 4.1 of \cite{chiantini2017effective}.

Let $\mathbf{A}^1$ be the submatrix of $\mathbf{A}$ given by only taking the equations in $E_1$.  Recall from Lemma \ref{lem:E1} that $E_1$ has the form
\[E_1=\{(x^\eta, x^\theta)\mid |\eta|=3, |\theta|=2, \eta_{\mathcal{C}}\geq 2, \theta_{\mathcal{C}}=0\}.\]
Moreover, from the proof of Theorem \ref{thm:counts} we have that
\begin{align*}
    Y&=Y_5\cup Y_4\cup Y_3\cup Y_2,\\
    Y_k &=\{x^\gamma\mid \gamma_{\mathcal{C}}=k\}.
\end{align*}
Then a row $\mathbf{A}^1_{(x^\eta, x^\theta)}\in \bbC^{|Y|}$ satisfies, for $x^\gamma\in Y$
\[
\bb{\mathbf{A}^1_{(x^\eta, x^\theta)}}_{x^\gamma} =
\begin{cases}
    (-1)^{r+1+\operatorname{pos}(\gamma-\eta)}m_{\gamma-\eta}^\theta, \quad &  \gamma\in B_2+\eta,\\
    m,\quad & \gamma=\eta+\theta,\\
    0,\quad & \text{otherwise}.
\end{cases}
\]
Note that the first two cases are indeed disjoint as $\theta\notin B_2$.  

We make a series of reductions from $\mathbf{A}^1$ to a smaller matrix $\mathbf{N}$ such that $\mathbf{A}^1$ is full column rank if and only if $\mathbf{N}$ is full column rank.  Define
\begin{equation}
    Y'=Y_5\cup Y_4\cup Y_3
\end{equation}
and 
\begin{equation}
    \begin{aligned}
    E_{1, 1}'&:=\{(x^\eta, x_j x_k)\mid |\eta|=3, \eta_{\mathcal{C}}, j\leq k\},\\
    E_{1, 2}'&:=\{((x^\delta x_k, x_kx_{\ell}), (x^\delta x_\ell, x_k^2))\mid |\delta|=2, \delta_{\mathcal{C}}=2, k\neq \ell\},\\
    &\quad \cup \{((x^\delta x_j, x_kx_\ell), (x^\delta x_k, x_j x_\ell))\mid |\delta|=2, \delta_{\mathcal{C}}=2, j<k, k\neq \ell\},\\
    E_{1}'&:=E_{1, 1}'\cup E_{1, 2}'.
\end{aligned}
\end{equation}
It is easy to see from the proof Theorem \ref{thm:counts} and the construction of the sets that
\begin{equation}
\begin{aligned}
    |Y'|&=\binom{c+4}{5}+(n-c)\binom{c+3}{4}+\binom{n-c+1}{2}\binom{c+2}{3},\\
    |E_1'|&=\binom{n-c+1}{2}\binom{c+2}{3}+\binom{c+1}{2}\bb{\binom{n-c+1}{2}(n-c)-\binom{n-c+2}{3}}.
\end{aligned}
\end{equation}
Define ${\mathbf{A}^1}'\in \bbC^{|E_1'|\times |Y'|}$ as the matrix with rows $E_1'$ and columns $Y'$.  

\begin{example}\label{ex:A_matrix}
    Let $n=4, c=1$.  Then 
    \[B=\{1, x_1, x_2, x_3, x_4, x_1^2, x_1x_2, x_1x_3, x_1x_4\}.\]
    The rows correspond to
    \begin{align*}
         & (x_1^3, x_2^2),
         (x_1^3, x_2x_3),
         (x_1^3, x_2x_4),
         (x_1^3, x_3^2),
         (x_1^3, x_3x_4),
         (x_1^3, x_4^2),\\
        &[(x_1^2x_2, x_2x_3), (x_1^2x_3, x_2^2)],
        [(x_1^2x_2, x_2x_4), (x_1^2x_4, x_2^2)],
        [(x_1^2x_2, x_3^2), (x_1^2x_3, x_2x_3)],\\
        & [(x_1^2x_2, x_3x_4), (x_1^2x_3, x_2x_4)]
        [(x_1^2x_2, x_3x_4), (x_1^2x_4, x_2x_3)],
        [(x_1^2x_2, x_4^2), (x_1^2x_4, x_2x_4)],\\
        &[(x_1^2x_3, x_3x_4), (x_1^2x_4, x_3^2)],
        [(x_1^2x_3, x_4^2), (x_1^2x_4, x_3x_4)]
    \end{align*}
    and columns to
    \[
         x_1^5,
         x_1^4x_2,
         x_1^4x_3,
         x_1^4x_4,
         x_1^3x_2^2,
         x_1^3x_2x_3,
         x_1^3x_2x_4,
         x_1^3x_3^2,
         x_1^3x_3x_4,
         x_1^3x_4^2.
    \]
        The matrix ${\mathbf{A}^1}'$ takes the form
    \[
    \bb{{\mathbf{A}^1}'}_{Y_5}= \begin{bmatrix}
        m_{x_1^2}^{x_2x_3} \\
        m_{x_1^2}^{x_2x_3)} \\
        m_{x_1^2}^{x_2x_4} \\
        m_{x_1^2}^{x_3^2} \\
        m_{x_1^2}^{x_3x_4} \\
        m_{x_1^2}^{x_4^2} \\ 
        0 \\
        0 \\
        0 \\
        0 \\
        0 \\
        0 \\
        0 \\
        0
    \end{bmatrix},\quad 
    \bb{{\mathbf{A}^1}'}_{Y_4}= \begin{bmatrix}
        -m_{x_1x_2}^{x_2^2} & m_{x_1x_3}^{x_2^2} & -m_{x_1x_4}^{x_2^2} \\
        -m_{x_1x_2}^{x_2x_3} & m_{x_1x_3}^{x_2x_3} & -m_{x_1x_4}^{x_2x_3} \\
        -m_{x_1x_2}^{x_2x_4} & m_{x_1x_3}^{x_2x_4} & -m_{x_1x_4}^{x_2x_4} \\
        -m_{x_1x_2}^{x_3^2} & m_{x_1x_3}^{x_3^2} & -m_{x_1x_4}^{x_3^2} \\
        -m_{x_1x_2}^{x_3x_4} & m_{x_1x_3}^{x_3x_4} & -m_{x_1x_4}^{x_3x_4} \\
        -m_{x_1x_2}^{x_4^2} & m_{x_1x_3}^{x_4^2} & -m_{x_1x_4}^{x_4^2} \\
        m_{x_1^2}^{x_2x_3} & -m_{x_1^2}^{x_2^2} & 0 \\
        m_{x_1^2}^{x_2x_4} & 0 & -m_{x_1^2}^{x_2^2} \\
        m_{x_1^2}^{x_3^2} & -m_{x_1^2}^{x_2x_3} & 0 \\
        m_{x_1^2}^{x_3x_4} & -m_{x_1^2}^{x_2x_4} & 0 \\
        m_{x_1^2}^{x_3x_4} & 0 & -m_{x_1^2}^{x_2x_3} \\
        m_{x_1^2}^{x_4^2} & 0 & -m_{x_1^2}^{x_2x_4} \\
        0 & m_{x_1^2}^{x_3x_4} & -m_{x_1^2}^{x_3^2} \\
        0 & m_{x_1^2}^{x_4^2} & -m_{x_1^2}^{x_3x_4} \\
    \end{bmatrix},
    \]
    \[
    \bb{{\mathbf{A}^1}'}_{Y_3}=\begin{bmatrix}
        m & 0 & 0 & 0 & 0 & 0 \\
        0 & m & 0 & 0 & 0 & 0 \\
        0 & 0 & m & 0 & 0 & 0 \\
        0 & 0 & 0 & m & 0 & 0 \\
        0 & 0 & 0 & 0 & m & 0 \\
        0 & 0 & 0 & 0 & 0 & m \\
        -m_{x_1x_2}^{x_2x_3} & m_{x_1x_3}^{x_2x_3}+m_{x_1x_2}^{x_2^2} & -m_{x_1x_4}^{x_2x_3} & -m_{x_1x_3}^{x_2^2} & m_{x_1x_4}^{x_2^2} & 0 \\
        -m_{x_1x_2}^{x_2x_4} & m_{x_1x_3}^{x_2x_4} & -m_{x_1x_4}^{x_2x_4}+m_{x_1x_2}^{x_2^2} & 0 & -m_{x_1x_3}^{x_2^2} & m_{x_1x_4}^{x_2^2} \\
        -m_{x_1x_2}^{x_3^2} & m_{x_1x_3}^{x_3^2}+m_{x_1x_2}^{x_2x_3} & -m_{x_1x_4}^{ x_3^2} & -m_{x_1x_3}^{x_2x_3} & m_{x_1x_4}^{x_2x_3} & 0 \\
        -m_{x_1x_2}^{x_3x_4} & m_{x_1x_3}^{x_3x_4}+m_{x_1x_2}^{x_2x_4} & -m_{x_1x_4}^{x_3x_4} & -m_{x_1x_3}^{x_2x_4} & m_{x_1x_4}^{x_2x_4} & 0 \\
        -m_{x_1x_2}^{ x_3x_4} & m_{x_1x_3}^{x_3x_4} & -m_{x_1x_4}^{x_3x_4}+m_{x_1x_2}^{ x_2x_3} & 0 & -m_{x_1x_3}^{x_2x_3} & m_{x_1x_4}^{x_2x_3} \\
        -m_{x_1x_2}^{x_4^2} & m_{x_1x_3}^{x_4^2} & -m_{x_1x_4}^{x_4^2}+m_{x_1x_2}^{x_2x_4} & 0 & -m_{x_1x_3}^{x_2x_4} & m_{x_1x_4}^{x_2x_4} \\
        0 & -m_{x_1x_2}^{ x_3x_4} & m_{x_1x_2}^{x_3^2} & m_{x_1x_3}^{x_3x_4} & -m_{x_1x_4}^{x_3x_4}-m_{x_1x_3}^{x_3^2} & m_{x_1x_4}^{x_3^2}\\
        0 & -m_{x_1x_2}^{x_4^2} & m_{x_1x_2}^{x_3x_4} & m_{x_1x_3}^{x_4^2} & -m_{x_1x_4}^{x_4^2}-m_{x_1x_3}^{x_3x_4} & m_{x_1x_4}^{ x_3x_4}\\
    \end{bmatrix}.
    \]
\end{example}

Now specify the submatrices
\begin{equation}\label{eq:N_matrices}
    \begin{aligned}
        \mathbf{N}_{1, 1}&:=\bb{{\mathbf{A}^1}'}_{E_{1, 2}', Y'_3},\\
    \mathbf{N}_{1, 2}&:=\bb{{\mathbf{A}^1}'}_{E_{1, 1}', Y'_5\cup Y'_4},\\
    \mathbf{N}_2&:=\bb{{\mathbf{A}^1}'}_{E'_{1, 2}, Y'_5\cup Y'_4},\\
    \mathbf{N}& :=\mathbf{N}_{1, 1}\mathbf{N}_{1, 2}-m\mathbf{N}_2\in \mathbb{C}^{|E_{1,2}'|\times |Y'_5\cup Y'_4|}.
    \end{aligned}
\end{equation}

We prove equivalences between the ranks of $\mathbf{A}^1, {\mathbf{A}^1}', \mathbf{N}$; the proof can be found in the appendix.
\begin{lemma}\label{lem:reductions}
    For $m=\operatorname{det}(\Z_B)\neq 0$ the following are equivalent:
    \begin{enumerate}
        \item $\mathbf{A}^1$ is full column rank,
        \item ${\mathbf{A}^1}'$ is full column rank,
        \item $\mathbf{N}$ is full column rank.
    \end{enumerate}
\end{lemma}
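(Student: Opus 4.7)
The plan is to establish $(i) \Leftrightarrow (ii) \Leftrightarrow (iii)$ via two block-matrix arguments, one for each step.

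For $(ii) \Leftrightarrow (iii)$ I will use a Schur-complement computation. The first observation is that $(x^\eta, x^\theta) \mapsto x^{\eta+\theta}$ defines a bijection $E_{1,1}' \to Y_3'$: the constraints $\eta_\mathcal{C}=3, \theta_\mathcal{C}=0$ uniquely split $\eta+\theta$ into its $\mathcal{C}$- and $\mathcal{C}^c$-parts. Combined with the fact that for $(x^\eta, x^\theta) \in E_{1,1}'$ the other variables appearing in \eqref{eq:new_lin_eq} have indices $\eta+\gamma$ with $(\eta+\gamma)_\mathcal{C} \geq 4$ (since $\gamma_\mathcal{C} \geq 1$ for every $\gamma \in B_2$), this shows the $E_{1,1}' \times Y_3'$ sub-block of ${\mathbf{A}^1}'$ is exactly $mI$. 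Hence ${\mathbf{A}^1}'$ has the block form $\begin{pmatrix} \mathbf{N}_{1,2} & mI \\ \mathbf{N}_2 & \mathbf{N}_{1,1} \end{pmatrix}$ along the partition $(E_{1,1}', E_{1,2}') \times (Y_5' \cup Y_4', Y_3')$. Since $m \neq 0$, subtracting $(1/m)\mathbf{N}_{1,1}$ times the top row-block from the bottom preserves rank and yields $\begin{pmatrix} \mathbf{N}_{1,2} & mI \\ -(1/m)\mathbf{N} & 0 \end{pmatrix}$, whose kernel is trivial iff $\mathbf{N}$ has full column rank.

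For $(i) \Leftrightarrow (ii)$, I will split $E_1 = E_{1,A} \sqcup E_{1,B}$ according to whether $\eta_\mathcal{C}$ equals $3$ or $2$, so that $E_{1,A} = E_{1,1}'$. A parallel degree count shows the $Y_2$-columns vanish on all $E_{1,A}$-rows, and each $E_{1,B}$-row has a single nonzero $Y_2$-entry, namely $m$ at the main-variable column $x^{\eta+\theta}$. Thus $\mathbf{A}^1 = \begin{pmatrix} M_A & 0 \\ M_B & C \end{pmatrix}$ along $(E_{1,A}, E_{1,B}) \times (Y', Y_2)$. The matrix $C$ has full column rank $|Y_2|$: every $\gamma \in Y_2$ is realized as $\eta+\theta$ for some $(x^\eta, x^\theta) \in E_{1,B}$ (pick any $x_j$ in the support of $\gamma^{\mathcal{C}^c}$, and take $\eta = \gamma^\mathcal{C} + e_j, \theta = \gamma^{\mathcal{C}^c} - e_j$), while columns of $C$ for distinct $\gamma$ have disjoint supports. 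Row-reducing $\mathbf{A}^1$ by picking, for each $\gamma \in Y_2$, one $E_{1,B}$-row with main variable $\gamma$ as a pivot and replacing the remaining $E_{1,B}$-rows by their differences with the pivot, produces a rank-preserving block form $\begin{pmatrix} M_A & 0 \\ M_B^{\mathrm{piv}} & mI \\ M_B'' & 0 \end{pmatrix}$, which has full column rank iff $\begin{pmatrix} M_A \\ M_B'' \end{pmatrix}$ does. It therefore remains to check that $\begin{pmatrix} M_A \\ M_B'' \end{pmatrix}$ and ${\mathbf{A}^1}'$ have the same row-span inside $\mathbb{C}^{|Y'|}$.

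This final check is the main obstacle and is combinatorial in nature. Each row of $E_{1,2}'$ is, by construction, the difference of two $E_{1,B}$-rows sharing the same main variable, so it lies in the left null space $L$ of $C$; I must show these differences span $L$. Since $\dim L = |E_{1,B}|-|Y_2| = \sum_{\gamma \in Y_2}(k_\gamma - 1)$, where $k_\gamma$ equals the number of distinct variables appearing in $\gamma^{\mathcal{C}^c}$ (either $1$, $2$, or $3$, according as $\gamma^{\mathcal{C}^c}$ equals $x_j^3$, $x_j^2 x_k$, or $x_j x_k x_\ell$), the plan is to unpack the two families defining $E_{1,2}'$, account for their overlap (a first-family triple $(\delta,k,\ell)$ with $k<\ell$ coincides with a second-family quadruple whose $\ell$-index equals its $j$-index), and verify that for each $\gamma$ the resulting differences span the full $(k_\gamma-1)$-dimensional space of $\gamma$-relations inside $L$. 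Once established, both $M_B''$ and the $E_{1,2}'$-block of ${\mathbf{A}^1}'$ have row-span equal to the image of $L$ in $\mathbb{C}^{|Y'|}$, delivering $(i) \Leftrightarrow (ii)$ and completing the chain of equivalences.
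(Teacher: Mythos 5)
Your proposal is correct and takes essentially the same route as the paper: both reductions rest on the facts that each $Y_2$-variable occurs with coefficient $m$ in at most three equations of $E_1$ and nowhere else (so it can be pivoted away, leaving precisely the $E_{1,2}'$ differences), and that the $E_{1,1}'\times Y_3$ block is $m\mathbf{I}$ (so a Schur complement produces $\mathbf{N}$). The spanning check you defer at the end is exactly the paper's case analysis on whether a $Y_2$-variable appears in one, two, or three equations --- your $k_\gamma\in\{1,2,3\}$ trichotomy --- and it goes through.
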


We now directly show that $(n, r), r\leq 2n+1$ are efficient formats. We start with a lemma counting $Y, E_1$.
\begin{lemma}\label{lem:c1_counts}
    Let $\phi\in S^4\bbC^{n+1}$ satisfy Assumption \ref{ass:concise}, Condition \ref{cond:low_rank} and Condition \ref{cond:first_r}, and let $\operatorname{rank}(\phi)=2n+1$.  Then
    \[|Y|=n+\binom{n}{2}+\binom{n+1}{3}=\binom{n+2}{3}\]
    and 
    \[|E_1|=n\binom{n}{2}.\]
    In particular, when $n\geq 4$, $|E_1|\geq |Y|$.
\end{lemma}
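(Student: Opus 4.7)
The plan is to read this off Theorem \ref{thm:counts} after identifying the correct coarsening parameter, and then clean up the binomial expressions.

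First I would observe that the hypothesis $r = 2n+1$ fits the coarsening $r = \sum_{j=0}^{c}(n-j+1)$ with $c=1$, since $(n+1) + n = 2n+1$. Under Assumption \ref{ass:concise} and Condition \ref{cond:first_r}, this forces the monomial basis to be $B = \{1, x_1, \dots, x_n, x_1 x_1, x_1 x_2, \dots, x_1 x_n\}$, i.e., the degree-$2$ part of $B$ consists precisely of the monomials with $\gamma_{\mathcal{C}} \geq 1$, and $c=1$ is the setting in which Theorem \ref{thm:counts} applies directly.

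Next I would plug $c=1$ into the formulas of Theorem \ref{thm:counts}. This yields
\begin{align*}
|Y| &= \binom{5}{5} + (n-1)\binom{4}{4} + \binom{n}{2}\binom{3}{3} + \binom{n+1}{3}\binom{2}{2} = n + \binom{n}{2} + \binom{n+1}{3},
\end{align*}
and
\begin{align*}
|E_1| &= \binom{n}{2}\left(\binom{3}{3} + (n-1)\binom{2}{2}\right) = \binom{n}{2}\cdot n.
\end{align*}
The first displayed identity already matches the first half of the lemma. To collapse it to $\binom{n+2}{3}$ I would invoke Pascal's identity twice: $n + \binom{n}{2} = \binom{n+1}{2}$, and then $\binom{n+1}{2} + \binom{n+1}{3} = \binom{n+2}{3}$.

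Finally I would verify the inequality $|E_1| \geq |Y|$ for $n \geq 4$ by a direct comparison: the inequality $n\binom{n}{2} \geq \binom{n+2}{3}$ is equivalent after clearing denominators to $n^2 - 3n - 1 \geq 0$, which holds for all integers $n \geq 4$ (and fails at $n=3$, as a sanity check that the bound is tight). There is no real obstacle here; the only thing to be careful about is matching the indexing in Theorem \ref{thm:counts} when specializing to $c=1$, so I would double-check that the substitution $c=1$ is what the coarsening $r = \sum_{j=0}^{c}(n-j+1) = 2n+1$ actually demands before running the arithmetic.
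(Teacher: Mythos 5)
Your proposal is correct and follows essentially the same route as the paper: substitute $c=1$ into Theorem \ref{thm:counts}, simplify, and check the sign of the resulting cubic $|E_1|-|Y|=\tfrac{n^3}{3}-n^2-\tfrac{n}{3}$ for $n\geq 4$. Your extra steps (the double application of Pascal's identity and reducing the inequality to $n^2-3n-1\geq 0$) are accurate refinements of the same computation.
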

\begin{proof}
    Plug in $c=1$ into Theorem \ref{thm:counts}.  We then have $|E_1|-|Y|=(n-1)\binom{n}{2}-n-\binom{n+1}{3}$, which is
    \[
        \frac{n^3}{3}-n^2-\frac{n}{3},
    \]
    which has a root between $3$ and $4$.  As the leading coefficient is positive, for $n\geq 4$ $|E_1|-|Y|$ is positive.
\end{proof}

Our next lemma addresses the cases $n=2, 3$ -- not covered by the previous lemma; proof is in the appendix.  The proof of $n=2$ proceeds by showing that the vanishing of the maximal minors $\mathbf{A}$ is explicitly a nonzero polynomial in the entries of $\mathbf{Z}$ while the proof of $n=3$ proceeds by a specialization, i.e., choosing specific points.
\begin{lemma}\label{lem:n23_linear}
    $(2, 4)$ is an efficient format.  Additionally, $(3, 5), (3, 6), (3, 7)$ are efficient formats.
\end{lemma}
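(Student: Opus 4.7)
The plan is to handle the $(2,4)$ case by a direct polynomial computation and the three $n=3$ cases by a common specialization argument, both exploiting that the entries of the coefficient matrix $\mathbf{A}$ are polynomials in the coordinates of a decomposition $\mathbf{Z}$.

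For $(2,4)$, with $B=\{1,x_1,x_2,x_1^2\}$ the definitions of $E_1$ and $E_2$ immediately give $E_1=\{(x_1^3,x_1x_2)\}$, $E_2=\{[(x_1^3,x_2^2),(x_1^2x_2,x_1x_2)]\}$, and $Y=\{x_1^5,x_1^4x_2\}$, so $\mathbf{A}$ is $2\times 2$. I would write down the two linear equations explicitly using the Laplace expansion formula of Equation \ref{eq:new_lin_eq} for the $E_1$ row, and its analogue obtained by subtracting two determinants for the $E_2$ row; each entry of $\mathbf{A}$ is either $m=\det(\mathbf{Z}_B)$ or a signed minor $\pm m_{x^\gamma}^{x^\delta}$ of $\mathbf{Z}$. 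The key step is then to expand $\det(\mathbf{A})$ as a polynomial in the coordinates of $\z_1,\dots,\z_4$ and exhibit a single monomial in these coordinates that cannot cancel against any other contribution (degree and multidegree bookkeeping on $m$ versus the $m_{x^\gamma}^{x^\delta}$ makes such a term easy to isolate). Since nonvanishing of a polynomial is a Zariski-open condition, this suffices to conclude that $(2,4)$ is an efficient format.

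For each format $(3,r)$ with $r\in\{5,6,7\}$ the direct symbolic expansion is impractical: $|Y|$ can be as large as $10$ and $|E|$ is substantially larger, with $E_2$ contributing many of the equations. Instead I would use a specialization argument. First, enumerate $E_1$, $E_2$, and $Y$ from the definitions. Second, choose a concrete configuration of points $\mathbf{Z}^{\ast}=\{\z_1^{\ast},\dots,\z_r^{\ast}\}\subset\bbC^4$, for instance small rational coordinates with $\z_{i,0}^{\ast}=1$, for which Condition \ref{cond:low_rank} and Condition \ref{cond:first_r} hold. Third, form $\mathbf{A}$ in terms of the coordinates of $\mathbf{Z}$ via Equation \ref{eq:new_lin_eq}, substitute $\mathbf{Z}=\mathbf{Z}^{\ast}$, and verify by a direct exact rank computation over $\bbQ$ that the specialized matrix has full column rank $|Y|$. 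Because the maximal $|Y|\times|Y|$ minors of $\mathbf{A}$ are polynomials in the entries of $\mathbf{Z}$, the existence of one $\mathbf{Z}^{\ast}$ at which some such minor is nonzero shows that the minor is a nonzero polynomial; hence $\mathbf{A}$ has full column rank on a Zariski-open dense subset of the parameter space, and $(3,r)$ is an efficient format.

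The main obstacle is bookkeeping, particularly for $(3,7)$: correctly listing all pairs in $E_2$ and assembling the corresponding rows of $\mathbf{A}$ from the Laplace expansion is straightforward but tedious. These calculations are well-suited to computer algebra and I would defer them to the appendix, keeping the rank checks over $\bbQ$ to avoid any numerical subtleties and ensure the specialization argument is rigorous.
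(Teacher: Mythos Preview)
Your proposal is correct and follows essentially the same approach as the paper: the $(2,4)$ case is handled by exhibiting a non-cancelling monomial in $\det(\mathbf{A})$ (the paper uses $\z_{1,1}^3\z_{2,2}^3\z_{3,1}\z_{4,2}$), and the three $n=3$ cases are handled by explicit rational specializations of $\Z$ followed by an exact rank check. One point worth flagging for the $(3,7)$ case is that $|E_1|=9<|Y|=10$, so at least one equation from $E_2$ is genuinely required to get full column rank; the paper makes this explicit by exhibiting the specific $E_2$ row $[(x_1x_2^2,x_3^2),(x_1x_2x_3,x_2x_3)]$ that completes a $10\times 10$ invertible submatrix.
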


The proof of the next lemma also proceeds by specialization, with proof in the appendix.
\begin{lemma}\label{lem:c1_veryefficient}
    For $n\geq 4$, $(n, 2n+1)$ is a very efficient format.
\end{lemma}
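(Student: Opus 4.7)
The plan is to combine Lemma \ref{lem:reductions} with a specialization argument. By Lemma \ref{lem:reductions}, it suffices to show that the reduced matrix $\mathbf{N} = \mathbf{N}_{1,1}\mathbf{N}_{1,2} - m\mathbf{N}_2$ of \eqref{eq:N_matrices} has full column rank generically. Each entry of $\mathbf{N}$ is a polynomial in the coordinates $\mathbf{z}_{i,k}$ of the points $\mathbf{z}_1, \dots, \mathbf{z}_{2n+1}$, so nonvanishing of at least one $|Y_5'\cup Y_4'| \times |Y_5'\cup Y_4'|$ minor is an open condition. It thus suffices to exhibit a single configuration $\mathbf{Z}^{\star}$ of $2n+1$ points in $\mathbb{C}^{n+1}$ at which $\mathbf{N}$ has full column rank, and (since we already have the counts $|E_1| \geq |Y|$ from Lemma \ref{lem:c1_counts} for $n\geq 4$) the existence of such a $\mathbf{Z}^\star$ immediately yields the very efficient format claim.

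The first step of the plan is to exploit the rigid form of $B$ when $c=1$, namely $B = \{1, x_1, \dots, x_n\} \cup \{x_1^2, x_1 x_2, \dots, x_1 x_n\}$. Every determinant $m^{\delta}_{\gamma}$ appearing in $\mathbf{N}$ is then the determinant of a $(2n+1) \times (2n+1)$ matrix whose columns consist of Vandermonde evaluations of degree-at-most-$2$ monomials, and among these columns all but one of the degree-$2$ monomials are divisible by $x_1$. This means every entry of $\mathbf{N}_{1,1}$, $\mathbf{N}_{1,2}$, and $\mathbf{N}_2$ factors linearly through the $x_1$-coordinates, which strongly restricts the possible cancellations after one substitutes in a sparse point configuration.

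The second step is to choose the specialization $\mathbf{Z}^{\star}$ with a recursive flavor, so that removing one variable and one point recovers the analogous problem for $(n-1, 2n-1)$. A natural candidate is to take $n$ of the points to lie on coordinate axes (say $\mathbf{z}_i = e_i$ for $i=1, \dots, n$), $n+1$ of the points in the hyperplane $x_n = 0$ with generic entries, and one additional point chosen to couple the $x_n$-direction to the rest. With this choice, many $m_\gamma^\delta$ either vanish or reduce to Vandermonde determinants in $n-1$ variables, and $\mathbf{N}$ acquires a block-triangular structure after a suitable row/column permutation. The off-diagonal blocks are controlled by the specialization, while the diagonal blocks are (up to harmless scalars) either instances of the same matrix $\mathbf{N}$ for $(n-1, 2n-1)$, or generalized Vandermonde matrices of small size that are easily checked to be full column rank.

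The third step is a straightforward induction on $n$, with base case $n=4$ verified by a direct rank computation (analogous to the argument in Lemma \ref{lem:n23_linear}); the inductive step then follows from the block-triangular reduction above. The main obstacle is the selection of the specialization $\mathbf{Z}^{\star}$: it must be sparse enough to make $\mathbf{N}$ block-triangular and to keep the analysis tractable, but rich enough that no diagonal block drops rank for spurious reasons, and the recursive structure must propagate cleanly from $n-1$ to $n$. Once a working $\mathbf{Z}^{\star}$ is fixed, establishing full column rank of the specialized $\mathbf{N}$ amounts to bookkeeping on the nonzero entries and a finite rank check.
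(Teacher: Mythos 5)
Your overall strategy (reduce via Lemma \ref{lem:reductions} to the matrix $\mathbf{N}$, then exhibit one explicit point configuration at which it has full column rank) is the same strategy the paper uses, but your proposal stops exactly where the proof has to begin: no concrete $\mathbf{Z}^\star$ is produced, the claimed block-triangular structure and the ranks of its diagonal blocks are asserted rather than verified, and even the invertibility of $\Z_B$ at your configuration (needed for $m\neq 0$, i.e.\ for Lemma \ref{lem:reductions} to apply at all) is not checked. Moreover, your proposed configuration is internally inconsistent: $n$ axis points plus $n+1$ points in the hyperplane $x_n=0$ plus one coupling point is $2n+2$ points, one more than the format allows, and points of the form $\mathbf{e}_i$ with vanishing zeroth coordinate do not fit the dehomogenized setting in which the minors $m_{x^\gamma}^{x^\delta}$ are defined.

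There is also a structural mismatch in the proposed induction. Passing from the format $(n,2n+1)$ to $(n-1,2n-1)$ removes one variable but \emph{two} points, so ``removing one variable and one point'' lands on $(n-1,2n)$; since $2n>2(n-1)+1$, that format is no longer in the $c=1$ regime ($B_2$ would have to contain a monomial not divisible by $x_1$), and its matrix $\mathbf{N}$ has a different shape, so the diagonal block of your specialized matrix cannot be ``the same matrix $\mathbf{N}$ for $(n-1,2n-1)$'' without a substantial additional argument. The paper avoids induction here entirely: it writes down a single explicit specialization valid for all $n\geq 4$ ($\z_1=\mathbf{e}_0$, $\z_k=\mathbf{e}_0+\mathbf{e}_{k-1}$, and tail points with entries $-1$ and $2$), computes every relevant minor in closed form (Claim \ref{claim:c1_veryefficient}: each $m_{x^\gamma}^{x^\delta}$ is $0$, $\pm m$, or $\pm 2m$), selects $|Y'|$ equations from $E_1'$, and reduces the determinant of the resulting square submatrix to a fixed $3\times 3$ block of determinant $3$, independent of $n$. (The inductive step the paper does use, Lemma \ref{lem:c1_efficient_reduction}, is of the form $(n,r)\to(n+1,r+1)$, adding one point and one variable, precisely because that is the increment for which the bookkeeping closes.) To repair your argument you would need either to supply a concrete sparse configuration with $2n+1$ dehomogenized points, prove $\det(\Z_B)\neq 0$ there, and carry out the rank verification of $\mathbf{N}$ explicitly, or to redesign the recursion so that the smaller instance genuinely reappears as a block; as written, the essential content of the lemma is missing.
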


We next show that efficient formats satisfy a sort of induction.
\begin{lemma}\label{lem:c1_efficient_reduction}
    Let $r\leq 2n+1$.  If $(n, r)$ is an efficient format then $(n+1, r+1)$ is an efficient format.
\end{lemma}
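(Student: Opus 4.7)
The plan is a specialization argument combined with upper semi-continuity of matrix rank. Since full column rank of $\mathbf{A}^{(n+1,r+1)}$ is an open condition, it suffices to exhibit a single tensor $\phi' \in S^4\bbC^{n+2}$ of rank $r+1$ for which the associated matrix has full column rank. I construct $\phi'$ by extending a generic $(n,r)$-tensor: let $\Z = \{\z_1,\ldots,\z_r\} \subset \bbC^{n+1}$ be a generic decomposition from the $(n,r)$ case (so by hypothesis $\mathbf{A}^{(n,r)}$ is full column rank), view each $\z_i$ in $\bbC^{n+2}$ by setting $z_{i,n+1}=0$, append $\z_{r+1}=(1,0,\ldots,0,1)$, and set $\phi' := \sum_{i=1}^{r+1}\z_i^{\otimes 4}$. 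Choose $B' := B \cup \{x_{n+1}\}$ where $B$ is the $(n,r)$-basis. Since $r \le 2n+1$ forces $|B_2| = r-n-1 \le n$, we have $B_2' = B_2$ and no degree-2 monomial in $B'$ involves $x_{n+1}$. Expanding along the last row of $\Z_{B'}$, which takes the form $(1,0,\ldots,0,1,0,\ldots,0)$ under the specialization, yields $\det(\Z_{B'}) = \pm m^{(n,r)} \neq 0$, so Condition \ref{cond:first_r} holds for $\phi'$.

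Next I partition $Y^{(n+1,r+1)} = Y^{(n,r)} \sqcup Y^{\text{new}}$ and $E^{(n+1,r+1)} = E^{(n,r)} \sqcup E^{\text{new}}$ according to whether $x_{n+1}$ appears. The key structural observation is that for any $e \in E^{(n,r)}$, the moment variables appearing in Equation \eqref{eq:new_lin_eq} have the form $y_{x^{\alpha+\gamma}x_i}$ or $y_{x^{\alpha+\beta}x_ix_j}$ with $x^\alpha,x^\beta,x^\gamma \in B_2' = B_2$ and $i,j \le n$, so none involves $x_{n+1}$; hence all lie in $Y^{(n,r)}$. This gives the block-triangular form
\[
\mathbf{A}^{(n+1,r+1)} = \begin{pmatrix} \mathbf{A}^{(n,r)} & \mathbf{0} \\ * & \mathbf{D} \end{pmatrix},
\]
where the top-left block agrees (up to diagonal sign) with $\mathbf{A}^{(n,r)}$: each $(n+1,r+1)$-minor $m_{x^\gamma}^{x^\beta x_j}$ with $x^\gamma,x^\beta,x_j$ involving no $x_{n+1}$ reduces via expansion along the $x_{n+1}$-column (which is $(0,\ldots,0,1)^\top$ after a row permutation) to the corresponding $(n,r)$-minor.

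To show $\mathbf{D}$ is full column rank, I use ``Case A'' rows: equations $(x^\alpha x_i, x_1 x_{n+1})$ with $x^\alpha \in B_2$ and $1 \le i \le r-n-1$, which satisfy $x_1 x_i \in B_2$ and $x_1 x_{n+1} \notin B_2'$ and so are valid $E_1$-equations lying in $E^{\text{new}}$. Each such row has exactly one nonzero entry among $Y^{\text{new}}$-columns, namely at $y_{x^\alpha x_1 x_i x_{n+1}}$ with coefficient $m = \pm m^{(n,r)} \neq 0$; the other contributions $y_{x^{\alpha+\gamma}x_i}$ for $x^\gamma \in B_2$ do not involve $x_{n+1}$ and sit in $Y^{(n,r)}$. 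Writing $x^\alpha = x_1 x_k$ with $1 \le k \le r-n-1$ and using that $Y^{\text{new}} = \{x_1^2 x_a x_b x_{n+1} : 1 \le a \le b \le r-n-1\}$, the map $(k,i) \mapsto x_1^2 x_k x_i x_{n+1}$ surjects onto $Y^{\text{new}}$, so every $Y^{\text{new}}$-column is hit by at least one Case A row. Hence the Case A sub-block of $\mathbf{D}$ has column rank $|Y^{\text{new}}|$, and $\mathbf{D}$ is full column rank. Both diagonal blocks being full column rank gives $\mathbf{A}^{(n+1,r+1)}$ full column rank for the specialized $\phi'$, and upper semi-continuity extends this to generic tensors.

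The main obstacle is establishing the block-triangular structure: it hinges on the equality $B_2' = B_2$ (no degree-2 basis monomial involves $x_{n+1}$), which forces moment variables of every ``old'' equation to lie in $Y^{(n,r)}$; this equality is the precise point where the hypothesis $r \le 2n+1$ is used. The subsequent full-rank analysis of the Case A sub-block is then a combinatorial surjectivity check for the natural multiplication map $(x^\alpha,i) \mapsto x^\alpha x_1 x_i x_{n+1}$ onto $Y^{\text{new}}$.
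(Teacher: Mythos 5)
Your proof is correct and follows essentially the same route as the paper's: the identification $B_2'=B_2$ (using $r\le 2n+1$), the block-triangular structure of $\mathbf{A}^{(n+1,r+1)}$ obtained from the new equations $(x_1x_kx_i, x_1x_{n+1})$ hitting the new variables $x_1^2x_kx_ix_{n+1}$ with coefficient $m$, and a specialization reducing the top-left block to $\mathbf{A}^{(n,r)}$. The only (cosmetic) difference is that you specialize just the appended point $\z_{r+1}=\mathbf{e}_0+\mathbf{e}_{n+1}$ while keeping the first $r$ points generic, which makes the invocation of the inductive hypothesis slightly cleaner than the paper's appeal to the full specialization of Lemma \ref{lem:c1_veryefficient}.
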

\begin{proof}
    We can proceed by an induction.  The format $(n, r)$ specifies 
    \[B^{(n, r)}=\{1, x_1, \dots, x_n, x_1^2, \dots, x_1x_{r-(n+1)}\}\]
    and the format $(n+1, r+1)$ specifies
    \[B^{(n+1, r+1)}=\{1, x_1, \dots, x_n, x_{n+1}, x_1^2, \dots, x_1x_{r-(n+1)}\},\]
    as $r+1\leq 2n+2< 2(n+1)+1$.  Similarly, we index the variables and equations for the former format by $Y^{(n, r)}, E^{(n, r)}$, respectively, and likewise for the latter format.  

    We show that $Y^{(n, r)}$ ``embeds'' in $Y^{(n+1, r+1)}$ in a suitable way, and similarly for $E^{(n, r)}$ in $E^{(n+1, r+1)}$.  Consider a variable in $Y^{(n, r)}$, which is indexed by $x^{\alpha+\alpha'}x_i$ for $x^\alpha, x^{\alpha'}\in B^{(n, r)}_2, i\leq n$.  Due to the specific form that $B^{(n+1, r+1)}$ takes, we have that $x^\alpha, x^{\alpha'}\in B^{(n+1, r+1)}_2$ and thus $x^{\alpha+\alpha'}x_i\in Y^{(n+1, r+1)}$.  It is also straightforward to compute $Y^{(n+1, r+1)}\setminus Y^{(n, r)}$.  This is precisely $x^{\alpha+\alpha'}x_{n+1}$ for $x^\alpha, x^{\alpha'}\in B^{(n+1, r+1)}_2=B^{(n, r)}_2$.  

    Consider an equation $(x_1 x_i x_j, x_k x_\ell)\in E^{(n, r)}_1$, so that $x_1x_i\in B^{(n, r)}_2, x_kx_\ell\notin B^{(n, r)}_2, x_kx_j\in B^{(n, r)}_2$.  All of these continue to be satisfied with $B^{(n+1, r+1)}$ as $B^{(n+1, r+1)}_2=B^{(n, r)}_2$.  This holds similarly for an equation $[(x_1x_ix_j, x_k x_\ell), (x_1x_i x_\ell, x_kx_j)]\in E^{(n, r)}_2$.

    To recap, we have
    \begin{align*}
        Y^{(n+1, r+1)}&=Y^{(n, r)}\cup (Y^{(n+1, r+1)}\setminus Y^{(n, r)}),\\
        E^{(n+1, r+1)}&=E^{(n, r)}\cup (E^{(n+1, r+1)}\setminus E^{(n, r)}).
    \end{align*}
    Next, we show that the equations in $E^{(n, r)}$ viewed as equations in $E^{(n+1, r+1)}$ are not supported on $Y^{(n+1, r+1)}\setminus Y^{(n, r)}$.  Indeed, if $(x_1 x_i x_j, x_k x_\ell)\in E^{(n, r)}_1$ we must have $i, j\leq n$.  This equation is supported on $x_1x_ix_jx^\alpha$, $\alpha\in B^{(n+1, r+1)}_2$.  But we know that $\alpha_{n+1}=0$ so $x_1x_ix_jx^\alpha\in Y^{(n, r)}$.  Similarly, an equation in $E^{(n, r)}_2$ is also supported only on $Y^{(n, r)}$.  

    Consider now each variable in $Y^{(n+1, r+1)}\setminus Y^{(n, r)}$.  We know that this is precisely the set $\{x^{\alpha+\alpha'}x_{n+1}\mid x^\alpha, x^{\alpha'}\in B^{(n, r)}_2\}$.  Because of the structure of $B^{(n, r)}_2$ this is equivalently the set of $x_1^2 x_ix_jx_{n+1}$ for $1\leq i\leq j\leq r-(n+1)$.  For each such ``new variable'' we assign an equation.  Let $x^\alpha=x_1x_i$, $x^\beta=x_1$, so that we have the equation $(x_1x_ix_j, x_1x_{n+1})\in E^{(n+1, r+1)}_1$; indeed, the ``swap'' of $x_1$ and $x_{n+1}$ gives $(x_1x_ix_{n+1}, x_1x_j)$, and $x_1x_j\in B^{(n, r)}_2$.  Clearly, the equations $(x_1x_ix_j, x_1x_{n+1})$ are supported on $x_1^2x_ix_jx_{n+1}$ with value $m$.  Moreover, on other variables in $Y^{(n+1, r+1)}\setminus Y^{(n, r)}$ this equation is zero: for $x^\alpha\in B^{(n, r)}_2$ we see that that $x_{n+1}$ cannot appear in $x_1x_ix_jx^{\alpha}$, essentially by assumption.  

    Define
    \[S:=\{(x_1x_ix_j, x_1x_{n+1})\mid 1\leq i\leq j\leq r-(n+1)\}.\]
    Take the set of equations $Y^{(n, r)}\cup S$.  These equations index the submatrix of $A^{(n+1, r+1)}$
    \[ \mathbf{A}^{(n+1, r+1)}_{Y^{(n, r)}\cup S}=
\begin{bNiceArray}{c|c}
   \Block{1-1}{\mathbf{A}^{(n, r)}} & \Block{1-1}{\mathbf{0}} \\ 
  \hline
  \Block{1-1}{*} &  \Block{1-1}{m\mathbf{I}}
\end{bNiceArray}.\]
Thus, $\mathbf{A}^{(n+1, r+1)}$ being full column rank reduces to $\mathbf{A}^{(n, r)}$ being full column rank (viewed as a embedding of the set of equations and variables).  $\mathbf{A}^{(n, r)}$ as a submatrix of $\mathbf{A}^{(n+1, r+1)}$ admits the same representation in terms of the variables $m_{x^\gamma}^{x^\delta}$.  Though, even with the same representation because the format is larger the specific value of $m_{x^\gamma}^{x^\delta}$ may be different, with the same specialization as Lemma \ref{lem:c1_veryefficient} we can see that $\mathbf{A}^{(n, r)}$ embedded as a submatrix of $\mathbf{A}^{(n+1, r+1)}$ is actually \textit{the same} submatrix as $\mathbf{A}^{(n, r)}$ under the format $(n, r)$.

Again, we let $\z_1=\mathbf{e}_0$.  For $k=2, \dots, n+1$ we write $\z_k=\mathbf{e}_0+\mathbf{e}_{k-1}$, and for $k=n+2, \dots r$ we have
\[\z_k=\begin{bmatrix}
    1\\
    -1\\
    \vdots \\
    -1\\
    2\\
    \vdots \\
    2
\end{bmatrix},\]
where the number of $-1$ is $k-(n+1)$ and the number of $2$ is $2n+1-k$.  We assert that 
\[\operatorname{det}\bb{\Z_{B^{(n+1, r+1)}\setminus \{x^\gamma\}\cup \{x^\delta\}}}=\operatorname{det}\bb{\Z_{B^{(n, r)}\setminus \{x^\gamma\}\cup \{x^\delta\}}}\]
if $\delta_{n+1}=0$.  By construction this matrix includes a $1$ in the $n+2^{\text{th}}$ row and $x_{n+1}$ column (which is the $n+2^{\text{th}}$ column), and by the pseudotriangular structure of the matrix the determinant must expand on this entry.  Removing this column and row we the remaining matrix is precisely $\operatorname{det}\bb{Z_{B^{(n, r)}\setminus{x^\gamma}\cup\{x^\delta\}}}$.  By assumption $\mathbf{A}^{(n, r)}$ is full column rank, so is also full column rank as a submatrix of $\mathbf{A}^{(n+1, r+1)}$; thus, $\mathbf{A}^{(n+1, r+1)}_{Y^{(n, r)}\cup S}$ is full column rank.
\end{proof}

We now present our main result of this section, giving a large family of tensors that can be efficiently decomposed via Algorithm \ref{alg:efficient_alg}.
\begin{theorem}\label{thm:c1_efficient}
Let $n$ and $r$ be non-negative integers. Then
    \begin{enumerate}
        \item $(2, r)$ is an efficient format for $r\leq 4$,
        \item $(n, r)$ is an efficient format for $n \geq 3$ and $r\leq 2n+1 $.
    \end{enumerate}
    Therefore, generic tensors of these formats are efficiently decomposable via Algorithm \ref{alg:efficient_alg} and $\mathbf{A}$ full column rank is an effective criterion for specific identifiability.
\end{theorem}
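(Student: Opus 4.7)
The plan is a short induction on $n$, bundling the three preparatory lemmas (Lemma~\ref{lem:n23_linear}, Lemma~\ref{lem:c1_veryefficient}, and Lemma~\ref{lem:c1_efficient_reduction}) with one trivial boundary observation. First I would dispose of the smallest rank allowed by Assumption~\ref{ass:concise}, namely $r = n+1$: in this case Condition~\ref{cond:first_r} forces $B = \{1, x_1, \dots, x_n\}$, which has $\operatorname{deg}(B) = 1$, so $Y = \varnothing$ by the description of moment variables in \eqref{eq:moment_variables}. The matrix $\mathbf{A}$ then has zero columns and is vacuously full column rank, so $(n, n+1)$ is an efficient format for every $n \geq 2$.

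The base cases of the theorem now follow at once. For $n = 2$, the trivial observation handles $(2, 3)$ and Lemma~\ref{lem:n23_linear} handles $(2, 4)$, which is all we need since we only claim $r \leq 4$. For $n = 3$, the trivial observation handles $(3, 4)$ and Lemma~\ref{lem:n23_linear} handles $(3, 5), (3, 6), (3, 7)$, covering all $r \leq 2n+1 = 7$.

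For $n \geq 4$ I would induct on $n$ with $n = 3$ as base. Assuming $(n, r')$ is an efficient format for every $r' \leq 2n+1$, fix $r \leq 2(n+1)+1 = 2n+3$. If $r = 2n+3$, apply Lemma~\ref{lem:c1_veryefficient} directly (yielding in fact very efficiency). Otherwise $r \leq 2n+2$, so $r-1 \leq 2n+1$, and by the inductive hypothesis $(n, r-1)$ is an efficient format; Lemma~\ref{lem:c1_efficient_reduction} then promotes this to efficiency of $(n+1, r)$. This completes the induction and establishes (1) and (2).

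The closing assertions of the theorem are immediate corollaries. When $\mathbf{A}$ is generically full column rank, the while loop of Algorithm~\ref{alg:efficient_alg} terminates after a single pass: the linear system $\mathbf{A} Y = \mathbf{b}$ uniquely determines all moment variables using polynomially many linear-algebraic operations, after which the eigendecomposition step recovers the minimal rank decomposition. Since ``full column rank'' is the open condition carved out by the nonvanishing of a single $|Y| \times |Y|$ minor of $\mathbf{A}$, it certifies uniqueness of the decomposition on a Zariski dense subset and thus is an effective criterion for specific identifiability in the sense of Definition~4.1 of \cite{chiantini2017effective}. I do not expect any real obstacle here, since all the algebraic content---the specialization argument behind Lemma~\ref{lem:c1_veryefficient} and the block-triangular identification behind Lemma~\ref{lem:c1_efficient_reduction}---has been isolated beforehand; the only care required is the bookkeeping that ensures the vacuous $Y = \varnothing$ cases and the explicit base cases fit together cleanly as the start of the induction.
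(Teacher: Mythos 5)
Your proposal is correct and follows essentially the same route as the paper: the same three preparatory lemmas (Lemma~\ref{lem:n23_linear}, Lemma~\ref{lem:c1_veryefficient}, Lemma~\ref{lem:c1_efficient_reduction}) combined with the trivial $r\leq n+1$ case, with the only difference being that you induct on $n$ over all ranks at once while the paper inducts along diagonals of fixed offset $r-n$ — a purely organizational distinction. The application of the reduction lemma is valid since $r-1\leq 2n+1$ whenever $r\leq 2n+2$, so no gap arises.
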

\begin{proof}
    For $(n, r)$ with $r\leq n+1$ this is given by simultaneous diagonalization.  The formats $(2, 4)$ and $(3, 5), (3, 6), (3, 7)$ are covered by Lemma \ref{lem:n23_linear}.  For $n\geq 4$, $(n, 2n+1)$ is covered by Lemma \ref{lem:c1_veryefficient}.  For $n\geq 4$, let $n+1<r<2n+1$.  If $(n-1, r-1)$ is an efficient format then by Lemma \ref{lem:c1_efficient_reduction} $(n, r)$ is an efficient format.  In particular, for fixed $r'\leq n+1$ if $(n, n+r')$ is an efficient format so is $(n+1, n+1+r')$.  We have a family of base cases: $(2, 4), (3, 6), (3, 7)$, and $(n, 2n+1)$ for $n\geq 4$, and by Lemma \ref{lem:c1_efficient_reduction} $(n, r)$ is an efficient format for $n+1<r<2n+1$.
\end{proof}

In the proof of Theorem \ref{thm:c1_efficient} we proceeded first by proving that $(n, 2n+1)$ is a very efficient format for $n\geq 4$, as in this regime the matrix $\mathbf{A}^1$ is tall, and then inducting on this in Lemma \ref{lem:c1_efficient_reduction}.  Moreover, we can easily see that for appropriate $r>2n+1$ the corresponding matrix $\mathbf{A}^1$ is still tall, so we would like to repeat this analysis for $r$ as large as possible.  We formalize this in our conjecture.  
\begin{conjecture}\label{conjecture}
    \begin{enumerate}[(1)]
        \item $\bb{n, \sum_{j=0}^c (n-j+1)}$ is a very efficient format if $c$ is such that $|E_1|\geq |Y|$.
        \item $(n, r)$ is an efficient format if there exists $c$ with $r\leq \sum_{j=0}^c (n-j+1)$ such that $\bb{n, \sum_{j=0}^{c} (n-j+1)}$ is a very efficient format.
    \end{enumerate}
\end{conjecture}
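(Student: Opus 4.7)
The plan is to address the two parts of Conjecture~\ref{conjecture} by different techniques: specialization for Part~(1) and a block-matrix induction for Part~(2), essentially mirroring the strategy used for the $c=1$ case in Theorem~\ref{thm:c1_efficient}.

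For Part~(1), I would follow the specialization strategy of Lemma~\ref{lem:c1_veryefficient}. Fix the coarse format $(n, r)$ with $r = \sum_{j=0}^c (n-j+1)$ and assume the count condition $|E_1| \geq |Y|$. Choose an explicit point configuration $\z_1, \ldots, \z_r$, extending the scheme used in earlier specializations: take $\z_1 = \mathbf{e}_0$, $\z_k = \mathbf{e}_0 + \mathbf{e}_{k-1}$ for $2 \leq k \leq n+1$, and continue with carefully chosen signed integer-valued vectors so that $\Z_B$ is invertible. Evaluating the minors $m = \det(\Z_B)$ and $m_{x^\gamma}^{x^\delta} = \det(\Z_{B \setminus \{x^\gamma\} \cup \{x^\delta\}})$ at this specialization turns $\mathbf{N}$ (equivalently $\mathbf{A}^1$ by Lemma~\ref{lem:reductions}) into an explicit rational matrix. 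The target is to show this matrix has rank $|Y'_5 \cup Y'_4|$; by upper semi-continuity of matrix rank, this proves $\mathbf{A}^1$ is generically full column rank, so $(n, r)$ is very efficient. I expect this step to be the main obstacle: for any individual pair $(n, c)$ the verification reduces to a finite rank computation, which is what the computer-assisted proofs through $n = 17$ accomplish, but producing a single uniform construction whose full-column-rank property can be proved for every valid $(n, c)$ seems to require new structural insight into the combinatorics of the minors $m_{x^\gamma}^{x^\delta}$.

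For Part~(2), I would generalize the block-matrix induction of Lemma~\ref{lem:c1_efficient_reduction} from $c=1$ to arbitrary $c$. Assume $(n, r)$ is efficient and that $r+1$ still satisfies $r+1 \leq \sum_{j=0}^c (n+1-j+1)$; the aim is to show $(n+1, r+1)$ is efficient. When $r < \sum_{j=0}^c(n-j+1)$ so that adding a variable does not force a new degree-two monomial into $B$, we have $B^{(n+1, r+1)} = B^{(n,r)} \cup \{x_{n+1}\}$ with $B_2^{(n+1,r+1)} = B_2^{(n,r)}$. This yields embeddings $Y^{(n,r)} \hookrightarrow Y^{(n+1, r+1)}$ and $E^{(n,r)} \hookrightarrow E^{(n+1, r+1)}$, with $Y^{(n+1, r+1)} \setminus Y^{(n,r)}$ consisting exactly of variables $x^{\alpha+\alpha'}x_{n+1}$ for $x^\alpha, x^{\alpha'} \in B_2^{(n,r)}$. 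For each new variable I would assign a single new equation, generalizing the choice $(x_1 x_i x_j, x_1 x_{n+1}) \in E_1^{(n+1, r+1)}$ from Lemma~\ref{lem:c1_efficient_reduction}, chosen so that its coefficient on that new variable is $m$ and its coefficients on all other variables in $Y^{(n+1, r+1)} \setminus Y^{(n,r)}$ vanish. The resulting submatrix has the block form
\begin{equation*}
\mathbf{A}^{(n+1, r+1)}_{Y^{(n,r)} \cup S} = \begin{pmatrix} \mathbf{A}^{(n,r)} & \mathbf{0} \\ \ast & m\,\mathbf{I} \end{pmatrix},
\end{equation*}
so generic full column rank of $\mathbf{A}^{(n,r)}$ and genericity of $m \neq 0$ imply generic full column rank of $\mathbf{A}^{(n+1, r+1)}$.

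To conclude, fix the target $(n, r)$ and choose the smallest $c$ with $r \leq \sum_{j=0}^c(n-j+1)$. Setting $t := \sum_{j=0}^c(n-j+1) - r$ and $n_0 := n - t$, iterated application of the lift $(n_0, r_0 = \sum_{j=0}^c(n_0-j+1)) \to (n_0+1, r_0+1) \to \cdots \to (n, r)$ reaches the desired format, provided Part~(1) guarantees the base case $(n_0, r_0)$ is very efficient; if the count condition fails at $n_0$ one instead begins at the smallest $n_0'$ where it first holds and combines lifts across several coarsenings. The main technical point, aside from the specialization challenge in Part~(1), is verifying that for every $c$ one can indeed pick a new-variable/new-equation pairing whose coefficient structure genuinely produces the diagonal block $m\,\mathbf{I}$; this amounts to a combinatorial check on the degree-three indices $x^\alpha x_i$ appearing in $E_1$, which should go through by the same Laplace expansion argument used to derive equation~(\ref{eq:new_lin_eq}).
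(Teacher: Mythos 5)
The statement you are addressing is a \emph{conjecture}, and the paper does not prove it: it proves only the case $c=1$ (Theorem~\ref{thm:c1_efficient}, via the specialization in Lemma~\ref{lem:c1_veryefficient} and the induction in Lemma~\ref{lem:c1_efficient_reduction}) and supplements this with computer-assisted verification for $n=2,\dots,17$; the discussion section explicitly states the conjecture remains open. Your proposal is therefore not a proof but a strategy outline, and to your credit you say so: the entire content of Part~(1) for $c\geq 2$ --- producing a single point configuration for which the specialized matrix $\mathbf{N}$ is provably full column rank for every valid $(n,c)$ --- is precisely the open problem. Restating the paper's $c=1$ template and flagging that the general case ``seems to require new structural insight'' does not close the gap; the reduction to $\mathbf{N}$ via Lemma~\ref{lem:reductions} and upper semi-continuity of rank are fine, but the rank computation itself is the whole difficulty.

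There is also a concrete flaw in your Part~(2) induction. You assert that for $r+1 \leq \sum_{j=0}^c(n+1-j+1)$ one has $B^{(n+1,r+1)} = B^{(n,r)} \cup \{x_{n+1}\}$ with $B_2^{(n+1,r+1)} = B_2^{(n,r)}$. Under Condition~\ref{cond:first_r}, $B$ is the initial segment of the graded lexicographic order, and once $B_2^{(n,r)}$ extends past the $x_1$-block (i.e., $c\geq 2$), the order on $n+1$ variables inserts $x_1x_{n+1}$ before $x_2^2$, so the first $r+1$ monomials in $n+1$ variables do \emph{not} have the same degree-two part as $B^{(n,r)}$. This is why the paper's own Proposition~\ref{prop:counts_inductions} uses the increment $r \mapsto r+c$ rather than $r\mapsto r+1$ when passing from $n$ to $n+1$ variables --- and even that proposition only tracks the counts $|E|$ and $|Y|$, not the rank of $\mathbf{A}$. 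So for $c\geq 2$ you would need to (i) redo the embedding of variables and equations for the increment $r\mapsto r+c$, (ii) verify that each of the several new variables can be paired with a new equation so as to obtain the block form with $m\,\mathbf{I}$ in the corner, and (iii) show that the copy of $\mathbf{A}^{(n,r)}$ sitting inside $\mathbf{A}^{(n+1,r+c)}$ is still generically full column rank (in the $c=1$ proof this required a separate specialization argument identifying the two submatrices). None of these steps is carried out, and the final paragraph's plan to ``combine lifts across several coarsenings'' when the count condition fails at the base dimension is too vague to assess, since the shape of $B_2$ changes between coarsenings.
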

We note that (1) is equivalent to conjecturing that when $\mathbf{A}^1$ is tall it is full column rank.  Note that we are \textit{not} conjecturing that when $\mathbf{A}$ is tall it is full column rank; towards this point we give, in the appendix, an example of a set of linear equations that are always linearly dependent in Example \ref{ex:lin_dep_equations}.  Crucially this set includes equations in $E_2$.  We also give a example in our supplementary code of a format such that for any tensor of such a format the corresponding $\mathbf{A}$ tall but numerically drops rank.

The previous results prove the conjecture specifically for $c=1$; we now describe the implications of the conjecture for efficient tensor decomposition up to $r=O(n^2)$.
\begin{proposition}\label{prop:asymp_counts}
    There exists some fixed constant $t^*\in (0.632, 1)$ such that for all $t\in (0, t^*)$ there exists some $n_t$ such that for all $n\geq n_t$, $\left|E_1^{\bb{n, \sum_{j=0}^{tn}(n-j+1)}}\right|\geq \left|Y^{\bb{n, \sum_{j=0}^{tn}(n-j+1)}}\right|$ .
\end{proposition}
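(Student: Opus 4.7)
The plan is to use the explicit formulas from Theorem~\ref{thm:counts}, substitute $c = \lfloor tn \rfloor$, and compare leading-order asymptotics in $n$. First, I would expand each binomial and extract the degree-$5$ piece (in the variables $n$ and $c$ jointly), obtaining
\begin{align*}
|Y| &= \tfrac{c^5}{120} + \tfrac{(n-c)c^4}{24} + \tfrac{(n-c)^2 c^3}{12} + \tfrac{(n-c)^3 c^2}{12} + O(n^4), \\
|E_1| &= \tfrac{(n-c)^2 c^3}{12} + \tfrac{(n-c)^3 c^2}{4} + O(n^4),
\end{align*}
where the $O(n^4)$ errors are uniform for $c \leq n$. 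Subtracting, substituting $c = tn$, and factoring $t^2/120$ reduces the desired inequality $|E_1| \geq |Y|$ to
\[
|E_1| - |Y| \;=\; \frac{n^5\, t^2}{120}\, f(t) \;+\; O(n^4),
\qquad f(t) := 20 - 60t + 55t^2 - 16t^3.
\]

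The next step is to analyze the cubic $f$ on $(0,1)$. Observe that $f(0) = 20$ and $f(1) = -1$, and that $f'(t) = -60 + 110t - 48t^2$ has its smaller root at $(110 - \sqrt{580})/96 \approx 0.895$, so $f$ is strictly decreasing on $(0, 0.895)$. A direct numerical evaluation gives $f(0.632) \approx 0.0093 > 0$ and $f(0.633) \approx -0.0003 < 0$; combined with the monotonicity, this pins the unique root of $f$ on $(0, 0.895)$ into $(0.632, 0.633)$. Calling this root $t^*$, we have $t^* \in (0.632, 1)$ and $f(t) > 0$ for every $t \in (0, t^*)$.

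Finally, for any fixed $t \in (0, t^*)$, the leading $n^5$ coefficient of $|E_1| - |Y|$ equals $t^2 f(t)/120 > 0$, so the difference is eventually positive in $n$. Choosing $n_t$ large enough that this leading term dominates the $O(n^4)$ error then yields $|E_1| \geq |Y|$ for all $n \geq n_t$. The only real obstacle is the bookkeeping in the binomial expansion; the positivity analysis of $f$ is a routine single-variable calculus exercise, and the fact that the relevant root lies just above $0.632$ rather than, say, above $0.7$ is a genuine feature of the combinatorics rather than an artifact of loose estimates.
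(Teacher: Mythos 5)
Your proposal is correct and follows essentially the same route as the paper: both extract the leading $n^5$ coefficient of $|E_1|-|Y|$ from Theorem~\ref{thm:counts} with $c=tn$ and show it is positive on $(0,t^*)$; indeed your $\tfrac{t^2}{120}f(t)$ with $f(t)=20-60t+55t^2-16t^3$ is exactly the paper's quintic $-\tfrac{2}{15}t^5+\tfrac{11}{24}t^4-\tfrac12 t^3+\tfrac16 t^2$, and your bracketing of the root in $(0.632,0.633)$ agrees with the paper's $t^*\approx 0.63297$. The only cosmetic difference is that you locate $t^*$ by monotonicity plus numerical evaluation where the paper quotes a closed-form radical expression.
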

\begin{proof}
    Write $c:=tn$.  Expanding the expressions of Theorem \ref{thm:counts} for $|Y|$ and $|E_1|$ we see that these are both quintics in $n$.  The coefficient of $n^5$ in $|Y|$ is
    \[-\frac{7}{60}t^5+\frac{7}{24}t^4-\frac{1}{4}t^3+\frac{1}{12}t^2\]
    and the coefficient in front of $n^5$ in $|E_1|$ is 
    \[-\frac{1}{4}t^5+\frac{3}{4}t^4-\frac{3}{4}t^3+\frac{1}{4}t^2.\]
    Then the coefficient in front of $n^5$ in $|E_1|-|Y|$ is
    \begin{equation}\label{eq:poly_p}
    -\frac{2}{15}t^5+\frac{11}{24}t^4-\frac{1}{2}t^3+\frac{1}{6}t^2=t^2\bb{-\frac{2}{15}t^3+\frac{11}{24}t^2-\frac{1}{2}t+\frac{1}{6}}.
    \end{equation}
    This polynomial in $t$ has 2 distinct real roots: a double root at $0$ and 
    \[t^*=\frac{1}{48}\bb{55
    -\frac{29\cdot 5^{2/3}}{\bb{421-96\cdot 6^{1/2}}^{1/3}}
    -\bb{5\cdot \bb{421- 96\cdot 6^{1/2}}}^{1/3}
    }\approx 0.63297.\]
    For $t\in (0, t^*)$ the polynomial \eqref{eq:poly_p} is positive.  This implies that for such $t$ the polynomial $|E_1|-|Y|$ is positive for $n$ large enough.  
\end{proof}

\begin{table}
\centering
\begin{tabular}{l|l|l}
\rowcolor[HTML]{FFFFFF} 
$t$    & $n_t$    & $r$                          \\ \hline
\rowcolor[HTML]{FFFFFF} 
$0.4$   & $5$    & $0.32 n^2 + 1.2 n + 1$       \\
\rowcolor[HTML]{FFFFFF} 
$0.5$   & $8$    & $0.375 n^2 + 1.25 n + 1$     \\
\rowcolor[HTML]{FFFFFF} 
$0.6$   & $31$   & $0.42 n^2 + 1.3 n + 1$       \\
\rowcolor[HTML]{FFFFFF} 
$0.63$  & $331$  & $0.43155 n^2 + 1.315 n + 1$  \\
$0.632$ & $1011$ & $0.432288 n^2 + 1.316 n + 1$
\end{tabular}
\caption{$|E_1|-|Y|\geq 0$ for $n\geq n_t$ and asymptotic rank bound}
\label{tab:rank_bounds}
\end{table}

 Table \ref{tab:rank_bounds} gives bounds for some $t<t^*$.  For each $t$, $n_t$ is such that for all $n\geq n_t$, $|E_1|-|Y|\geq 0$.  Column $r$ writes the corresponding rank in terms of $n$ -- note that these are all $O(n^2)$ with improving leading coefficient as $t$ increases.    

\begin{theorem}\label{thm:conj_implication}
    If Conjecture \ref{conjecture} holds, then there exist constants $q, n_{q}$ such that for all $n\geq n_q$, $r\leq qn^2$, $(n, r)$ is an efficient format.  Thus, generic tensors of these formats are efficiently decomposable via Algorithm \ref{alg:efficient_alg} and $\mathbf{A}$ full column rank is an effective criterion for deciding the identifiability of specific tensors.  In particular, we can take $q> 0.432288$.
\end{theorem}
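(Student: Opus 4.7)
The plan is to chain together Proposition~\ref{prop:asymp_counts} with both parts of Conjecture~\ref{conjecture}. Fix a rational $t^{\circ}$ strictly between $0.632$ and $t^{\ast}$, and set $q := t^{\circ} - (t^{\circ})^2/2$. Because the map $t \mapsto t - t^2/2$ is strictly increasing on $(0,1)$ with $0.632 - (0.632)^2/2 = 0.432288$, this choice gives $q > 0.432288$.

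For each integer $n$ write $c_n := \lfloor t^{\circ} n \rfloor$ and $R(n) := \sum_{j=0}^{c_n}(n-j+1)$. By Proposition~\ref{prop:asymp_counts} applied at $t^{\circ}$, there is an integer $n_{t^{\circ}}$ such that $|E_1^{(n, R(n))}| \geq |Y^{(n, R(n))}|$ for every $n \geq n_{t^{\circ}}$. Conjecture~\ref{conjecture}(1) then promotes each $\bigl(n, R(n)\bigr)$ with $n \geq n_{t^{\circ}}$ to a very efficient format. Evaluating the arithmetic sum,
\[
R(n) \;=\; (c_n + 1)(n+1) - \binom{c_n+1}{2} \;=\; \bigl(t^{\circ} - (t^{\circ})^2/2\bigr)\, n^2 + O(n) \;=\; qn^2 + O(n),
\]
so there exists $n_q \geq n_{t^{\circ}}$ with $qn^2 \leq R(n)$ for all $n \geq n_q$.

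Now suppose $n \geq n_q$ and $r \leq qn^2$. Then $r \leq R(n)$, and since $\bigl(n, R(n)\bigr)$ is a very efficient format, Conjecture~\ref{conjecture}(2) applied with $c = c_n$ yields that $(n, r)$ is an efficient format. The remaining conclusions, namely that generic tensors of format $(n,r)$ are efficiently decomposable via Algorithm~\ref{alg:efficient_alg} and that $\mathbf{A}$ having full column rank furnishes an effective identifiability criterion, follow immediately from the definition of efficient format and the discussion preceding Theorem~\ref{thm:c1_efficient}.

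The argument is essentially bookkeeping, and no step presents a real obstacle, since the substantive work has already been carried out in Proposition~\ref{prop:asymp_counts} (an explicit analysis of the real roots of the polynomial \eqref{eq:poly_p}) and by assuming Conjecture~\ref{conjecture}. The only subtle point is taking $t^{\circ}$ strictly greater than $0.632$ so as to obtain the strict bound $q > 0.432288$ rather than $q \geq 0.432288$; sharper leading coefficients correspond to pushing $t^{\circ}$ closer to $t^{\ast}$, at the price of a larger threshold $n_q$ (as illustrated in Table~\ref{tab:rank_bounds}).
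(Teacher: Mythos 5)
Your argument is correct and follows essentially the same route the paper takes (the paper leaves the proof implicit in the discussion surrounding Proposition~\ref{prop:asymp_counts} and Table~\ref{tab:rank_bounds}): apply Proposition~\ref{prop:asymp_counts} at a fixed $t^{\circ}\in(0.632,t^{*})$, invoke Conjecture~\ref{conjecture}(1) to make $\bigl(n,\sum_{j=0}^{c_n}(n-j+1)\bigr)$ a very efficient format, and then Conjecture~\ref{conjecture}(2) to cover all $r\leq qn^2$. Your explicit identification of the leading coefficient as $q=t^{\circ}-(t^{\circ})^2/2$, together with the monotonicity observation that forces $q>0.432288$, is exactly the computation the paper alludes to when it says $q$ is "the leading coefficient of $\sum_{j=0}^{tn}(n-j+1)$."
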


Theorem \ref{thm:conj_implication} matches the asymptotic $O(n^2)$ rate present in the literature.  Moreover, the leading coefficient improves existing work; we can take $q$ to be the leading coefficient of $\sum_{j=0}^{tn}(n-j+1)$ for any $t\in (0, t^*)$, with $t^*$ as in the proof of Proposition \ref{prop:asymp_counts}.  Concretely, looking at Table \ref{tab:rank_bounds} we can take $q=0.432288, n_q=1011$, for example.  In contrast, \cite{deLathauwer2007fourth, johnston2023computing} imply a leading coefficient of $1/6$ via an algorithm for finding rank-$1$ elements in a linear section of a variety, while the recent work of \cite{dastidar2025improvingthresholdfindingrank1} conjectures that this can be improved to $1/\sqrt{6}\approx 0.40825$, which seems optimal for their algorithm according to their numerical experiments.  Our conjecture implies a strict improvement through a different approach.

We give computer assisted proofs for $n=4, \dots, 17$.  For each $r>2n+1$ we choose a prime $p$ and draw $\z_1, \dots, \z_r\in \mathbb{F}_p^{n+1}$; we then construct the corresponding tensor $\phi=\sum_{i=1}^r \z_i^{\otimes 4}$ and construct the matrix $\mathbf{A}$ via the linear determinantal relations (Definition \ref{def:determinal_relations}).  We can then calculate the rank exactly via Gaussian elimination over the finite field; if the matrix $\mathbf{A}$ is full column rank over $\mathbb{F}_p$ then it is full column rank over $\mathbb{C}$.  To perform these calculations we use the \texttt{Julia} computer algebra package \texttt{Nemo.jl} \cite{nemo}; primes $p$ and explicit specializations $\z_1, \dots, \z_r$ for each $n$ are given in our supplementary code.

\begin{table}[ht]
\centering
\begin{tabular}{l|llllllll}
$n$ & 2           & 3           & 4           & 5           & 6           & 7            & 8            & 9            \\
$r_n$ & \textbf{4}  & \textbf{7}  & \textbf{11} & \textbf{15} & \textbf{21} & \textbf{28}  & \textbf{36}  & \textbf{44}  \\ \hline
$n$    & 10          & 11          & 12          & 13          & 14          & 15           & 16           & 17           \\
$r_n$    & \textbf{53} & \textbf{64} & \textbf{75} & \textbf{87} & \textbf{99} & \textbf{113} & \textbf{128} & \textbf{143}
\end{tabular}
\caption{Maximum $r$ for which $\mathbf{A}$ is full column rank, $n=2, \dots, 17$.}
\label{tab:n_r_bounds}
\end{table}

\begin{theorem}
    Let $2\leq n\leq 17$.  Then $(n, r)$ is an efficient format for all $r\leq r_n$, where $r_n$ is given in Table \ref{tab:n_r_bounds}.  Therefore, the generic tensor of such a format is efficiently decomposable via Algorithm \ref{alg:efficient_alg} and $\mathbf{A}$ full column rank is an effective criterion for specific identifiability.
\end{theorem}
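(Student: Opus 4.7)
The plan is to prove this via an explicit computer-assisted specialization argument based on semicontinuity of matrix rank. The entries of $\mathbf{A}$ are polynomial functions (in fact signed maximal minors of $\Z_2$) of the coordinates of the decomposing points $\z_1, \ldots, \z_r$; consequently, ``$\mathbf{A}$ has full column rank'' is a Zariski open condition, equivalent to the non-vanishing of at least one $|Y| \times |Y|$ minor of $\mathbf{A}$. If a single specialization makes this minor nonzero, it is a nonzero polynomial in the entries of $\Z$, hence nonzero on a dense open subset of $\bb{\bbC^{n+1}}^r$. Since the case $r \leq 2n+1$ is already handled by Theorem \ref{thm:c1_efficient}, it suffices to treat each $(n,r)$ with $2 \leq n \leq 17$ and $2n+2 \leq r \leq r_n$ individually, of which there are finitely many.

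For each such pair I would proceed as follows. First, fix a prime $p$ and an explicit tuple $\z_1, \ldots, \z_r \in \bbF_p^{n+1}$. Next, verify symbolically over $\bbF_p$ that this specialization satisfies Condition \ref{cond:low_rank} and Condition \ref{cond:first_r}, so that the standard graded-lexicographic basis $B$ is valid and the matrix $\mathbf{A}$ from Lemma \ref{lem:equation_deg} is well-defined. Then assemble $\phi = \sum_{i=1}^{r} \z_i^{\otimes 4}$ over $\bbF_p$, construct $\mathbf{A}$, and compute $\operatorname{rank}(\mathbf{A})$ via exact Gaussian elimination over $\bbF_p$ using \texttt{Nemo.jl}. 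Finally, lift to characteristic zero: any $|Y| \times |Y|$ minor of $\mathbf{A}$ is an element of $\bbZ[\z_1, \ldots, \z_r]$, so a nonvanishing witness modulo $p$ forces it to be a nonzero integer polynomial, and hence nonzero for generic points of $\bbC^{n+1}$.

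The two additional conclusions of the theorem follow directly from the preceding theory. Once $(n,r)$ is confirmed to be an efficient format, Algorithm \ref{alg:efficient_alg} efficiently recovers a decomposition of a generic tensor of that format: solving the linear system $\mathbf{A} Y = \mathbf{b}$ fully specifies the moment variables, hence the multiplication matrices $\mathbf{M}_i^B(Y^*)$, and Theorem \ref{thm:main} then produces $\z_1, \ldots, \z_r$ via a joint eigendecomposition. Uniqueness of the solution, which holds when $\mathbf{A}$ is full column rank, gives exactly the effective criterion for specific identifiability in the sense discussed before Theorem \ref{thm:c1_efficient}.

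The main obstacle I anticipate is twofold. Combinatorially, by Theorem \ref{thm:counts} the matrix $\mathbf{A}$ has $\Theta(n^5)$ rows and columns in the regime $r = \Theta(n^2)$, so constructing it from the definition and row-reducing it over $\bbF_p$ for $n$ up to $17$ demands a careful implementation that extracts the linear relations of Lemma \ref{lem:equation_deg} directly, without ever forming the full (mostly quadratic) determinantal system. Mathematically, the delicate point is choosing the specialization $\z_1, \ldots, \z_r \in \bbF_p^{n+1}$ so that (i) all the open conditions required to even define $\mathbf{A}$ hold over $\bbF_p$, and (ii) $\mathbf{A}$ actually achieves its generic rank at this specialization rather than dropping rank due to characteristic-$p$ coincidences; choosing $p$ sufficiently large and resampling as needed should suffice, but each $(n,r)$ must be certified explicitly.
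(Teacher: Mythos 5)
Your proposal is correct and is essentially the paper's own proof: the cases $r\leq 2n+1$ (and $n=2,3$) are delegated to Theorem \ref{thm:c1_efficient}, and for $2n+2\leq r\leq r_n$ the paper likewise certifies an explicit specialization $\z_1,\dots,\z_r\in\bbF_p^{n+1}$, builds $\mathbf{A}$ from the linear determinantal relations, and computes its rank exactly over $\bbF_p$ with \texttt{Nemo.jl}, using the same integrality/semicontinuity lifting to conclude generic full column rank over $\bbC$. The extra care you flag (checking Conditions \ref{cond:low_rank} and \ref{cond:first_r} at the specialization, and choosing $p$ large enough) is sensible but does not change the argument.
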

\begin{proof}
    The cases $n=2, 3$ and $r\leq 2n+1$ for $n\geq 4$ are covered by Theorem \ref{thm:c1_efficient}.  For $2n+2\leq r\leq r_n$ and $n\geq 4$ we give an explicit prime $p$ and points $\z_1, \dots, \z_r\in \mathbb{F}_p^{n+1}$ (found in our supplementary code) for which the corresponding coefficient matrix $\mathbf{A}$ is full column rank.  As an explicit specialization of points, we confirm for the format $(n, r)$ that $\mathbf{A}$ is generically full column rank, that is, $(n, r)$ is an efficient format.
\end{proof}

Note that these $r_n$ are slightly better than the maximum ranks specified by Conjecture \ref{conjecture}.  To be precise, part (1) of the conjecture specifies a rank bound $r'_n:=\sum_{j=0}^{c_n}(n-j+1)$ where $c_n$ is the largest integer such that the corresponding $\mathbf{A}^1$ is full column rank.  Part (2) states that for all $r\leq r'_n$, $(n, r)$ is an efficient format.  Of course, we must have $r'_n\leq r_n$.  Qualitatively, the gap between the two is not large; see Table \ref{tab:n_r'_bounds}. 

\begin{table}[h]
\centering
\begin{tabular}{l|llllllll}
$n$    & 2           & 3           & 4           & 5           & 6           & 7            & 8            & 9            \\
$c_n$  & \textbf{0}  & \textbf{0}  & \textbf{1}  & \textbf{2}  & \textbf{2}  & \textbf{3}   & \textbf{4}   & \textbf{4}   \\
$r'_n$ & \textbf{3}  & \textbf{4}  & \textbf{9}  & \textbf{15} & \textbf{18} & \textbf{26}  & \textbf{35}  & \textbf{40}  \\ \hline
$n$       & 10          & 11          & 12          & 13          & 14          & 15           & 16           & 17           \\
 $c_n$      & \textbf{5}  & \textbf{5}  & \textbf{6}  & \textbf{7}  & \textbf{7}  & \textbf{8}   & \textbf{9}   & \textbf{9}   \\
 $r_n'$      & \textbf{51} & \textbf{57} & \textbf{70} & \textbf{84} & \textbf{92} & \textbf{108} & \textbf{125} & \textbf{135}
\end{tabular}
\caption{Maximum $c_n, r_n'$ for which $\mathbf{A}^1$ is full column rank, $n=2, \dots, 17$.}
\label{tab:n_r'_bounds}
\end{table}

\subsubsection{Further application: three collinear points}\label{sec:collinear}
One might ask if there is a non-generic rank $r$ tensor $ \phi \in S^4 \bbC^{n+1}$ where $(n, r)$ is an efficient format but $\mathbf{A}$ drops rank.  In this subsection we show that this is indeed the case for a specific class of nonidentifiable tensors -- decompositions with three collinear points. Although these tensors are non-generic, linear relations can still suffice for extension and decomposition.

Fix the format $(n, n+2)$.  We know from Theorem \ref{thm:c1_efficient} that this is an efficient format.  From Lemma \ref{lem:equation_deg} the quadratic relations are of the form
\[\operatorname{det}\bb{\mathcal{H}_{B\cup \{x^\alpha x_i\}, B\cup\{x^\beta x_j\}}}-\operatorname{det}\bb{\mathcal{H}_{B\cup \{x^\alpha x_j\}, B\cup\{x^\beta x_i\}}}=0\]
for $\alpha, \beta\in B_2, i\neq j$.  Now if $\phi$ satisfies Assumption \ref{ass:dehomogenization} and Assumption \ref{ass:concise} and is of this format, then there are no quadratic moment relations: $B_2$ is simply one element and so the quadratic relations are the constant zero polynomials.  Tensors of this format are thus completely specified by linear equations -- including when there are multiple decompositions.  

\begin{condition}\label{cond:collinear}
    $\phi\in S^4\bbC^{n+1}$ is such that $\operatorname{rank}(\phi)=n+2$ and there exists a decomposition $\Z=\{\z_1, \dots, \z_r\}$ where $\z_1, \dots \z_{r-1}$ are generic and $\z_1, \z_2, \z_r$ are collinear.
\end{condition}

A set of such points can be written
\[\Z=\begin{bmatrix}
    \z_1^\top \\
    \z_2^\top \\
    \vdots \\
    \z_{n+1}^\top \\
    t \z_1^\top +(1-t)\z_2^\top
\end{bmatrix}\]
where $t\in (0, 1)$ (because we dehomogenize) and $\z_{i, 0}=1$ for all $i$.  

\begin{proposition}\label{prop:collinear_eqs}
    Let $\Z$ be a set of points satisfying Condition \ref{cond:collinear} and let 
    \[B=\{1, x_1, \dots, x_n, x_1^2\}.\]
    Then for all $i,j,k,\ell \in \{1,\ldots, n\}$, the points $\Z$ satisfies the relations
    \[\operatorname{det}\bb{\Z_{B\setminus \{x_1^2\}\cup \{x_ix_j\} }}\operatorname{det}\bb{\Z_{B\setminus \{x_1^2\}\cup \{x_kx_\ell\} }}-\operatorname{det}\bb{\Z_{B\setminus \{x_1^2\}\cup \{x_ix_k\} }}\operatorname{det}\bb{\Z_{B\setminus \{x_1^2\}\cup \{x_jx_\ell\} }}=0.\]
\end{proposition}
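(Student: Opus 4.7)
The plan is to exploit the collinearity to show that each of the four determinants above factors as a rank-one product, after which the stated relation becomes a trivial identity between products of scalars. Write $M_{ij} := \operatorname{det}\bb{\Z_{B\setminus\{x_1^2\}\cup\{x_ix_j\}}}$, so $M_{ij}$ is the determinant of an $(n+2)\times(n+2)$ matrix whose columns correspond to $1, x_1, \ldots, x_n, x_ix_j$ evaluated on $\z_1,\ldots,\z_{n+1},\z_r$ (using $\z_{k,0}=1$). The first $n+1$ columns do not depend on the pair $(i,j)$; only the last column does.

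The key step is a row operation on row $\z_r$. Since $\z_r = t\z_1 + (1-t)\z_2$ and all $0$-coordinates are $1$ (so $t+(1-t)=1$), subtracting $t$ times row $1$ and $(1-t)$ times row $2$ from row $n+2$ annihilates all entries in the first $n+1$ columns. A short calculation shows the remaining entry in the last column becomes
\[
\z_{r,i}\z_{r,j} - t\z_{1,i}\z_{1,j} - (1-t)\z_{2,i}\z_{2,j} \;=\; -t(1-t)(\z_{1,i}-\z_{2,i})(\z_{1,j}-\z_{2,j}),
\]
by expanding $\z_{r,i}\z_{r,j}=(t\z_{1,i}+(1-t)\z_{2,i})(t\z_{1,j}+(1-t)\z_{2,j})$ and collecting terms.

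Expanding the resulting determinant along the last row then gives
\[
M_{ij} \;=\; C\,(\z_{1,i}-\z_{2,i})(\z_{1,j}-\z_{2,j}),
\]
where $C$ is the signed $(n+1)\times(n+1)$ minor obtained from rows $1,\ldots,n+1$ and columns $1,\ldots,n+1$, multiplied by $-t(1-t)$. Crucially $C$ is independent of $(i,j)$. Setting $u_i := \z_{1,i}-\z_{2,i}$, we obtain $M_{ij}=C\,u_i u_j$, so both $M_{ij}M_{k\ell}$ and $M_{ik}M_{j\ell}$ equal $C^2 u_i u_j u_k u_\ell$, yielding the claimed vanishing.

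The only potential obstacle is bookkeeping the signs in the cofactor expansion, but this is irrelevant because it only affects $C$ and $C$ cancels in the final identity. Thus the essential content is the algebraic identity $\z_{r,i}\z_{r,j}-t\z_{1,i}\z_{1,j}-(1-t)\z_{2,i}\z_{2,j} = -t(1-t)(\z_{1,i}-\z_{2,i})(\z_{1,j}-\z_{2,j})$, which is the single calculation driving the proof.
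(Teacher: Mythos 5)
Your proof is correct, and it is essentially the paper's argument: both reduce to the factorization $\operatorname{det}\bb{\Z_{B\setminus\{x_1^2\}\cup\{x_ix_j\}}} = C\,(\z_{1,i}-\z_{2,i})(\z_{1,j}-\z_{2,j})$ with $C$ independent of $(i,j)$, driven by the same identity $\z_{r,i}\z_{r,j}-t\z_{1,i}\z_{1,j}-(1-t)\z_{2,i}\z_{2,j}=-t(1-t)(\z_{1,i}-\z_{2,i})(\z_{1,j}-\z_{2,j})$. The only cosmetic difference is that you obtain it by a row operation on the row of $\z_r$, while the paper expands along the last column and uses that all but three cofactors vanish together with the proportionality of the remaining ones.
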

\begin{proof}\belowdisplayskip=-12pt
    We have 
    \begin{align*}
        m_{x_1^2}^{x_ix_j}
        &=\sum_{k=1}^r (-1)^{r+k}\z_{k, i}\z_{k, j}\operatorname{det}\bb{\Z^{\hat{k}}} \\
        &=\sum_{k\in \{1, 2, r\}}(-1)^{r+k}\z_{k, i}\z_{k, j}\operatorname{det}\bb{\Z^{\hat{k}}},\\
    \end{align*}
    as $\operatorname{det}\bb{\Z^{\hat{k}}}=0$ for $k\neq 1, 2, r$.  We can also see $\operatorname{det}\bb{\Z^{\hat{1}}}=t\operatorname{det}\bb{\Z^{\hat{r}}}, \operatorname{det}\bb{\Z^{\hat{2}}}=-(1-t)\operatorname{det}\bb{\Z^{\hat{r}}}$,
    so we can deduce
    \begin{gather*}
    m_{x_1^2}^{x_ix_j}/\operatorname{det}\bb{\Z^{\hat{r}}}=-t(1-t)\bb{\z_{1, i}-\z_{2, i}}\bb{\z_{1, j}-\z_{2, j}},\\
    m_{x_1^2}^{x_kx_\ell}/\operatorname{det}\bb{\Z^{\hat{r}}}=-t(1-t)\bb{\z_{1, k}-\z_{2, k}}\bb{\z_{1, \ell}-\z_{2, \ell}},\\
    m_{x_1^2}^{x_ix_k}/\operatorname{det}\bb{\Z^{\hat{r}}}=-t(1-t)\bb{\z_{1, i}-\z_{2, i}}\bb{\z_{1, k}-\z_{2, k}},\\
    m_{x_1^2}^{x_jx_\ell}/\operatorname{det}\bb{\Z^{\hat{r}}}=-t(1-t)\bb{\z_{1, j}-\z_{2, j}}\bb{\z_{1, \ell}-\z_{2, \ell}}.
\end{gather*}
\qedhere
\end{proof}
This actually holds for $B=\{1, x_1, \dots, x_n, x_ix_j\}$ for any $1 \leq i, j \leq n$, and replacing $x_1^2$ with $x_ix_j$ in the statement and proof.  Using this proposition we give Example \ref{ex:collinear}, in the appendix, of a tensor of format $(n, r) = (2, 4)$ such that $\z_1, \z_2, \z_3$ are generic and $\z_4$ lies on the line passing through $\z_1, \z_2$.  We explicitly demonstrate that though $(2, 4)$ is an efficient format, when there are three collinear points the matrix $\mathbf{A}$ drops rank; we then show that this corresponds to a one-dimensional parameterized space of decompositions.  This example recovers case (4b)(ii) of Theorem 3.7 of \cite{MOURRAIN2020347} via an algorithmic method.

We generalize this in the following theorem, also generalizing some cases from Theorem 4.2 of \cite{MOURRAIN2020347} to larger $n$.
\begin{theorem}
    Under Condition \ref{cond:collinear} the tensor $\phi$ has a one-dimensional space of decompositions: the points $\z_3, \dots, \z_{n+1}$ and $s_1\z_1+(1-s_1)\z_2, s_2\z_1+(1-s_2)\z_2, s_3\z_1+(1-s_3)\z_2$, where $s_1, s_2, s_3$ are the three distinct solutions to a cubic with coefficients parameterized by a single moment variable.  Each decomposition can be found efficiently via the parameterization.
\end{theorem}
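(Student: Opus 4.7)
The plan is to combine the moment matrix extension framework of this section with Sylvester's classical decomposition theorem for binary quartics. First I would show that any alternative decomposition of $\phi$ under Condition \ref{cond:collinear} must retain the points $\z_3, \ldots, \z_{n+1}$ and replace the collinear triple $\{\z_1, \z_2, \z_r\}$ by another triple on the same line $L$. Indeed, setting $\psi = \phi - \sum_{k=3}^{n+1}\lambda_k \z_k^{\otimes 4}$, one obtains a rank-$3$ tensor lying in $S^4 L \cong S^4 \bbC^2$; since $\z_3, \ldots, \z_{n+1}$ span a subspace complementary to $L$ by the genericity hypothesis, any rank-$3$ decomposition of $\psi$ must lie entirely in $S^4 L$. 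Sylvester's theorem then yields $\operatorname{VSP}(\psi, 3) \cong \bbP^1$: the degree-$3$ part of the apolar ideal $\psi^\perp$ is two-dimensional, and a generic cubic in the associated pencil has three distinct roots giving a rank-$3$ decomposition. Parameterizing $L$ by $w(s) = s\z_1 + (1-s)\z_2$, each apolar cubic becomes a univariate cubic $c(s)$ whose roots furnish the three line points.

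Next I would realize this $\bbP^1$-family algorithmically via Algorithm \ref{alg:efficient_alg}. Taking $B = \{1, x_1, \ldots, x_n, x_1^2\}$, the moment variables are $Y = \{y_{x_1^4 x_i} : i = 1, \ldots, n\}$; since $B_2 = \{x_1^2\}$ is a singleton, every determinantal relation is linear in $Y$, and the extension problem reduces to a single linear system $\mathbf{A} Y = \mathbf{b}$. The Sylvester parameterization above produces a $\bbP^1$-family of solutions, so $\dim \ker \mathbf{A} \geq 1$. To prove equality I would leverage the rank-one factorization of minors from Proposition \ref{prop:collinear_eqs}, $m_{x_1^2}^{x_i x_j} = -t(1-t)(\z_{1,i}-\z_{2,i})(\z_{1,j}-\z_{2,j})\operatorname{det}(\Z^{\hat{r}})$, which produces a single explicit linear dependence among the rows of $\mathbf{A}$; a direct computation using the genericity of $\z_3, \ldots, \z_{n+1}$ then rules out any further rank drops. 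Hence all solutions of the linear system are parameterized by a single free moment variable $y^* \in Y$.

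Finally, I would extract the decomposition from Theorem \ref{thm:main}. For each choice of $y^*$, the multiplication matrices $\mathbf{M}_i^B(y^*)$ are fully specified; by construction $\z_3^B, \ldots, \z_{n+1}^B$ are always common eigenvectors across the family, and in an adapted basis the $\mathbf{M}_i^B(y^*)$ are block-diagonal with a fixed $(n-1)\times (n-1)$ block and a $3 \times 3$ block depending on $y^*$. The characteristic polynomial of the $3\times 3$ block, viewed as a polynomial in the line parameter $s$, is the desired cubic $c(s; y^*)$; its coefficients are rational in $y^*$ and become linear after clearing denominators, and its three generically distinct roots $s_1, s_2, s_3$ give the three line points. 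Each decomposition is therefore computed efficiently by choosing $y^*$, solving the linear system for the remaining moment variables, forming the multiplication matrices, and reading the decomposition off the eigenstructure. The main obstacle is the exact corank computation: the upper bound $\dim \ker \mathbf{A} \leq 1$ follows from combining the Sylvester analysis with Theorem \ref{thm:main}, but verifying that this bound is achieved requires the careful use of the rank-one structure from Proposition \ref{prop:collinear_eqs}, which is where the collinearity hypothesis enters the argument in its most concrete form.
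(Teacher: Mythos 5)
Your overall architecture (reduce to the linear system for $B=\{1,x_1,\dots,x_n,x_1^2\}$, show the corank is exactly one, then read off the cubic from the eigenstructure of the multiplication matrices) matches the paper's, and your third paragraph is essentially the paper's argument via Example \ref{ex:collinear}. The Sylvester detour is a genuinely different ingredient, but it is also where the gap sits. Your first step asserts that every size-$(n+2)$ decomposition of $\phi$ must \emph{retain} the points $\z_3,\dots,\z_{n+1}$; the argument you give only shows that a decomposition which already contains those points has its remaining three points on $L$ (you subtract $\sum_{k\ge 3}\lambda_k\z_k^{\otimes 4}$ and analyze $\psi$, which presupposes the split). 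Since your upper bound $\dim\ker\mathbf{A}\le 1$ is derived from ``Sylvester plus Theorem \ref{thm:main},'' i.e., from the decomposition space being exactly a $\bbP^1$, the whole corank computation rests on this unproven retention claim. The paper avoids the issue entirely: by Theorem \ref{thm:main} \emph{all} decompositions are in bijection with solutions of the determinantal relations, and for the format $(n,n+2)$ there are no quadratic relations, so computing the solution set of the linear system \emph{is} computing the full decomposition space; no a priori description of that space is needed.

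The second, smaller problem is your corank argument itself. You describe Proposition \ref{prop:collinear_eqs} as producing ``a single explicit linear dependence among the rows of $\mathbf{A}$,'' but $\mathbf{A}$ is a tall $|E|\times n$ matrix and the relevant quantity is its column rank; a one-dimensional kernel is not one row relation. The paper's computation is sharper and worth internalizing: after normalizing $\z_1=\mathbf{e}_0$, $\z_k=\mathbf{e}_0+\mathbf{e}_{k-1}$ for $k=2,\dots,n+1$, and $\z_{n+2}=\mathbf{e}_0+(1-t)\mathbf{e}_1$, one gets $\operatorname{det}(\Z_B)=-t(1-t)\neq 0$ while $m_{x_1^2}^{x^\alpha}=0$ for every $x^\alpha\neq x_1^2$. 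Consequently the entire column of $\mathbf{A}$ indexed by $y_{x_1^5}$ vanishes (that variable only ever enters an equation with coefficient $\pm m_{x_1^2}^{x^\beta x_j}$), while each $y_{x_1^4x_j}$, $j\ge 2$, appears with the nonzero coefficient $\operatorname{det}(\Z_B)$ in the equation $(x_1^3,x_1x_j)\in E_1$, so those columns are manifestly independent and those variables are uniquely determined. That gives corank exactly one with $y_{x_1^5}$ as the free parameter, with no genericity bookkeeping and no appeal to an external classification of $\operatorname{VSP}(\psi,3)$.
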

\begin{proof}
    With a change of variables we can always write $\Z=\{\z_1, \dots, \z_{n+2}\}$ such that $\z_{1}=\mathbf{e}_0$, $\z_k=\mathbf{e}_0+\mathbf{e}_{k-1}$ for $k=2, \dots, n+1$, and $\z_{n+2}=\mathbf{e}_0+(1-t)\mathbf{e}_1$.  It can be easily seen that $\operatorname{det}\bb{\Z_B}=-t(1-t)$, and for all $x^\alpha\neq x_1^2$ that $\operatorname{det}\bb{\Z_{B\setminus \{x_1^2\}\cup \{x^\alpha\} }}=0$.  In particular, the column $y_{x_1^5}$ of $\mathbf{A}$ is zero.  Each other column $y_{x_1^4x_j}, j\geq 2$ contains $\operatorname{det}\bb{\Z_B}$ in exactly one entry -- the equation $(x_1^3, x_1x_j)\in E_1$.  We can treat $y_{x_1^5}$ as a free parameter, and as there are no quadratic relations, the dimension of the space of decompositions is one dimensional.

    From the same analysis as in Example \ref{ex:collinear}, a generic choice for $y_{x_1^5}$ results in multiplication matrices that are nondefective, with eigenvectors $\z_3, \dots, \z_{n+1}$ fixed regardless of choice of parameter.  The remaining three eigenvectors are of the form $s_i\z_1+(1-s_i)\z_2, i=1, 2, 3$ where $s_i$ are the three solutions to a cubic in the choice of $y_{x_1^5}$.
\end{proof}

\begin{remark}
    For this specific format, with $r=n+2$ and $\operatorname{det}\bb{\Z_B}\neq 0$ for $B=\{1, x_1, \dots, x_n, x_ix_j\}$, three points collinear seems to be a necessary condition for $\mathbf{A}$ to drop rank.  
    For example, if $\z_{n+2}$ is a nontrivial linear combination of $\z_1, \z_2, \z_3$, so we have four points coplanar, $\mathbf{A}$ is full column rank (verified by a specialization).
    
    We conjecture that when 
    Condition \ref{cond:first_r} and Condition \ref{cond:collinear} are satisfied, the relations
    \[\operatorname{det}\bb{\Z_{B\setminus \{x_1^2\}\cup \{x_ix_j\} }}\operatorname{det}\bb{\Z_{B\setminus \{x_1^2\}\cup \{x_kx_\ell\} }}-\operatorname{det}\bb{\Z_{B\setminus \{x_1^2\}\cup \{x_ix_k\} }}\operatorname{det}\bb{\Z_{B\setminus \{x_1^2\}\cup \{x_jx_\ell\} }}=0\]
    generate the radical of the vanishing of the $|Y|\times |Y|$ minors of $\mathbf{A}$. 
\end{remark}

\begin{remark}
    When $r>n+2$ there are nontrivial quadratic determinantal relations.  However, similar behavior as above appears when $(n, r)$ is still an efficient format, $\z_1, \dots, \z_{r-1}$ are generic, and $\z_r$ is collinear with $\z_1, \z_2$.  We observe experimentally that $\mathbf{A}$ drops rank by one. Fixing one variable and writing all other variables as a linear expression in the fixed variable, we can substitute these expressions into the quadratic relations to produce a quadratic system in one variable, which is guaranteed to have a solution.  However, instead of this quadratic system specifying a finite number of solutions for this fixed variable we observe that these quadratic relations become the constant zero -- again indicating that linear equations suffice and the space of decompositions is one-dimensional.  An example is included in our attached code.  This merits further consideration and we leave it open for future work.
\end{remark} 

\subsection{Higher order tensors}
Our analysis is not restricted to order-$4$ tensors.  Indeed, under Condition \ref{cond:low_rank} we still can obtain a significant number of linear equations.  However, counting the number of equations and variables remains difficult unless we impose something similar to Condition \ref{cond:first_r}, which for larger $d$ becomes unrealistic and too strong.

If we do choose to proceed with a similar condition, counting equations and variables becomes possible.  We expect for $r=O(n^{d/2})$ the matrix $\mathbf{A}$ will be tall, and similar to our conjecture we also expect this matrix to be generically full column rank.  This condition is one on the expected growth of the Hilbert function $h_{\phi}$, i.e., ranks of catalecticant.

On the other hand, in our supplementary code we give an explicit example of an order-$6$ tensor that does not achieve the expected growth of the Hilbert function yet decomposition still reduces to a linear system.  This certainly motivates future study.

\section{Monomials}\label{sec:monomials}
In this section we discuss applying Algorithm \ref{alg:size_s_decomp} on symmetric tensors that are \textit{monomials}.  It holds that monomials given in a canonical form satisfy Assumption \ref{ass:dehomogenization}, which allows us to determine algorithmically the rank of monomials and provide a parameterization of the space of decompositions efficiently.

\begin{condition}\label{cond:monomial}
    $\phi$ is a monomial, i.e., $\phi_{x_0^{d_0}\dots x_n^{d_n}}=1$ and all other entries are zero, with $d_0\leq d_1\leq \dots \leq d_n$.  $\phi$ is an order-$d:=\sum_{j=0}^n d_j$ symmetric tensor.
\end{condition}

The following is known:
\begin{proposition}[\cite{BUCZYNSKA201345waring}, Corollary 19]\label{prop:monomial_dehomogenization}
    For $\phi$ satisfying Condition \ref{cond:monomial}, if $\z_1, \dots, \z_r$ gives a minimal rank decomposition of $\phi$ then $\z_{i, 0}\neq 0$ for all $i$.
\end{proposition}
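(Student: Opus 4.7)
The plan is to combine the Carlini–Catalisano–Geramita (CCG) formula $\operatorname{rank}(\phi)=\prod_{j=1}^n(d_j+1)=:r$ with differentiation of a proposed decomposition. Write a minimum decomposition as $\phi=\sum_{i=1}^r\ell_i^d$ with $\ell_i=\z_i\cdot x$, and let $k$ be the number of summands with $\z_{i,0}=0$; the goal is to show $k=0$.

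The case $d_0\ge 2$ is direct. Applying $\partial/\partial x_0$ to $\phi=\sum_i\ell_i^d$ gives
\[
d_0\,x_0^{d_0-1}x_1^{d_1}\cdots x_n^{d_n}=d\sum_{i=1}^r\z_{i,0}\,\ell_i^{d-1},
\]
in which the $k$ offending terms vanish, leaving at most $r-k$ nonzero summands on the right. The monomial on the left still has smallest exponent $d_0-1\ge 1$, so CCG applied to it gives Waring rank $\prod_{j=1}^n(d_j+1)=r$. Hence $r-k\ge r$ and $k=0$.

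The case $d_0=1$ is harder, since the same derivative only produces a decomposition of $m:=x_1^{d_1}\cdots x_n^{d_n}$, whose rank is $r/(d_1+1)<r$. Here I would exploit the full expansion $\ell_i=\widetilde{\ell_i}+\z_{i,0}\,x_0$ inside $\sum_i\ell_i^d=x_0\,m$; matching coefficients of each power of $x_0$ gives the relations $\sum_i\widetilde{\ell_i}^d=0$, $d\sum_i\z_{i,0}\widetilde{\ell_i}^{d-1}=m$, and $\sum_i\z_{i,0}^{\,j}\widetilde{\ell_i}^{d-j}=0$ for $j\ge 2$. Passing to apolar schemes, the points $Z=\{[\z_1],\ldots,[\z_r]\}\subset\bbP^n$ satisfy $I_Z\subset \phi^\perp=(y_0^2,y_1^{d_1+1},\ldots,y_n^{d_n+1})$, and reducing modulo $y_0$ shows that the $k$ points of $Z\cap\{y_0=0\}$ give an apolar scheme for the monomial $m$ in the $n$ variables $y_1,\ldots,y_n$, forcing $k\ge r/(d_1+1)$.

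The main obstacle is closing this last case: the bounds $r/(d_1+1)\le k\le rd_1/(d_1+1)$ obtained so far are consistent with $k>0$, and the derivative identities alone are not sharp enough. A clean finish invokes the Gorenstein / complete intersection structure of the apolar algebra $R/\phi^\perp$ (supported at the origin) to force any length-$r$ reduced apolar subscheme of $V(\phi^\perp)$ into the open torus orbit $(\bbC^{\times})^n\subset\bbP^n$, i.e., to have no points on any coordinate hyperplane; specializing to $\{y_0=0\}$ then yields $k=0$. This is the strategy of Buczyńska–Buczyński's proof of the cited corollary, via their smoothability theorem for apolar subschemes concentrated at a single point, and a self-contained treatment here would essentially reproduce that argument.
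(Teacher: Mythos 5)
The paper does not actually prove this statement: it is imported from \cite{BUCZYNSKA201345waring}, Corollary 19, and the paper's own remark in Section \ref{sec:monomials} explicitly flags that a standalone proof of Proposition \ref{prop:monomial_dehomogenization} would be needed for self-containment. So you are attempting something the paper deliberately outsources. Your $d_0\geq 2$ case is correct and complete: differentiating by $x_0$ kills exactly the $k$ offending summands, and since $x_0^{d_0-1}x_1^{d_1}\cdots x_n^{d_n}$ still has its smallest exponent in the $x_0$ slot, the Carlini--Catalisano--Geramita formula gives its rank as $r$ again, forcing $k=0$. Importing CCG as an external black box is also the right move here, since the paper's internal derivation of $\operatorname{rank}(\phi)=\prod_{j=1}^n(d_j+1)$ (Proposition \ref{prop:monnomial_r_lower} together with Section \ref{sec:canonical_decomposition_monomial}) itself invokes Proposition \ref{prop:monomial_dehomogenization}, so any purely internal route would be circular.

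The $d_0=1$ case, however, contains a genuine gap, and one intermediate step is false rather than merely unfinished. The claim that ``reducing modulo $y_0$'' makes $Z\cap\{y_0=0\}$ an apolar scheme for $m=x_1^{d_1}\cdots x_n^{d_n}$, hence $k\geq r/(d_1+1)$, cannot be right: it asserts a strictly positive lower bound on the very quantity the proposition says is zero, so if the deduction were valid the proposition would be false. The deduction fails because a form $g(y_1,\dots,y_n)$ vanishing on the points of $Z$ that lie in the hyperplane need not vanish on the remaining points of $Z$; hence $g\notin I_Z$, and one cannot conclude $g\circ\phi=0$ (equivalently $g\circ m=0$, since $g\circ(x_0m)=x_0(g\circ m)$ when $g$ does not involve $y_0$). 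Deleting that step leaves only the upper bound $k\leq rd_1/(d_1+1)$ from the derivative identity, which does not close the case. As you acknowledge, finishing requires the smoothability/complete-intersection analysis of \cite{BUCZYNSKA201345waring} applied to $\phi^\perp=(y_0^2,y_1^{d_1+1},\dots,y_n^{d_n+1})$; without reproducing that argument the proof is incomplete, and the citation should be retained (though your $d_0\geq 2$ argument would be a worthwhile elementary addition for that subcase).
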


Proposition \ref{prop:monomial_dehomogenization} ensures that Assumption \ref{ass:dehomogenization} holds for decompositions of monomials.  Form the Hankel matrix $\mathcal{H}$ corresponding to $\phi$, dehomogenizing with respect to $x_0$; the first step is to find a monomial set $B$ such that $\mathcal{H}_{B, B}$ is invertible.  Consider
\begin{equation}\label{eq:monomial_B}
    B:=\{x^\alpha\mid 0\leq \alpha_i\leq d_i, 1\leq i\leq n\}.
\end{equation}
Then $|B|=\prod_{j=1}^n (d_j+1)$ and
\[\mathcal{H}_{B, B}=\begin{bmatrix}
    0 & &  &  & 1\\
      & &  & 1 &   \\
      & & \iddots & & \\
    & 1 & & &     \\
    1 & &  & & *
\end{bmatrix},\]
that is, the antidiagonal of $\mathcal{H}_{B, B}$ is all ones and above it lies all zeros.  To see this, write $\overline{d}:=(d_1, \dots, d_n)$.  An antidiagonal element of $\mathcal{H}_{B, B}$ is precisely indexed by $\bb{x^\alpha, x^{\overline{d}-\alpha}}$ and by Condition \ref{cond:monomial}, $\phi_{x^\alpha x^{\overline{d}-\alpha}}=\phi_{x^{\overline{d}}}=1$.  Above this antidiagonal are indices of degree less than $d$, so must be zero.  Below the antidiagonal there can be moment variables in some of the entries.

Using Proposition \ref{prop:monomial_dehomogenization} and the above observation, we can prove a lower bound on the rank of $\phi$.
\begin{proposition}\label{prop:monnomial_r_lower}
    $\operatorname{rank}(\phi)\geq \prod_{j=0}^n (d_j+1)$.
\end{proposition}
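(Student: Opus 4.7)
\medskip
\noindent\textbf{Proof plan.} The strategy is to turn the ``antidiagonal'' structure of $\mathcal{H}_{B,B}$ sketched in the preceding paragraph into a precise rank statement, and then combine it with the automatic upper bound on $\operatorname{rank}(\mathcal{H}_{B,B})$ that any decomposition of $\phi$ forces.

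The first step is to make the antidiagonal picture rigorous. I would introduce the involution $\sigma: x^\alpha \mapsto x^{\overline{d} - \alpha}$ on $B$, which is well-defined since $0 \leq d_j - \alpha_j \leq d_j$ for every $x^\alpha \in B$. Permuting the columns of $\mathcal{H}_{B,B}$ via $\sigma$ yields a matrix $M$ whose $(\alpha, \beta)$-entry is indexed by $\gamma := \alpha + \overline{d} - \beta$, so that $|\gamma| = |\overline{d}| + |\alpha| - |\beta|$. Ordering the rows and columns of $B$ by nondecreasing $|\alpha|$, a direct case analysis shows: when $|\alpha| \leq |\beta|$ one has $|\gamma| \leq |\overline{d}| \leq d$, so the entry is the genuine tensor value $\phi_\gamma$, which equals $1$ if $\alpha = \beta$ and $0$ otherwise. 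Hence $M$ is lower triangular with $1$'s on the diagonal, so $\det(\mathcal{H}_{B,B}) = \pm 1$ identically in the moment variables. In particular $\operatorname{rank}(\mathcal{H}_{B,B}) = |B| = \prod_{j=1}^{n}(d_j+1)$ for \emph{any} values assigned to the moment variables.

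The second step is the standard rank lower bound. Let $\phi = \sum_{i=1}^r \lambda_i \z_i^{\otimes d}$ be a minimal rank decomposition; by Proposition \ref{prop:monomial_dehomogenization} we may assume $\z_{i,0}=1$. Setting $y_\gamma := \sum_{i=1}^r \lambda_i \z_i^{\gamma}$ for every $\gamma \in \bbZ_{\geq 0}^n$ produces a consistent filling of the moment variables that agrees with the tensor entries, and under this filling the Hankel matrix factors as $\mathcal{H}_{B,B} = \Z_B^\top \Lambda \Z_B$ with $\Lambda = \operatorname{diag}(\lambda_1, \dots, \lambda_r)$. The rank of this product is at most $r$, so combining with the first step yields $|B| \leq r = \operatorname{rank}(\phi)$.

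I do not expect a substantive obstacle. The only slightly delicate point is verifying that, in the triangularity argument, every position $(\alpha, \beta)$ with $|\alpha| \leq |\beta|$ is read as a tensor entry rather than a moment variable; this is immediate from $|\alpha + \overline{d} - \beta| = |\overline{d}| + |\alpha| - |\beta| \leq |\overline{d}| \leq d$. Note also that the argument never needs Theorem \ref{thm:main}: once a decomposition is in hand, the moment series $y_\gamma = \sum_i \lambda_i \z_i^\gamma$ is manifestly admissible, and that suffices.
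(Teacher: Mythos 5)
Your proof is correct and follows essentially the same route as the paper: the antidiagonal (after your column permutation, triangular) structure of $\mathcal{H}_{B,B}$ forces $\operatorname{rank}(\mathcal{H}_{B,B})=|B|$ for any values of the moment variables, while any size-$r$ decomposition with $\z_{i,0}=1$ yields the factorization $\mathcal{H}_{B,B}=\Z_B^\top\Lambda\Z_B$ and hence $|B|\leq r$; your version simply makes explicit the triangularity and the moment-series filling that the paper leaves implicit (and, reasonably, bypasses Theorem \ref{thm:main} in favor of the easy direction). Note that both your argument and the paper's proof establish the bound $\prod_{j=1}^n(d_j+1)$, so the index $j=0$ in the proposition's statement is evidently a typo.
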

\begin{proof}
     Proposition \ref{prop:monomial_dehomogenization} and  Theorem \ref{thm:main} show that all decompositions of $\phi$ can be realized via extensions using the Hankel matrix $\mathcal{H}$.  Therefore, numerical ranks of submatrices of $\mathcal{H}$ lower bound the rank of $\phi$.  Given
    \[B=\{x^\alpha\mid 0\leq \alpha_i\leq d_i, 1\leq i\leq n\}\]
    and realizing $\mathcal{H}_{B, B}$ is lower antidiagonal with $1$'s on the antidiagonal, we have $\operatorname{rank}(\phi)\geq \operatorname{rank}(\mathcal{H}_{B, B})= \prod_{j=1}^n (d_j+1)$ regardless of what happens below the antidiagonal.
\end{proof}
Henceforth, fix the $B$ given in Equation \ref{eq:monomial_B}.  As a brief road map to our discussion on monomials, the next two subsections we describe the moment variables and determinantal equations and reveal a certain grading in the equations.  We then show how certain moment variables act as free parameters, reducing solving the determinantal equations to \textit{polynomial evaluation}, implying an efficient algorithm for decomposition of monomials.

\subsection{Variables}
We describe the variables present in the determinantal relations.
\begin{equation}\label{eq:monomial_vars}
\begin{aligned}
    Y&=\{x^\alpha x^{\alpha'}x_i\mid x^\alpha, x^{\alpha'}\in B, 1\leq i\leq n\},\\
    Y_k&:=\{x^\gamma\in Y\mid (k-1)(d_0+1)+1\leq |\gamma|-d\leq k(d_0+1)\}.
\end{aligned}
\end{equation}
$Y_k$ stratifies $Y$. Indeed let $m$ be the smallest integer such that $|\gamma|-d\leq m(d_0+1)$ for all $x^\gamma\in Y$.  Then $Y=\bigcup_{k=1}^m Y_k$.  Furthermore, define
\begin{equation}Y_k^\ell:=\{x^\gamma\in Y_k\mid \exists T\subseteq [n], |T|=\ell \text{ s.t. } \forall i\in T, \gamma_i>d_i, \forall j\notin T, \gamma_j\leq d_j\}.\end{equation}
In other words, $Y_k^\ell$ is the set of $x^\gamma\in Y_k$ such that $\gamma_i>d_i$ on exactly $\ell$ coordinates $i$.  A useful fact is that if $x^\gamma \in Y$ then $\gamma_i$ can be equal to $2d_i+1$ on at most one coordinate $i$.

\subsection{Equations}\label{sec:monomialeqs}
We now describe the determinantal relations.  Recall that equations are of the form
\[\operatorname{det}\bb{\mathcal{H}_{B\cup \{x^\alpha x_i\}, B\cup\{x^\beta x_j\}}}-\operatorname{det}\bb{\mathcal{H}_{B\cup \{x^\alpha x_j\}, B\cup\{x^\beta x_i\}}}=0\]
for $x^\alpha, x^\beta\in B, i\neq j$.  We abuse some notation from the previous section and define
\begin{equation*}
    \begin{aligned}
        E_1&:=\{(x^\alpha x_i, x^\beta x_j)\mid x^\alpha\in B, x^\beta\in B, i\neq j, x^\alpha x_i\notin B\wedge x^\beta x_j\notin B, x^\alpha x_j\in B\vee  x^\beta x_i\in B\},\\
        E_2&:=\{((x^\alpha x_i, x^\beta x_j), (x^\alpha x_j, x^\beta x_i))\mid x^\alpha\in B, x^\beta\in B, i\neq j, x^\alpha x_i, x^\alpha x_j, x^\beta x_i, x^\beta x_j\notin B\},\\
        E&:=E_1 \cup E_2.
    \end{aligned}
\end{equation*}
As before, we consider $E_1,E_2$ and $E$ as sets of equations defined by the given indices.  However, in contrast with Section \ref{sec:lowrankeven}, these equations are not necessarily linear.

For $\alpha, \beta, i, j$ such that the corresponding equation is in $E_2$ we need $x^\alpha\in B, x^\beta\in B, i\neq j, x^\alpha x_i, x^\alpha x_j, x^\beta x_i, x^\beta x_j\notin B$.  In particular, this means that $\alpha_i=d_i, \alpha_j=d_j, \beta_i=d_i, \beta_j=d_j$ and so, $x^{\alpha+\beta}x_ix_j\notin Y$.  By contrapositive every $x^\gamma\in Y$ can be realized as $x^\gamma=x^{\alpha+\beta}x_ix_j$ with $(x^\alpha x_i, x^\beta x_j)\in E_1$.  Henceforth it suffices to focus on $E_1$.

It will be useful to define the sets $S_k\subseteq B\times B$ for $k\geq 0$:
\begin{equation} \label{eq:upper-n-lower}
S_k:=\{(x^\gamma, x^{\gamma'})\in B\times B\mid k(d_0+1)\leq |\gamma'|-|\gamma|\leq (k+1)(d_0+1)-1\}.
\end{equation}
Implicitly we are assuming $|\gamma'|\geq |\gamma|$.  We can show that $(x^\gamma, x^{\gamma'})\in S_k\implies x^{\overline{d}-\gamma+\gamma'}\in Y_k$:
\begin{align*}
    |\overline{d}-\gamma+\gamma'|
    &=|\gamma'|+|\overline{d}-\gamma|\\
    &=|\gamma'|+d-d_0-|\gamma|,
\end{align*}
the second equality being justified since $x^\gamma$ divides $x^{\overline{d}}$.  Then 
\begin{align*}
    |\gamma'|+d-d_0-|\gamma|
    & \leq d-d_0+(k+1)(d_0+1)-1\\
    &=d+k(d_0+1).
\end{align*}
The lower bound in Equation \ref{eq:upper-n-lower} holds similarly.  Technically we need to check that $x^{\overline{d}-\gamma+\gamma'}\in Y$ but this is clear.

The next two lemmas we present give recursive expressions for $\mathcal{G}:=\mathcal{H}_{B, B}^{-1}$ and  the determinantal equation $\operatorname{det}\mathcal{H}_{B\cup\{x^{\alpha}x_i\}, B\cup\{x^\beta x_j\}}$.  In particular, we show that these expressions possess a certain graded structure.  The proofs are in Appendix \ref{app:b}.

\begin{lemma}\label{lem:monomial_G}
    Let $(x^\gamma, x^{\gamma'})\in S_k$.  Then 
    \[
        \mathcal{G}_{x^\gamma, x^{\overline{d}-\gamma'}}=-y_{x^{\overline{d}-\gamma+\gamma'}}-\sum_{\ell=1}^{k-1}\sum_{(x^\delta, x^{\gamma'})\in S_\ell} \mathcal{G}_{x^\gamma, x^{\overline{d}-\delta}}y_{x^{\overline{d}-\delta+\gamma'}}.
    \]
    In particular, $\operatorname{deg}\bb{\mathcal{G}_{x^\gamma, x^{\overline{d}-\gamma'}}}=k$  and the only moment variable in $Y_k$ is $y_{x^{\overline{d}-\gamma+\gamma'}}$; all other moment variables present are in $\bigcup{\ell<k} Y_\ell$.
\end{lemma}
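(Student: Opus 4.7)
The plan is to derive the stated recursion directly from the matrix identity $\mathcal{G}\mathcal{H}_{B,B}=I$, exploiting the rigid shape that $\mathcal{H}_{B,B}$ takes when $\phi$ is a monomial. Under Condition~\ref{cond:monomial}, every entry $(\mathcal{H}_{B,B})_{\mu,\sigma}$ is either the indicator $[\mu+\sigma=\overline{d}]$ (when $|\mu+\sigma|\leq d$) or the moment variable $y_{x^{\mu+\sigma}}$ (when $|\mu+\sigma|>d$, equivalently $|\mu|+|\sigma|>d$). Reading the $(\gamma,\gamma')$-entry of $\mathcal{G}\mathcal{H}_{B,B}=I$ and relabeling the summation index $\mu=\overline{d}-\delta$ yields, for $\gamma\neq\gamma'$,
\[
\mathcal{G}_{x^\gamma,x^{\overline{d}-\gamma'}}\;=\;-\sum_{\substack{\delta\in B\\ |\gamma'|-|\delta|\geq d_0+1}}\mathcal{G}_{x^\gamma,x^{\overline{d}-\delta}}\,y_{x^{\overline{d}-\delta+\gamma'}}.
\]
The index set decomposes as $\bigcup_{\ell\geq 1}\{\delta:(x^\delta,x^{\gamma'})\in S_\ell\}$, so this is already the recursion of the lemma, up to separating out the $\ell=k$ contribution and killing the $\ell\geq k+1$ terms.

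For that main step I would invoke the Neumann expansion $\mathcal{H}_{B,B}^{-1}=\sum_{j\geq 0}(-PN)^j P$, where $P$ is the ``anti-identity'' $P_{\alpha,\beta}=[\alpha+\beta=\overline{d}]$ and $N$ holds the moment-variable entries of $\mathcal{H}_{B,B}$. Because $(PN)_{\alpha,\beta}$ is supported on $|\beta|-|\alpha|\geq d_0+1$, the series is nilpotent on the finite-dimensional space, and one reads off directly that $\mathcal{G}_{x^\gamma,x^{\overline{d}-\delta}}=0$ whenever $|\delta|<|\gamma|$ or $(x^\gamma,x^\delta)\in S_0$ with $\delta\neq\gamma$, and $\mathcal{G}_{x^\gamma,x^{\overline{d}-\gamma}}=1$. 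The two stratification constraints $(x^\gamma,x^{\gamma'})\in S_k$ and $(x^\delta,x^{\gamma'})\in S_\ell$ force
\[
|\delta|-|\gamma|\;\in\;\bigl[(k-\ell-1)(d_0+1)+1,\,(k-\ell+1)(d_0+1)-1\bigr].
\]
For $\ell=k$ this interval is $[-d_0,d_0]$, so the vanishing rules above eliminate every term except $\delta=\gamma$, producing the isolated $-y_{x^{\overline{d}-\gamma+\gamma'}}$; for $\ell\geq k+1$ the interval has upper endpoint $\leq -1$, so $|\delta|<|\gamma|$ and every term vanishes. This recovers the recursion exactly.

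The grading assertions then follow by induction on $k$. A direct size check shows $y_{x^{\overline{d}-\delta+\gamma'}}\in Y_\ell$ whenever $(x^\delta,x^{\gamma'})\in S_\ell$; combined with the interval above, a surviving term in the double sum (so $\ell\leq k-1$ and $|\delta|\geq|\gamma|$) has $(x^\gamma,x^\delta)$ lying in some $S_{k'}$ with $k'\leq k-\ell<k$. By induction each $\mathcal{G}_{x^\gamma,x^{\overline{d}-\delta}}$ involves only variables from $\bigcup_{j<k}Y_j$, so the double sum contributes only variables in $\bigcup_{\ell<k}Y_\ell$, leaving $y_{x^{\overline{d}-\gamma+\gamma'}}$ as the unique $Y_k$-variable appearing. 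The degree bound $\operatorname{deg}\leq k$ is immediate from nilpotency of $PN$, and equality holds because the leading component $(-1)^k((PN)^k)_{\gamma,\gamma'}$ is a sum of path-products $\prod_i y_{x^{\overline{d}-\delta_i+\delta_{i+1}}}$ with positive integer coefficients and at least one path $\gamma=\delta_0\to\cdots\to\delta_k=\gamma'$ exists in $B$. The main obstacle I anticipate is the careful bookkeeping of the two overlapping stratifications $(x^\gamma,x^\delta)\in S_{k'}$ and $(x^\delta,x^{\gamma'})\in S_\ell$, together with the asymmetric zero pattern of $\mathcal{G}$ above the antidiagonal; the Neumann representation is what makes this bookkeeping transparent.
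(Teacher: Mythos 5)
Your proposal is correct and follows essentially the same route as the paper: both derive the recursion by reading the $(\gamma,\gamma')$-entry of $\mathcal{G}\mathcal{H}_{B,B}=I$, use the fact that moment-variable entries force a degree jump of at least $d_0+1$ to bound $\ell\leq k$, and collapse the $\ell=k$ stratum to the single term $\delta=\gamma$ via the $S_0$ vanishing pattern. Your Neumann expansion $\mathcal{H}_{B,B}^{-1}=\sum_{j\geq 0}(-PN)^jP$ is just a tidier packaging of the paper's ``upper antitriangular, base case $S_0$, then induct'' argument (and gives a slightly cleaner justification that the degree is exactly $k$), but it is not a different proof.
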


\begin{lemma}\label{lem:monomial_eq}
    Let $x^{\alpha+\beta}x_ix_j\in Y_k$, with $i\neq j$.  Then $\operatorname{det}\mathcal{H}_{B\cup\{x^{\alpha}x_i\}, B\cup\{x^\beta x_j\}}$ equals
    \begin{align*}
        -y_{x^{\alpha+\beta}x_i x_j}+\sum_{d-|\alpha|\leq |\gamma|\leq |\beta|-d_0}y_{x^{\alpha+\gamma}x_i}y_{x^{\overline{d}-\gamma+\beta} x_j}+\sum_{\ell=1}^{k-2}\sum_{\substack{|\gamma|\geq d-|\alpha|,\\ |\gamma'|\leq |\beta|-d_0,\\(x^\gamma, x^{\gamma'})\in S_\ell}}y_{x^{\alpha+\gamma}x_i}\mathcal{G}_{x^\gamma, x^{\overline{d}-\gamma'}}y_{x^{\overline{d}-\gamma'+\beta} x_j}.
    \end{align*}
    In particular, $\operatorname{deg}\bb{\operatorname{det}\mathcal{H}_{B\cup\{x^{\alpha}x_i\}, B\cup\{x^\beta x_j\}}}=k$ and the only moment variable in equation $Y_k$ is $y_{x^{\alpha+\beta}x_i x_j}$; all other variables are in $\bigcup{\ell<k} Y_\ell$.
\end{lemma}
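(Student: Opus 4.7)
I plan to start from the cofactor identity used in the proof of Lemma \ref{lem:determinant_equals_commuting},
\[
\det\bigl(\mathcal{H}_{B \cup \{x^\alpha x_i\}, B \cup \{x^\beta x_j\}}\bigr) = \mathcal{H}_{x^{\alpha+\beta}x_ix_j}\det(\mathcal{H}_{B,B}) - \mathcal{H}_{B, x^\alpha x_i}^\top \operatorname{adj}(\mathcal{H}_{B,B})\,\mathcal{H}_{B, x^\beta x_j}.
\]
As recorded in the proof of Proposition \ref{prop:monnomial_r_lower}, $\mathcal{H}_{B,B}$ is lower antidiagonal with $1$s on the antidiagonal, so $\det(\mathcal{H}_{B,B}) = \pm 1$ and $\operatorname{adj}(\mathcal{H}_{B,B}) = \pm\mathcal{G}$; since $|\alpha+\beta|+2 > d$ we have $\mathcal{H}_{x^{\alpha+\beta}x_ix_j} = y_{x^{\alpha+\beta}x_ix_j}$. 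Absorbing the global sign into the overall identity produces the leading term $-y_{x^{\alpha+\beta}x_ix_j}$ together with a bilinear form $\mathcal{H}_{B, x^\alpha x_i}^\top \mathcal{G}\,\mathcal{H}_{B, x^\beta x_j}$.

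Next I expand the bilinear form as the double sum $\sum_{x^\gamma, x^{\gamma'} \in B} \mathcal{H}_{x^\gamma, x^\alpha x_i}\,\mathcal{G}_{x^\gamma, x^{\gamma'}}\,\mathcal{H}_{x^{\gamma'}, x^\beta x_j}$ and restrict supports via the monomial structure. In the equations relevant to Section~\ref{sec:monomialeqs} one has $x^\alpha x_i, x^\beta x_j \notin B$, so the vectors $\mathcal{H}_{B, x^\alpha x_i}$ and $\mathcal{H}_{B, x^\beta x_j}$ carry no $1$-entries; their nonzero entries are moment variables $y_{x^{\gamma+\alpha}x_i}$ with $|\gamma| \geq d - |\alpha|$ and $y_{x^{\gamma'+\beta}x_j}$ with $|\gamma'| \geq d - |\beta|$, respectively. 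After the substitution $\gamma' \mapsto \overline{d} - \gamma'$ required to align the column index of $\mathcal{G}$ with the convention of Lemma \ref{lem:monomial_G}, the latter constraint becomes $|\gamma'| \leq |\beta| - d_0$, which is exactly the range appearing in the claim.

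Then I stratify the resulting sum by the level $\ell$ with $(x^\gamma, x^{\gamma'}) \in S_\ell$. The key observation is that the $S_0$ stratum collapses to the diagonal $\gamma = \gamma'$: for $\gamma \neq \gamma'$ in $S_0$ one has $0 < |\gamma'| - |\gamma| \leq d_0$, and the Neumann expansion $\mathcal{G} = (J + L)^{-1} = J - JLJ + \ldots$ (with $J$ the antidiagonal involution and $L$ collecting the strictly sub-antidiagonal moment variables) shows the corresponding entry $\mathcal{G}_{x^\gamma, x^{\overline d - \gamma'}}$ vanishes, since the first nontrivial correction to $J$ appears only at $|\gamma'| - |\gamma| > d_0$. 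This yields the first sum (with the factor $\mathcal{G}_{x^\gamma, x^{\overline d - \gamma}} = 1$ absorbed). For $\ell \geq 1$ I substitute the expression of $\mathcal{G}_{x^\gamma, x^{\overline d - \gamma'}}$ supplied by Lemma \ref{lem:monomial_G}, producing the second sum; the cap $\ell \leq k-2$ is forced because the constraints $|\gamma| \geq d - |\alpha|$ and $|\gamma'| \leq |\beta| - d_0$ together with $(x^\gamma, x^{\gamma'}) \in S_\ell$ bound $|\gamma'|-|\gamma|$ from above, and translating this bound into $Y_\bullet$-grading through the hypothesis $x^{\alpha+\beta}x_ix_j \in Y_k$ shows that strictly larger $\ell$ renders the index range empty.

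The ``in particular'' claim about degree and $Y_k$-support will then follow by pairing the estimate $\deg \mathcal{G}_{x^\gamma, x^{\overline d - \gamma'}} \leq \ell$ from Lemma \ref{lem:monomial_G} with the gradings of the two flanking $y$-factors: a ceiling calculation on $|\alpha+\gamma|+1-d$ and $|\beta|-|\gamma'|-d_0+1$ shows each triple product has total $Y_\bullet$-grading at most $k$, and because every summand of either sum is a product of at least two moment variables it is nonlinear in $Y$ and so cannot contribute a single $Y_k$-variable monomial. The only linear contribution of grading $k$ is the explicit $-y_{x^{\alpha+\beta}x_ix_j}$. The main obstacle I anticipate is the index arithmetic in the stratification step: verifying that the Neumann expansion of $\mathcal{G}$ really does vanish between the antidiagonal and the first band at $|\gamma'|-|\gamma| > d_0$, and confirming that the combinatorial truncation of $\ell$ at $k-2$ matches the statement exactly without leaving residual terms.
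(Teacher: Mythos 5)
Your proposal is correct and follows essentially the same route as the paper: both start from the cofactor identity $\det(\mathcal{H}_{B\cup\{x^\alpha x_i\},B\cup\{x^\beta x_j\}})=\pm\bigl(y_{x^{\alpha+\beta}x_ix_j}-\mathcal{H}_{B,x^\alpha x_i}^\top\mathcal{G}\,\mathcal{H}_{B,x^\beta x_j}\bigr)$, use the antidiagonal structure of $\mathcal{H}_{B,B}$ and the $S_\ell$-stratification with Lemma \ref{lem:monomial_G} (the $S_0$ stratum collapsing to the diagonal), and run the same index arithmetic on $|\alpha|+|\gamma|+1-d$ and $|\beta|-|\gamma'|+1-d_0$ for the grading claim. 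The only caveats are cosmetic: the displayed second sum keeps $\mathcal{G}_{x^\gamma,x^{\overline{d}-\gamma'}}$ unexpanded rather than substituting the recursion, and for the $Y_k$-support claim you should state explicitly that total grading at most $k$ together with at least two factors of grading at least $1$ forces every individual variable in those summands into $\bigcup_{\ell<k}Y_\ell$, which is the form of the statement that Proposition \ref{prop:determined_by_param} actually needs.
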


Importantly, note that there may be distinct equations corresponding to different $\alpha, \beta, i, j$ and $\alpha', \beta', i', j'$ even though $x^{\alpha+\beta}x_ix_j=x^{\alpha'+\beta'}x_{i'}x_{j'}$.

\begin{example}
    Let $\phi=x_0x_1x_2^2$.  Then
    \[B=\{1, x_1, x_2, x_1x_2, x_2^2, x_1x_2^2\}.\]
    The variables are
    \[Y=\{y_{x_1^3x_2^2}, y_{x_1^2x_2^3}, y_{x_1x_2^4}, y_{x_2^5}, y_{x_1^3x_2^3}, y_{x_1^2x_2^4}, y_{x_1x_2^5}, y_{x_1^3x_2^4}, y_{x_1^2x_2^5}\}\]
    and $E_1$
    \begin{align*}
        &y_{x_1^2x_2^3}=0, y_{x_1^3x_2^3}=0, y_{x_1^2x_2^4}=0,\\
        &y_{x_1^3x_2^4}-y_{x_1^3x_2^2}y_{x_1x_2^4}-y_{x_1^2x_2^3}^2=0,\\
        &y_{x_1^2x_2^5}-y_{x_1^3x_2^2}y_{x_2^5}-y_{x_1^2x_2^3}y_{x_1x_2^4}=0.
    \end{align*}
    $E_2$ is empty.  If we substitute we end up with the two equations
    \[y_{x_1^3x_2^4}=y_{x_1^3x_2^2}y_{x_1x_2^4}, \quad y_{x_1^2x_2^5}=y_{x_1^3x_2^2}y_{x_2^5}.\]
    As a lead-in to the next section, we see that we can treat $y_{x_1^3x_2^2}, y_{x_1x_2^4}, y_{x_2^5}, y_{x_1^2x_2^5}$ as free parameters.
    
\end{example}

\subsection{Parameters}
Now that we have described the variables and equations in the determinantal equations, and have shown that these equations possess a graded structure. We draw attention to a particular subset of variables that we claim serve as ``parameters for extension.''

\begin{definition}\label{def:monomial_parameters}
    Define the set
    \[Y_P:=\bigcup_{k} Y_k^1\]
    to be the parameters for extension.
\end{definition}

The important characteristic of the set $Y_P$ is that variables in this set do not appear as a linear term in any equation.  To be precise, consider equations in $E_1$, we require $i\neq j$, and for $x^\alpha x_i, x^\beta x_j\notin B$ we need $\alpha_i=d_i, \beta_j=d_j$.  But then $x^\gamma=x^{\alpha+\beta}x_i x_j$ is such that $\gamma_i\geq d_i, \gamma_j\geq d_j$, so $x^\gamma\notin Y_P$.  On the other hand, for $x^\gamma\in Y\setminus Y_P$ we can always find $\alpha, \beta, i\neq j$ such that $x^\gamma=x^{\alpha+\beta}x_ix_j$ with corresponding equation in $E_1$.  

\begin{proposition}\label{prop:determined_by_param}
    In the determinantal relations the value of a variable $x^\gamma\in Y_k\setminus Y_{k}^1$ is determined by the values of $\cup_{\ell<k} Y_\ell^1$.
\end{proposition}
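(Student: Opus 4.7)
The plan is to proceed by strong induction on the level $k$. The two ingredients are (i) the observation immediately following Definition \ref{def:monomial_parameters}: for every $x^\gamma\in Y\setminus Y_P$ there exist $x^\alpha,x^\beta\in B$ and $i\neq j$ with $x^\gamma=x^{\alpha+\beta}x_i x_j$ and $(x^\alpha x_i,x^\beta x_j)\in E_1$; and (ii) Lemma \ref{lem:monomial_eq}, which rewrites the corresponding determinantal equation as $-y_{x^\gamma}+P=0$, where $P$ is a polynomial whose moment variables all lie in $\cup_{\ell<k}Y_\ell$ and whose remaining factors are tensor entries of $\phi$. Since $(x^\alpha x_i,x^\beta x_j)\in E_1$ forces the companion determinant $\operatorname{det}\mathcal{H}_{B\cup\{x^\alpha x_j\},B\cup\{x^\beta x_i\}}$ to vanish (a column indexed by an element of $B$ is repeated), the full $E_1$-equation collapses to $-y_{x^\gamma}+P=0$, giving an explicit expression $y_{x^\gamma}=P$.

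For the base case $k=1$, the sum over $\ell=1,\dots,k-2$ in Lemma \ref{lem:monomial_eq} is vacuous, and the middle sum is also empty: for $x^{\alpha+\beta}x_ix_j\in Y_1$ we have $|\alpha|+|\beta|\leq d+d_0-1<d+d_0$, which forces the index range $d-|\alpha|\leq|\delta|\leq|\beta|-d_0$ to be empty. Hence $y_{x^\gamma}=0$, trivially determined by $\cup_{\ell<1}Y_\ell^1=\varnothing$. For the inductive step, assume the claim for all levels strictly below $k$, and let $x^\gamma\in Y_k\setminus Y_k^1$. Every moment variable appearing in $P$ is either a parameter in $\cup_{\ell<k}Y_\ell^1$ or a non-parameter in $\cup_{\ell<k}(Y_\ell\setminus Y_\ell^1)$; by hypothesis the latter are themselves polynomial in $\cup_{\ell'<\ell}Y_{\ell'}^1\subseteq\cup_{\ell'<k}Y_{\ell'}^1$. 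Substituting, $y_{x^\gamma}$ becomes a polynomial in $\cup_{\ell<k}Y_\ell^1$, completing the induction.

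The main obstacle is to verify ingredient (i) rigorously. Given $x^\gamma\in Y\setminus Y_P$, since $|\gamma|>d$ yet $x^\gamma\notin Y_P$, there must be at least two coordinates $i\neq j$ with $\gamma_i>d_i$ and $\gamma_j>d_j$. For any such pair one attempts $\alpha_i=d_i$, $\beta_j=d_j$, $\beta_i=\gamma_i-1-d_i$, $\alpha_j=\gamma_j-1-d_j$, and distributes each remaining $\gamma_k$ between $\alpha_k,\beta_k\leq d_k$; this requires $\gamma_k\leq 2d_k$ for $k\notin\{i,j\}$. By \eqref{eq:monomial_vars}, a coordinate can have $\gamma_k>2d_k$ only at the ``extra variable'' index $s$ of the representation of $x^\gamma$, in which case $\gamma_s=2d_s+1>d_s$, so $s$ itself lies among the coordinates where $\gamma$ exceeds $d$; choosing $i=s$ (or $j=s$) sidesteps the obstruction. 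A minor subtlety is that different admissible tuples may produce superficially different polynomial expressions for $y_{x^\gamma}$, but the proposition asserts only the existence of one such expression in terms of $\cup_{\ell<k}Y_\ell^1$, so this ambiguity is harmless.
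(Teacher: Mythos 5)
Your proof is correct and follows essentially the same route as the paper's: induction on the level $k$, using Lemma \ref{lem:monomial_eq} to express $y_{x^\gamma}$ linearly in terms of variables from lower strata, with the same base case computation showing $Y_1\setminus Y_1^1$ variables vanish. Your explicit verification that every $x^\gamma\in Y\setminus Y_P$ admits a representation $x^{\alpha+\beta}x_ix_j$ with $(x^\alpha x_i,x^\beta x_j)\in E_1$ is a welcome addition, as the paper asserts this without proof in the paragraph preceding the proposition.
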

\begin{proof}
    This is a straightforward induction on $k$.  We know from Lemma \ref{lem:monomial_eq} that a variable $x^\gamma$ with $\gamma\in Y_{k}$ is determined by variables in $Y_\ell, \ell<k$ as it appears linearly in the corresponding equations (which may be distinct), whereas there are no other variables outside $\cup_{\ell=1}^{k-1}Y_{\ell}^1$ except for $x^{\gamma}$ in these equations.  But a variable $x^\delta\in Y_\ell$ with $\ell<k$ is either in $Y_\ell^1$ or is determined by $\cup_{\ell'=1}^{\ell-1}Y_{\ell'}^1$ by induction.  We simply need to prove the base case $Y_1\setminus Y_1^1$.  From  Equation \ref{eq:monomial_vars}, if $x^{\gamma}\in Y_1$ then $|\gamma|\leq d+d_0+1$.  If there exists $\alpha, \beta, i\neq j$ such that $x^\gamma=x^{\alpha+\beta}x_ix_j$, then $|\alpha|+|\beta|\leq d+d_0-1$.  Then from  Lemma \ref{lem:monomial_eq} we see that both of the summations are empty, giving $y_{x^\gamma}=0$.  
\end{proof}

\subsubsection{An explicit decomposition via a specific choice of values for \texorpdfstring{$Y_P$}{YP}}\label{sec:canonical_decomposition_monomial}
We demonstrate a choice of values for $Y_P$ that corresponds to a decomposition of $\phi$, thereby verifying that $r:=\operatorname{rank}(\phi)=\prod_{j=1}^n (d_j+1)$, reproving the result of Section 2 of \cite{BUCZYNSKA201345waring}.  Consider the size-$n$ subset of $Y_P$ given by 
\[\{x^\gamma\mid \forall i=1, \dots, n, \gamma_i=2d_i+1, \gamma_j=d_j, j\neq i\}.\]
We assign the value $1$ to these $n$ parameters and the value $0$ to all other parameters in $Y_P$.  If there is a decomposition of size $r$, then by Proposition \ref{prop:determined_by_param} there is a solution to the determinantal relations, where the values the other moment variables take are uniquely determined by the choice of $Y_P$.  We claim that this unique determination sets all other moment variables to take value $0$. It is possible to show this via a direct analysis of the determinantal relations.  Equivalently we construct the corresponding multiplication matrices $\mathbf{M}_i^B(Y^*)$, with $Y^*$ our claimed solution, and show that these commute and are nondefective; by Lemma \ref{lem:determinant_equals_commuting} these are the same.

No variable $x^\gamma$ with some $i$ such that $\gamma_i=2d_i+1$ can appear in $\mathcal{H}_{B, B}$. Thus, $\mathcal{H}_{B, B}$ has ones on the antidiagonal and zeros everywhere else.  Therefore, $\mathcal{H}_{B, B}^{-1}=\mathcal{H}_{B, B}$.  We now consider $\mathcal{H}_{B, B_i}$.  An entry $(x^\alpha, x^{\beta}), x^\alpha, x^\beta\in B$ of this matrix is $1$ if $\alpha+\beta=(d_1, \dots, d_{i-1}, d_i-1, d_{i+1}, \dots, d_n)$ or if $\alpha+\beta=(d_1, \dots, d_{i-1}, 2d_i, d_{i+1}, \dots, d_n)$, and is $0$ otherwise.  $\mathcal{H}_{B, B}^{-1}$ is order reversing (i.e., the permutation matrix corresponding to the cycle $(r\,\, r-1\,\,\dots \, \,2\,\,1)$), so we have
\[
\bb{\mathbf{M}_i^B(Y^*)}_{x^\alpha, x^\beta x_i}=\begin{cases}
    1,\quad \text{if $\beta-\alpha=\mathbf{e}_i$},\\
    1,\quad \text{if $\alpha-\beta=d_i\mathbf{e}_i$},\\
    0,\quad \text{otherwise}.
\end{cases}
\]
Consider row $x^\alpha$. If $\alpha_i=d_i$ then there is a $1$ where $x^\beta$ is such that $\beta_i=0, \beta_j=\alpha_j, j\neq i$.  If $\alpha_i<d_i$, then there is a $1$ where $x^\beta$ is such that $\beta_i=\alpha_i+1, \beta_j=\alpha_j, j\neq i$.  This means that each row has exactly one nonzero entry.  

Let $\mathbf{t}$ be the vector
\[\mathbf{t}=\begin{bmatrix}
    1\\
    \zeta_1\\
    \vdots \\
    \zeta_n
\end{bmatrix},\]
where $\zeta_i$ is some primitive $d_i+1$ root of unity.  The $x^\beta$ entry of $\mathbf{t}^B$ is $\zeta_1^{\beta_1}\dots \zeta_n^{\beta_n}$.  Then $\bb{\mathbf{M}_i^B(Y^*)}_{x^\alpha}^\top \mathbf{t}^B$ has value $\zeta_1^{\alpha_1}\dots \zeta_i^{\alpha_i+1}\dots \zeta_n^{\alpha_n}$ if $\alpha_i<d_i$ and value $\zeta_1^{\alpha_i} \dots \zeta_{i-1}^{\alpha_{d-1}} \zeta_{i+1}^{\alpha_{i+1}}\dots \zeta_n^{d_n} $.  This is precisely $\zeta_i\mathbf{t}^B$, i.e., $\mathbf{t}^B$ is an eigenvector of $\bb{\mathbf{M}_i^B(Y^*)}_{x^\alpha, x^\beta x_i}$ with corresponding eigenvalue $\zeta_i$ for all $i$.  Letting $\mathbf{t}$ range over all permutations of the primitive roots of unity we see that the $\bb{\mathbf{M}_i^B(Y^*)}_{x^\alpha, x^\beta x_i}$ share $r$ linearly independent eigenvectors.  By  Theorem \ref{thm:main} this is a decomposition of $\phi$, and so $\operatorname{rank}(\phi)\leq r$.  With Proposition \ref{prop:monnomial_r_lower}, $\operatorname{rank}(\phi)=r$.

\subsubsection{The space of decompositions} 
It is immediate by Proposition \ref{prop:determined_by_param} that $|Y_P|$ upper bounds the dimension of the space of minimal rank decompositions of $\phi$; if there are no extra relations imposed among variables in $Y_P$ then these are equal.  Moreover, if we can treat $Y_P$ truly as free parameters then $\phi$ can be decomposed efficiently since we can choose generic values for $Y_P$ and by Proposition \ref{prop:determined_by_param}, decomposition reduces to polynomial evaluation.  We first need a result on the dimension of the space of decompositions of $\phi$.

\begin{proposition}\label{prop:dim_VSP_monomial}[\cite{BUCZYNSKA201345waring}, Theorem 2]
    Let $\phi$ satisfy Condition \ref{cond:monomial} and consider the ideal $I:=(a_1^{d_1+1}, \dots, a_n^{d_n+1})$ in $\bbC[a_0, \dots, a_n]$ with associated Hilbert function $h_I$.  Then the dimension of the space of decompositions of $\phi$ is $\sum_{j=1}^n h_I(d_j-d_0)$.
\end{proposition}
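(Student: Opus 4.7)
The strategy splits into two pieces: first a combinatorial identity $|Y_P|=\sum_{j=1}^n h_I(d_j-d_0)$, and second a geometric equality $\dim\operatorname{VSP}(\phi,r)=|Y_P|$.

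For the combinatorial identity, I would show that $Y_P$ admits the disjoint union decomposition $Y_P=\bigsqcup_{i=1}^n Y_P^{(i)}$ where
\[
Y_P^{(i)}=\{x^\gamma\in Y : \gamma_i>d_i,\ \gamma_j\leq d_j \text{ for } j\neq i\}.
\]
For fixed $i$, substitute $e=\gamma_i-d_i-1\in\{0,\ldots,d_i\}$ and $\beta_j=d_j-\gamma_j\in\{0,\ldots,d_j\}$ for $j\neq i$, so that the condition $|\gamma|>d$ translates into $e\geq d_0+\sum_{j\neq i}\beta_j$. Introducing auxiliary variables $\beta_0,\beta_i\geq 0$ with $\beta_0+\beta_i=d_i-d_0-\sum_{j\neq i}\beta_j$ (where the bound $\beta_i\leq d_i$ is automatic), the data $(e,(\beta_j)_{j\neq i})$ corresponds bijectively to tuples $(\beta_0,\beta_1,\ldots,\beta_n)$ with $\sum_j\beta_j=d_i-d_0$ and $0\leq\beta_j\leq d_j$ for $j\geq 1$. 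Such tuples index a monomial basis of $\bbC[a_0,\ldots,a_n]_{d_i-d_0}/I_{d_i-d_0}$, yielding $|Y_P^{(i)}|=h_I(d_i-d_0)$.

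For the geometric equality, the upper bound $\dim\operatorname{VSP}(\phi,r)\leq|Y_P|$ is immediate from Proposition \ref{prop:determined_by_param}, which parameterizes the entire solution space of the determinantal relations by assignments of values to $Y_P$. For the matching lower bound, Section \ref{sec:canonical_decomposition_monomial} constructs an explicit $Y_P$-assignment whose corresponding multiplication matrices have $r$ distinct common eigenvectors, producing a valid decomposition via Theorem \ref{thm:main}. Since nondefectivity with $r$ distinct common eigenvectors is Zariski open on $Y_P$, a dense open subset of $Y_P$-assignments produces genuine decompositions. The resulting map from $Y_P$-assignments to $\operatorname{VSP}(\phi,r)$ is injective on this open set, because any decomposition $(\z_i,\lambda_i)$ uniquely determines the extension $\phi^\uparrow_\gamma=\sum_k\lambda_k\z_k^\gamma$ and hence every moment value. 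Together, these bounds give $\dim\operatorname{VSP}(\phi,r)=|Y_P|$.

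The main obstacle is the careful bookkeeping in the combinatorial step, since the constraint $|\gamma|>d$ interacts non-trivially with the individual bounds $\gamma_j\leq 2d_j+1$ and with the restriction to exactly one coordinate exceeding $d_i$. The geometric step is conceptually short, essentially packaging openness of nondefectivity with injectivity of the extension-to-decomposition correspondence, both of which reduce quickly to previously established results.
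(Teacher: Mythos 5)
Your combinatorial step is correct and is essentially the same bijection the paper uses to prove Theorem \ref{thm:vsp} (which establishes $|Y_P|=\sum_{j=1}^n h_I(d_j-d_0)$ independently of the proposition). Your upper bound $\dim\operatorname{VSP}(\phi,r)\leq|Y_P|$ is also sound: by Proposition \ref{prop:monomial_dehomogenization} and Theorem \ref{thm:main} every minimal decomposition yields a solution of the determinantal relations for the fixed $B$ (note $\det(\mathcal{H}_{B,B})=\pm1$ for any moment values, so $\Z_B$ is invertible for every decomposition), and Proposition \ref{prop:determined_by_param} then bounds the dimension of the solution set by $|Y_P|$.

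The gap is in your lower bound. From one explicit working assignment plus Zariski-openness of nondefectivity you cannot conclude that a dense open set of $Y_P$-assignments produces genuine decompositions. Nondefectivity is indeed open, but the prior condition -- that the values of the non-parameter variables obtained by back-substitution actually satisfy \emph{all} of the determinantal relations -- is a \emph{closed} condition on $\bbC^{Y_P}$: each variable in $Y_k\setminus Y_k^1$ may be determined by several distinct equations (the paper explicitly warns that different $(\alpha,\beta,i,j)$ can give different polynomials for the same $x^{\alpha+\beta}x_ix_j$), and the relations in $E_2$ must also hold. Picking one defining equation per variable gives a polynomial map $\psi:\bbC^{Y_P}\to\bbC^{Y}$, and the locus where $\psi$ lands in the solution variety is $\psi^{-1}(V)$, a closed subvariety; containing the single point from Section \ref{sec:canonical_decomposition_monomial} does not make it dense. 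This is exactly the unresolved combinatorial consistency check the paper flags in the remark following Theorem \ref{thm:vsp} ("we leave it to future study"), which is why the paper cites \cite{BUCZYNSKA201345waring} for this proposition rather than proving it. You also cannot appeal to Lemma \ref{lem:monomial_choice} to get dominance of the parameter map, since its incidence-variety argument uses $\dim\mathcal{Z}=|Y_P|$, i.e., the very statement you are proving. To close the gap you would either need to verify the consistency of the multiply-determined variables and of $E_2$ directly, or fall back on the apolarity-based dimension count of the cited reference.
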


\begin{theorem}\label{thm:vsp}
    Let $\phi$ satisfy Condition \ref{cond:monomial}.  The set of associated parameters for extension $Y_P$ is such that $|Y_P|$ is equal to the dimension of the space of decompositions of $\phi$.
\end{theorem}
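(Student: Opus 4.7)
The plan is to reduce the statement to the combinatorial identity
\[
|Y_P| \;=\; \sum_{j=1}^n h_I(d_j - d_0),
\]
which combined with Proposition \ref{prop:dim_VSP_monomial} yields the theorem.

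First I would translate the definition of $Y_P$ into an explicit arithmetic description. By Definition \ref{def:monomial_parameters} and the stratification in Equation \ref{eq:monomial_vars}, an element $x^\gamma$ lies in $Y_P$ iff $x^\gamma \in Y$ and exactly one coordinate $i^\ast \in \{1,\dots,n\}$ satisfies $\gamma_{i^\ast} > d_{i^\ast}$. Unpacking $x^\gamma \in Y$: writing $\gamma = \alpha + \alpha' + e_i$ with $\alpha,\alpha' \in B$ forces $\gamma_j \leq 2d_j + [i=j]$. Combined with the single-coordinate condition, this is equivalent to the three conditions (a) $d_{i^\ast} < \gamma_{i^\ast} \leq 2d_{i^\ast} + 1$, (b) $0 \leq \gamma_j \leq d_j$ for all $j \neq i^\ast$ in $[n]$, and (c) $|\gamma| \geq d+1$. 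Conversely, given such a $\gamma$ one can exhibit a decomposition $\gamma = \alpha + \alpha' + e_i$ in $B$ (taking $i = i^\ast$ whenever $\gamma_{i^\ast} = 2d_{i^\ast}+1$ and any $i$ with $\gamma_i \geq 1$ otherwise), so $x^\gamma$ does lie in $Y$.

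Next I would partition $Y_P = \bigsqcup_{i^\ast = 1}^n Y_P(i^\ast)$ according to the unique special coordinate and count each piece. Introducing the change of variables
\[
v \;=\; 2d_{i^\ast} + 1 - \gamma_{i^\ast}, \qquad u_j \;=\; d_j - \gamma_j \ \ (j \neq i^\ast,\ j \in [n]),
\]
conditions (a) and (b) become $0 \leq v \leq d_{i^\ast}$ and $0 \leq u_j \leq d_j$, while condition (c) rearranges to
\[
v \;+\; \sum_{j \neq i^\ast} u_j \;\leq\; d_{i^\ast} - d_0.
\]
On the other side, the Hilbert function of $I = (a_1^{d_1+1}, \dots, a_n^{d_n+1}) \subset \bbC[a_0,\dots,a_n]$ at degree $k$ is given by the count of monomials $a_0^{b_0} \cdots a_n^{b_n}$ of degree $k$ not divisible by any $a_j^{d_j+1}$, $j \geq 1$; eliminating $b_0$ this is exactly the number of tuples $(b_1,\dots,b_n)$ with $0 \leq b_j \leq d_j$ and $\sum_{j=1}^n b_j \leq k$. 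Setting $b_{i^\ast} := v$ and $b_j := u_j$ for $j \neq i^\ast$ is then a bijection $Y_P(i^\ast) \leftrightarrow$ (monomial basis of $\bbC[a_0,\dots,a_n]_{d_{i^\ast}-d_0}/I$), proving $|Y_P(i^\ast)| = h_I(d_{i^\ast} - d_0)$.

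Summing over $i^\ast$ gives the identity displayed above, and Proposition \ref{prop:dim_VSP_monomial} finishes the proof. I do not expect a serious obstacle; the argument is careful bookkeeping. The only subtleties are verifying that the $Y$-membership constraint (c) is captured correctly by the Hilbert bound (this uses the standing assumption $d_0 \leq d_1 \leq \cdots \leq d_n$, which ensures $d_{i^\ast} - d_0 \geq 0$), and checking the boundary case $d_{i^\ast} = d_0$ where $h_I(0) = 1$ corresponds to the unique parameter $x^\gamma$ with $\gamma_{i^\ast} = 2d_{i^\ast}+1$ and $\gamma_j = d_j$ for $j \neq i^\ast$.
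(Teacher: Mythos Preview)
Your proposal is correct and follows essentially the same approach as the paper: partition $Y_P$ by the unique coordinate $i^\ast$ with $\gamma_{i^\ast}>d_{i^\ast}$, change variables to $d_j-\gamma_j$ on the remaining coordinates, and biject with the monomial basis counted by $h_I(d_{i^\ast}-d_0)$. The only cosmetic difference is that the paper further grades each piece $Y_P^{i^\ast}$ by the value $\gamma_{i^\ast}=d_{i^\ast}+d_0+1+k$ and bijects level-by-level with $(\bbC[a_1,\dots,a_n]/I)_k$ before homogenizing, whereas you go directly to $(\bbC[a_0,\dots,a_n]/I)_{d_{i^\ast}-d_0}$ using $a_0$ as the slack variable; the two bijections differ by swapping the roles of $a_0$ and $a_{i^\ast}$, which is harmless.
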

\begin{proof}
    Define 
    \[Y_P^j=\{x^\gamma\in Y_P\mid \gamma_j>d_j\}.\]
    By definition, if $x^\gamma\in Y_P^j$ then for all $k\neq j, \gamma_k\leq d_k$.  $Y_P$ is the disjoint union of these $Y_P^j$.  We show that $|Y_P^j|=h_I(d_j-d_0)$ where $I$ is the ideal defined in Proposition \ref{prop:dim_VSP_monomial}.  

    We know that $h_I(d_j-d_0)=\operatorname{dim}\bb{\bbC[a_0, \dots, a_n]/I}_{d_j-d_0}$.  The quotient space \\$\bb{\bbC[a_0, \dots, a_n]/I}_{d_j-d_0}$
    is generated by the monomials of degree $d_j-d_0$ not divided by any $a_j^{d_j+1}$.  Equivalently, these are the monomials of degree at most $d_j-d_0$ in $\bbC[a_1, \dots, a_n]$ not divided by $a_j^{d_j+1}$.  We construct a one-to-one mapping between these monomials and elements of $Y_P^j$.

    Explicitly,
    \[Y_P^j=\{x^\gamma\mid d_j+d_0+1\leq \gamma_j\leq 2d_j+1, 0\leq \gamma_k\leq d_k, k\neq j\}.\] 
    We can grade this set by defining
    \[Y_P^{j, k}=\{x^\gamma\in Y_P^{j}\mid \gamma_j=d_j+d_0+1+k\}.\]
    Then $Y_P^j=\cup_{k=0}^{d_j-d_0} Y_P^{j, k}$.  Because of the condition that $|\gamma|\geq d+1$, we must have for all $x^\gamma\in Y_P^{j, k}$ that $\sum_{i\leq j} d_i-\gamma_i\leq k$.  As such, an element of $Y_P^{j, k}$ can be written 
    \[x_1^{d_1-\ell_1}\dots x_j^{d_j+d_0+1+k}\dots x_n^{d_n-\ell_n}\]
    with $\ell_1+\dots+ \ell_{j-1}+\ell_{j+1}+\dots +\ell_n\leq k$ and $\ell_i\leq d_i$ as required.  We define the mapping
    \[x_1^{d_1-\ell_1}\dots x_j^{d_j+d_0+1+k}\dots x_n^{d_n-\ell_n}\mapsto a_1^{\ell_1}\dots a_j^{k-\sum_{i\neq j}\ell_i}\dots a_n^{\ell_n}.\]
    We claim that 
    \[a_1^{\ell_1}\dots a_j^{k-\sum_{i\neq j}\ell_i}\dots a_n^{\ell_n}\in \bb{\bbC[a_1, \dots, a_n]/I}_{k}.\]
    It is clear that this monomial is degree $k$.  Moreover, as $\ell_i\leq d_i$ for $i\neq j$, $a_i^{d_i+1}$ cannot divide it.  Moreover, as $k\leq d_j-d_0$ and $0\leq \sum_{i\leq j}\ell_i\leq k$, $0\leq k-\sum_{i\leq j}\ell_i\leq k\leq d_j-d_0$.  $a_j^{d_j+1}$ cannot divide this monomial because $d_0\geq 1$.  Therefore, this claim is true.

    It is obvious this map is injective.  We further claim it is onto $\bb{\bbC[a_1, \dots, a_n]/I}_{k}$ for $k\leq d_j-d_0$.  An element of this space is $a_1^{m_1}\dots a_j^{m_j}\dots a_n^{m_n}$ for $\sum_i m_i=k$ and $m_i\leq d_i$ for all $i$.  Take $\ell_i=d_i-m_i$ for $i\neq j$.  Then the resulting $x_1^{d_1-\ell_1}\dots x_j^{d_j+d_0+1+k}\dots x_n^{d_n-\ell_n}$ satisfies $\sum_{i\neq j}\ell_i\leq k $ and $d_j+d_0+1+k\leq 2d_j+1$ as $k\leq d_j-d_0$.  

    Then $|Y_P^{j, k}|=\operatorname{dim}\bb{\bbC[a_1, \dots, a_n]/I}_{k}$, and so 
    \[|Y_P^{j}|=\operatorname{dim}\bb{\bbC[a_1, \dots, a_n]/I}_{\leq d_j-d_0}=\operatorname{dim}\bb{\bbC[a_0, a_1, \dots, a_n]/I}_{d_j-d_0}.\]
    Then $Y_P=\sum_{j=1}^n h_I(d_j-d_0)$ and we are done by Proposition \ref{prop:dim_VSP_monomial}.
\end{proof}

\begin{remark}
    For the discussion in this section to be fully self-contained we would need standalone proofs for Proposition \ref{prop:monomial_dehomogenization} and Proposition \ref{prop:dim_VSP_monomial}. Towards the latter, we present the following idea. 

    Let $x^\gamma\in Y_P$.  In the full set of determinantal relations there may be different $\alpha, \beta, i\neq j$ and $\alpha', \beta', i'\neq j'$ such that $x^\gamma=x^{\alpha+\beta}x_ix_j=x^{\alpha'+\beta'}x_{i'}x_{j'}$; these are in fact different polynomials in $Y$.  However, by Proposition \ref{prop:determined_by_param} we can rewrite $x^\gamma$ in terms of only variables in $Y_P$.  We simply need to show that all equations corresponding to $x^\gamma$ become equivalent when written in terms of $Y_P$ -- in other words, an equation $(x^\alpha x_i, x^\beta x_j)\in E_1$ has a description in terms of only $x^{\alpha+\beta}x_ix_j$.  We then need to check that given these values for $Y$, $E_2$ is satisfied.  This is combinatorial and we leave it to future study; computations with small monomials indicate that this works.
\end{remark}

To obtain decompositions of monomials, we take the valid $\mathcal{B}_r=\{B\}$, with $B$ as in Equation \ref{eq:monomial_B}.  We then choose generic values for $Y_P$ (so that the multiplication matrices are not defective) and directly solve for the other moment variables via the grading described.  This allows us to produce \textit{any} decomposition of $\phi$ efficiently via Algorithm \ref{alg:monomial}.

\begin{algorithm}[!ht]
\hspace*{\algorithmicindent} \textbf{Input: }$\phi=x_0^{d_0}x_1^{d_1}\dots x_n^{d_n}, d_0\leq d_1\leq \dots \leq d_n$\\
\hspace*{\algorithmicindent} \textbf{Output: }A minimal rank decomposition of $\phi$
\begin{algorithmic}[1]
\STATE Form $\mathcal{H}_{B, B}$ with $B$ as in Equation \ref{eq:monomial_B}
\STATE Choose values for $Y_P$ as defined in Definition \ref{def:monomial_parameters}
\FOR{$k=1, \dots, m$, where $m$ is the smallest integer s.t. $|\gamma|-d\leq m(d_0+1), \forall x^\gamma\in Y$} 
\STATE Solve for $Y_k$ via the linear equations described in Lemma \ref{lem:monomial_eq}
\ENDFOR
\STATE  Obtain a decomposition $\z_1, \dots, \z_s$ via an eigendecomposition and solve a linear system for $\lambda_1, \dots, \lambda_s$

\end{algorithmic}
\caption{Monomial decomposition}
\label{alg:monomial}
\end{algorithm}

\begin{lemma}\label{lem:monomial_choice}
    Let $\phi$ satisfy Condition \ref{cond:monomial}.  Generic values of $Y_P$ correspond to decompositions of $\phi$.
\end{lemma}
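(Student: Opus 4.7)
The plan is to combine the explicit canonical decomposition from Section \ref{sec:canonical_decomposition_monomial} with a dimension count against the variety of sums of powers. By Proposition \ref{prop:determined_by_param} and Lemma \ref{lem:monomial_eq}, the values of the $Y_P$ parameters polynomially determine, via linear substitution across the grading, the values of all remaining moment variables in $Y\setminus Y_P$; write $\pi\colon \bbC^{|Y_P|}\to \bbC^{|Y|}$ for this substitution map. For Algorithm \ref{alg:monomial} to produce a decomposition at a chosen $Y_P$, two conditions must hold at $\pi(Y_P)$: (a) every determinantal relation in $E_1\cup E_2$ is satisfied, not just those used to solve for a variable; and (b) the resulting multiplication matrices $\mathbf{M}_i^B$ share $r$ linearly independent common eigenvectors.

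Condition (b) is a Zariski open condition on $Y_P$. The explicit roots-of-unity extension of Section \ref{sec:canonical_decomposition_monomial} exhibits a single $Y_P^0$ at which (b) is satisfied, so the set of $Y_P$ satisfying (b) is a nonempty Zariski open subset of $\bbC^{|Y_P|}$. For (a), I would introduce the map $\Phi\colon \operatorname{VSP}(\phi, r)\to \bbC^{|Y_P|}$ sending a decomposition $\z_1,\dots,\z_r$ to the $Y_P$-coordinates of the moment extension that Theorem \ref{thm:main} assigns to it. By Proposition \ref{prop:determined_by_param} and the eigendecomposition recovery of Theorem \ref{thm:main}, $\Phi$ is injective on the nonempty open locus where the associated multiplication matrices are nondefective. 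By Theorem \ref{thm:vsp}, $\dim \operatorname{VSP}(\phi, r)=|Y_P|$, so $\Phi$ is a dominant morphism between varieties of equal dimension and hence has Zariski dense image. Every point in the image of $\Phi$ satisfies all determinantal relations by Theorem \ref{thm:main}, so each relation, viewed as a polynomial in $Y_P$ after substitution via $\pi$, vanishes on a Zariski dense set and therefore vanishes identically. This gives (a) for \emph{every} $Y_P$, in particular on the open set from (b).

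Combining (a) and (b), a generic $Y_P$ produces commuting, nondefective multiplication matrices whose common eigenvectors give a minimal-rank decomposition of $\phi$. The main obstacle is verifying that $\Phi$ is actually dominant, which rests on two points: the canonical extension of Theorem \ref{thm:main} must respect the graded polynomial structure of $\pi$ so that the image lies in $\pi(\bbC^{|Y_P|})$, and at least one point (namely $Y_P^0$) must lie in the image with nondefective matrices. Both are handed to us by the construction of the canonical decomposition in Section \ref{sec:canonical_decomposition_monomial} together with the grading in Lemma \ref{lem:monomial_eq}, which makes the dimension-matching argument with Theorem \ref{thm:vsp} close cleanly.
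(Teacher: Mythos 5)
Your proposal is correct and takes essentially the same route as the paper: both arguments rest on the dimension match of Theorem \ref{thm:vsp}, the (generic) finiteness/injectivity of the correspondence between decompositions and their $Y_P$-coordinates on the nondefective locus, and a dominance conclusion (the paper phrases this via an incidence variety and Chevalley's theorem, you via the direct map $\Phi$ and density of its image, which also yields the identical vanishing of the substituted relations).
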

\begin{proof}
    Let $Y_P$ be the space of free parameters and $\mathcal{Z} \subset \bbC^{r \times (n+1)}$ the space of decompositions of $\phi$ in the sense that $\Z \in \mathcal{Z}$ if $\phi = \sum^r_{i=1} \z_i^{\otimes d}$.  Both $\bbC^{Y_P}$ and $\mathcal{Z}$  are varieties, and we know they have the same dimension by Theorem \ref{thm:vsp}; let this dimension be denoted $N$.  
    Let $\psi : \bbC^{Y_P} \rightarrow (\bbC^{r \times r})^{\times n}$ denote the polynomial map from free parameters to tuples of claimed multiplication matrices, where we fixed an order of equations to make $\psi$ a definite map.  Let $\xi: \mathcal{Z} \rightarrow (\bbC^{r \times r})^{\times n}$ denote the polynomial map sending a decomposition $\Z \in \mathcal{Z}$ to the corresponding tuple of multiplication matrices (i.e., with the right eigenvectors and eigenvalues as specified by $\Z$).  Let $\mathcal{V} \subset \bbC^{Y_P}  \times \mathcal{Z}$ be the incidence variety defined by $\mathcal{V} = \{(Y^*, Z) \in \bbC^{Y_P} \times \mathcal{Z} : \psi(Y^*) = \xi(\Z)\}$. 
    Consider the projections  onto the first and second factors, $\pi_1: \mathcal{V} \rightarrow \bbC^{Y_P}$ and $\pi_2 : \mathcal{V} \rightarrow \mathcal{Z}$.  Then $\pi_2$ is surjective, because every decomposition of $\phi$ has a corresponding choice of free parameters.  This implies $\operatorname{dim}(\mathcal{V}) \geq N$.  On the other hand, notice that the generic fiber of $\pi_1$ is finite, since for every choice of free parameters mapping to commuting and nondefective multiplication matrices the decomposition is then uniquely determined (up to permutation of factors).  This implies $\dim(\mathcal{V}) \leq N$.  
    We conclude that $\operatorname{dim}(\mathcal{V})=N$ and that $\pi_1$ is dominant.  Hence, there exists a nonempty Zariski open subset of $\bbC^{Y_P}$ contained in $\pi_1(\mathcal{V})$ by Chevalley’s theorem, and the lemma follows.
\end{proof}

\begin{theorem}
    A parameterization of all minimal decompositions of a monomial $\phi$ can be produced efficiently via Algorithm \ref{alg:monomial}.
\end{theorem}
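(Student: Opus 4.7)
The plan is to verify two things: (a) the output of Algorithm \ref{alg:monomial} for generic choices of the parameters $Y_P$ is indeed a minimal decomposition of $\phi$, and (b) as $Y_P$ ranges over $\bbC^{Y_P}$ we obtain a Zariski-dense family inside the space of minimal decompositions, with each individual decomposition computed via polynomially many linear algebra operations in the number of tensor entries.

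For (a), I would invoke Lemma \ref{lem:monomial_choice}, which tells us that generic assignments to $Y_P$ correspond to decompositions of $\phi$, together with Proposition \ref{prop:determined_by_param}, which guarantees that once $Y_P$ is fixed, all remaining moment variables in $Y \setminus Y_P$ are uniquely determined. The key point is that the determinantal relations of Lemma \ref{lem:monomial_eq} are graded: a variable $y_{x^\gamma} \in Y_k \setminus Y_k^1$ appears linearly (with coefficient $-1$) in the corresponding equation indexed by some $(x^\alpha x_i, x^\beta x_j)$ with $x^{\alpha+\beta} x_i x_j = x^\gamma$, while all other terms in that equation involve only variables of strictly lower grade (in $\bigcup_{\ell<k} Y_\ell$) and hence are already known. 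Thus, looping over $k = 1, \dots, m$ as in Algorithm \ref{alg:monomial} reduces solving the determinantal system to polynomial evaluation. By Theorem \ref{thm:main}, once a fully specified solution $Y^*$ is in hand and the multiplication matrices $\mathbf{M}_i^B(Y^*)$ are nondefective, the common eigenvectors of these matrices yield a minimal decomposition (using Proposition \ref{prop:monnomial_r_lower} and the construction in Section \ref{sec:canonical_decomposition_monomial} to certify that $|B| = \prod_{j=1}^n (d_j+1) = \operatorname{rank}(\phi)$).

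For (b), the completeness of the parameterization follows from Theorem \ref{thm:vsp}, which identifies $|Y_P|$ with the dimension of the space of minimal decompositions of $\phi$. Combining this with the incidence variety argument from the proof of Lemma \ref{lem:monomial_choice}, the map $Y_P \mapsto \z_1, \dots, \z_r$ is a dominant morphism between varieties of the same dimension; its image is therefore Zariski-dense in the space of decompositions. Hence ranging $Y_P$ over its generic locus yields a parameterization of all minimal decompositions.

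For efficiency, I would argue that every step of Algorithm \ref{alg:monomial} runs in time polynomial in the number of tensor entries $\binom{n+d}{n}$. The rank $r = \prod_{j=1}^n (d_j+1) \leq \binom{n+d}{n}$, so $\mathcal{H}_{B,B}$ is a polynomial-sized matrix; the set $Y$ of moment variables, being indexed by monomials of bounded degree in $n$ variables, is likewise of polynomial size, and so is each stratum $Y_k$. By Lemma \ref{lem:monomial_eq}, each variable in $Y_k \setminus Y_k^1$ is obtained by evaluating a single polynomial expression whose length is bounded by $|B|^2$; the total cost across all strata is therefore polynomial. The closing eigendecomposition step acts on $r \times r$ matrices, which is again polynomial in the input size. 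The main subtlety to address in the write-up is that distinct pairs $(x^\alpha x_i, x^\beta x_j)$ may yield the same monomial $x^{\alpha+\beta} x_i x_j \in Y_k \setminus Y_k^1$; one must verify that all such equations, when expressed in terms of $Y_P$ via the recursion, are consistent, so that Algorithm \ref{alg:monomial} can pick any representative. This consistency follows from the fact that the overall system admits a solution for generic $Y_P$ (Lemma \ref{lem:monomial_choice}) combined with the uniqueness statement in Proposition \ref{prop:determined_by_param}; the subtle combinatorial bookkeeping is the place I expect to spend the most care.
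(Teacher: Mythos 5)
Your proposal is correct and follows essentially the same route as the paper: invertibility of $\mathcal{H}_{B,B}$ via Proposition \ref{prop:monnomial_r_lower}, identification of $Y_P$ as the parameters via Theorem \ref{thm:vsp}, grade-by-grade polynomial evaluation via Lemma \ref{lem:monomial_eq} and Proposition \ref{prop:determined_by_param}, and genericity plus completeness via Lemma \ref{lem:monomial_choice}. Your added detail on the polynomial-size accounting and the consistency of multiple equations indexing the same variable (which the paper itself flags in a remark and resolves the same way, through Lemma \ref{lem:monomial_choice}) is consistent with, not a departure from, the paper's argument.
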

\begin{proof}
    By Proposition \ref{prop:monnomial_r_lower}, $\mathcal{H}_{B, B}$ is invertible.  By Theorem \ref{thm:vsp}, $Y_P$ is precisely the set of parameters for the extension problem.  Choosing values for these, we can then solve for the remaining variables in $Y_k$ for increasing $Y_k$ efficiently by Lemma \ref{lem:monomial_eq} and Proposition \ref{prop:determined_by_param} via polynomial evaluation.  By Lemma \ref{lem:monomial_choice}, if these values are generic we obtain a decomposition of $\phi$ and all minimal decompositions arise in this way.
\end{proof}

We now briefly discuss the action of the $n$-dimensional torus on the space of decompositions (see \cite{BUCZYNSKA201345waring}) that acts via scaling the variables, i.e., if $\mathbf{z}_1, \dots, \mathbf{z}_r$ is a decomposition for $\phi$ with coefficients $\lambda_1, \dots, \lambda_r$ then $\mathbf{z}'_1, \dots, \mathbf{z}'_r$ is also a decomposition for $\mathbf{z}'_{i, j}=\tau_j \mathbf{z}_{i, j}$, $i=1, \dots, r$ for some nonzero scalars $\tau_1, \dots, \tau_n$; the new coefficients are $\lambda_i':=\lambda_i/\boldsymbol{\tau}^{\overline{d}}$.  It is not hard to see that for two choices of parameter values $Y_{P, 1}^*$ and $Y_{P, 2}^*$, the corresponding decompositions are related via scaling the variables if and only if there exist nonzero scalars $\tau_1, \dots, \tau_n$ such that $\boldsymbol{\tau}^\gamma y_{x^\gamma, 1}^* -\boldsymbol{\tau}^{\overline{d}}y_{x^\gamma, 2}^*=0$ for all $x^\gamma \in Y_P$.  When $d_0, d_1, \dots, d_n$ are not all the same this action cannot be transitive as $|Y_P|>n$.  On the other hand, for $d_0=d_1=\dots=d_n$, $|Y_P|=n$ is of the form
\[Y_P=\{x^\gamma\mid \exists i, \gamma_i=2d_i+1, \forall j\neq i, \gamma_j=d_j\}.\]
Then for fixed $x^\gamma$ and two values $y_{x^\gamma, 1}^*, y_{x^\gamma, 2}^*$ both nonzero, with $\gamma_i=2d_i+1$, take $\tau_i^{d_i+1}=y_{x^\gamma, 2}^*/y_{x^\gamma, 1}^*$.  It is straightforward that these $\tau_1, \dots, \tau_n$ satisfy $\boldsymbol{\tau}^\gamma y_{x^\gamma, 1}^* -\boldsymbol{\tau}^{\overline{d}}y_{x^\gamma, 2}^*=0$ for all $x^\gamma \in Y_P$, that is, the action of the torus is transitive.  This is an algorithmic proof of Theorem 4 of \cite{BUCZYNSKA201345waring}.

\begin{remark}
    The discussion in this section crucially relies on $\phi$ being given in a canonical form.  However, in a different basis the resulting tensor may not be easily decomposed. Determining if a tensor is a monomial is decidable and can be determined using Gr\"obner bases and symbolic algebraic methods, however, these methods are a priori not efficient. This begs the question: suppose an arbitrary tensor is given; is there an efficient method that can distinguish if the tensor is a monomial or not, and if it is a monomial, provide the change of basis to convert it to its canonical form?  If so, then Condition \ref{cond:monomial} can be relaxed and our discussion applies to a monomial in any presentation. 
\end{remark}

\section{Discussion and Future Work}
We refined and performed theoretical analysis of the tensor decomposition algorithm of \cite{brachat2010symmetric}, connecting its complexity to the regularity of a target decomposition. We showed that for even-order tensors with decompositions of low enough regularity, moment matrix extension and therefore tensor decomposition can reduce to solving a linear system.  For order-$4$ tensors we proved that this linear system suffices for decomposition of generic tensors of rank at most $2n+1$, implying that the extension algorithm is efficient and gives an effective criterion for identifiability of specific tensors.  We formulated a conjecture that the linear system suffices for rank up to $O(n^2)$, with improved leading coefficient compared to existing literature, and gave a computer-assisted proof verifying the correctness of the conjecture up to $n=17$.  We also demonstrated that certain classes of nonidentifiable tensors, including monomials, can be decomposed efficiently.  To our knowledge, handling nonidentifiability is a first in work on efficient tensor decomposition. In these cases, we algorithmically reproved existing \nolinebreak theory \nolinebreak as \nolinebreak well.

There are various opportunities for future work.  Of course, Conjecture \ref{conjecture} remains open. Relatedly, we could further understand the algebra of the vanishing of the maximal minors of the coefficient matrix $\mathbf{A}$ and implications for identifiability.   
More broadly, we could ask how far we can push the rank bounds for efficient decomposition of generic tensors; in terms of our meta-algorithm, this can include using both new core algorithms (e.g., new normal form methods) and new extension algorithms.  One idea for the extension algorithms is to use equations arising from Young flattenings (rather than the Hankel matrix) 
as defining equations. 
One could use \cite{kothari2024overcompletetensordecompositionkoszulyoung}, which develops a core algorithm based on these flattenings.

\section*{Acknowledgments}
The authors are grateful to Bernard Mourrain, Enrica Barrilli, Tait Weicht, and Luke Oeding for helpful discussions at different stages of this research. 
BS is supported in part by HDR TRIPODS Phase II grant 2217058.  
JL is supported in part by NSF DMS 2318837.  
JK is supported in part by NSF DMS 2309782, NSF DMS 2436499, NSF CISE-IIS 2312746, and DE SC0025312, as well as a J. Tinsley Oden Faculty Fellowship from UT Austin.  

\printbibliography

\newpage
\begin{appendices}
\label{appendix}

    \section{Additional Proofs and Examples for Section \ref{sec:lowrankeven}}\label{app:a}
    \subsection{Section \ref{sec:simplifications}}
    \begin{proof}[Proof of Proposition \ref{prop:counts_inductions}]
    We have
    \begin{align*}
        B^{(n, r)}&=\{1, x_1, \dots, x_n, x_1^2, \dots, x_1x_n, \dots, x_c^2, \dots x_cx_s\},\\
        B^{(n+1, r+c)}&=\{1, x_1, \dots, x_n, x_{n+1}, x_1^2, \dots, x_1x_n, x_1x_{n+1}\dots, x_c^2, \dots x_cx_s,\}
    \end{align*}
    for $c\leq s\leq n$, and $|B^{(n, r)}|=r$.  $Y^{(n, r)}$ and $E^{(n, r)}$ ``embed'' inside $Y^{(n+1, r+c)}$ and $E^{(n+1, r+c)}$, respectively, in a natural way.  Indeed, if $x^{\alpha+\alpha'}x_i\in Y^{(n, r)}$ then $x^\alpha, x^{\alpha'}\in B^{(n+1, r+c)}$, and so $x^{\alpha+\alpha'}x_i\in Y^{(n+1, r+c)}$.  Similarly, if $x^\beta x_i\notin B^{(n, r)}$ with $|\beta|=1$ then $x^\beta x_i\notin B^{(n+1, r+c)}$.  This is enough to ensure that $E^{(n, r)}\subset E^{(n+1, r+c)}$ via an embedding.

    We now describe $Y^{(n+1, r+c)}\setminus Y^{(n, r)}$.  It is clear that all such variables must involve $x_{n+1}$.  The three cases are if the variable involves $x_{n+1}^3, x_{n+1}^2$, or $x_{n+1}$ just once.  We demonstrate that we can choose one new equation for each of these variables.
    \begin{enumerate}[(1)]
        \item If $x^\gamma=x_ix_jx_{n+1}^3$:  In this case we know that $i, j\leq c-1$, as $x_cx_{n+1}^2, x_{n+1}^2\not\in B^{(n+1, r+c)}$.  Choose $x^\alpha=x_jx_{n+1}, x^\beta=x_{n+1}$.  Then as $x_ix_{n+1}\in B^{(n+1, r+c)}$ we have $(x_ix_jx_{n+1}, x_{n+1}^2)\in E_1^{(n+1, r+c)}$.
        \item If $x^\gamma=x_ix_jx_kx_{n+1}^2$:  As $x^\gamma=x^{\delta+\delta'}x_a$ either $x_a=x_{n+1}$ or $\delta'_{n+1}\neq 1$; we must have $\delta_{n+1}\neq 0$ regardless.  In the first case we must have $x_ix_{n+1}, x_jx_k\in B$ so we can take $x^\alpha=x_jx_k, x^\beta=x_{n+1}$, and $(x_ix_jx_k, x_{n+1}^2)\in E_1^{(n+1, r+c)}$, and in the second case as $j\leq c-1$ we again have $x_jx_k\in B^{(n+1, r+c)}$ and we can take the same equation.
        \item There are some subcases.
        \begin{enumerate}
            \item Suppose $\ell\geq c$ without loss of generality.  If $x^\gamma=x^{\delta+\delta'}x_{n+1}$ with $x^\delta, x^{\delta'}\in B_{(n+1, r+c)}$ then we know $x_ix_j, x_kx_\ell\in B^{(n+1, r+c)}$, so take $x^\alpha=x_ix_j, x^\beta=x_\ell$, with the equation $(x_ix_jx_k, x_\ell x_{n+1})$.  On the other hand, if $\delta_{n+1}\neq 0$, without loss of generality, we can see that we always have the equation $(x_ix_jx_k, x_\ell x_{n+1})$ as well. 
            \item If $i, j, k, \ell\leq c-1$ we have that any $2$-subset of these and $n+1$ is in $B^{(n+1, r+c)}$
        \end{enumerate}
    \end{enumerate}
    From this we see that for all ``new variables'' except $x_ix_jx_kx_\ell x_{n+1}$ with $i, j, k, \ell\leq c-1$ there is a corresponding equation in $E_1^{(n+1, r+c)}$ such that $x^{\alpha+\beta}x_ix_j$ is the new variable.  Thus, we choose extra equations for these variables $x_ix_jx_kx_\ell x_{n+1}$.  For each of these variables assign the equation $(x_ix_jx_{n+1}, x_c x_{n+1})$, which arises from choosing $x^\alpha=x_jx_{n+1}, x^\beta=x_{n+1}$.  These equations were not chosen in case (2) above.  Then as $x_kx_\ell\in B^{(n+1, r+c)}$, the variable $x_ix_jx_kx_\ell x_{n+1}$ appears in this equation.          
    \end{proof}
    \subsection{Section \ref{sec:conjecture}}

\begin{proof}[Proof of Lemma \ref{lem:reductions}]
     First, take $x^\gamma=x_ix_jx_k^3$ for $i, j\leq c, k>c$.  It is clear that $y_{x^\gamma}$ only appears in one equation in $E_1$: the equation $(x_ix_jx_k, x_k^2)$.  This equation will always be linearly independent from all other equations in $E_1$, so it suffices to remove all equations of the form $(x_ix_jx_k, x_k^2)$ and variables of the form $x_ix_jx_k^3$ from $Y$.  Now consider variables of the form $x^\delta x_k^2x_\ell$ for $|\delta|=2, \delta_{\mathcal{C}}=2, k\neq \ell$.  These variables only appear in two equations: $(x^\delta x_k, x_kx_\ell)$ and $(x^\delta x_\ell, x_k^2)$.  It suffices to replace one of these equations by the difference of the two, and the resulting two equations will be linearly independent if and only if the original two are.  We can thus take the equation $[(x^\delta x_k, x_kx_\ell), (x^\delta x_\ell, x_k^2)]$ and remove the variable $x^\delta x_k^2x_\ell$.  Similarly, for a variable of the form $x^\delta x_j x_k x_\ell$ for for $|\delta|=2, \delta_{\mathcal{C}}=2$, $j, k, \ell$ all distinct and $j, k, \ell>c$.  By a similar argument, this variable only appears in three equations: $(x^\delta x_j, x_kx_\ell)$, $(x^\delta x_k, x_jx_\ell)$, and $(x^\delta x_\ell, x_jx_k)$.  There are three differences we can take for these equations, but any one of them is dependent on the other two.  Taking any two of them, however, with one of the original equations means that the resulting equations will be linearly independent if the original three are.  We then take the two new equations $[(x^\delta x_j, x_kx_\ell), (x^\delta x_k, x_jx_\ell)]$ and $[(x^\delta x_j, x_kx_\ell), (x^\delta x_\ell, x_jx_k)]$ and remove the variable $x^\delta x_j x_k x_\ell$.  This process exactly constructs $Y'$ and $E_1'$, so that ${\mathbf{A}^1}$ is full column rank if and only if ${\mathbf{A}^1}'$ \nolinebreak is.  

     Up to a change in ordering of the rows and columns of ${\mathbf{A}^1}'$ we can write
     \[{\mathbf{A}^1}'=\begin{bNiceArray}{c|c}
   \Block{1-1}{\mathbf{N}_{1, 2}} & \Block{1-1}{m\mathbf{I}} \\ 
  \hline
  \Block{1-1}{\mathbf{N}_2} &  \Block{1-1}{\mathbf{N}_{1, 1}}
\end{bNiceArray}.\]
Then it is straightforward that ${A^1}'$ is full column rank if and only if $\mathbf{N}=\mathbf{N}_{1, 1}\mathbf{N}_{1, 2}-m\mathbf{N}_2$ is full column rank.
\end{proof}

\begin{proof}[Proof of Lemma \ref{lem:n23_linear}]
    First let $n=2$.  Let $\phi\in S^4\bbC^3$ satisfy Assumptions \ref{ass:dehomogenization}, \ref{ass:concise} and Conditions \ref{cond:low_rank}, \ref{cond:first_r}.
    \begin{enumerate}
        \item  Let $r=4$ with some decomposition $\Z$.  Then 
        \[B=\{1, x_1, x_2, x_1^2\}.\]
        Then
        \[Y=\{x_1^5, x_1^4x_2\}\]
        and 
        \[E_1=\{(x_1^3, x_1x_2)\}, E_2=\{[(x_1^3, x_2^2), (x_1^2x_2, x_1x_2)]\}.\]
        Therefore,
        \[\mathbf{A}=\begin{bmatrix}
            -m_{x_1^2}^{x_1x_2} & m \\ 
            -m_{x_1^2}^{x_2^2} & m_{x_1^2}^{x_1x_2}
        \end{bmatrix}.\]
        If $m\cdot m_{x_1^2}^{x_2^2}-\bb{m_{x_1^2}^{x_1x_2}}^2\neq 0$ as a polynomial in the entries of $\Z$ then $A$ is generically full column rank.  Indeed, as all of the $m_\gamma^\delta$ represent minors of
        \[\Z_2=\begin{bmatrix}
            1 & \z_{1, 1} & \z_{1, 2} & \z_{1, 1}^2 & \z_{1, 1}\z_{1, 2} & \z_{1, 2}^2\\
            1 & \z_{2, 1} & \z_{2, 2} & \z_{2, 1}^2 & \z_{2, 1}\z_{2, 2} & \z_{2, 2}^2\\
            1 & \z_{3, 1} & \z_{3, 2} & \z_{3, 1}^2 & \z_{3, 1}\z_{3, 2} & \z_{3, 2}^2\\
            1 & \z_{4, 1} & \z_{4, 2} & \z_{4, 1}^2 & \z_{4, 1}\z_{4, 2} & \z_{4, 2}^2
        \end{bmatrix},\]
        we can expand both $m\cdot m_{x_1^2}^{x_2^2}$ and $\bb{m_{x_1^2}^{x_1x_2}}^2$ as polynomials in the $\z_{i, j}$ and find monomials that appear in one but not the other.  For example, one such monomial that appears in $m\cdot m_{x_1^2}^{x_2^2}$ but not $\bb{m_{x_1^2}^{x_1x_2}}^2$ (up to sign) is 
        \[\z_{1, 1}^3\z_{2, 2}^3\z_{3, 1}\z_{4, 2}.\]
        \item Let $r=5$ with some decomposition $\Z$.  Then
        \[B=\{1, x_1, x_2, x_1^2, x_1x_2\}.\]
        Then
        \[Y=\{x_1^5, x_1^4x_2, x_1^3x_2^2, x_1^2x_2^3\}\]
        and
        \[E_1=\{(x_1^3, x_2^2), (x_1^2x_2, x_2^2)\}, E_2=\varnothing.\]
        There are four moment variables but only two linear equations, so linear equations do not suffice.
    \end{enumerate}    
    For $n=3$, it suffices to exhibit sets of points such that the corresponding $\mathbf{A}$ is full column rank.  We give related specializations for $r=5, 6, 7$.
    \begin{enumerate}
        \item Let 
        \[\Z=\begin{bmatrix}
            1 & 0 & 0 & 0 \\
            1 & 1 & 0 & 0 \\
            1 & 0 & 1 & 0 \\ 
            1 & 0 & 0 & 1 \\
            1 & -1 & 2 & 2
        \end{bmatrix},\]
        so
        \[\Z_2=\begin{bmatrix}
            1 & 0 & 0 & 0 & 0 & 0 & 0 & 0 & 0 & 0 \\ 
            1 & 1 & 0 & 0 & 1 & 0 & 0 & 0 & 0 & 0 \\
            1 & 0 & 1 & 0 & 0 & 0 & 0 & 1 & 0 & 0 \\ 
            1 & 0 & 0 & 1 & 0 & 0 & 0 & 0 & 0 & 1 \\
            1 & -1 & 2 & 2 & 1 & -2 & -2 & 4 & 4 & 4
        \end{bmatrix}.\]
        There are three moment variables
        \[Y=\{x_1^5, x_1^4x_2, x_1^4x_3\}.\]
        There are many equations, but we simply need to exhibit three linearly independent equations.  We will take these three equations to be
        \begin{align*}
            & (x_1^3, x_1x_2),\\
            &(x_1^3, x_1x_3),\\
            &[(x_1^3, x_2x_3), (x_1^2x_2, x_1x_3))].
        \end{align*}
        The corresponding submatrix of $\mathbf{A}$ is
        \[\begin{bmatrix}
            2 & 2 & 0\\
            2 & 0 & 2\\
            -4 & -2 & 0
        \end{bmatrix}\]
        from computing the appropriate minors of $\Z_2$.  This is obviously rank three.
        \item Let
            \[\Z=\begin{bmatrix}
            1 & 0 & 0 & 0 \\
            1 & 1 & 0 & 0 \\
            1 & 0 & 1 & 0 \\ 
            1 & 0 & 0 & 1 \\
            1 & -1 & 2 & 2 \\
            1 & -1 & -1 & 2
        \end{bmatrix}.\]
        There are seven variables, and a $7\times 7$ full-rank submatrix of $\mathbf{A}$ is
        \[\begin{bmatrix}
            6 & 0 & 6 & 0 & 0 & 0 & 0 \\
            -6 & 0 & 0 & 6 & 0 & 0 & 0 \\ 
            0 & 12 & 0 & 0 & 6 & 0 & 0 \\
            0 & 6 & 0 & 0 & 6 & 0 & 0 \\
            0 & -6 & 0 & 0 & 0 & 6 & 0 \\
            0 & 0 & 0 & 12 & 0 & 0 & 6\\
            0 & 0 & 0 & 6 & 0 & 0 & 6
        \end{bmatrix}.\]
        \item Let
            \[\Z=\begin{bmatrix}
            1 & 0 & 0 & 0 \\
            1 & 1 & 0 & 0 \\
            1 & 0 & 1 & 0 \\ 
            1 & 0 & 0 & 1 \\
            1 & -1 & 2 & 2 \\
            1 & -1 & -1 & 2 \\
            1 & -1 & -1 & -1
        \end{bmatrix}.\]
        There are ten variables, and a $10\times 10$ full-rank submatrix of $\mathbf{A}$ is
        \[\begin{bmatrix}
             -18  &  0 &   0 & 18  &  0  &  0 &  0  &  0 &  0  & 0 \\
             -18  & 36 & -18 &  0  & 18  &  0 &  0  &  0 &  0  & 0 \\
             -18  &  0 &   0 &  0  &  0  & 18 &  0  &  0 &  0  & 0 \\
               0  &-18 &   0 &  0  &  0  &  0 & 18  &  0 &  0  & 0 \\
               0  &-18 &   0 & 36  &-18  &  0 &  0  & 18 &  0  & 0 \\
               0  &-18 &   0 &  0  &  0  &  0 &  0  &  0 & 18  & 0 \\
               0  &  0 & -18 &  0  &  0  &  0 &  0  & 18 &  0  & 0 \\
               0  &  0 & -18 &  0  & 36  &-18 &  0  &  0 & 18  & 0 \\
               0  &  0 & -18 &  0  &  0  &  0 &  0  &  0 &  0  &18 \\
               0  &  0 &   0 &-18  & 18  &  0 &  0  &-36 & 18  & 0
        \end{bmatrix}.\]
        Note that here $r=2n+1$, but $|E_1|=9<|Y|$.  Indeed, the first nine rows in the given submatrix are indexed by $E_1$; we are still short one equation so we must look for an equation in $E_2$.  In this case the tenth row is given by
        \[[(x_1x_2^2, x_3^2), (x_1x_2x_3, x_2x_3)]\in E_2.\]
        \item For $r\geq 8$ there are simply not enough linear equations by a naive count.\qedhere
    \end{enumerate}
\end{proof}

\begin{proof}[Proof of Lemma \ref{lem:c1_veryefficient}]
        By Lemma \ref{lem:c1_counts}, for $n\geq 4$ $|E_1|\geq |Y|$.  We give a family of specializations.  Construct
    \begin{equation}\label{eq:c1_construction}\z_1=\mathbf{e}_0, \quad \z_k=\mathbf{e}_0+\mathbf{e}_{k-1}, k=2, \dots, n+1, \quad \z_k=\begin{bmatrix}
        1 \\
        -\mathbf{1}_{(k-(n+1))\times 1} \\ 
        \mathbf{2}_{(2n+1-k)\times 1}
    \end{bmatrix}, k=n+2, \dots, 2n+1.\end{equation}
    Also,
    \[B=\{1, x_1, \dots, x_n, x_1^2, \dots, x_1x_n\}.\]

    We want to show $\mathbf{A}^1$ is full column rank.  By Lemma \ref{lem:reductions} this is equivalent to showing ${\mathbf{A}^1}'$ is full column rank.  We will give a set $S\subseteq E_1'$, $|S|=|Y'|$ such that the corresponding square submatrix of ${\mathbf{A}^1}'$ is invertible.  When $c=1$ $Y'$ and $E_{1, 1}', E_{1, 2}'$ have particularly simple forms:
    \begin{align*}
        Y'&=\{x_1^3x_jx_k\mid 1\leq j\leq k\leq n\},\\
        E_{1, 1}'&=\{(x_1^3, x_jx_k)\mid 2\leq j\leq k\leq n\},\\
        E_{1, 2}'&= \{[(x_1^2x_j, x_kx_\ell), (x_1^2x_k, x_jx_\ell)]\mid 2\leq j, k, \ell\leq n; j\leq k, k\neq \ell\}.
    \end{align*}
    Noting that $|Y'_3|=E_{1, 1}'$ we include $E_{1, 1}'$ in the set $S$.  Now as $|Y'_5\cup Y'_4|=n$, we need to select $n$ equations from $E'_{1, 2}$.  We select the following:
    \begin{align*}
        & [(x_1^2x_2, x_2x_3), (x_1^2x_3, x_2^2)],\\
        & [(x_1^2x_2, x_3^2), (x_1^2x_3, x_2x_3)],\\
        & [(x_1^2x_2, x_3x_4), (x_1^2x_3, x_2x_4)],\\
        & [(x_1^2x_2, x_3x_k), (x_1^2x_k, x_2x_3)],\quad k\geq 4,
    \end{align*}
    and call this set $S_2$, so that $S=E'_{1, 1}\cup S_2$.  Write 
    \begin{align*}
        \mathbf{P}_{1, 1}&:=\bb{{\mathbf{A}^1}'}_{S, Y'_3},\\
        \mathbf{P}_{1, 2}&:=\bb{{\mathbf{A}^1}'}_{E'_{1, 1}, Y'_5\cup Y'_4},\\
        \mathbf{P}_2&:=\bb{{\mathbf{A}^1}'}_{S, Y'_5\cup Y'_4},
    \end{align*}
    so that
    \[\bb{{\mathbf{A}^1}'}_S=\begin{bNiceArray}{c|c}
   \Block{1-1}{\mathbf{P}_{1, 2}} & \Block{1-1}{m\mathbf{I}} \\ 
  \hline
  \Block{1-1}{\mathbf{P}_2} &  \Block{1-1}{\mathbf{P}_{1, 1}}
\end{bNiceArray};\]
    these $\mathbf{P}$ are submatrices of the $\mathbf{N}$ defined in Equation \ref{eq:N_matrices}.  Using the formula for the determinant of a $2\times 2$ block matrix we have that $\bb{{\mathbf{A}^1}'}_S$ is invertible if and only if \\$\operatorname{det}\bb{\mathbf{P}_{1, 1}\mathbf{P}_{1, 2}-m\mathbf{P}_2}\neq 0$.  

    \begin{claim}\label{claim:c1_veryefficient}
        Construct $\Z$ as in Equation \ref{eq:c1_construction}.  Then $m:=\det\bb{\Z_B}\neq 0$, and
        \begin{align*}
        & m_{x_1^2}^{x_jx_k}=(-1)^{n+1}m,&& 2\leq j\leq k\leq n,\\
        & m_{x_1x_j}^{x_jx_k}=(-1)^{n+j+1}\cdot 2m, && 2\leq j< k\leq n,\\
        & m_{x_1x_k}^{x_jx_k}=(-1)^{n+k}m,&& 2\leq j< k\leq n,
    \end{align*}
    and all other $m_{x_1x_i}^{x_jx_k}=0$.
    \end{claim}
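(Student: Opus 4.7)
My approach is a direct block-matrix calculation via Schur complements. I would first split
\[
\Z_B = \begin{pmatrix} A_1 & A_2 \\ B_1 & B_2 \end{pmatrix}
\]
according to the row partition into the $n+1$ ``standard'' points $\z_1,\ldots,\z_{n+1}$ and the $n$ ``scaled'' points $\z_{n+2},\ldots,\z_{2n+1}$, together with the column partition into the linear monomials $\{1,x_1,\ldots,x_n\}$ and the quadratics $\{x_1^2, x_1 x_2,\ldots, x_1 x_n\}$. Direct inspection of the points shows that $A_1$ is an $(n+1)\times(n+1)$ lower-triangular-like matrix with $\det A_1 = 1$, and that $A_2$ has only one nonzero entry (at position $(2,1)$). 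The Schur complement then gives $m = \det C$ where $C := B_2 - B_1 A_1^{-1} A_2$. Using the explicit step pattern of $B_1,B_2$ (where $\z_{n+1+\ell, i} = -1$ if $i\leq\ell$ and $2$ otherwise), one sees that $C$ has first column $(2,\ldots,2)^\top$ and $j$-th column equal to $-2$ in the first $j-1$ entries and $1$ thereafter; a short row reduction then yields $m = 2\cdot 3^{n-1}\neq 0$. Let $c_1,\ldots,c_n$ denote the columns of $C$.

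For each minor $m_{x_1 x_i}^{x_j x_k}$ in the claim (with the convention $i=1$ denoting $x_1^2$), I would apply the same block decomposition to the modified matrix $\Z_{B\setminus\{x_1 x_i\}\cup\{x_j x_k\}}$ to obtain a new Schur complement $D = B_2' - B_1 A_1^{-1} A_2'$. The key step is to express the modified column of $D$ as an explicit linear combination of $c_1, c_j, c_k$. Evaluated at the scaled points, $x_j x_k$ takes at most three distinct values---namely $4$ on indices $\ell < j$, $-2$ on $j\leq\ell<k$, and $1$ on $\ell\geq k$, collapsing to two values when $j=k$---so solving a small linear system yields the representation $c_1 - 2c_j + c_k$ when $j<k$, and $c_1 - c_j$ when $j=k$.

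Substituting this expression into the $i$-th column of $D$ and invoking multilinearity of the determinant then reduces $\det D$ to a sum of at most three determinants, each of which is either $\det C = m$ or vanishes due to two identical columns. This gives $\det D = m$ for $m_{x_1^2}^{x_j x_k}$ and $m_{x_1 x_k}^{x_j x_k}$, $\det D = -2m$ for $m_{x_1 x_j}^{x_j x_k}$, and $\det D = 0$ otherwise. Finally, I would account for the sign introduced by reordering the new index set into graded lexicographic order: the new column $x_j x_k$ moves from its ``natural'' replacement position $n+1+i$ to the terminal position $2n+1$, producing a sign $(-1)^{n-i}$ that composes with the Schur-complement value to give the signs $(-1)^{n+1}$, $(-1)^{n+j+1}\cdot 2$, $(-1)^{n+k}$, and $0$ of the four cases in the claim.

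The main obstacle will be the case distinction between $j=k$ and $j<k$: in the former, the monomial $x_j^2$ has a nonzero evaluation at the standard point $\z_{j+1}$, so $A_2'$ gains an extra nonzero entry and contributes via $B_1 A_1^{-1} A_2'$ a correcting term that must be tracked carefully to recover the unified linear-combination formula for the replaced column of $D$. Verifying this reconciliation is the only nontrivial bookkeeping; once it is carried out, the rest of the argument is a routine multilinearity expansion.
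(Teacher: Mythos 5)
Your proposal is correct and reaches all the stated values, including the signs. It shares the paper's overall skeleton: both split $\Z_B$ according to the $n+1$ standard points versus the $n$ scaled points and reduce everything to determinants of perturbations of the same $n\times n$ step matrix (your $C$ is exactly the paper's $\mathbf{G}'$, and your Schur complement is the paper's column reduction $\operatorname{col}(x_1^2)\leftarrow\operatorname{col}(x_1^2)-\operatorname{col}(x_1)$, which likewise yields $m=2\cdot 3^{n-1}$). Where you genuinely diverge is in how the perturbed determinants are evaluated. The paper computes $\mathbf{G}^{-1}$ explicitly and applies the rank-one update $\det(\mathbf{G}+\mathbf{u}\mathbf{e}_i^\top)=\det(\mathbf{G})\left(1+\mathbf{e}_i^\top\mathbf{G}^{-1}\mathbf{u}\right)$, then runs an entry-by-entry case analysis on $\mathbf{u}=\mathbf{v}_{jk}-\mathbf{G}_{x_1x_i}$; you instead exhibit the single identity $\mathbf{v}_{jk}=c_1-2c_j+c_k$ for $j<k$ and let multilinearity plus repeated columns do all the case analysis at once. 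Your version buys transparency: the vanishing of $m_{x_1x_i}^{x_jx_k}$ for $i\notin\{1,j,k\}$ and the coefficients $1,-2,1$ in the three surviving cases drop out immediately, where the paper grinds through the five possible value pairs of $\mathbf{u}$. You also correctly isolate the one delicate point, $j=k$: there the new column is supported on the standard point $\z_{j+1}$, and the Schur correction $-B_1A_1^{-1}A_2'$ contributes $+c_j$, so the corrected column of $D$ is exactly $c_1$ rather than $c_1-c_j$. This correction is not optional: with the uncorrected $c_1-c_j$ you would get $m_{x_1x_j}^{x_j^2}=\pm m\neq 0$, contradicting the claim, whereas the corrected column $c_1$ duplicates the first column and kills every $m_{x_1x_i}^{x_j^2}$ with $i\geq 2$ while leaving $m_{x_1^2}^{x_j^2}=(-1)^{n+1}m$ intact. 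The paper handles this same point by a different trick (performing $\operatorname{col}(x_1^2)-\operatorname{col}(x_1)$ and $\operatorname{col}(x_j^2)-\operatorname{col}(x_j)$ simultaneously and observing a repeated column). Either route works; just make sure the reconciliation you flag is actually carried through, since it is the only place where the naive formula fails.
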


    Using this claim we compute $\operatorname{det}\bb{\mathbf{P}_{1, 1}\mathbf{P}_{1, 2}-m\mathbf{P}_2}$.  We want to first determine the entries of $\mathbf{P}_{1, 1}\mathbf{P}_{1, 2}$.  Consider a row of $\mathbf{P}_{1, 1}$ corresponding to $[(x_1^2x_j, x_kx_\ell), (x_1^2x_k, x_jx_\ell)]$.  As a difference of the two vectors corresponding to $(x_1^2x_j, x_kx_\ell)$ and $(x_1^2x_k, x_jx_\ell)$, we first fix $(x_1^2x_j, x_kx_\ell)$.  An entry $\gamma\in Y'_3$ of the coefficient vector is nonzero for $x^\gamma=x_1^3x_jx_b, b\geq 2$ with coefficient $(-1)^{r+1+\operatorname{pos}(x^1x_b)}m_{x_1x_b}^{x_kx_\ell}$. 

    Similarly, take a column of $\mathbf{P}_{1, 2}$ corresponding to $x^\gamma\in Y'_5\cup Y'_4$.  In the case $c=1$ such a $x^\gamma$ corresponds to $x^\gamma=x_1^4x_i, i\geq 1$.  Take the dot product of this column with the row $(x_1^2x_j, x_kx_\ell)$.  This is the sum 
    \[\sum_{b=2}^n (-1)^{r+1+\operatorname{pos}(x_1x_b)}(-1)^{r+1+\operatorname{pos}(x_1x_i)}m_{x_1x_b}^{x_kx_\ell}m_{x_1x_i}^{x_jx_b}=\sum_{b=2}^n (-1)^{\operatorname{pos}(x_ix_b)+\operatorname{pos}(x_1x_i)}m_{x_1x_b}^{x_kx_\ell}m_{x_1x_i}^{x_jx_b}.\]
    Taking now the row $(x_1^2x_k, x_jx_\ell)$ dot the product with $\gamma=x_1^4x_i$ is
    \[\sum_{b=2}^n (-1)^{\operatorname{pos}(x_1x_b)+\operatorname{pos}(x_1x_i)}m_{x_1x_b}^{x_jx_\ell}m_{x_1x_i}^{x_kx_b}.\]
    It is straightforward that $\operatorname{pos}(x_1x_i)=1+n+i$.  Then taking the difference of the two products we have
    \[\sum_{b=2}^n (-1)^{b+i}\bb{m_{x_1x_b}^{x_kx_\ell}m_{x_1x_i}^{x_jx_b}-m_{x_1x_b}^{x_jx_\ell}m_{x_1x_i}^{x_kx_b}}.\]
    We compute the entries of $\mathbf{P}_{1, 1}\mathbf{P}_{1, 2}$ corresponding to row $(j, k, \ell)=(2, 3, \ell)$ and column $i$ for $\ell=2, 3, 4, i\geq 1$.  Proceed with casework using Claim \ref{claim:c1_veryefficient}.
    \begin{itemize}
        \item \underline{$\ell=2, i=1$:} 
        \begin{align*}
            \sum_{b=2}^n (-1)^{b+1}\bb{m_{x_1x_b}^{x_2x_3}m_{x_1^2}^{x_2x_b}-m_{x_1x_b}^{x_2^2}m_{x_1^2}^{x_3x_b}}
            &=\sum_{b=2}^n (-1)^{b+1}m_{x_1x_b}^{x_2x_3}m_{x_1^2}^{x_2x_b}\\
            &=\sum_{b=2}^n (-1)^{b+n}m_{x_1x_b}^{x_2x_3}\\
            &=(-1)^{2+n}m_{x_1x_2}^{x_2x_3}+(-1)^{3+n}m_{x_1x_3}^{x_2x_3}\\
            &=(-1)^{2+n}\cdot (-1)^{n+2+1}\cdot 2+(-1)^{3+n}\cdot (-1)^{n+3}\\
            &=-1.
        \end{align*}
        \item \underline{$\ell=2, i=2$:}
        \begin{align*}
            \sum_{b=2}^n (-1)^{b+2}\bb{m_{x_1x_b}^{x_2x_3}m_{x_1x_2}^{x_2x_b}-m_{x_1x_b}^{x_2^2}m_{x_1x_2}^{x_3x_b}}
            &=\sum_{b=2}^n (-1)^{b}m_{x_1x_b}^{x_2x_3}m_{x_1x_2}^{x_2x_b}\\
            &=(-1)^{3}m_{x_1x_3}^{x_2x_3}m_{x_1x_2}^{x_2x_3}\\
            &=(-1)^{3}\cdot (-1)^{n+3}\cdot  (-1)^{n+2+1}\cdot 2\\
            &=-2.
        \end{align*}
        \item \underline{$\ell=2, i=3$:}
        \begin{align*}
            \sum_{b=2}^n (-1)^{b+3}\bb{m_{x_1x_b}^{x_2x_3}m_{x_1x_3}^{x_2x_b}-m_{x_1x_b}^{x_2^2}m_{x_1x_3}^{x_3x_b}}
            &=\sum_{b=2}^n (-1)^{b+1 }m_{x_1x_b}^{x_2x_3}m_{x_1x_3}^{x_2x_b}\\
            &=(-1)^{3+1}m_{x_1x_3}^{x_2x_3}m_{x_1x_3}^{x_2x_3}\\
            &=1.
        \end{align*}
        \item \underline{$\ell=2, i\geq 4$:}
        \[\sum_{b=2}^n (-1)^{b+i}\bb{m_{x_1x_b}^{x_2x_3}m_{x_1x_i}^{x_2x_b}-m_{x_1x_b}^{x_2^2}m_{x_1x_i}^{x_3x_b}}=0,\]
        as we need $b=i$, but then as $i\geq 0$, $m_{x_1x_i}^{x_2x_3}=0$.
        \item \underline{$\ell=3, i=1$:} 
        \begin{align*}
            \sum_{b=2}^n (-1)^{b+1}\bb{m_{x_1x_b}^{x_3^2}m_{x_1^2}^{x_2x_b}-m_{x_1x_b}^{x_2x_3}m_{x_1^2}^{x_3x_b}}
            &=\sum_{b=2}^n (-1)^{b}m_{x_1x_b}^{x_2x_3}m_{x_1^2}^{x_3x_b}\\
            &=(-1)^{2}m_{x_1x_2}^{x_2x_3}m_{x_1^2}^{x_2x_3}+(-1)^{3}m_{x_1x_3}^{x_2x_3}m_{x_1^2}^{x_3^3}\\
            &=(-1)^{n+2+1}\cdot 2 \cdot (-1)^{n+1}-(-1)^{n+3}\cdot (-1)^{n+1}\\
            &=1.
        \end{align*}
        \item \underline{$\ell=3, i=2$:} 
        \begin{align*}
            \sum_{b=2}^n (-1)^{b+2}\bb{m_{x_1x_b}^{x_3^2}m_{x_1x_2}^{x_2x_b}-m_{x_1x_b}^{x_2x_3}m_{x_1x_2}^{x_3x_b}}
            &=\sum_{b=2}^n (-1)^{b+1}m_{x_1x_b}^{x_2x_3}m_{x_1x_2}^{x_3x_b}\\
            &=(-1)^{3}m_{x_1x_2}^{x_2x_3}m_{x_1x_2}^{x_2x_3}\\
            &=-4.
        \end{align*}
        \item \underline{$\ell=3, i=3$:} 
        \begin{align*}
            \sum_{b=2}^n (-1)^{b+3}\bb{m_{x_1x_b}^{x_3^2}m_{x_1x_3}^{x_2x_b}-m_{x_1x_b}^{x_2x_3}m_{x_1x_3}^{x_3x_b}}
            &=\sum_{b=2}^n (-1)^{b}m_{x_1x_b}^{x_2x_3}m_{x_1x_3}^{x_3x_b}\\
            &=(-1)^{2}m_{x_1x_2}^{x_2x_3}m_{x_1x_3}^{x_2x_3}\\
            &=2.
        \end{align*}
        \item \underline{$\ell=3, i\geq 4$:}
        \[\sum_{b=2}^n (-1)^{b+i}\bb{m_{x_1x_b}^{x_3^2}m_{x_1x_i}^{x_2x_b}-m_{x_1x_b}^{x_2x_3}m_{x_1x_i}^{x_3x_b}}=0.\]
        \item \underline{$\ell=4, i=1$:}
        \begin{align*}
            \sum_{b=2}^n (-1)^{b+1}\bb{m_{x_1x_b}^{x_3x_4}m_{x_1^2}^{x_2x_b}-m_{x_1x_b}^{x_2x_4}m_{x_1^2}^{x_3x_b}}
            &=(-1)^{2+1}\bb{-m_{x_1x_2}^{x_2x_4}m_{x_1^2}^{x_2x_3}}\\
            &\quad + (-1)^{3+1}m_{x_1x_3}^{x_3x_4}m_{x_1^2}^{x_2x_3}\\
            &\quad + (-1)^{4+1}\bb{m_{x_1x_4}^{x_3x_4}m_{x_1^2}^{x_2x_4}-m_{x_1x_4}^{x_2x_4}m_{x_1^2}^{x_3x_4}}\\
            &=(-1)^{n+2+1}\cdot 2\cdot (-1)^{n+1}\\
            &\quad + (-1)^{n+3+1}\cdot 2\cdot (-1)^{n+1}\\
            &\quad -\bb{(-1)^{n+4}\cdot (-1)^{n+1}-(-1)^{n+4}\cdot (-1)^{n+1}}\\
            &=0.
        \end{align*}
        \item \underline{$\ell=4, i=2$:} 
        \begin{align*}
            \sum_{b=2}^n (-1)^{b+2}\bb{m_{x_1x_b}^{x_3x_4}m_{x_1x_2}^{x_2x_b}-m_{x_1x_b}^{x_2x_4}m_{x_1x_2}^{x_3x_b}}
            &=(-1)^{2+2}\bb{-m_{x_1x_2}^{x_2x_4}m_{x_1x_2}^{x_2x_3}}\\
            &\quad +(-1)^{3+2}m_{x_1x_3}^{x_3x_4}m_{x_1x_2}^{x_2x_3}\\
            &\quad +(-1)^{4+2}m_{x_1x_4}^{x_3x_4}m_{x_1x_2}^{x_2x_4}\\
            &=-(-1)^{n+2+1}\cdot 2 \cdot (-1)^{n+2+1}\cdot 2\\
            &\quad -(-1)^{n+3+1}\cdot 2 \cdot (-1)^{n+2+1}\cdot 2\\
            &\quad +(-1)^{n+4}\cdot (-1)^{n+2+1}\cdot 2\\
            &=-2.
        \end{align*}
        \item \underline{$\ell=4, i=3$:} 
        \begin{align*}
            \sum_{b=2}^n (-1)^{b+3}\bb{m_{x_1x_b}^{x_3x_4}m_{x_1x_3}^{x_2x_b}-m_{x_1x_b}^{x_2x_4}m_{x_1x_3}^{x_3x_b}}
            &=(-1)^{2+3}\bb{-m_{x_1x_2}^{x_2x_4}m_{x_1x_3}^{x_2x_3}}\\
            &\quad + (-1)^{3+3}m_{x_1x_3}^{x_3x_4}m_{x_1x_3}^{x_2x_3}\\
            &\quad + (-1)^{4+3}\bb{-m_{x_1x_4}^{x_2x_4}m_{x_1x_3}^{x_3x_4}}\\
            &=(-1)^{n+2+1}\cdot 2 \cdot (-1)^{n+3}\\
            &\quad + (-1)^{n+3+1}\cdot 2\cdot (-1)^{n+3}\\
            &\quad + (-1)^{n+4}\cdot (-1)^{n+3+1}\cdot 2\\
            &=2.
        \end{align*}
        \item \underline{$\ell=4, i\geq 4$:}
        \[\sum_{b=2}^n (-1)^{b+i}\bb{m_{x_1x_b}^{x_3x_4}m_{x_1x_i}^{x_2x_b}-m_{x_1x_b}^{x_2x_4}m_{x_1x_i}^{x_3x_b}}=0.\]
        This is because for $i\geq 5$, we need $b=i$, but then  $m_{x_1x_i}^{x_3x_4}, m_{x_1x_i}^{x_2x_4}=0$.  For $i=4$, again we need $b=i=4$, and then the corresponding term is 
        \[(-1)^{b+4}\bb{m_{x_1x_4}^{x_3x_4}m_{x_1x_4}^{x_2x_4}-m_{x_1x_4}^{x_2x_4}m_{x_1x_4}^{x_3x_4}}=0.\]
    \end{itemize}
    Now we let $(j, k, \ell)=(2, k, 3)$ for $k\geq 4$.  The entry of $\mathbf{P}_{1, 1}\mathbf{P}_{1, 2}$ with this row and column $i$ is
    \[\sum_{b=2}^n (-1)^{b+i}\bb{m_{x_1x_b}^{x_3x_k}m_{x_1x_i}^{x_2x_b}-m_{x_1x_b}^{x_2x_3}m_{x_1x_i}^{x_kx_b}}.\]
    There are two cases we consider.
    \begin{itemize}
        \item \underline{$i=k$:}
        \begin{align*}
            \sum_{b=2}^n (-1)^{b+k}\bb{m_{x_1x_b}^{x_3x_k}m_{x_1x_k}^{x_2x_b}-m_{x_1x_b}^{x_2x_3}m_{x_1x_k}^{x_kx_b}}
            &=(-1)^{2+k}\bb{-m_{x_1x_2}^{x_2x_3}m_{x_1x_k}^{x_2x_k}}\\
            &\quad + (-1)^{3+k}\bb{-m_{x_1x_3}^{x_2x_3}m_{x_1x_k}^{x_3x_k}}\\
            &\quad + (-1)^{2k}m_{x_1x_k}^{x_3x_k}m_{x_1x_k}^{x_2x_k}\\
            &=(-1)^{k+1}\cdot (-1)^{n+2+1}\cdot 2\cdot (-1)^{n+k}\\
            &\quad + (-1)^k \cdot (-1)^{n+3}\cdot (-1)^{n+k}\\
            &\quad + (-1)^{2k}\cdot (-1)^{n+k}\cdot (-1)^{n+k}\\
            &=2.
        \end{align*}
        \item \underline{$i\geq 4, i\neq k$:}
        \[\sum_{b=2}^n (-1)^{b+i}\bb{m_{x_1x_b}^{x_3x_k}m_{x_1x_i}^{x_2x_b}-m_{x_1x_b}^{x_2x_3}m_{x_1x_i}^{x_kx_b}}=0.\]
        This is because we need $b=i$ as $i\neq k$.  But then we need $b\in \{3, k\}$ or $b\in \{2, 3\}$, which is not possible. 
    \end{itemize}
    Therefore, 
    \[\frac{1}{m}\mathbf{P}_{1, 1}\mathbf{P}_{1, 2}=
\begin{pNiceArray}{ccc|c}
  -1 & -2 & 1 & \Block{3-1}{\mathbf{0}} \\
  1 & -4 & 2 \\
  0 & -2 & 2 \\ 
  \hline
  \Block{1-3}{*} & & & \Block{1-1}{2\mathbf{I}}
\end{pNiceArray}
.\]
Computing the entries for $M_2$ is more straightforward.  This computation reveals
\[\frac{1}{m}\mathbf{P}_2=
\begin{pNiceArray}{ccc|c}
  0 & -1 & 1 & \Block{3-1}{\mathbf{0}} \\
  0 & -1 & 1 \\
  0 & -1 & 1 \\ 
  \hline
  \Block{1-3}{*} & & & \Block{1-1}{\mathbf{I}}
\end{pNiceArray}
,\]
so that
\[\frac{1}{m}\bb{{\mathbf{A}^1}'}_S =
\begin{pNiceArray}{ccc|c}
  -1 & -1 & 0 & \Block{3-1}{\mathbf{0}} \\
  1 & -3 & 1 \\
  0 & -1 & 1 \\ 
  \hline
  \Block{1-3}{*} & & & \Block{1-1}{\mathbf{I}}
\end{pNiceArray}.\]
Therefore, the determinant of $\frac{1}{m}\bb{{\mathbf{A}^1}'}_S$ is exactly the determinant of the principal $3\times 3$ block, which is easily computed to be $3$.  As such, we have found an invertible maximal submatrix of ${\mathbf{A}^1}'$, meaning that it is full column rank, and therefore so is $\mathbf{A}^1$.
\end{proof}

\begin{proof}[Proof of Claim \ref{claim:c1_veryefficient}]
    First, we can write
    \[\Z_B=\begin{bmatrix}
        \mathbf{1} & \mathbf{G}_1 & \mathbf{G}_2 \\
        \mathbf{1} & \mathbf{G}_3 & \mathbf{G}_4
    \end{bmatrix}\]
    and 
    \[\Z_{B\setminus \{x_1x_i\}\cup \{x_jx_k\}}=\begin{bmatrix}
        \mathbf{1} & \mathbf{G}_1 & \mathbf{G}^{(i, j, k)}_2\\
        \mathbf{1} & \mathbf{G}_3 & \mathbf{G}^{(i, j, k)}_4
    \end{bmatrix},\]
    where 
    \[\mathbf{G}_1=\begin{bmatrix}
        \mathbf{0}_{1\times n}\\
        \mathbf{I}_{n\times n}
    \end{bmatrix}\in\mathbb{C}^{(n+1)\times n}, \quad \mathbf{G}_3=\begin{bmatrix}
        -1 & 2 & \dots & 2\\
        -1 & -1 & \dots & 2\\
        \vdots & & &  \vdots \\
        -1 & -1 & \dots & 2\\
        -1 & -1 & \dots & -1
    \end{bmatrix}\in\bbC^{n\times n}.\]
    We deal with the structure of $\mathbf{G}^{(i, j, k)}_2, \mathbf{G}^{(i, j, k)}_4$ in a case-by-case basis.  We index the columns of these matrices by monomials, and write $\operatorname{col}(x^\alpha)$ to refer to the columns themselves.

    The general proof idea for all of the cases will be as follows: fix the $n\times n$ matrix
    \[\mathbf{G}=\begin{bmatrix}
        1 & -2 & \dots & -2\\
        1 & 1 & \dots & -2\\
        \vdots & & &  \vdots \\
        1 & 1 & \dots & -2\\
        1 & 1 & \dots & 1
    \end{bmatrix}.\]
    Because $\mathbf{G}^{(i, j, k)}_2$ is very sparse, with ones in only at most a few entries, the matrices we are considering are almost block triangular.  For each of these cases we will specify a few column updates so that they become block triangular, and then specify the determinants based on the relationship of the lower-right block with $\mathbf{G}$.

    Begin with $\det\bb{\Z_B}$.  Note that $\mathbf{G}_2$ as only one nonzero entry -- a one -- in the entry corresponding to column $x_1^2$ and row $2$, and $\mathbf{G}_4=\mathbf{G}$.  Taking $\operatorname{col}(x_1^2)\leftarrow \operatorname{col}(x_1^2)-\operatorname{col}(x_1)$ we obtain the matrix
    \[\begin{bmatrix}
        \mathbf{1} & \mathbf{G}_1 & \mathbf{0}_{(n+1)\times n} \\
        \mathbf{1} & \mathbf{G}_3 & \mathbf{G}'
    \end{bmatrix},\]
    where 
    \[\mathbf{G}'=\begin{bmatrix}
        2 & -2 & \dots & -2\\
        2 & 1 & \dots & -2\\
        \vdots & & &  \vdots \\
        2 & 1 & \dots & -2\\
        2 & 1 & \dots & 1
    \end{bmatrix}.\]
    Then $\det\bb{Z_B}=\operatorname{det}\bb{\mathbf{G}'}=2\det\bb{\mathbf{G}}=2\cdot 3^{n-1}$.  

To compute $m_{x_1^2}^{x_k^2}$ for $2 \leq k \leq n$, we first see that $\mathbf{G}^{(1, k, k)}_4$ is the last $n-1$ columns of $\mathbf{G}$ appended with the column $[\mathbf{4}_{1\times (k-1)} \mathbf{1}_{1\times (n-k+1)}]^\top $.  With the same idea as above we replace $\operatorname{col}(x_k^2)\leftarrow \operatorname{col}(x_k^2)-\operatorname{col}(x_k^2)$ to obtain essentially the same matrix $\mathbf{G}'$ except the all $2$ column is cycled to the end.  Then $m_{x_1^2}^{x_k^2}=(-1)^{n+1}\cdot 2\det(G)=(-1)^{n+1}m$, where the sign comes from considering the sign of the cycle $(2\,3\dots n \, 1)$.

To compute $m_{x_1^2}^{x_jx_k}, 2\leq j<k\leq n$ we first see that 
\[ \text{col}(x_j x_k) = \begin{bmatrix}
    \mathbf{0}_{(n+1)\times 1} \\ \mathbf{4}_{(j-1)-1} \\ -\mathbf{2}_{(k-j)\times 1} \\ \mathbf{1}_{(n-k+1)\times 1}
\end{bmatrix}.\]
Denote $\mathbf{v}_{jk}$ denote the last $n$ entries of this vector.  We additionally see that $\mathbf{G}_2^{(1, j, k)}$ is all zero.  Then
\begin{multline*}
    m_{x_1^2}^{x_jx_k}
    =\det \begin{bmatrix}
    \mathbf{G}_{x_1x_2} & \cdots & \mathbf{G}_{x_1x_n} & \mathbf{v}_{jk}
\end{bmatrix}
    =(-1)^{n+1}\det \begin{bmatrix}
   \mathbf{v}_{jk} & \mathbf{G}_{x_1x_2} & \cdots & \mathbf{G}_{x_1x_n}
\end{bmatrix}\\
= \det \bb{\mathbf{G} + (\mathbf{G}_{x_1^2} - \mathbf{v}_{jk})\mathbf{e}_1^\top}
=\det(\mathbf{G})\bb{1 + \mathbf{e}_1^\top \mathbf{G}^{-1}(\mathbf{G}_{x_1^2} - \mathbf{v}_{jk})}.
\end{multline*}
One can directly verify that 
\[ \mathbf{G}^{-1} = \frac{1}{3} \cdot \begin{bmatrix}
    1 & 0 & 0 & \cdots & 0 & 2 \\ 
    -1 & 1 & 0 & \cdots & 0 & 0 \\
    0 & -1 & 1 & \cdots & 0 & 0 \\
    \vdots & & \ddots & & \vdots \\
    0 & 0 & 0 & \cdots & -1 & 1
\end{bmatrix}. \]
Then
\[ \mathbf{e}_1^\top \mathbf{G}^{-1} (\mathbf{G}_{x_1^2} - \mathbf{v}_{jk}) = \frac{1}{3} \begin{bmatrix}
     1 & 0 & \cdots & 0 & 2 
\end{bmatrix} \begin{bmatrix}
   \mathbf{3}_{j-1} \\ -\mathbf{3}_{k-j} \\ \mathbf{0}_{n-k+1}
\end{bmatrix} = 1,\]
so $m_{x_1^2}^{x_jx_k} = (-1)^{n+1}\det(\mathbf{G})(1 + 1) = (-1)^{n+1}\cdot 2 \det(\mathbf{G}) = (-1)^{n+1}m$.

Now we compute $m_{x_1x_i}^{x_j x_k}$ where $i > 1$.  We first notice that $m_{x_1x_i}^{x_j^2}$ from the fact that taking $\operatorname{col}\bb{x_1^2}\leftarrow \operatorname{col}\bb{x_1^2}-\operatorname{col}\bb{x_1}$ and $\operatorname{col}\bb{x_j^2}\leftarrow \operatorname{col}\bb{x_j^2}-\operatorname{col}\bb{x_j}$ leads to a repeated column, so the determinant must be zero.  Now assume $j<k$.  Take $\operatorname{col}\bb{x_1^2}\leftarrow \operatorname{col}\bb{x_1^2}-\operatorname{col}\bb{x_1}$, so we are left with determining the determinant of the matrix
\[\begin{bmatrix}
    2\mathbf{G}_{x_1}^2 & \dots & \mathbf{G}_{x_1x_{i-1}} & \mathbf{G}_{x_1x_{i+1}} & \dots & \mathbf{G}_{x_1x_n} & \mathbf{v}_{jk}
\end{bmatrix}.\]
We instead cycle $\mathbf{v}_{jk}$ into $x_1x_i$ index and first calculate
\[\mathbf{G}'=\begin{bmatrix}
    2\mathbf{G}_{x_1}^2 & \dots & \mathbf{G}_{x_1x_{i-1}} &  \mathbf{v}_{jk}& \mathbf{G}_{x_1x_{i+1}} & \dots & \mathbf{G}_{x_1x_n}
\end{bmatrix},\]
accounting for sign later.  Then
\[\det\bb{\mathbf{G}'}=2\det\bb{\mathbf{G}+(\mathbf{v}_{jk}-\mathbf{G}_{x_1x_i})\mathbf{e}_i^\top}=2\det\bb{\mathbf{G}}\bb{1+\mathbf{e}_i^\top \mathbf{G}^{-1}(\mathbf{v}_{jk}-\mathbf{G}_{x_1x_i})}.\]
Observe that 
\[
\mathbf{e}_i^\top \mathbf{G}^{-1}=\frac{1}{3}\begin{bmatrix}
    \mathbf{0}_{(i-2)\times 1}\\
    -1 \\
    1\\
    \mathbf{0}_{(n-i)\times 1}
\end{bmatrix}^\top.
\]
 Let $u_{i-1}$ and $u_{i}$ be the $(i-1)^{\text{th}}$ and $i^{\text{th}}$ entries of $\mathbf{v}_{jk} - \mathbf{G}_{x_1x_i}$. We then have that $\mathbf{e}_i^\top \mathbf{G}^{-1} (\mathbf{v}_{jk} - \mathbf{G}_{x_1x_i}) = \frac{1}{3}(-u_{i-1} + u_i)$. Observe that the $(i-1)^{\text{th}}$ and $i^{\text{th}}$ entries of $\mathbf{G}_{x_1x_i}$ are $-2$ and $1$ respectively. The possible $(i-1)^{\text{th}}$ and $i^{\text{th}}$ entries of $\mathbf{v}_{jk}$ are $\{(4,4), (4, -2), (-2, -2), (-2, 1), (1,1) \}$. This gives that $(u_{i-1},u_i) \in \{(6,3),(6,-3),(0,-3),(0,0),(3,0) \}$. In particular, $\mathbf{e}_i^\top \mathbf{G}^{-1}(\mathbf{v}_{jk} - \mathbf{G}_{x_1x_i}) \in \{-1,-3,-1,0,-1\}$ so $\det\bb{\mathbf{G}'} \in \{0, -4 \det(\mathbf{G}), 0, 2 \det(\mathbf{G}), 0\}$. This shows that $\operatorname{det}\bb{\mathbf{G}'} = 0$ except in two cases: when $(u_{i-1}, u_i) = (6,-3)$ and when $(u_{i-1}, u_i) = (0,0)$. The former case happens when $j = i$ and the latter when $k = i$.  Therefore, when $i=j$ then  $m_{x_1x_j}^{x_jx_k}=(-1)^{n+j+1}\cdot 2m$ and when $i=k$ then $m_{x_1x_k}^{x_jx_k}=(-1)^{n+k}m$ as we account for sign.
\end{proof}

\begin{example}\label{ex:lin_dep_equations}
    Let $n=3, r=7$.  Let
    \[B=\{1, x_1, x_2, x_3, x^2, x_1x_2, x_1x_3\}.\]
    Let $x^\alpha =x^2, x^\beta=x_2, i=2, j=3$.  This corresponds to 
    \[[(x_1^2x_2, x_2x_3), (x_1^2x_3, x_2^2)]\in E_2.\]
    Let $x^\alpha=x_1x_2, x^\beta =x_3, i=1, j=2$.  This corresponds to 
    \[(x_1^2x_2, x_2x_3)\in E_1,\]
    as $x^\beta x_i\in B_2$.  Similarly, let $x^\alpha=x_1x_3, x^\beta=x_2, i=1, j=2$.  This corresponds to 
    \[(x_1^2x_3, x_2^2)\in E_1.\]
    But the difference of the latter two equations is exactly the first equation.
    
    A more subtle dependency is as follows.  Consider the following six equations in $E_1$:
    \[(x_1^3, x_2^2),(x_1^3, x_3^2),(x_1^2x_2, x_2^2),(x_1^2x_2, x_3^2),(x_1^2x_3, x_2^2), (x_1^2x_3, x_3^2).\]
    It can be seen that the sum of $[(x_1x_2^2, x_3^2), (x_1x_2x_3, x_2x_3)]$ and $[(x_1x_2x_3, x_2x_3), (x_1x_3^2, x_2^2)]$ in $E_2$ is a linear combination of these 6 equations.  The corresponding submatrix of $\mathbf{A}$ has the following form:
\[
\left[
\begin{array}{cccccccccc}
 - m_{ x_1^2}^{ x_2^2} & m_{x_1 x_2}^{ x_2^2} &  - m_{x_1 x_3}^{ x_2^2} & m & 0 & 0 & 0 & 0 & 0 & 0 \\
 - m_{ x_1^2}^{ x_3^2} & m_{x_1 x_2}^{ x_3^2} &  - m_{x_1 x_3}^{ x_3^2} & 0 & 0 & m & 0 & 0 & 0 & 0 \\
0 &  - m_{ x_1^2}^{ x_2^2} & 0 & m_{x_1 x_2}^{ x_2^2} &  - m_{x_1 x_3}^{ x_2^2} & 0 & m & 0 & 0 & 0 \\
0 &  - m_{ x_1^2}^{ x_3^2} & 0 & m_{x_1 x_2}^{ x_3^2} &  - m_{x_1 x_3}^{ x_3^2} & 0 & 0 & 0 & m & 0 \\
0 & 0 &  - m_{ x_1^2}^{ x_2^2} & 0 & m_{x_1 x_2}^{ x_2^2} &  - m_{x_1 x_3}^{ x_2^2} & 0 & m & 0 & 0 \\
0 & 0 &  - m_{ x_1^2}^{ x_3^2}  & 0 & m_{x_1 x_2}^{ x_3^2} &  - m_{x_1 x_3}^{ x_3^2} & 0 & 0 & 0 & m \\
0 & 0 & 0 & 0 &  - m_{ x_1^2}^{x_2 x_3} & m_{ x_1^2}^{ x_2^2} & 0 & m_{x_1 x_2}^{x_2 x_3} &  - m_{x_1 x_2}^{ x_2^2} - m_{x_1 x_3}^{x_2 x_3} & m_{x_1 x_3}^{ x_2^2} \\
0 & 0 & 0 &  -m_{ x_1^2}^{ x_3^2} & m_{ x_1^2}^{x_2 x_3} & 0 & m_{x_1 x_2}^{ x_3^2} &  - m_{x_1 x_2}^{x_2 x_3} - m_{x_1 x_3}^{ x_3^2} & m_{x_1 x_3}^{x_2 x_3} & 0
\end{array}
\right].
\]
The columns correspond to $Y$:
\[x_1^5,x_1^4x_2,x_1^4x_3,x_1^3x_2^2,x_1^3x_2x_3,x_1^3x_3^2,x_1^2x_2^3, x_1^2x_2^2 x_3,x_1^2x_2x_3^2, x_1^2x_3^3.\]
This matrix can be verified to have rank at most $7$ (not full row rank) regardless of the values for the $m_{x^\gamma}^{x^\delta}$.
\end{example}

\subsection{Section \ref{sec:collinear}}

\begin{example}\label{ex:collinear}
    Let $n=2, r=4$, with $\phi$ satisfying Condition \ref{cond:collinear}.  Recall from the proof of Lemma \ref{lem:n23_linear} that the single polynomial $m\cdot m_{x_1^2}^{x_2^2}-\bb{m_{x_1^2}^{x_1x_2}}^2$ determines whether $\mathbf{A}$ drops rank.  But indeed this polynomial is equal to zero by Proposition \ref{prop:collinear_eqs}, and as we can show that $m\neq 0$, $\mathbf{A}$ is rank $1$.  As the two moment variables are $y_{x_1^5}, y_{x_1^4x_2}$, we can treat $y_{x_1^5}$ as a parameter, thus determining a value for $y_{x_1^4x_2}$ via the single linearly independent relation.

For simplicity we assume $\lambda_i=1, i=1, \dots, 4$.  We have 
\[
\mathcal{H}_{B, B_1}=\begin{bmatrix}
    \phi_{x_1} & \phi_{x_1^2} & \phi_{x_1x_2} & \phi_{x_1^3}\\
    \phi_{x_1^2} & \phi_{x_1^3} & \phi_{x_1^2x_2} & \phi_{x_1^4}\\
    \phi_{x_1x_2} & \phi_{x_1^2x_2} & \phi_{x_1x_2^2} & \phi_{x_1^3 x_2}\\
    \phi_{x_1^3} & \phi_{x_1^4} & \phi_{x_1^3x_2} & y_{x_1^5}
\end{bmatrix},\quad \mathcal{H}_{B, B_2}=\begin{bmatrix}
    \phi_{x_2} & \phi_{x_1x_2} & \phi_{x_2^2} & \phi_{x_1^2x_2}\\
    \phi_{x_1x_2} & \phi_{x_1^2x_2} & \phi_{x_1x_2^2} & \phi_{x_1^3x_2}\\
    \phi_{x_2^2} & \phi_{x_1x_2^2} & \phi_{x_2^3} & \phi_{x_1^2 x_2^2}\\
    \phi_{x_1^2x_2} & \phi_{x_1^3x_2} & \phi_{x_1^2x_2^2} & y_{x_1^4x_2}
\end{bmatrix}.
\]
Now letting $\Z_B^{\hat{i}, \hat{x^\alpha}}$ denote the $3\times 3$ submatrix of $\Z_B$ obtained by removing row $i$ and column $x^\alpha$, 
\[
\Z_B^{-1}=\frac{1}{\operatorname{det}\bb{\Z_B}}\begin{bmatrix}
    \operatorname{det}\bb{\Z_{B}^{\hat{1}, \hat{1}}} & -\operatorname{det}\bb{\Z_{B}^{\hat{2}, \hat{1}}} & \operatorname{det}\bb{\Z_{B}^{\hat{3}, \hat{1}}} & -\operatorname{det}\bb{\Z_{B}^{\hat{4}, \hat{1}}}\\
    -\operatorname{det}\bb{\Z_{B}^{\hat{1}, \hat{x_1}}} & \operatorname{det}\bb{\Z_{B}^{\hat{2}, \hat{x_1}}} & -\operatorname{det}\bb{\Z_{B}^{\hat{3}, \hat{x_1}}} & \operatorname{det}\bb{\Z_{B}^{\hat{4}, \hat{x_1}}}\\
    \operatorname{det}\bb{\Z_{B}^{\hat{1}, \hat{x_2}}} & -\operatorname{det}\bb{\Z_{B}^{\hat{2}, \hat{x_2}}} & \operatorname{det}\bb{\Z_{B}^{\hat{3}, \hat{x_2}}} & -\operatorname{det}\bb{\Z_{B}^{\hat{4}, \hat{x_2}}}\\
    -\operatorname{det}\bb{\Z_{B}^{\hat{1}, \hat{x_1^2}}} & \operatorname{det}\bb{\Z_{B}^{\hat{2}, \hat{x_1^2}}} & -\operatorname{det}\bb{\Z_{B}^{\hat{3}, \hat{x_1^2}}} &\operatorname{det}\bb{\Z_{B}^{\hat{4}, \hat{x_1^2}}}
\end{bmatrix},
\]
and in particular, $\operatorname{det}\bb{\Z_{B}^{\hat{3}, \hat{x_1^2}}}=0$.  Therefore, $\mathbf{M}_1^B=\mathcal{H}_{B, B_1}\mathcal{H}_{B, B}^{-1}=\mathcal{H}_{B, B_1}\Z_B^{-1}\Z_B^{-\top}$ is such that
\[\mathcal{H}_{B, B_1}\Z_B^{-1}=\begin{bmatrix}
    \z_{1, 1} & \z_{2, 1} & \z_{3, 1} & t\z_{1, 1}+(1-t)\z_{2, 1} \\ 
    \z_{1, 1}^2 & \z_{2, 1}^2 & \z_{3, 1}^2 & \bb{t\z_{1, 1}+(1-t)\z_{2, 1}}^2 \\ 
    \z_{1, 1}\z_{1, 2} & \z_{2, 1}\z_{2, 2} & \z_{3, 1}\z_{3, 2} & \bb{t\z_{1, 1}+(1-t)\z_{2, 1}}\bb{t\z_{1, 2}+(1-t)\z_{2, 2}} \\ 
    f_{x_1^3, 1}(y_{x_1^5}) & f_{x_1^3, 2}(y_{x_1^5}) & f_{x_1^3, 3}(y_{x_1^5}) & f_{x_1^3, 4}(y_{x_1^5})
\end{bmatrix},\]
where $f_{x_1^3, j}$ are linear functions in $y_{x_1^5}$.  In particular, we can calculate explicitly
\begin{align*}
    f_{x_1^3, 3}(y_{x_1^5})
    &=\phi_{x_1^3}\operatorname{det}\bb{\Z_B^{\hat{3}, \hat{1}}}-\phi_{x_1^4}\operatorname{det}\bb{\Z_B^{\hat{3}, \hat{x_1^2}}}+\phi_{x_1^3x_3}\operatorname{det}\bb{\Z_B^{\hat{3}, \hat{x_2}}}-y_{x_1^5}\operatorname{det}\bb{\Z_B^{\hat{3}, \hat{x_1^2}}}\\
    &=\phi_{x_1^3}\operatorname{det}\bb{\Z_B^{\hat{3}, \hat{1}}}-\phi_{x_1^4}\operatorname{det}\bb{\Z_B^{\hat{3}, \hat{x_1^2}}}+\phi_{x_1^3x_3}\operatorname{det}\bb{\Z_B^{\hat{3}, \hat{x_2}}}\\
    &=\z_{3, 1}^3.
\end{align*}
Then we immediately have $\mathbf{M}_1^B(y_{x_1^5})\z_3^B=\z_{3, 1}\z_3^B$, so that $\z_3^B$ is always an eigenvector of the multiplication matrices regardless of the value of $y_{x_1^5}$.

On the other hand, consider a vector $\z'=s\z_1+(1-s)\z_2$ for any $s\in (0,1 )$.  We have ${\z'}^B=s\z_1^B+(1-s)\z_2^B-s(1-s)\begin{bmatrix}
    0\\0\\0\\(\z_{1, 1}-\z_{2, 1})^2
\end{bmatrix}$; then a calculation shows $\mathbf{M}_1^B(y_{x_1^5}){\z'}^B$ is equal to
\[
 \begin{bmatrix}
    s\z_{1, 1}+(1-s)\z_{2, 1}\\
    (s\z_{1, 1}+(1-s)\z_{2, 1})^2\\
    (s\z_{1, 1}+(1-s)\z_{2, 1})(s\z_{1, 2}+(1-s)\z_{2, 2})\\
    g_{3, 0}(y_{x_1^5})+g_{3, 1}(y_{x_1^5})(s\z_{1, 1}+(1-s)\z_{2, 1})+g_{3, 2}(y_{x_1^5})(s\z_{1, 2}+(1-s)\z_{2, 2})+g_{3, 3}(y_{x_1^5})(s\z_{1, 1}+(1-s)\z_{2, 1})^2
\end{bmatrix},
\]
for linear functions $g$ in $y_{x_1^5}$, so we consider
\begin{multline*}
     g_{3, 0}(y_{x_1^5})+g_{3, 1}(y_{x_1^5})(s\z_{1, 1}+(1-s)\z_{2, 1})+g_{3, 2}(y_{x_1^5})(s\z_{1, 2}\\+(1-s)\z_{2, 2})+g_{3, 3}(y_{x_1^5})(s\z_{1, 1}+(1-s)\z_{2, 1})^2\\
    =  (s\z_{1, 1}+(1-s)\z_{2, 1})^3.
\end{multline*}
This is a cubic polynomial in $s$, with coefficients the entries of $\phi$, $\z_1, \z_2$, and the parameter $y_{x_1^5}$.  There are three distinct eigenvectors when there are three distinct solutions to this cubic, which is a condition on the discriminant.  For generic choices of value for $y_{x_1^5}$ the discriminant is nonzero, as checked by specialization.

Because there are no quadratic relations, all decompositions are specified by the linear relations: the point $\z_3$ and $s_1\z_1+(1-s_1)\z_2, s_2\z_1+(1-s_2)\z_2, s_3\z_1+(1-s_3)\z_2$, where $s_1, s_2, s_3$ are the three distinct solutions corresponding to the above cubic for a fixed value for the variable $y_{x_1^5}$.  That is, as there is one free parameter, the dimension of the space of decompositions is one-dimensional -- geometrically, $\z_3$ along with the line passing through $\z_{1}$ and $\z_2$.  This recovers case (4b)(ii) of Theorem 3.7 of \cite{MOURRAIN2020347}.
\end{example}

\section{Additional Proofs for Section \ref{sec:monomials}}\label{app:b}
\subsection{Section \ref{sec:monomialeqs}}

\begin{proof}[Proof of Lemma \ref{lem:monomial_G}]
    As $\mathcal{H}_{B, B}$ is symmetric lower antitriangular, $\mathcal{G}$ is symmetric upper antitriangular.  By definition, for $x^\gamma, x^{\gamma'}\in B$, $\mathcal{G}_{x^\gamma}^\top \mathcal{H}_{B, x^{\gamma'}}=1$ if $\gamma=\gamma'$ and $0$ otherwise.

    Explicitly, we have
    \[\mathcal{G}_{x^\gamma}^\top \mathcal{H}_{B, x^{\gamma'}}=\sum_{\delta}\mathcal{G}_{x^\gamma, x^{\overline{d}-\delta}}\mathcal{H}_{x^{\overline{d}-\delta}, x^{\gamma'}}.\]
    $\mathcal{H}_{\overline{d}-\delta, \gamma'}$ is nonzero when either $\delta=\gamma'$, so that the corresponding entry is $1$, or $(x^\delta, x^{\gamma'})\in S_k$ for $k\geq 1$, in which case the corresponding entry is $y_{x^{\overline{d}-\delta+\gamma'}}$ (note that it suffices to consider only those $\delta$ such that $|\delta|\leq |\gamma'|$; for $|\delta|>|\gamma|$ the size of $\overline{d}-\delta+\gamma'$ is too small so the entry of $\mathcal{H}_{B, B}$ is zero).  Then
    \begin{multline*}
        \mathcal{G}_{x^\gamma}^\top \mathcal{H}_{B, x^{\gamma'}}=\mathcal{G}_{x^\gamma, x^{\overline{d}-\gamma'}}+\sum_{(x^\delta, x^{\gamma'})\in S_k, k\geq 1}\mathcal{G}_{x^\gamma, x^{\overline{d}-\delta}}\mathcal{H}_{x^{\overline{d}-\delta}, x^{\gamma'}}\\=\mathcal{G}_{x^\gamma, x^{\overline{d}-\gamma'}}+\sum_{(x^\delta, x^{\gamma'})\in S_k, k\geq 1}\mathcal{G}_{x^\gamma, x^{\overline{d}-\delta}}y_{x^{\overline{d}-\delta+\gamma'}}.
    \end{multline*}

    In particular, for $\gamma=\gamma'$ 
    \[\mathcal{G}_{x^\gamma, x^{\overline{d}-\gamma}}=1-\sum_{(x^\delta, x^\gamma)\in S_k, k\geq 1}\mathcal{G}_{x^\gamma, x^{\overline{d}-\delta}}y_{x^{\overline{d}-\delta+\gamma}}\]
    and when $\gamma\neq \gamma'$
    \[\mathcal{G}_{x^\gamma, x^{\overline{d}-\gamma'}}=-\sum_{(x^\delta, x^{\gamma'})\in S_k, k\geq 1}\mathcal{G}_{x^\gamma, x^{\overline{d}-\delta}}y_{x^{\overline{d}-\delta+\gamma'}}.\]
    
    First, note that for $|\gamma|>|\gamma'|$ $\mathcal{G}_{x^\gamma, x^{\overline{d}-\gamma'}}=0$; this can be seen easily from upper antitriangularity of $\mathcal{G}$.  Assuming now $|\gamma|\leq |\gamma'|$ we compute $\mathcal{G}_{x^\gamma, x^{\overline{d}-\gamma'}}$ for $(x^\gamma, x^{\gamma'})\in S_k$ for increasing $S_k$.  Let $k=0$; then $0\leq |\gamma'|-|\gamma|\leq d_0$.  But the summation is vacuous: suppose there exists $\delta$ such that $(x^\delta, x^{\gamma'})\in S_k, k\geq 1$.  For $|\gamma|>|\delta|$ we know $\mathcal{G}_{x^\gamma, x^{\overline{d}-\delta}}=0$, so we can also assume $|\gamma|\leq |\delta|$.  But then we have $|\gamma'|-|\delta|\geq d_0+1$ and $-\gamma\geq -\delta$, so $|\gamma'|-|\gamma|\geq d_0+1$, a contradiction.  Therefore, for $(x^\gamma, x^{\gamma'})\in S_0$, $\mathcal{G}_{x^\gamma, x^{\overline{d}-\gamma'}}=1$ is $\gamma=\gamma'$ and is $0$ otherwise.
    
    Now suppose $(x^\gamma, x^{\gamma'})\in S_k$ for some $k\geq 1$.  In this case the entry is always
    \[\mathcal{G}_{x^\gamma, x^{\overline{d}-\gamma'}}=-\sum_{(x^\delta, x^{\gamma'})\in S_k, k\geq 1}\mathcal{G}_{x^\gamma, x^{\overline{d}-\delta}}y_{x^{\overline{d}-\delta+\gamma'}}.\]
    In fact, we can always upper bound $k$ in the summation.  Suppose $(x^\delta, x^{\gamma'})\in S_\ell$ for some $\ell> k$.  Then by the same argument as before, as $|\gamma|\leq |\delta|$ we have
    \[|\gamma'|-|\gamma|\geq |\gamma'|-|\delta|\geq \ell(d_0+1),\]
    a contradiction.  On the other hand, for $\ell\leq k$ there will always exist some $(x^\delta, x^{\gamma'})\in S_\ell$ (e.g., for $\ell=k$ just take $\delta=\gamma$).  Thus, we can write
    \[\mathcal{G}_{x^\gamma, x^{\overline{d}-\gamma'}}=-\sum_{\ell=1}^k \sum_{(x^\delta, x^{\gamma'})\in S_\ell} \mathcal{G}_{x^\gamma, x^{\overline{d}-\delta}}y_{x^{\overline{d}-\delta+\gamma'}}.\]
    Let us be more precise.  We can see that if $(x^\delta, x^{\gamma'})\in S_\ell$, that we immediately have the upper bound $|\delta|-|\gamma|\leq (k-\ell+1)(d_0+1)-1$, i.e., if $(x^\delta, x^{\gamma'})\in S_\ell$ and $(x^\gamma, x^\delta)\in S_{\ell'}$, then $\ell'\leq d-\ell$.  In particular, if $(x^\delta, x^{\gamma'})\in S_k$ then $(x^\gamma, \delta)\in S_0$; as we know that for $(\gamma, x^\delta)\in S_0$, $\mathcal{G}_{x^\gamma, x^{\overline{d}-\delta}}\neq 0$ if and only if $\gamma=\delta$, we finally write
    \[\mathcal{G}_{x^\gamma, x^{\overline{d}-\gamma'}}=-y_{x^{\overline{d}-\gamma+\gamma'}}-\sum_{\ell=1}^{k-1}\sum_{(x^\delta, x^{\gamma'})\in S_\ell}\mathcal{G}_{x^\gamma, x^{\overline{d}-\delta}}y_{x^{\overline{d}-\delta+\gamma'}}.\]
    It is easy to induct via this expression and see that the degree of $\mathcal{G}_{x^\gamma, x^{\overline{d}-\gamma'}}$ is $k$ and the only moment variable in $Y_k$ is $y_{x^{\overline{d}-\gamma+\gamma'}}$.
\end{proof}

\begin{proof}[Proof of Lemma \ref{lem:monomial_eq}]
    We can see that
    \[\operatorname{det}\mathcal{H}_{B\cup\{x^{\alpha}x_i\}, B\cup\{x^\beta x_j\}}=-y_{x^{\alpha+\beta}x_i x_j}+\sum_{\gamma, \gamma'}\mathcal{H}_{x^\alpha x_i, x^\gamma}\mathcal{G}_{x^\gamma, x^{\overline{d}-\gamma'}}\mathcal{H}_{x^{\overline{d}-\gamma'}, x^\beta x_j}.\]
    Then the expression simply comes from the fact that $\mathcal{G}_{x^\gamma, x^{\overline{d}-\gamma}}=1$.  

    By induction we see that for fixed $\ell$,
    \[\operatorname{deg}\bb{\sum_{\substack{|\gamma|\geq d-|\alpha|,\\ |\gamma'|\leq |\beta|-d_0,\\(x^\gamma, x^{\gamma'})\in S_\ell}}y_{x^{\alpha+\gamma}x_i}\mathcal{G}_{x^\gamma, x^{\overline{d}-\gamma'}}y_{x^{\overline{d}-\gamma'+\beta} x_j}}=\ell+2,\]
    so that if $x^{\alpha+\beta}x_i x_j\in Y_k$ then $\operatorname{deg}\bb{\mathcal{H}_{B\cup \{x^\alpha x_i\}, B\cup \{x^\beta x_j\} }}=k$ with degree $\ell$ part graded in the above manner.  For some fixed $\ell$ we calculate the sizes of the variables present.  First, from $(x^\gamma, x^{\gamma'})\in S_\ell$ we can see $|\gamma|\leq |\gamma'|-\ell(d_0+1)$.  Then
    \begin{align*}
        |\alpha|+|\gamma|+1
        &\leq |\alpha|+|\gamma'|-\ell(d_0+1)+1\\
        &\leq |\alpha|+|\beta|+2-(\ell+1)(d_0+1)\\
        &\leq d+(k-(\ell+1))(d_0+1).
    \end{align*}
    Thus, if $x^{\alpha+\gamma }x_i\in Y_{\ell'}$ then we must have $\ell'\leq k-(\ell+1)$.  The same idea works for $x^{\overline{d}-\gamma'+\beta} x_j$.  We also now that the maximum size of any variable that appears in $\mathcal{G}_{x^\gamma, x^{\overline{d}-\gamma'}}$ is in $Y_\ell$.  Therefore, we have again that the only variable in $Y_k$ in the equation is $x^{\alpha+\beta}x_ix_j$.
\end{proof}

\end{appendices}

\end{document}